\newtheorem{definition}{Definition}
\newcommand{\ssize}{\text{size}\,}
\newcommand{\eenergy}{\text{energy}\,}
\newtheorem{lemma}{Lemma}
\newtheorem{corollary}{Corollary}
\newtheorem{proposition}{Proposition}
\newtheorem{theorem}{Theorem}
\newtheorem{rremark}{Remark}
\newcommand{\sssize}{\widetilde{\text{size}\,}}
\newcommand{\one}{\mathbf{1}}
\newcommand{\dist}{\text{ dist }}
\newcommand{\rr}{\mathbb}
\newcommand{\ii}{\mathscr}
\newcommand{\ic}{\mathcal}
\newcommand{\ci}{\tilde{\chi}}
\newcommand{\ds}{\displaystyle}
\newcommand{\two}{\left( 2 \right)}
\newcommand{\di}{\left( d \right)}
\newtheorem{question*}{Question}
\newtheorem{claim*}{Claim}
\newtheorem*{main*}{\underline{Induction statement}}
\newcommand{\lft}{\big|}
\newcommand{\rg}{\big|}
\newcommand{\supp}{\text{supp\,}}
\newenvironment{thmbis}[1]
  {%
   \addtocounter{proposition}{-1}%
   \begin{proposition}}
  {\end{proposition}}
\def\Xint#1{\mathchoice
   {\XXint\displaystyle\textstyle{#1}}%
   {\XXint\textstyle\scriptstyle{#1}}%
   {\XXint\scriptstyle\scriptscriptstyle{#1}}%
   {\XXint\scriptscriptstyle\scriptscriptstyle{#1}}%
   \!\int}
\def\XXint#1#2#3{{\setbox0=\hbox{$#1{#2#3}{\int}$}
     \vcenter{\hbox{$#2#3$}}\kern-.5\wd0}}
\def\aver#1{\Xint-_{#1}}
\author{Cristina Benea }
\address{Cristina Benea, Universit\'{e} de Nantes, Laboratoire Jean Leray, Nantes 44322, France}
\email{cristina.benea@univ-nantes.fr}
\author[Camil Muscalu]{Camil Muscalu*}
\thanks{$^*$The author is also a Member of the ``Simion Stoilow" Institute of Mathematics of the Romanian Academy}
\address{Camil Muscalu, Department of Mathematics, Cornell University, Ithaca, NY 14853, USA}
\email{camil@math.cornell.edu}
 \title{The Helicoidal Method
} 
\begin{document}
\begin{abstract}
We present a method for proving multiple vector-valued inequalities for several operators in harmonic analysis, based on stopping times and localizations. As it turns out, the local estimate can be used for proving sparse domination for the scalar operator and its multiple vector-valued extensions, and hence also weighted estimates.
\end{abstract}
\maketitle

\section{Introduction}
\label{sec:intro}

The present work is an expository paper on the helicoidal method, which we introduced in \cite{vv_BHT}, \cite{quasiBanachHelicoid}, \cite{sparse-hel}. 
Put in a shell, the helicoidal method comes down to encapsulating very precise data at a \emph{local} level, and subsequently unfolding the (possibly superfluous) information on certain collections of intervals that are chosen through a careful stopping time. 

\smallskip 

The helicoidal method was developed with the purpose of dealing with vector-valued (depth $1$) extensions of operators, and later with multiple vector-valued extensions (depth $n$, where $n$ represents the number of iterated spaces considered). The local estimates at any level /  of any depth are very similar in nature, and they all resemble the scalar (level $0$) one, which is essential to our approach. Hence the helicoid is a metaphor for this iterative procedure, where estimates at the $n^{\text{th}}$ level can be achieved only through estimates from the previous level, and they all are of the same kind. See also Remark \ref{remark:expl-name-hel} below.
\smallskip

For the reader's convenience, we will try to leave out the technical details and instead present the development of the ideas and their implications, from the scalar to the multiple vector-valued setting. With this in mind, we will take the time to review certain results which are classical in the field of time-frequency analysis, in spite of them being obvious to experts.

Our prime example is the bilinear Hilbert transform, an operator given by
\begin{equation}
\label{eq:def-BHT}
BHT(f, g)(x)=p.v. \int_{\rr R} f(x-t) g(x+t)\frac{dt}{t}, 
\end{equation}
which falls beyond the scope of classical Calder\'on-Zygmund theory. It was introduced by Calder\'on in his attempt to understand the Cauchy integral on Lipschitz curves and what are now termed ``Calder\'on's commutators".

The prototypical localized inequality obtained through the helicoidal method is
{\fontsize{10}{10}\begin{equation}
\label{eq:loc-BHT-first}
\vert \langle BHT_{I_0}(f, g ), h \rangle \vert \lesssim  \sup_{I \subseteq I_0} \big( \frac{1}{|I|} \int_{\rr R} |f|^{s_1} \ci_I dx \big)^\frac{1}{s_1} \,  \sup_{I \subseteq I_0} \big( \frac{1}{|I|} \int_{\rr R} |g|^{s_2} \ci_I dx \big)^\frac{1}{s_2} \, \sup_{I \subseteq I_0} \big(  \frac{1}{|I|} \int_{\rr R} |h|^{s_3} \ci_I dx \big)^\frac{1}{s_3} \cdot |I_0|,
\end{equation}}
\smallskip
where $I_0$ is some (dyadic) interval contained in $\rr R$, $\ci_I(x): =\big( 1+\frac{\dist (x, I)}{|I|}  \big)^{-100}$ and $s_1, s_3, s_3 \in (1, \infty)$ are Lebesgue exponents such that $ \frac{1}{s_1}+\frac{1}{s_2}+\frac{1}{s_3}<2$. In fact, it will be clear later on that it suffices to find $0 \leq \theta_1, \theta_2, \theta_3 <1$ with $\theta_1+\theta_2 +\theta_3=1$, such that for every $1 \leq j \leq 3$,  $\frac{1}{s_j} < \frac{1+\theta_j}{2}$ (in particular, we can replace the tuple $(s_1, s_2, s_3)$ by any $(\tilde{s}_1, \tilde{s}_2, \tilde{s}_3)$ with $s_j<\tilde{s}_j$ for all $1 \leq j \leq 3$).

It is the scalar case and the iterative procedure of the helicoidal method that imply also the depth-$n$ vector-valued version of \eqref{eq:loc-BHT-first}. Such a local estimate, together with a stopping time of type VVST implies the corresponding result of depth $n+1$ (the induction step is presented in detail in Section \ref{sec:method-proof}), while a stopping time of type SST (as described in the proof of Theorem \ref{thm:local->sparse}) implies a multiple vector-valued sparse domination result. The two stopping time procedures (of type VVST or SST) are discussed in more detail in Remark \ref{remark:stopping-times}. The multiple vector-valued extensions of $BHT$ can be regarded as $BHT \otimes Id \otimes \ldots \otimes Id$, and it can be seen that the structure of the operator plays a role only in proving the scalar local estimate (taking the form of a localized trilinear form as in \eqref{eq:loc-BHT-first}, or formulated in the quasi-Banach context).

The bilinear Hilbert transform operator maps $L^{q_1} \times L^{q_2}$ into $L^{q}$ for any $1< q_1, q_2 \leq \infty$, $\frac{2}{3}< q <\infty$ satisfying the H\"older condition $\frac{1}{q_1}+\frac{1}{q_2}+\frac{1}{q'}=1$. This is a classical result due to Lacey and Thiele from the '$90 \,$s \cite{initial_BHT_paper}. If $q \geq 1$, this result can be rephrased as
\[
\vert \langle  BHT(f, g), h   \rangle  \vert \lesssim \|f\|_{q_1} \, \| g  \|_{q_2} \, \|h \|_{q'}, \quad \text{whenever    } \frac{1}{q_1}+\frac{1}{q_2}+\frac{1}{q'}=1.
\]

In contrast, the estimate \eqref{eq:loc-BHT-first} of the trilinear form, though local, seems to yields more information than one might need: the estimate holds provided the Lebesgue exponents verify $\ds \frac{1}{s_1}+\frac{1}{s_2}+\frac{1}{s_3}<2$. Indeed, we eventually obtain a stronger result (which first appeared in \cite{sparse-hel}) in the form of a Fefferman-Stein inequality for $BHT$:
\begin{theorem}
\label{thm:Fefferman-Stein-BHT}
Let $\ic M_{s_j}(f):=\big(\ic M (|f| ^{s_j}) \big)^\frac{1}{s_j}$, where $\ic M$ denotes the usual Hardy-Littlewood maximal function. Then if $0<q \leq 1$ and $1<s_1, s_2 < \infty$ with $\frac{1}{s_1}+\frac{1}{s_2}<\frac{3}{2}$, we have 
\[
\| BHT(f, g)\|_q \lesssim \| \ic M_{s_1} (f)  \cdot \ic M_{s_2}(g) \|_q.
\]

If $1<q < \infty$, $\frac{1}{s_1}+\frac{1}{s_2}<\frac{3}{2}$ and  $\ds \frac{1}{s_1}+\frac{1}{s_2}< 1+\frac{1}{q}$, then a similar inequality holds. 
\end{theorem}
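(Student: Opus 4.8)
The plan is to deduce the Fefferman--Stein inequality from the localized trilinear estimate \eqref{eq:loc-BHT-first} together with a sparse domination argument, following the strategy laid out for Theorem \ref{thm:local->sparse}. First I would observe that, by the remark following \eqref{eq:loc-BHT-first}, the hypothesis $\frac{1}{s_1}+\frac{1}{s_2}<\frac{3}{2}$ guarantees the existence of an exponent $s_3>1$ with $\frac{1}{s_1}+\frac{1}{s_2}+\frac{1}{s_3}<2$; in the range $0<q\le 1$ one can take $s_3$ as close to $1$ as needed (so that $\frac{1}{s_3}$ is close to $1$), which is what will allow the test function $h$ to be handled in $L^{q'}$ with $q'=\infty$ when $q=1$, or with the extra room coming from $q<1$. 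For $1<q<\infty$ the additional constraint $\frac{1}{s_1}+\frac{1}{s_2}<1+\frac{1}{q}$ is precisely what is needed so that $\frac{1}{s_3}$ can be chosen below $\frac{1}{q'}$, i.e. so that $h\in L^{q'}$ with $q'<s_3'$, which is the natural duality exponent.

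The core step is the passage from the single localized estimate on $I_0$ to a sparse bound. Running the SST stopping time from the proof of Theorem \ref{thm:local->sparse}: starting from a large dyadic interval and selecting maximal subintervals on which one of the three averages $\big(\frac{1}{|I|}\int |f|^{s_1}\ci_I\big)^{1/s_1}$, $\big(\frac{1}{|I|}\int |g|^{s_2}\ci_I\big)^{1/s_2}$, $\big(\frac{1}{|I|}\int |h|^{s_3}\ci_I\big)^{1/s_3}$ exceeds (a constant times) the corresponding average over the parent, one produces a sparse family $\mathcal S$ for which
\[
|\langle BHT(f,g),h\rangle| \lesssim \sum_{Q\in\mathcal S} \langle |f|^{s_1}\rangle_Q^{1/s_1}\,\langle |g|^{s_2}\rangle_Q^{1/s_2}\,\langle |h|^{s_3}\rangle_Q^{1/s_3}\,|Q|.
\]
Here the tails $\ci_I$ are absorbed in the standard way, at the cost of enlarging the exponents slightly, which is harmless since the strict inequalities give room. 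This reduces the theorem to a known, purely ``positive'' statement: the sparse form on the right-hand side, tested against $h$, is controlled by $\|\ic M_{s_1}(f)\,\ic M_{s_2}(g)\|_q$.

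To finish, I would estimate the sparse form by duality. Writing $h$ for a generic $L^{q'}$ function when $q>1$ (or using the $q\le1$ version of sparse domination directly, where one tests against $h\in L^\infty$ after linearizing $\|\cdot\|_q$), one has $\langle|h|^{s_3}\rangle_Q^{1/s_3}\le \inf_Q \ic M_{s_3}h$, while by the sparseness one picks disjoint major subsets $E_Q\subset Q$ with $|E_Q|\gtrsim|Q|$, so that
\[
\sum_{Q\in\mathcal S}\langle|f|^{s_1}\rangle_Q^{1/s_1}\langle|g|^{s_2}\rangle_Q^{1/s_2}\langle|h|^{s_3}\rangle_Q^{1/s_3}|Q| \lesssim \sum_{Q\in\mathcal S}\int_{E_Q}\ic M_{s_1}(f)\,\ic M_{s_2}(g)\,\ic M_{s_3}(h) \le \int_{\rr R}\ic M_{s_1}(f)\,\ic M_{s_2}(g)\,\ic M_{s_3}(h).
\]
Applying H\"older with exponents $(q,s_3')$ — legitimate precisely because $q'>s_3$ when $q>1$, and trivially when $q\le1$ after the linearization — bounds this by $\|\ic M_{s_1}(f)\ic M_{s_2}(g)\|_q\,\|\ic M_{s_3}(h)\|_{s_3'}\lesssim \|\ic M_{s_1}(f)\ic M_{s_2}(g)\|_q\,\|h\|_{q'}$, using boundedness of $\ic M_{s_3}$ on $L^{s_3'}$ since $s_3'<\infty$. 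Taking the supremum over $\|h\|_{q'}\le1$ (and invoking the known $q\le1$ duality for $L^q$ sparse forms in that range) yields the claimed inequality.

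The main obstacle is the bookkeeping in the sparse domination step: verifying that the tails $\ci_I$ in \eqref{eq:loc-BHT-first} can be summed across the stopping collection without destroying sparseness, and checking that the strict inequalities on the $s_j$ leave exactly enough slack to absorb them and still land $s_3$ in the correct range relative to $q$ — this is the delicate point where the two hypotheses on $\frac{1}{s_1}+\frac{1}{s_2}$ get used, and where one must be careful in the endpoint $q=1$ (so $q'=\infty$), where $h$ is merely bounded and $\ic M_{s_3}h\lesssim\|h\|_\infty$ suffices.
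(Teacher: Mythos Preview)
Your argument for $q\ge 1$ is correct and matches the paper's route: the trilinear sparse domination of Theorem~\ref{thm:local->sparse} followed by duality against $h\in L^{q'}$ is exactly the mechanism behind Proposition~\ref{prop:sparse->sparse->no-subadditivity} (with $\tau=1$). The minor slips --- writing ``H\"older with exponents $(q,s_3')$'' where you mean $(q,q')$, and ``$\frac{1}{s_3}$ below $\frac{1}{q'}$'' where you mean $s_3<q'$ --- do not affect the substance.

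The genuine gap is the range $0<q<1$. You write ``invoking the known $q\le 1$ duality for $L^q$ sparse forms'', but there is no such duality: $L^q$ for $q<1$ has trivial dual, and the sparse collection you built depends on $h$, so you cannot take a supremum over a dualizing $h$ to recover $\|BHT(f,g)\|_q$. The trilinear sparse bound on $|\langle BHT(f,g),h\rangle|$ simply does not control $\|BHT(f,g)\|_q$ when $q<1$. What the paper does instead is bypass the trilinear form entirely in this range: it uses the quasi-Banach \emph{operator} localization from Section~\ref{sec:method-proof} (the statement $\mathscr{P}^*(0)$ in \ref{Pn-star-bht}, where one controls $\|BHT^{F,G,H'}_{\rr P(I_0)}(f,g)\|_s$ directly rather than its pairing with $h$), passes to general functions via Proposition~\ref{prop:restricted->general}, and then runs the stopping time on $\|BHT_{\rr P(I_0)}(f,g)\cdot v\|_q^q$ as in Proposition~\ref{prop:Lq:local->sparse}. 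The key point is that $\|\cdot\|_q^q$ is \emph{subadditive} for $q\le 1$, so the contributions of the pieces $\rr P_{Q}$ add up; this replaces the duality you are missing. Setting $v\equiv 1$ in the resulting $L^q$ sparse bound and using $\langle|f|^{s_j}\rangle_Q^{q/s_j}\le \inf_{E_Q}(\ic M_{s_j}f)^q$ on the disjoint pieces $E_Q$ gives the Fefferman--Stein inequality.
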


The dichotomy between the cases $q \in (0, 1]$ and $q \in (1, \infty)$ is due to a subadditivity condition and Theorem \ref{thm:sparse-BHT}. It can also be rephrased, for all $0<q <\infty$ as
\[
\| BHT(f, g)\|_q \lesssim \| \ic M_{s_1} (f)  \cdot \ic M_{s_2}(g) \|_q, \quad \text{provided} \quad \frac{1}{s_1}+\frac{1}{s_2}< \min \big( \frac{3}{2}, 1+\frac{1}{q} \big),
\]
but then the role of subadditivity is concealed. This is discussed in more detail in Remark \ref{remark:rangeFeffStein} \eqref{remark:rangeFS-item}, and we note in passing that a weighted version was stated in \cite{sparse-hel}.

Nevertheless, the result of Theorem \ref{thm:Fefferman-Stein-BHT} is powerful enough as it allows us to recover the full range of boundedness of $BHT$ from \cite{initial_BHT_paper}. We illustrate this for $\frac{2}{3}<q<1$: first, by H\"older, we immediately obtain
\[
\| BHT(f, g)\|_q \lesssim \| \ic M_{s_1} (f) \|_{q_1}  \cdot  \| \ic M_{s_2}(g) \|_{q_2}, \quad \text{where} \quad \frac{1}{q_1}+\frac{1}{q_2}=\frac{1}{q}.
\]
Hence $BHT: L^{q_1} \times L^{q_2} \to L^{q}$ provided we can find $1<s_1, s_2<\infty$ with  $\frac{1}{s_1}+\frac{1}{s_2}<\frac{3}{2}$, and $s_1<q_1$ and $s_2<q_2$. It suffices to take $s_j=q_j-\epsilon$, with $\epsilon$ sufficiently small.

The result of Theorem \ref{thm:Fefferman-Stein-BHT} loses its relevance if $q$ is too small: the norms of the maximal operators on the right hand side become infinite. This will not happen on the torus $\rr T$ though. There the bilinear Hilbert transform is defined as
\[
BHT \,_{\rr T}(f, g)(t)=\int_{\rr T} f(t - \theta) g(t+ \theta) \cot (\pi \theta) d \, \theta,
\]
and Theorem \ref{thm:Fefferman-Stein-BHT} admits a corresponding formulation in the setting of periodic functions.

The Fefferman-Stein inequality is deduced from \eqref{eq:loc-BHT-first} through a careful selection of intervals $I_0$, thanks to the scattered structure of the collection of such intervals. This procedure is known in literature as ``sparse domination" and it resembles in spirit the ``good-$\lambda$ inequalities" which are commonly used for showing that Calder\'on-Zygmund operators are controlled (in norm) by the Hardy-Littlewood maximal function. For the sparse domination, the unfolding of the information given by \eqref{eq:loc-BHT-first} occurs on disjoint sets, at the price of making appear Hardy-Littlewood maximal functions.

For the vector-valued inequalities, the unfolding process is slightly different. In this case, the goal is to convert the local $L^{s_j}$ maximal averages into $L^{r_j}$ averages and later into $L^{q_j}$ averages. This will yield ultimately the vector-valued extension $$BHT: L^{q_1}(\ell^{r_1}) \times L^{q_2}(\ell^{r_2}) \to L^{q}(\ell^r),$$ so long as we can find $s_1, s_2, s_3$ as in \eqref{eq:loc-BHT-first} satisfying simultaneously 
{\fontsize{10}{10}\[
\frac{1}{q_1}, \frac{1}{r_1}<\frac{1}{s_1}, \quad \frac{1}{q_2}, \frac{1}{r_2}<\frac{1}{s_2}  \quad \text{and} \quad \frac{1}{q'}, \frac{1}{r'}<\frac{1}{s_3}.
\]}

Weighted or vector-valued extensions of the above Fefferman-Stein inequality will be discussed later on. Moreover, the product of two maximal functions can be replaced by a bilinear maximal function (of the type presented in \cite{NewMaxFnMultipleWeights}).  We want to point out that Theorem \ref{thm:Fefferman-Stein-BHT} holds for all $0<q<\infty$, and that the quasi-Banach case (i.e., when $q<1$) is especially relevant for the $BHT$ operator: the condition $\frac{2}{3}< q$, which is not known to be necessary, hints to the importance of orthogonality in the proof, which is indicated by the fact that at least one of $q_1$ and $q_2$ is greater than $2$. 

The Fefferman-Stein inequality of Theorem \ref{thm:Fefferman-Stein-BHT} also points out to the controlling role that the Hardy-Littlewood maximal functions have over the bilinear Hilbert transform operator. Although somehow concealed, it already represents a key point in the proof (see \cite{LaceyThieleBHTp>2}, \cite{initial_BHT_paper}, \cite{biest}). That its importance becomes more visible in the context of vector-valued and weighted extensions, the area where the Fefferman-Stein inequalities originally appeared for Calder\'on-Zygmund operators \cite{FefStein_vvmaximal}, should not come as a surprise.

The inequality \eqref{eq:loc-BHT-first} appears in \cite{vv_BHT}, in the special case of restricted-type functions, i.e. functions which are bounded above by characteristic functions of sets of finite measure. Considering such particular functions is natural for the $BHT$ operator, since at some point we need to attune $L^1$ and $L^2$ information. Outside the local $L^2$ setting, this step of combining $L^1$ and $L^2$ data cannot be avoided; interpolation is required sooner or later. By working with restricted-type functions, interpolation is reduced to a technical issue which is performed last.

Later on, we will present some applications to the helicoidal method, including sparse vector-valued estimates for several operators in time-frequency analysis. Of particular importance are the multiple vector-valued extensions, which enabled us to answer several open questions related to multi-parameter operators in \cite{vv_BHT}.

The helicoidal method provides a self-inclusive, robust method for proving vector-valued extensions for operators in harmonic analysis. The vector spaces considered are iterated $L^p$ spaces: if $n \geq 1$, given an $n$-tuple $R=(r^1, \ldots, r^n)$ and $(\ii W, \Sigma, \mu):=\big( \prod_{j=1}^n \ii W_j, \prod_{j=1}^n \Sigma_j, \prod_{j=1}^n \mu_j   \big)$ a product of totally $\sigma$-finite measure spaces, the $L^R$ norm is defined as
\[
\| \vec f  \|_{L^R(\ii W, \mu)}:= \big(  \int_{\ii W_1} \ldots \big( \int_{\ii W_n} \vert \vec f(w_1, \ldots , w_n)   \vert^{r^n} d \mu_n(w_n)     \big)^{r^{n-1}/r^n} \ldots d \mu_1(w_1) \big)^{1/{r^1}}.
\]
If $S$ is a (sub)linear operator, its range will be the set $\ds Range(S):=\lbrace p : S:L^p \to L^p \rbrace$. Similarly, for an $m$-(sub)linear operator $T$, its range will consist of
\[
Range(T):=\lbrace (p_1, \ldots, p_m, p): T:L^{p_1}\times \ldots \times L^{p_m} \to L^p \rbrace.
\]

As a matter of fact, we want to find the range of the ``depth $n$" vector-valued extension, i.e. all possible Lebesgue exponents for which
\begin{equation}
\label{eq:vv-depth-n}
\vec{T}_n : L^{p_1}\left( \rr{R}; L^{R_1}(\ii W, \mu) \right) \times \ldots \times L^{p_m}\left( \rr{R}; L^{R_m}(\ii W, \mu) \right) \to L^{p}\left( \rr{R}; L^{R}(\ii W, \mu) \right),
\end{equation}
where the $n$-tuples $R_k=\big( r^1_k, \ldots, r^n_k \big), R=\left( r^1, \ldots, r^n \right)$ satisfy for every $1 \leq k \leq m$,  $1 \leq j \leq n$
\begin{equation}
\label{eq:cond-r-tuples}
1< r_k^j \leq \infty, \quad \frac{1}{m}<r^j<\infty \quad \text{and}\quad \frac{1}{r_1^j}+\ldots +\frac{1}{r_m^j}=\frac{1}{r^j}.
\end{equation}

In the case of linear operators, it was long understood that the vector-valued extensions are related to weighted inequalities: the former are implied by the latter via extrapolation theory. Knowing that $T:L^{p_0}(w) \to L^{p_0}(w)$ for the correct class of weights $w$ (which depends on $p_0$), one can deduce that $T: L^p(w) \to L^p(w)$ for any $p \in Range(T)$, with the associated class of weights (which now depends on $p$). Furthermore, extrapolation also yields weighted vector-valued extensions for any $n$-tuple $R$ with the $r^j \in Range(T)$ \cite{RFCuervaBook}.

This theory is also available for multilinear Calder\'on-Zygmund operators (\cite{extrapolation_multilinear}), provided no $L^\infty$ spaces are involved. If $T$ is such an $m$-linear operator, then 
\[
T: L^{p_1} \times \ldots \times L^{p_m} \to L^p \qquad \text{for all         } 1<p_j \leq \infty, \, \frac{1}{m}<p<\infty.
\]
By means of extrapolation, it follows that 
\[
\vec{T}_n : L^{p_1}\left( \rr{R}; L^{R_1}(\ii W, \mu) \right) \times \ldots \times L^{p_m}\left( \rr{R}; L^{R_m}(\ii W, \mu) \right) \to L^{p}\left( \rr{R}; L^{R}(\ii W, \mu) \right),
\]
for all $\ds 1<p_j < \infty, \, \frac{1}{m}<p<\infty,$ and all $n$-tuples $R_1, \ldots, R_m, R$ so that $1<r^j_k, r^j<\infty$. However, our method allows us to deal with $L^\infty$ spaces as well: see Theorem \ref{thm:main-thm-paraprod} below.

Throughout the paper, we chose not to investigate the endpoint behavior of the operator (that is, when the target space is $L^{p, \infty}$), though it is traceable without difficulty.

For more general multilinear operators, in particular for the bilinear Hilbert transform, a suitable theory of extrapolation was only recently developed in \cite{extrap-BHT}. For the moment, this extrapolation theory doesn't seem to match perfectly the vector-valued results from \cite{vv_BHT}, \cite{quasiBanachHelicoid}.

\psscalebox{.6 .6} % Change this value to rescale the drawing.
{
\begin{pspicture}(-3,-5.2)(11.520909,3)
\rput[bl](4,-2){ \Large $\left(1, 0, 0 \right)$}
\rput[bl](4.55,-3.73205080756888){\Large $\left( 1, \frac{1}{2}, -\frac{1}{2}\right)$}
\rput[b](8,1.5){\Large $\left( 0, 0, 1\right)$}
\rput[br](11.8,-2){ \Large $\left( 0, 1, 0 \right)$}
\rput[bl](9.4,-3.73205080756888){\Large  $\left( \frac{1}{2}, 1, -\frac{1}{2}\right)$}
\pscustom[linecolor=black, linewidth=0.04]
{
\newpath
\moveto(1.2857143,-5.7638702)
}
\pscustom[linecolor=black, linewidth=0.04]
{
\newpath
\moveto(5.0,-5.7638702)
}
\psdiamond[linecolor=black, linewidth=0.04, dimen=outer](8,-2)(2,3.46410161513775)
\pspolygon[linecolor=black, linewidth=0.04, fillstyle=solid,fillcolor=lightgray](8,1.46410161513775)(10,-2)(9,-3.7)(7,-3.7)(6,-2)
\psline[linecolor=black, linewidth=0.04](6,-2)(10,-2)
\label{fig:rangeBHT}
\end{pspicture}
}

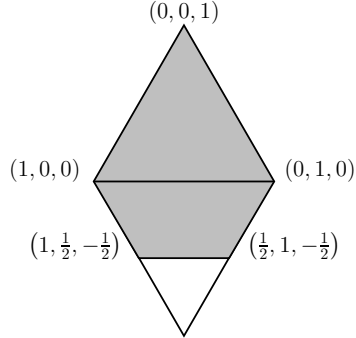
\captionof{figure}{Range for $BHT$ operator : $\big( \frac{1}{p}, \frac{1}{q}, \frac{1}{s'} \big)$ lies in the shaded region}

\smallskip
The extrapolation apparatus requires the existence of suitable weighted estimates for a certain tuple $(p_{1,0}, \ldots, p_{m, 0}, p_0)$ and all weights in the corresponding class. For the $BHT$ operator, weighted results were obtained in \cite{weighted_BHT} in the Banach case. The limited range multilinear extrapolation theory from \cite{extrap-BHT} extends these results to the quasi-Banach case as well, but it doesn't recover all the known vector valued estimates for $BHT$ from \cite{vv_BHT} and \cite{quasiBanachHelicoid}. The ``correct" class of weights for $BHT$, which will establish the appropriate vector-valued extensions, is still to be determined. 

\smallskip
We will see that the (vector-valued) local estimate for $BHT$ implies a (vector-valued) sparse domination result, which further produces weighted vector-valued estimates. We will present in detail the helicoidal method from \cite{vv_BHT}, the extension to quasi-Banach spaces from \cite{quasiBanachHelicoid}, and later on, the sparse domination results of \cite{sparse-hel}. The central study case will be the bilinear Hilbert transform, but the method can easily be adapted to other operators in harmonic analysis that allow for a wave packet decomposition; among these: Carleson and variational Carleson operators, paraproducts, etc. 

Sparse domination for the trilinear form associated to $BHT$ and for the bilinear form of the variational Carleson operator have been obtained in \cite{weighted_BHT} and \cite{sparse-var-Carleson}. Essential to the authors' method are the Carleson embeddings of \cite{Carleson-embedding-belowL2}, \cite{outer-measure-var-Carleson} (Carleson embeddings represent a way of interpreting time-frequency information exclusively as spatial information and they are ubiquitous, although implicitly, in time-frequency analysis). The ``localized outer $L^p$ embeddings", formulated in the language of outer measures of \cite{outer_measures}, sit at the core of \cite{weighted_BHT} and \cite{sparse-var-Carleson}, and are similar in spirit to the localization principle developed in \cite{vv_BHT}, \cite{quasiBanachHelicoid}. More details describing connections between the outer measure approach (of \cite{outer_measures}) and the general size and energy one (introduced in \cite{biest}) will be presented in Section \ref{sec:outer-measures}.

\smallskip
Before we proceed, we want to mention a few applications of the vector-valued inequalities implied by our method.

\subsection{A Rubio de Francia inequality for iterated Fourier integrals}
\label{sec:RF-for-BHT}
Our incipient project was the study of an $\ell^r$ inequality for iterated Fourier integrals associated to an arbitrary collection $\ds \lbrace [a_k, b_k]  \rbrace_k$ of disjoint intervals:
\begin{equation}
\label{eq:def-RF-it-integrals}
T_r(f, g)(x):=\big(  \sum_{k=1}^N \big \vert  \int_{a_k<\xi_1 <\xi_2<b_k} \hat{f}(\xi_1) \hat{g}(\xi_2) e^{2 \pi i x \left( \xi_1+\xi_2 \right)}  d \xi_1 d \xi_2 \big \vert^r \big)^\frac{1}{r}.
\end{equation}

We called $T_r$ a \emph{Rubio de Francia operator for iterated Fourier integrals} since it represents a bilinear version of Rubio de Francia's square function for Fourier projections associated to an arbitrary collection of disjoint intervals from \cite{RF}. 

\begin{theorem}
\label{thm-RF-itF-integrals}
If  $1 \leq r < 2$, then
\[
\| T_r(f,g) \|_s \lesssim \|f\|_p  \|g\|_q
\]
whenever $\dfrac{1}{p}+\dfrac{1}{q}=\dfrac{1}{s}$, and $p, q, s$ satisfy
\[
0 < \frac{1}{p}, \frac{1}{q}<\frac{1}{2}+\frac{1}{r}, \qquad -\frac{1}{r'}<\frac{1}{s'}<1.
\]

On the other hand, if $r \geq 2$, $T_r$ is a bounded operator with the same range as $BHT$ ($L^\infty$ spaces excluded), i.e. $T_r : L^p \times L^q \to L^s$ for all $1< p, q <\infty, \frac{2}{3}< s <\infty$ so that $\frac{1}{p}+\frac{1}{q}=\frac{1}{s}$.
\end{theorem}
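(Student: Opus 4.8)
The plan is to deduce the theorem from the multiple vector-valued $BHT$ estimates produced by the helicoidal method, fed with the families of frequency projections of $f$ and $g$, combined with Rubio de Francia's square function inequality to pass from vector-valued norms back to scalar ones. For each $k$ the $k$-th iterated Fourier integral is the bilinear multiplier operator with symbol $\mathbf{1}_{[a_k,b_k]}(\xi_1)\,\mathbf{1}_{[a_k,b_k]}(\xi_2)\,\mathbf{1}_{\{\xi_1<\xi_2\}}$; since the $[a_k,b_k]$ are disjoint these symbols are supported over disjoint regions in each variable, and the outputs have Fourier supports in the disjoint intervals $2I_k$. By standard reductions — smoothing the sharp cutoffs near the edges $\xi_i=a_k,b_k$ and near the diagonal $\xi_1=\xi_2$ by Whitney/Fourier-series arguments, uniformly in $k$ and in the number of intervals, and splitting off the pointwise-product part of $\mathbf{1}_{\{\xi_1<\xi_2\}}$ (handled by H\"older in $k$ and $x$ plus Rubio de Francia alone) — one writes the $k$-th piece as $\widetilde{BHT}_{I_k}(P_{I_k}f,\,P_{I_k}g)$, where $P_{I_k}$ is a smooth Fourier projection adapted to $I_k=[a_k,b_k]$ and $\widetilde{BHT}_{I_k}$ is a $BHT$-type operator at the scale of $I_k$ with bounds uniform in $k$. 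It then suffices to run the depth-one vector-valued extension $\vBHT\colon L^p(\ell^{r_1})\times L^q(\ell^{r_2})\to L^s(\ell^r)$, with $\tfrac1{r_1}+\tfrac1{r_2}=\tfrac1r$ free — available via the VVST step from \eqref{eq:loc-BHT-first} and its quasi-Banach counterpart whenever one can choose $s_1,s_2,s_3$ with $\tfrac1{s_1}+\tfrac1{s_2}+\tfrac1{s_3}<2$, $\tfrac1{s_j}<1$, $\tfrac1p,\tfrac1{r_1}<\tfrac1{s_1}$, $\tfrac1q,\tfrac1{r_2}<\tfrac1{s_2}$, $\tfrac1{s'},\tfrac1{r'}<\tfrac1{s_3}$ — on the pair $(\{P_{I_k}f\}_k,\{P_{I_k}g\}_k)$, and then to invoke $\|\{P_{I_k}f\}_k\|_{L^p(\ell^\rho)}\lesssim\|f\|_p$. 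The last inequality holds for $\rho=\infty$, $1<p<\infty$ (the maximal smooth Fourier projection, $\sup_k|P_{I_k}f|\lesssim\mathcal Mf$), for $\rho=2$, $p\ge 2$ (Plancherel and disjointness), hence for $\rho\ge2$, $p\ge2$, and for intermediate pairs by interpolation — in all cases when $\tfrac1\rho\le\tfrac12$ and $\tfrac1p<1-\tfrac1\rho$.

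When $r\ge2$ one has $\|\cdot\|_{\ell^r}\le\|\cdot\|_{\ell^2}$ pointwise, so $T_r(f,g)\le T_2(f,g)$ and it is enough to treat $T_2$. For $T_2$ I would choose $r_1,r_2\in[2,\infty)$ with $\tfrac1{r_1}+\tfrac1{r_2}=\tfrac12$ and $r_1>p'$, $r_2>q'$ — possible precisely when $\tfrac1p+\tfrac1q<\tfrac32$, i.e.\ $s>\tfrac23$. Rubio de Francia then controls both inputs, and since the requisite $s_1,s_2,s_3$ can be exhibited throughout the $BHT$ range (the surviving constraints reducing to $1<p,q<\infty$ and $s>\tfrac23$), the helicoidal local estimate gives $\vBHT\colon L^p(\ell^{r_1})\times L^q(\ell^{r_2})\to L^s(\ell^2)$ there; this yields $T_r\colon L^p\times L^q\to L^s$ for all $1<p,q<\infty$, $\tfrac23<s<\infty$ with $\tfrac1p+\tfrac1q=\tfrac1s$, the exclusion of $L^\infty$ being forced by the absence of a $p=\infty$ endpoint in Rubio de Francia.

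For $1\le r<2$ the scheme is the same, but Rubio de Francia forbids $\ell^\rho$ with $\rho<2$, so $r_1,r_2\in[2,\infty)$, and since $\tfrac1{r_1}+\tfrac1{r_2}=\tfrac1r>\tfrac12$ neither may be $\infty$; combining the constraints from the vector-valued local estimate with $\tfrac1p<1-\tfrac1{r_1}$, $\tfrac1q<1-\tfrac1{r_2}$ and optimizing the split yields the half-space $\tfrac1p+\tfrac1q<2-\tfrac1r$ (that is, $\tfrac1{s'}>-\tfrac1{r'}$) together with individual bounds on $\tfrac1p$ and $\tfrac1q$. The main obstacle is that this naive bookkeeping does not reach the full stated region $0<\tfrac1p,\tfrac1q<\tfrac12+\tfrac1r$: the corners where $p$ or $q$ falls below $1$ lie outside the reach of any vector-valued $BHT$ estimate, whose local form always keeps the input Lebesgue exponents above $1$. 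Recovering them calls for a sharper input — a local (restricted-type) trilinear estimate for the $T_r$-form that uses the disjointness of the Fourier supports of the pieces $\{\widetilde{BHT}_{I_k}(\cdot,\cdot)\}_k$ directly at the wave-packet level, replacing in effect the crude $L^{s_1}$ average of $f$ by a quantity sensitive to the Rubio de Francia square function — and then feeding that estimate into the stopping-time and interpolation machinery. The endpoint $r=1$, where the region collapses to the Banach range $1<p,q<\infty$, $1<s<\infty$ and $\sum_k|BHT_{I_k}(f,g)|$ is governed by a ``bilinear Marcinkiewicz'' phenomenon for symbols supported on disjoint boxes below the diagonal, is the natural test case against which to organize this refinement.
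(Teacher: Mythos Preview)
Your approach — reduce to depth-one vector-valued $BHT$ estimates and close with Rubio de Francia on the inputs — is exactly the route the paper advertises (it says Theorem~\ref{thm-RF-itF-integrals} ``becomes a consequence'' of the inequalities of Section~\ref{sec-vv-ineq-BHT}, and records that its original argument ran the helicoidal stopping times against averages of the Rubio de Francia square function). For $r\ge 2$ your argument is complete and matches the paper's.

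For $1\le r<2$ your diagnosis of the leftover gap is off in two ways. First, the ``corners where $p$ or $q$ falls below $1$'' are not actually part of the claimed range: testing on a single interval $[a_k,b_k]$ reduces $T_r$ to $BHT$ itself and forces $p,q>1$, so under $r<2$ the written bound $\tfrac1p<\tfrac12+\tfrac1r$ is redundant and the effective region is $1<p,q<\infty$ together with $\tfrac1p+\tfrac1q<2-\tfrac1r$. Second — and this is the real issue — your Rubio de Francia step loses more than you record. Since $r_1,r_2\ge 2$ with $\tfrac1{r_1}+\tfrac1{r_2}=\tfrac1r$ forces $\tfrac1{r_1}\ge\tfrac1r-\tfrac12$, the constraint $\tfrac1p<1-\tfrac1{r_1}$ only ever yields $\tfrac1p<\tfrac32-\tfrac1r$, not $\tfrac1p<1$. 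At $r=1$ this collapses to $p,q>2$, so your scheme misses the entire band $p\in(1,2]$ (and symmetrically for $q$) even though no quasi-Banach inputs are in play; the gap is present throughout $1\le r<2$, not only at putative sub-$1$ corners.

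The fix you sketch at the end is the right one and is precisely the paper's ``initial proof'': instead of first applying Rubio de Francia globally and then feeding $\{P_{I_k}f\}$ into a black-box vector-valued $BHT$, one runs the localized trilinear estimate \eqref{eq:local-gen-size-energy} and performs the stopping times with respect to $\sssize_{\rr P(I_0)}\big(\sum_k|P_{I_k}f|^2\big)^{1/2}$ and $\sssize_{\rr P(I_0)}\big(\sum_k|P_{I_k}g|^2\big)^{1/2}$ directly. This bypasses the lossy ``first RdF, then $\ell^{r_1}$-embed'' step and recovers the full range $1<p,q<\infty$, $-\tfrac1{r'}<\tfrac1{s'}<1$.
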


Our initial proof of Theorem \ref{thm-RF-itF-integrals} was based on localizations and stopping times with respect to averages of Rubio de Francia's square function associated to $f$ and $g$ respectively. Later on, we realized that the same scheme yields vector-valued extensions for $BHT$, so in fact Theorem \ref{thm-RF-itF-integrals} becomes a consequence of the inequalities that will be presented in Section \ref{sec-vv-ineq-BHT}.
\smallskip
Localization is natural when dealing with arbitrary Fourier projections: an appropriate maximal operator can control, at a local level, all the different frequency scales. The frequency information is reduced entirely to spatial information (through the boundedness of certain Fourier multipliers or square functions) and our maximal operator is in most cases represented as a ``$\sssize$" (see \eqref{def:sssize}).
\smallskip

The study of the operator $T_r$ was motivated by a particular question related to an $AKNS$ system. These are systems of differential equations of the form
\[
u'=i \lambda Du +Au,
\]
where $u=[u_1,\ldots, u_n]^t$ is a vector-valued function defined on $\mathbb{R}$, $D$ is a diagonal $n \times n$ matrix with real and distinct entries $d_1,d_2, \ldots d_n$, and $A=\left( a_{jk} (\cdot)  \right)_{j,k=1}^n$ is a matrix valued function defined on $\mathbb{R}$, and so that $a_{jj} \equiv 0$ for all $1 \leq j \leq n$. 

The operator $T_r$ turns out to be useful in the case when $A$ can be written as the sum of an upper-triangular matrix with entries in $L^2$ and a lower-triangular matrix with entries in $L^p$ spaces with $1\leq p <2$.

\subsection{Vector-valued inequalities for $BHT$}
\label{sec-vv-ineq-BHT}
Prior to \cite{vv_BHT}, vector-valued extensions for $BHT$ where known from \cite{vv_bht-Prabath} and \cite{vv_quartile}, within a rather restricted range. We managed to remove some of these constraints, even though, as of this moment, the exact domain of boundedness for $BHT$ or its vector-valued extensions is not known precisely. We recall the multiple vector-valued extensions for $BHT$ from \cite{vv_BHT} and \cite{quasiBanachHelicoid}: 

\begin{theorem}
\label{thm-main-vvBHT-1vs}
For any triple $( r_1, r_2, r)$ with $1<r_1, r_2 \leq \infty, \, \frac{2}{3}<r <\infty$ and so that $\frac{1}{r_1}+\frac{1}{r_2}=\frac{1}{r}$, there exists a non-empty set $\ii D_{ r_1, r_2, r}$ of triples $(p, q, s)$ satisfying $\frac{1}{p}+\frac{1}{q}=\frac{1}{s}$, for which
\[
\big \|  \big\| BHT(\vec f, \vec g)\big\|_{L^r\left( \ii W, \mu \right)} \big\|_{L^s\left( \rr R \right)} \lesssim \big \|  \big\|  \vec f  \big\|_{L^{r_1}\left( \ii W, \mu \right)}  \big\|_{L^p\left( \rr R \right)} \big \|  \big\|  \vec g \big\|_{L^{r_2}\left( \ii W, \mu \right)}  \big\|_{L^q\left( \rr R \right)}.
\]
The set $\ii D_{ r_1, r_2, r}$ can be described as follows: $(p, q, s) \in \ii D_{r_1, r_2, r }$ if there exist $\theta_1, \theta_2, \theta_3$ so that $0 \leq \theta_1, \theta_2, \theta_3 <1, \, \theta_1+\theta_2+\theta_3 =1$ and 
\[
\frac{1}{r_1}, \frac{1}{p} <\frac{1+\theta_1}{2}, \quad \frac{1}{r_2}, \frac{1}{q} <\frac{1+\theta_2}{2} \quad \frac{1}{r'}, \frac{1}{s'} <\frac{1+\theta_3}{2}.
\]
\end{theorem}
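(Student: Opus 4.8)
\smallskip
The plan is to reduce the theorem to a single \emph{localized} estimate for restricted-type functions, and then to unfold it globally and interpolate. First I would fix a dyadic interval $I_0\subseteq\rr R$ and functions with $|\vec f|\le\one_F$, $|\vec g|\le\one_G$ pointwise on $\rr R\times\ii W$ (for sets $F,G$ of finite measure) and aim for
\[
\big|\big\langle\big\|BHT_{I_0}(\vec f,\vec g)\big\|_{L^{r}(\ii W,\mu)},h\big\rangle\big|\ \lesssim\ \Lambda_{s_1}(\vec f)\,\Lambda_{s_2}(\vec g)\,\Lambda_{s_3}(h)\,|I_0|,
\]
where $\Lambda_{s_1}(\vec f):=\sup_{I\subseteq I_0}\big(\tfrac1{|I|}\int_{\rr R}\|\vec f\|_{L^{r_1}(\ii W,\mu)}^{s_1}\,\ci_I\,dx\big)^{1/s_1}$, $\Lambda_{s_2}(\vec g)$ is its analogue with $s_2,r_2$, $\Lambda_{s_3}(h):=\sup_{I\subseteq I_0}\big(\tfrac1{|I|}\int_{\rr R}|h|^{s_3}\,\ci_I\,dx\big)^{1/s_3}$, and the $s_j$ satisfy $\tfrac1{s_j}<\tfrac{1+\theta_j}{2}$. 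This is the depth-$1$ analogue of \eqref{eq:loc-BHT-first}; in the quasi-Banach ranges $r<1$ or $s<1$ it would instead be stated as a direct bound on $\|BHT_{I_0}(\vec f,\vec g)\|_{L^{s}(\rr R;L^{r}(\ii W))}$ by $\Lambda_{s_1}(\vec f)\Lambda_{s_2}(\vec g)|I_0|^{1/s}$, phrased through the $\sssize$ as in \cite{quasiBanachHelicoid}.

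The heart of the matter is to derive this localized estimate from the \emph{scalar} one \eqref{eq:loc-BHT-first} by a single VVST (vector-valued stopping time) step; iterating the same step with $\ii W$ a finite product would give the depth-$n$ versions. I would decompose $I_0$ into a sparse family $\ii Q$ of maximal dyadic sub-intervals on which the averages $\big(\tfrac1{|Q|}\int\|\vec f\|_{L^{r_1}}^{s_1}\ci_Q\big)^{1/s_1}$, $\big(\tfrac1{|Q|}\int\|\vec g\|_{L^{r_2}}^{s_2}\ci_Q\big)^{1/s_2}$ are stabilized; on each $Q$ express $\|BHT_Q(\vec f,\vec g)(x,\cdot)\|_{L^{r}(\ii W)}$ by duality in $\ii W$ (for $r\ge1$; for $r<1$ one works through the $\sssize$) as $\int_{\ii W}BHT_Q(\vec f(\cdot,w),\vec g(\cdot,w))(x)\,b(x,w)\,d\mu(w)$ with $\|b(x,\cdot)\|_{L^{r'}(\ii W)}\le1$, apply \eqref{eq:loc-BHT-first} on $Q$ for a.e.\ $w$, then invoke H\"older in $w$ with exponents $(r_1,r_2,r')$ and Minkowski to interchange the resulting $L^{r_j}(\ii W)$-norms with the stabilized spatial averages, and finally sum over $\ii Q$ using the disjointness and the rapid decay of the $\ci$-tails. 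The mechanism that makes this work is the freedom in choosing the exponents $(s_1,s_2,s_3)$ in \eqref{eq:loc-BHT-first} (it holds for \emph{any} triple with $\tfrac1{s_j}<\tfrac{1+\theta_j}{2}$, and these may be further enlarged as noted after that display): one picks them so that in addition $s_j\le r_j$ — required for the Minkowski interchange in $\ii W$ — and $s_j<p_j$ — required for the globalization below — where $(r_1,r_2,r_3):=(r_1,r_2,r')$, $(p_1,p_2,p_3):=(p,q,s')$, and such a choice exists precisely because $\tfrac1{r_j}<\tfrac{1+\theta_j}{2}$ and $\tfrac1{p_j}<\tfrac{1+\theta_j}{2}$. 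The hard part, I expect, is exactly this bookkeeping: arranging the stopping time so that the sup/norm interchanges cost essentially nothing, and controlling the tail factors across scales.

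To globalize, I would cover $\rr R$ by a stopping-time family of pairwise disjoint dyadic intervals adapted to the Hardy--Littlewood maximal averages of $\|\vec f\|_{L^{r_1}}$ and $\|\vec g\|_{L^{r_2}}$, apply the localized bound on each, and sum. When $s\ge1$ one dualizes against $h\in L^{s'}(\rr R)$ and converts the $\Lambda_{s_j}$-quantities into $\|\cdot\|_{L^p},\|\cdot\|_{L^q},\|\cdot\|_{L^{s'}}$ via the $L^{p_j}$-boundedness of $\ic M_{s_j}$ (this is where $s_j<p_j$ enters); when $s<1$, where $L^{s}$ is not normed, one argues directly through the $\sssize$, using subadditivity of $\|\cdot\|_s^s$ (cf.\ Section~\ref{sec:method-proof}). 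Either way this gives the asserted inequality for restricted-type functions, after which a multilinear Marcinkiewicz-type interpolation argument, valid in the quasi-Banach regime as well, removes the restricted-type assumption. The region so reached is exactly $\ii D_{r_1,r_2,r}$, the $\theta_j$ being the (restricted-type) interpolation parameters: $(p,q,s)$ is attainable precisely when $\theta_j\in[0,1)$ with $\theta_1+\theta_2+\theta_3=1$ can be chosen so that $\tfrac1p,\tfrac1{r_1}<\tfrac{1+\theta_1}{2}$, $\tfrac1q,\tfrac1{r_2}<\tfrac{1+\theta_2}{2}$, $\tfrac1{s'},\tfrac1{r'}<\tfrac{1+\theta_3}{2}$.

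Finally I would check that $\ii D_{r_1,r_2,r}\neq\emptyset$. Put $b:=(\tfrac1{r_1},\tfrac1{r_2},\tfrac1{r'})$, so $b_1+b_2+b_3=1$, and test $(p,q,s)=(2,2,1)$, for which $a:=(\tfrac1p,\tfrac1q,\tfrac1{s'})=(\tfrac12,\tfrac12,0)$; one then needs $\theta_j\in[0,1)$ with $\sum_j\theta_j=1$ and $\theta_j>c_j:=\max(2a_j-1,2b_j-1,0)=\max(2b_j-1,0)$. Here $\sum_jc_j<1$: at most two of the $b_j$ can exceed $\tfrac12$, and if two do, then they must be $b_1,b_2$, whence $b_3<0$ and $\sum_jc_j=(2b_1-1)+(2b_2-1)=\tfrac2r-2<1$ because $r>\tfrac23$, while if at most one does, then $\sum_jc_j=\max(2\max_jb_j-1,0)<1$ because $\max_jb_j<1$ (as $r_1,r_2>1$ and $r<\infty$). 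Distributing the surplus $1-\sum_jc_j>0$ among the $\theta_j$ keeps each one below $1$, which produces an admissible triple; combined with the interpolation step this completes the proof.
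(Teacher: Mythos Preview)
Your overall strategy --- localize, prove a depth-$1$ local estimate, globalize via stopping times, interpolate --- matches the paper's, and your non-emptiness check is fine. But the step where you pass from the scalar local estimate \eqref{eq:loc-BHT-first} to its vector-valued analogue has a real gap.

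You propose to apply \eqref{eq:loc-BHT-first} on each $Q\in\ii Q$ for a.e.\ $w$, then H\"older in $w$ with exponents $(r_1,r_2,r')$, then Minkowski to swap the $L^{r_j}(\ii W)$ norms past the spatial averages. The problem is that \eqref{eq:loc-BHT-first} produces, for each fixed $w$, a factor $\sup_{I\subseteq Q}\big(\tfrac1{|I|}\int|f(\cdot,w)|^{s_1}\ci_I\big)^{1/s_1}$ whose maximizing interval $I$ depends on $w$. After H\"older you face $\big\|\sup_{I\subseteq Q}(\cdots)\big\|_{L^{r_1}_w}$, and Minkowski gives you $\|(\cdots)\|_{L^{r_1}_w}\le(\cdots\|\vec f\|_{L^{r_1}_w}\cdots)$ only for a \emph{fixed} $I$; it does not let you pull the supremum outside. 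Your stopping time stabilizes the averages of $\|\vec f(x,\cdot)\|_{L^{r_1}}$, not those of $|f(x,w)|$ for each $w$, so it does not tame this $w$-dependent supremum either.

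The paper circumvents exactly this obstruction, and it is worth seeing how. Rather than use \eqref{eq:loc-BHT-first} directly, it first upgrades the restricted-type local estimate $\ii P^*(0)$ to an \emph{operatorial} estimate $\ii P(0)$ (see Section~\ref{sec:method-proof}): for any $f,g,h$ with $|f|\le\one_F$ etc.,
\[
\big|\Lambda_{\rr P(I_0)}^{F,G,H'}(f,g,h)\big|\ \lesssim\ \big(\sssize_{I_0}\one_F\big)^{\frac{1+\theta_1}{2}-\frac{1}{r_1}-\epsilon}\big(\sssize_{I_0}\one_G\big)^{\frac{1+\theta_2}{2}-\frac{1}{r_2}-\epsilon}\big(\sssize_{I_0}\one_{H'}\big)^{\frac{1+\theta_3}{2}-\frac{1}{r'}-\epsilon}\,\|f\ci_{I_0}\|_{r_1}\|g\ci_{I_0}\|_{r_2}\|h\ci_{I_0}\|_{r'}.
\]
The point is that the ``size'' prefactor depends only on the sets $F,G,H'$, not on $f,g,h$ themselves; deriving this from $\ii P^*(0)$ costs a separate triple stopping time and a restricted-type interpolation argument (this is where the exponents $\frac{1+\theta_j}{2}-\frac{1}{r_j}>0$ enter, explaining the description of $\ii D_{r_1,r_2,r}$). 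Once one has $\ii P(0)$, the vector-valued step is a one-line H\"older: apply $\ii P(0)$ to each $f_w,g_w,h_w$ (the prefactor is $w$-independent), sum/integrate in $w$, and use $\tfrac1{r_1}+\tfrac1{r_2}+\tfrac1{r'}=1$ to collapse $\|f_w\ci_{I_0}\|_{r_1}\|g_w\ci_{I_0}\|_{r_2}\|h_w\ci_{I_0}\|_{r'}$ into $\|\one_F\ci_{I_0}\|_{r_1}\|\one_G\ci_{I_0}\|_{r_2}\|\one_{H'}\ci_{I_0}\|_{r'}$. This yields $\ii P^*(1)$, and the rest of your outline (globalization by a further stopping time, interpolation, the quasi-Banach variant via direct $L^s$ bounds) goes through essentially as you describe.
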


More generally, we can consider extensions to multiple vector-valued spaces:

\begin{theorem}
\label{thm:main-thm-BHT}
Consider the tuples $R_1=\left(r_1^1, \ldots, r_1^n \right), R_2=\left(r_2^1, \ldots, r_2^n \right)$ and $R=\left(r^1, \ldots, r^n \right)$ satisfying for every $1 \leq j \leq n: 1<r_1^j, r_2^j \leq \infty, \, \frac{2}{3}<r^j <\infty$, and $\frac{1}{r_1^j}+\frac{1}{r_2^j}=\frac{1}{r^j}$. Then the bilinear Hilbert transform allows for a multiple vector-valued extension 
\begin{equation}
\label{eq:BHT-multi-vec}
\overrightarrow{BHT}^n_{\vec R}: L^p\left( \rr R; L^{R_1}\left( \ii W, \mu \right) \right) \times L^q\left( \rr R; L^{R_2}\left( \ii W, \mu \right) \right) \to L^s\left( \rr R; L^{R}\left( \ii W, \mu \right) \right)
\end{equation}
provided there exist $\theta_1, \theta_2, \theta_3$ so that $0 \leq \theta_1, \theta_2, \theta_3 <1,  \theta_1+\theta_2+\theta_3  =1$ and, for all $ 1 \leq j \leq n$, we have 
\begin{equation}
\label{eq:constraints-r_j-p}
\frac{1}{r_1^j}, \frac{1}{p} <\frac{1+\theta_1}{2}, \quad \frac{1}{r_2^j}, \frac{1}{q} <\frac{1+\theta_2}{2} \quad \frac{1}{\left(r^j\right)'}, \frac{1}{s'} <\frac{1+\theta_3}{2}.
\end{equation}

In this case we denote by $$\ii D_{R_1, R_2, R}:=Range(\overrightarrow{BHT}^n_{\vec R})=\lbrace (p, q, s): \eqref{eq:BHT-multi-vec} \text{ holds  } \rbrace,$$
and we note that $\ii D_{R_1, R_2, R} =\bigcap\limits_{1 \leq j \leq n} \ii D_{r_1^j, r_2^j, r^j}$. 
\end{theorem}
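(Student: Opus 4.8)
The plan is to establish \eqref{eq:BHT-multi-vec} by induction on the depth $n$, reducing every level to the scalar localized estimate \eqref{eq:loc-BHT-first} (which carries all the time-frequency structure of $BHT$ -- wave packets, trees, sizes and energies) together with one vector-valued stopping time per level. What I would propagate along the induction is a localized estimate of the form
\[
\big|\langle \overrightarrow{BHT}^n_{I_0}(\vec f,\vec g),\vec h\rangle\big|\lesssim |I_0|\cdot\vssize^{R_1}_{I_0}(\vec f)\cdot\vssize^{R_2}_{I_0}(\vec g)\cdot\vssize^{R'}_{I_0}(\vec h),
\]
for every dyadic $I_0\subseteq\rr R$ and every admissible triple $(\theta_1,\theta_2,\theta_3)$ as in \eqref{eq:constraints-r_j-p}, where e.g. $\vssize^{R_1}_{I_0}(\vec f):=\sup_{I\subseteq I_0}\big(\frac{1}{|I|}\int_{\rr R}\|\vec f(x,\cdot)\|_{L^{R_1}(\ii W,\mu)}^{s_1}\ci_I\,dx\big)^{1/s_1}$, the auxiliary exponent $s_1$ being any one with $\max\big(\frac1p,\frac1{r_1^1},\ldots,\frac1{r_1^n}\big)<\frac1{s_1}<\frac{1+\theta_1}{2}$ -- such $s_1$ existing precisely because of \eqref{eq:constraints-r_j-p} -- and similarly for the other two slots, with $\vec h$ measured in the Hölder-conjugate tuple $R'=((r^1)',\ldots,(r^n)')$. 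The base case $n=0$ is exactly \eqref{eq:loc-BHT-first}; I would run the whole induction for restricted-type functions, so that the eventual interpolation that reconciles the $L^1$- and $L^2$-information becomes a purely technical last step.

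For the inductive step I would assume the depth-$(n-1)$ localized estimate for the truncated tuples $(r_k^1,\ldots,r_k^{n-1})$ and perform a vector-valued stopping time (VVST) on a fixed $I_0$: freeze the innermost vector variable $w_n$; run a Whitney-type stopping-time decomposition of $I_0$ that replaces the defining supremum of each $\vssize_{I_0}$ by a sum over pairwise disjoint dyadic subintervals on which the relevant $L^{s_j}$-average is essentially constant; on each such piece Minkowski's integral inequality legitimately interchanges the $L^{r_k^n}$-norm in $w_n$ with the $L^{s_j}$-average (this is where $s_j<r_k^n$ is used); then apply the depth-$(n-1)$ hypothesis at fixed $w_n$, integrate in $w_n$ by Hölder with exponents $(r_1^n,r_2^n,(r^n)')$, and sum the resulting geometric series over the stopping intervals. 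The outcome is the depth-$n$ localized estimate, now carrying the constraint \eqref{eq:constraints-r_j-p} for $j=n$ as well; after $n$ such steps all constraints $j=1,\ldots,n$ are in force, i.e. exactly the hypotheses of the theorem. Running the same stopping time globally on $\rr R$ rather than inside a fixed $I_0$ -- equivalently, combining the localized estimate with the sparse selection of intervals behind Theorems \ref{thm:local->sparse} and \ref{thm:sparse-BHT} -- then yields the global bound \eqref{eq:BHT-multi-vec}, after a routine linearization of the trilinear form and the restricted-type interpolation, which delivers the full exponent set $\ii D_{R_1,R_2,R}$ described by \eqref{eq:constraints-r_j-p}.

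The identity $\ii D_{R_1,R_2,R}=\bigcap_{1\le j\le n}\ii D_{r_1^j,r_2^j,r^j}$ I would verify as follows. For ``$\subseteq$'', test \eqref{eq:BHT-multi-vec} on functions $\vec f(x,w_1,\ldots,w_n)=F(x,w_j)\prod_{k\ne j}\one_{C_k}(w_k)$ (and similarly for $\vec g,\vec h$): by the Hölder relations $\frac1{r_1^k}+\frac1{r_2^k}=\frac1{r^k}$ of \eqref{eq:cond-r-tuples} all the $|C_k|$-powers cancel and \eqref{eq:BHT-multi-vec} collapses to precisely the depth-one extension of Theorem \ref{thm-main-vvBHT-1vs} for the triple $(r_1^j,r_2^j,r^j)$, so membership of $(p,q,s)$ in $\ii D_{R_1,R_2,R}$ forces membership in $\ii D_{r_1^j,r_2^j,r^j}$ for every $j$. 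For ``$\supseteq$'', I would rewrite the existence of $(\theta_1,\theta_2,\theta_3)$ satisfying \eqref{eq:constraints-r_j-p} simultaneously for all $j$ as the single scalar inequality $\max\big(0,\tfrac2p-1,\max_j(\tfrac2{r_1^j}-1)\big)+\max\big(0,\tfrac2q-1,\max_j(\tfrac2{r_2^j}-1)\big)+\max\big(0,\tfrac2{s'}-1,\max_j(\tfrac2{(r^j)'}-1)\big)<1$, compare it with the analogous description of each $\ii D_{r_1^j,r_2^j,r^j}$ coming from Theorem \ref{thm-main-vvBHT-1vs}, and check the two coincide -- an elementary verification that does use the Hölder relations among $p,q,s$ and among the $r_k^j$.

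The genuinely hard ingredient -- which I would take as given, since it is \eqref{eq:loc-BHT-first} -- is the scalar localized estimate in the regime where $\frac1{s_1}+\frac1{s_2}+\frac1{s_3}$ is close to $2$, so that one factor is controlled only by an $L^1$-type average while the form is trilinear; this is exactly where orthogonality, in the guise of Bessel-type and energy estimates for the wave-packet decomposition, must be exploited, and it is the source of the $\frac23<r^j$ restriction. Beyond that, the delicate point I anticipate in the induction is the bookkeeping of the auxiliary exponents $s_1,s_2,s_3$ at each level: one must check that the strict inequalities in \eqref{eq:constraints-r_j-p} keep the admissible window for these exponents non-empty as $n$ grows, so that every stopping series sums and non-emptiness of $\ii D_{R_1,R_2,R}$ is preserved; everything else -- the Minkowski interchange, the passage from local to global, and the final restricted-type interpolation -- is technical but routine.
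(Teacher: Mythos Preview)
Your overall architecture---induction on the depth $n$, propagating a localized estimate on each dyadic $I_0$, working with restricted-type functions, and running one stopping time per level---is exactly the paper's helicoidal scheme. The gap is in the mechanism of the inductive step.

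The step you describe (``apply the depth-$(n-1)$ hypothesis at fixed $w_n$, integrate in $w_n$ by H\"older, use Minkowski to interchange $L^{r_k^n}_{w_n}$ with the $L^{s_j}$-average'') does not close. After applying the depth-$(n-1)$ hypothesis at fixed $w_n$ you obtain a product of three \emph{sizes}, each a supremum over subintervals of $I_0$; the maximizing subinterval depends on $w_n$, so when you integrate in $w_n$ you cannot push the $L^{r_k^n}$-norm inside the supremum to land on the depth-$n$ size. Your stopping time is run with respect to the depth-$n$ averages, not the $w_n$-dependent depth-$(n-1)$ ones, so on a stopping interval the depth-$(n-1)$ size at fixed $w_n$ may still concentrate on a much smaller subinterval. (The Minkowski clause is also placed before the application of the hypothesis, which makes the order incoherent: once $w_n$ is integrated out there is no ``fixed $w_n$'' left.)

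What the paper actually carries through the induction is a \emph{pair} of statements. One, $\mathscr P^*(n)$, is essentially your localized size bound for restricted-type data. The other, $\mathscr P(n)$, is the piece you are missing: a bound on the localized form $\Lambda^{F,G,H'}_{\mathbb P(I_0)}(\vec f,\vec g,\vec h)$ (functions cut off to fixed sets $F,G,H'$) of the shape
\[
\big|\Lambda^{F,G,H'}_{\mathbb P(I_0)}(\vec f,\vec g,\vec h)\big|\ \lesssim\ \|\Lambda^{F,G,H'}_{I_0}\|\cdot \big\|\,\|\vec f\|_{L^{R_1^n}}\,\tilde\chi_{I_0}\big\|_{r_1}\big\|\,\|\vec g\|_{L^{R_2^n}}\,\tilde\chi_{I_0}\big\|_{r_2}\big\|\,\|\vec h\|_{L^{(R^n)'}}\,\tilde\chi_{I_0}\big\|_{r'},
\]
where the ``operatorial norm'' $\|\Lambda^{F,G,H'}_{I_0}\|=\prod_j(\widetilde{\text{size}\,}_{I_0}\mathbf 1_{\cdot})^{\frac{1+\theta_j}{2}-\frac{1}{r_j}-\epsilon}$ depends only on the fixed sets $F,G,H'$, \emph{not} on the vector variable. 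The step $\mathscr P(n)\Rightarrow\mathscr P^*(n+1)$ is then a one-line H\"older in $w_{n+1}$: the operatorial norm factors out, and the three \emph{norms} (not sizes) combine via $\frac1{r_1}+\frac1{r_2}+\frac1{r'}=1$ with no supremum/integral conflict. The step $\mathscr P^*(n)\Rightarrow\mathscr P(n)$ is where the triple stopping time actually lives---it is a localized generalized-restricted-type interpolation that converts size information back into genuine $L^{r_j}$-norms, and this is also the step that forces the positivity conditions $\frac{1+\theta_j}{2}-\frac1{r_j}>0$, i.e.\ exactly \eqref{eq:constraints-r_j-p}.

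The paper makes this point explicit in Remark~\ref{remark:expl-name-hel}: iterating the pure size estimate $\mathscr P^{**}(n)$ with a stopping time alone just reproduces $\mathscr P^{**}(n)$---one ``moves in circles''. The operatorial-norm statement $\mathscr P(n)$ is precisely what breaks the circle and allows the H\"older step in the new variable. That is the idea your sketch is missing.
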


It is not difficult to see that in the ``local $L^2$" situation ($2 \leq r^j_1, r^j_2, (r^j)' \leq \infty$), the conditions on $r^j_1, r^j_2$ and $(r^j)'$ from \eqref{eq:constraints-r_j-p} are automatically satisfied (assuming that $0<\theta_1, \theta_2, \theta_3<1$), and hence the range of vector-valued extensions $\overrightarrow{BHT}^n_{\vec R}$ coincides with the range of the scalar operator:
\begin{corollary}
If the tuples $R_1=\left(r_1^1, \ldots, r_1^n \right), R_2=\left(r_2^1, \ldots, r_2^n \right)$ and $R=\left(r^1, \ldots, r^n \right)$ satisfy for every $1 \leq j \leq n: \, 2 \leq r_1^j, r_2^j, \left( r^j  \right)' \leq \infty$, then 
\[
\ii D_{R_1, R_2, R}:=Range(\overrightarrow{BHT}^n_{\vec R})=Range(BHT).
\]
\end{corollary}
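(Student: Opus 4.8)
The plan is to prove the two inclusions
\[
Range(\overrightarrow{BHT}^n_{\vec R}) \subseteq Range(BHT) \qquad\text{and}\qquad Range(BHT) \subseteq \ii D_{R_1, R_2, R};
\]
since $\ii D_{R_1, R_2, R} = Range(\overrightarrow{BHT}^n_{\vec R})$ by the definition in Theorem \ref{thm:main-thm-BHT}, together they give the asserted equality. Note that only the second inclusion uses the ``local $L^2$'' hypothesis $2 \le r_1^j, r_2^j, (r^j)' \le \infty$.

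The first inclusion holds for arbitrary tuples and simply records that the scalar operator embeds in any of its vector-valued extensions. Fix $(p,q,s) \in Range(\overrightarrow{BHT}^n_{\vec R})$ and a ``box'' $W_0 = W_0^1 \times \dots \times W_0^n \subseteq \ii W$ with $0 < \mu(W_0) < \infty$ (such a box exists since each $\mu_j$ is totally $\sigma$-finite and non-degenerate). Given scalar functions $f, g$ on $\rr R$, apply \eqref{eq:BHT-multi-vec} to $\vec f := f \otimes \one_{W_0}$ and $\vec g := g \otimes \one_{W_0}$. Using $\one_{W_0}^2 = \one_{W_0}$ and that $\|\one_{W_0}\|_{L^{R_1}(\ii W,\mu)}$, $\|\one_{W_0}\|_{L^{R_2}(\ii W,\mu)}$, $\|\one_{W_0}\|_{L^{R}(\ii W,\mu)}$ are finite positive constants depending only on $W_0$ and the tuples, the iterated norms factor through and one reads off $\|BHT(f,g)\|_{L^s(\rr R)} \lesssim \|f\|_{L^p(\rr R)}\,\|g\|_{L^q(\rr R)}$, i.e. $(p,q,s) \in Range(BHT)$.

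For the second inclusion it suffices to fix $(p,q,s) \in Range(BHT)$ and produce a \emph{single} triple $(\theta_1, \theta_2, \theta_3)$ with $0 \le \theta_i < 1$, $\theta_1 + \theta_2 + \theta_3 = 1$, satisfying \eqref{eq:constraints-r_j-p} for every $1 \le j \le n$; by Theorem \ref{thm:main-thm-BHT} this places $(p,q,s)$ in $\ii D_{R_1,R_2,R}$. Write $\alpha_1 = \frac1p$, $\alpha_2 = \frac1q$, $\alpha_3 = \frac1{s'}$; by the classical range of $BHT$ from \cite{initial_BHT_paper} (the open region of Figure \ref{fig:rangeBHT}) we have $\alpha_1, \alpha_2 \in [0,1)$, $\alpha_3 \in (-\frac12, 1)$ and $\alpha_1+\alpha_2+\alpha_3 = 1$. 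Set $\mu_i := \max(0, 2\alpha_i - 1)$ and
\[
\theta_i := \mu_i + t\,(1-\mu_i), \qquad t := \frac{1 - (\mu_1+\mu_2+\mu_3)}{3 - (\mu_1+\mu_2+\mu_3)}.
\]
A short case analysis on how many of the $\alpha_i$ exceed $\frac12$ — using $\alpha_1, \alpha_2 < 1$, $\alpha_3 > -\frac12$ and $\alpha_3 < 1$ — shows $\mu_1 + \mu_2 + \mu_3 < 1$; hence $t \in (0,1)$, so $0 < \theta_i < 1$, $\theta_1 + \theta_2 + \theta_3 = 1$, and $\theta_i > \mu_i \ge 2\alpha_i - 1$, i.e. $\frac1p < \frac{1+\theta_1}{2}$, $\frac1q < \frac{1+\theta_2}{2}$, $\frac1{s'} < \frac{1+\theta_3}{2}$. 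Finally, since $2 \le r_1^j, r_2^j, (r^j)' \le \infty$ we have $\frac1{r_1^j}, \frac1{r_2^j}, \frac1{(r^j)'} \le \frac12$, and $\theta_i > 0$ gives $\frac12 < \frac{1+\theta_i}{2}$; so all the inequalities in \eqref{eq:constraints-r_j-p} hold for every $j$ with this one choice of $\theta$.

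The only non-formal point is the inequality $\mu_1 + \mu_2 + \mu_3 < 1$ on $Range(BHT)$ — this is exactly where the shape of the $BHT$ range is needed. One must use all of $p > 1$, $q > 1$ and $s > \frac23$, together with the exclusion of the corner $(\frac1p, \frac1q) = (0,0)$ (the failure of $BHT \colon L^\infty \times L^\infty \to L^\infty$, hence $\alpha_3 < 1$), to keep $\mu_1 + \mu_2 + \mu_3$ strictly below $1$; it approaches $1$ only in the degenerate limit $p, q \to \infty$, $s \to \frac23^+$, which lies outside the range. Everything else is bookkeeping, and in fact the argument is a quantitative version of the observation recorded right before the statement.
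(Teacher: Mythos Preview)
Your proof is correct and follows the same core observation as the paper: in the local $L^2$ regime the constraints on the $r$-tuples in \eqref{eq:constraints-r_j-p} become automatic once all $\theta_i>0$, so the description of $\ii D_{R_1,R_2,R}$ collapses to the conditions on $(p,q,s)$ alone. The paper records this in one sentence just before the corollary and leaves the rest implicit; you flesh out both halves.

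Two things you add that the paper omits are worth noting. First, you supply the reverse inclusion $Range(\overrightarrow{BHT}^n_{\vec R})\subseteq Range(BHT)$ via testing on tensor functions $f\otimes\one_{W_0}$; this is the natural argument and the paper does not spell it out. Second, and more substantively, you give an explicit barycentric construction of an admissible $(\theta_1,\theta_2,\theta_3)$ for an arbitrary $(p,q,s)\in Range(BHT)$, together with the case analysis showing $\mu_1+\mu_2+\mu_3<1$. The paper treats the existence of such $\theta_i$ as understood (it is implicit in the direct proof of the scalar range in Section~\ref{sec:BHT-avoid-interpolation}), so your construction is a genuine addition. One small wording quibble: your sentence ``the degenerate limit $p,q\to\infty$, $s\to\tfrac23^+$'' conflates two separate boundary limits (they cannot occur simultaneously), but your case analysis handles each correctly.
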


\subsection{Boundedness of multi-parameter operators and Leibniz rules}

As applications to our vector-valued inequalities, we can prove the boundedness (sometimes in mixed norm $L^p$ spaces) of multi-parameter operators. While this is not the case for $BHT \otimes BHT$, which, as claimed in \cite{bi-parameter_paraproducts}, does not satisfy any $L^p$ estimates, it is true for $\Pi \otimes \Pi, \Pi \otimes \ldots \otimes \Pi$ and $BHT \otimes \Pi \ldots \otimes \Pi$.

The operator $\Pi$ is a \emph{paraproduct}, i.e. a bilinear operator of the form
\begin{equation*}
%\tag{I}
\label{eqn:paraprod 1}
(f,g) \mapsto \sum_{k } \left(\left(f \ast \psi_k   \right)   \cdot \left(g \ast \psi_k  \right) \right) \ast \varphi_k(x)= \sum_{k}P_k(Q_kf \cdot Q_k g)
\end{equation*}
\begin{equation*}
%\tag{II}
\label{eqn:paraprod 2}
(f,g) \mapsto \sum_{k } \left(\left(f \ast \varphi_k   \right)   \cdot \left(g \ast \psi_k  \right) \right) \ast \psi_k(x)= \sum_{k} Q_k(P_kf \cdot Q_k g).
\end{equation*}
\begin{equation*}
%\tag{III}
\label{eqn:paraprod 3}
(f,g) \mapsto \sum_{k } \left(\left(f \ast \psi_k   \right)   \cdot \left(g \ast \varphi_k  \right) \right) \ast \psi_k(x)= \sum_{k}Q_k(Q_kf \cdot P_k g).
\end{equation*}

Here $\psi_k(x)=2^k \psi(2^k x)$, $\varphi_k(x)=2^k \varphi(2^k x)$,  $\hat{\varphi}(\xi) \equiv 1$ on $\ds \left[-1/2, 1/2 \right]$, is supported on $[-1,1]$  and $\hat{\psi}(\xi)=\hat{\varphi}(\xi/2)-\hat{\varphi}(\xi) $. The $\ds \lbrace Q_k \rbrace_k$ represent Littlewood-Paley projections onto the frequency $|\xi| \sim 2^k$, while $\ds \lbrace P_k \rbrace_k$ are convolution operators associated with dyadic dilations of a nice bump function of integral 1.

The study of a general bilinear \emph{Mikhlin} multiplier $T_m$ defined by
\begin{equation*}
T_m(f, g)(x):= \int_{\rr R^2} m(\xi, \eta) \, \hat{f}(\xi) \, \hat{g}(\eta) \, e^{2 \pi i x \left(\xi+\eta  \right)} d \xi \, d \eta,
\end{equation*}
where the symbol $m$ satisfies 
\begin{equation}
\label{eq:Mikhlin-one-param-condition}
\vert  \partial_{\xi}^{\alpha}\partial_{\eta}^{\beta} m(\xi, \eta) \vert \lesssim \vert (\xi, \eta ) \vert^{-(\alpha+\beta)},
\end{equation}
can be reduced to the study of (frequency modulated) paraproducts. This is done by restricting the symbol $m$ to Whitney cubes with respect to the origin in the $\xi \eta$-plane and using Fourier series decompositions. The procedure, which is described in \cite{multilinear_harmonic}, allows to write $T_m$ as a finite sum of terms of the form
\[
\sum_{n_1, n_2 \in \rr Z} \sum_{k \in \rr Z} C_{n_1, n_2}^k Q_{k} ( P_{k, n_1}f \cdot Q_{k, n_2}g), 
\] 
where $\ds \widehat{ P_{k, n_1}f}(\xi)= \hat{\phi}_k(\xi) \, e^{\frac{2 \pi i \, n_1 \, \xi}{2^k}}, \, \widehat{ Q_{k, n_2}g}(\eta)= \hat{\psi}_k(\eta) \, e^{\frac{2 \pi i \, n_2 \, \eta}{2^k}}$, while the Fourier coefficients $C_{n_1, n_2}^k$ satisfy, uniformly in $k$,
\begin{equation}
\label{eq:cond-Fourier-coeff}
\vert C_{n_1, n_2}^k  \vert \lesssim  \left( 1+ \vert n_1 \vert  \right)^{-\alpha} \, \left( 1+ \vert n_2 \vert  \right)^{-\beta}.
\end{equation}

Similarly, a bilinear bi-parameter Mikhlin multiplier $T_m^{\two}$ defined by 
\begin{equation}
\label{eq:two-param-multipl}
T_m^{\two}(f, g)(x,y):= \int_{\rr R^4} m(\xi_1, \xi_2, \eta_1, \eta_2) \, \hat{f}(\xi_1, \xi_2) \, \hat{g}(\eta_1, \eta_2) \, e^{2 \pi i x \left(\xi_1+\eta_1  \right)} \, e^{2 \pi i y \left(\xi_2+\eta_2 \right)} d \xi_1 \, d \xi_2 \, d \eta_1\, d \eta_2,
\end{equation}
where the symbol $m$ satisfies 
\begin{equation}
\label{eq:Mikhlin-two-param-condition}
\vert  \partial_{\xi_1}^{\alpha_1}\partial_{\xi_2}^{\alpha_2}\partial_{\eta_1}^{\beta_1} \partial_{\eta_2}^{\beta_2} m(\xi_1, \xi_2, \eta_1, \eta_2) \vert \lesssim \vert (\xi_1, \eta_1 ) \vert^{-(\alpha_1+\beta_1)} \, \vert (\xi_2, \eta_2 ) \vert^{-(\alpha_2+\beta_2)} ,
\end{equation}
can be written as a finite sum of terms of the form
\[
\sum_{n_1, n_2 \in \rr Z} \sum_{m_1, m_2 \in \rr Z} \sum_{k \in \rr Z} \sum_{\ell \in \rr Z} C_{n_1, n_2; m_1, m_2}^{k, \ell} Q_{k} \otimes Q_\ell ( P_{k, n_1} \otimes Q_{\ell, m_1} f \cdot Q_{k, n_2} P_{\ell, m_2} g), 
\]
with the Fourier coefficients $C_{n_1, n_2; m_1, m_2}^{k, \ell}$ satisfying, uniformly in $k$ and $\ell$
\begin{equation}
\label{eq:cond-Fourier-coeff-bi}
\vert C_{n_1, n_2; m_1, m_2}^{k, \ell}  \vert \lesssim  \left( 1+ \vert n_1 \vert  \right)^{-\alpha_1} \, \left( 1+ \vert n_2 \vert  \right)^{-\beta_1}\left( 1+ \vert m_1 \vert  \right)^{-\alpha_2} \, \left( 1+ \vert m_2 \vert  \right)^{-\beta_2}.
\end{equation}

The boundedness of $\Pi \otimes \Pi$ (or more generally, of a bilinear bi-parameter Mikhlin multiplier satisfying conditions \eqref{eq:Mikhlin-two-param-condition}) was proved in \cite{bi-parameter_paraproducts}, and in \cite{multi-parameter_paraproducts} the same was shown to hold for $d$-parameter paraproducts $\Pi \otimes \ldots \otimes \Pi$ and implicitly for bilinear $d$-parameter Mikhlin multipliers $T^{\di}_m$, which behave like bilinear Mikhlin multipliers in each parameter. 

We were able to recover these results in \cite{vv_BHT} and also, for $n=2$, we obtained a mixed norm estimate for $\Pi \otimes \Pi$ (in \cite{vv_BHT} and the case $\frac{1}{2}<s_2<1$ in \cite{quasiBanachHelicoid}). Via the reduction described above, these results transfer right away to bilinear bi-parameter multipliers:
\begin{theorem}
\label{thm:kenig-two-param}
Let $1<p_i, q_i \leq \infty$ and $\frac{1}{2}<s_i<\infty$, be so that $\ds \frac{1}{p_i}+\frac{1}{q_i}=\frac{1}{s_i}$ for any index $i=1, 2$. Then the bilinear bi-parameter Mikhlin multiplier $T_m^{\two}$ defined by \eqref{eq:two-param-multipl} satisfies the following mixed norm estimate:
\begin{equation}
\label{eq:mixed-norms-quasiBanach}
T_m^{\two} : L^{p_1}_xL^{p_2}_y \times  L^{q_1}_xL^{q_2}_y \to L^{s_1}_xL^{s_2}_y.
\end{equation}
Moreover, for any $d \geq 3$ a similar result holds for the bilinear $d$-parameter Mikhlin multiplier $T_m^{\di}$, provided $1<p_i, q_i<\infty$, $\frac{1}{2}<s_i<\infty$ and $\ds \frac{1}{p_i}+\frac{1}{q_i}=\frac{1}{s_i}$ for all $1 \leq i \leq d$:
\[
T_m^{\di} : L^{p_1}_{x_1} \ldots  L^{p_d}_{x_d} \times  L^{q_1}_{x_1} \ldots L^{q_d}_{x_d} \to L^{s_1}_{x_1} \ldots L^{s_d}_{x_d}.
\]
\end{theorem}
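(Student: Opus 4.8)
The plan is to prove the estimate for the model operators --- the $d$-parameter paraproducts $\Pi \otimes \ldots \otimes \Pi$ --- and then deduce the Mikhlin case by the reduction recalled above. First I would carry out the reduction: restricting the symbol $m$ to Whitney cubes with respect to the diagonal in each of the $d$ frequency planes and expanding in Fourier series writes $T_m^{\di}$ as a sum over $(n_1,n_2;\ldots)$, $(m_1,m_2;\ldots)$ of frequency-modulated $d$-parameter paraproducts with coefficients obeying \eqref{eq:cond-Fourier-coeff-bi} (and its $d$-parameter analogue). The modulations translate the bump functions defining the Littlewood-Paley and averaging projections but do not alter the bilinear/wave-packet structure; all the quantitative estimates used below are uniform in these translations up to a polynomial loss in $(1+|n_i|),(1+|m_i|)$, which is absorbed by the rapid decay of the Fourier coefficients once $\alpha_i,\beta_i$ are taken large (as one is free to do). Hence it suffices to bound each frequency-modulated $d$-parameter paraproduct in mixed norms, uniformly in the modulations, and to sum.

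The core step is then the mixed-norm boundedness of $\Pi \otimes \ldots \otimes \Pi$, which I would obtain by viewing it as a multiple vector-valued extension of the one-parameter paraproduct, in the sense of \eqref{eq:vv-depth-n}--\eqref{eq:cond-r-tuples}, and applying the helicoidal induction. Concretely, writing the bi-parameter case $\Pi \otimes \Pi(f,g)(x,y) = \sum_k Q_k^x\big[\Pi_y(P_k^x f,\,Q_k^x g)\big](x,y)$ exhibits it as a paraproduct in the $x$-variable in which the pointwise product has been replaced by the bilinear form $\Pi_y$ acting in the $y$-variable; since, as stressed in the introduction, the structure of the inner operator enters only through the scalar local estimate and not through the vector-valued unfolding, the depth-$1$ extension of $\Pi$ (together with the boundedness $\Pi_y: L^{p_2}_y \times L^{q_2}_y \to L^{s_2}_y$ as the ``pointwise'' input) yields $\Pi\otimes\Pi: L^{p_1}_x(L^{p_2}_y)\times L^{q_1}_x(L^{q_2}_y)\to L^{s_1}_x(L^{s_2}_y)$. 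Iterating this $d-1$ times --- that is, running the induction step of Section \ref{sec:method-proof} with a VVST stopping time, on the local estimate for $\Pi$ (the paraproduct analogue of \eqref{eq:loc-BHT-first}) --- produces the claimed bound, the admissible range being dictated at each level by the Hölder relation $\frac{1}{p_i}+\frac{1}{q_i}=\frac{1}{s_i}$ together with $1<p_i,q_i\le\infty$, $\frac12<s_i<\infty$; unlike for $BHT$, for the paraproduct the auxiliary parameters $\theta_i$ may all be taken equal to $0$, so no further constraint appears. The $L^\infty$ endpoints in the $d=2$ statement are exactly what forces one to invoke the $L^\infty$-robust version of the (multiple) vector-valued paraproduct estimate, Theorem \ref{thm:main-thm-paraprod}; for $d\ge 3$, where all $p_i,q_i<\infty$, the plain extrapolation-type statement already suffices.

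The main obstacle I expect is twofold. First, the bookkeeping in the reduction: one must verify that the translated ``paraproduct'' pieces coming from the Whitney decomposition really are bounded with constants that grow at most polynomially in the modulation indices --- this is where one cannot be cavalier, since an exponential loss would not be summable against \eqref{eq:cond-Fourier-coeff-bi}; the point is that modulating a bump by $e^{2\pi i n \xi/2^k}$ only costs powers of $(1+|n|)$ in the relevant $L^1$ / Schwartz seminorms of the kernel, and these propagate linearly through the size/energy estimates. Second, and more essential, is the correct set-up of the ``tensor $=$ iterated vector-valued'' identification in the genuinely bi-parameter (as opposed to merely iterated-composition) setting: one has to check that freezing one group of variables and applying $\Pi$ in the other, with values in the remaining mixed $L^p$ space, is legitimate on the relevant dense classes and that Fubini applies, so that the depth-$n$ machinery of \eqref{eq:vv-depth-n} is genuinely being invoked with the right source and target iterated $L^p$ spaces. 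Once this is in place, the helicoidal induction runs exactly as in the scalar-to-depth-$n$ passage described in Section \ref{sec:method-proof}, with no new analytic input beyond the one-parameter local estimate for $\Pi$.
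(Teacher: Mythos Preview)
Your proposal is correct and follows essentially the same route as the paper: reduce $T_m^{(d)}$ to (frequency-modulated) $d$-parameter paraproducts via the Whitney/Fourier-series decomposition with coefficients as in \eqref{eq:cond-Fourier-coeff-bi}, then identify $\Pi\otimes\cdots\otimes\Pi$ with an iterated vector-valued extension of the one-parameter paraproduct and invoke Theorem~\ref{thm:main-thm-paraprod} (and Remark~\ref{remark:new-param-vv} for the parameter-dependent families). Your handling of the modulations and of the $L^\infty$ endpoints for $d=2$ is exactly what the paper has in mind.

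One point to flag: for $d\ge 3$ in the quasi-Banach range (some $s_i<1$), the paper does not claim that the plain iteration you describe suffices; see Remark~\ref{remark-d-param-mixed}, where a multiple vector-valued extension of the Chang--Fefferman inequality (from \cite{mixed-norm-square-function}) is invoked as an additional ingredient, and where the restriction $p_i,q_i<\infty$ for $1\le i\le d-1$ is explained. Your sketch glosses over this, so if you write up the $d\ge 3$ case you should either supply that square-function input or restrict to the Banach range.
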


\begin{rremark}
\label{remark-d-param-mixed}
The $d$-parameter case, for $d \geq 3$, in the quasi-Banach setting (i.e. when at least one of $s_i<1$) is implied by a multiple vector-valued extension of the Chang-Fefferman inequality from \cite{mixed-norm-square-function}.

We also note that in contrast to the case $d=2$, we cannot allow for $p_i=\infty$ or $q_i=\infty$ if $1 \leq i \leq d-1$.

\end{rremark}

A similar result for the bi-parameter operator, corresponding to reflexive Banach spaces (i.e. $1<p_2, q_2, s_2<\infty$)  was proved using different techniques in \cite[Corollary 4]{francesco_UMDparaproducts}. Both approaches invoke vector-valued inequalities in the study of multi-parameter multilinear operators. 

Theorem \ref{thm:kenig-two-param} implies a new Leibniz rule in mixed-norm $L^p$ spaces, in the spirit of the one employed in \cite{kenig2004_vvLeibniz}:
\begin{theorem}
\label{thm:Leibniz}
For any $\alpha, \beta >0$
\begin{align*}
\Big \| D_1^\alpha D^\beta_2 (f \cdot g)  \Big \|_{L^{s_1}_xL^{s_2}_y} &\lesssim \Big \| D_1^\alpha D^\beta_2 f  \Big \|_{L^{p_1}_xL^{p_2}_y} \cdot \|g\|_{L^{q_1}_xL^{q_2}_y} +\|f\|_{L^{p_3}_xL^{p_4}_y} \cdot \Big \| D_1^\alpha D^\beta_2 g  \Big \|_{L^{q_3}_xL^{q_4}_y} \\
&+\Big \| D_1^\alpha f  \Big \|_{L^{p_5}_xL^{p_6}_y} \cdot \Big \| D^\beta_2 g  \Big \|_{L^{q_5}_xL^{q_6}_y} +\Big \| D_2^\beta f  \Big \|_{L^{p_7}_xL^{p_8}_y} \cdot \Big \| D^\alpha_1 g  \Big \|_{L^{q_7}_xL^{q_8}_y},
\end{align*}
whenever $1< p_j,q_j \leq \infty$, $\frac{1}{1+\alpha} <s_1<\infty$, $\max \big( \frac{1}{1+\alpha}, \frac{1}{1+\beta}  \big) < s_2<\infty$, and the indices satisfy the natural H\"{o}lder-type conditions.
\end{theorem}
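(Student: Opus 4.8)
\smallskip
\noindent\textit{Proof strategy.} The plan is to deduce Theorem \ref{thm:Leibniz} from the multiplier estimate of Theorem \ref{thm:kenig-two-param} via a bi-parameter paraproduct decomposition, arranged so that the four terms on the right-hand side correspond to the four ways of assigning the derivatives $D_1^\alpha$ and $D_2^\beta$ to $f$ or to $g$.

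First I would view $D_1^\alpha D_2^\beta(f\cdot g)$ as the bilinear bi-parameter multiplier with symbol $|\xi_1+\eta_1|^\alpha\,|\xi_2+\eta_2|^\beta$, where $(\xi_1,\xi_2)$ and $(\eta_1,\eta_2)$ denote the frequencies of $f$ and $g$, and decompose, in each parameter $i\in\{1,2\}$ separately, into three Whitney-type regions
\[
R_i^{f}=\{|\eta_i|\le\varepsilon|\xi_i|\},\qquad R_i^{g}=\{|\xi_i|\le\varepsilon|\eta_i|\},\qquad R_i^{\Delta}=\{\varepsilon|\xi_i|\le|\eta_i|\le\varepsilon^{-1}|\xi_i|\},
\]
for a small fixed $\varepsilon>0$. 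Tensoring gives $3\times 3$ pieces. On the four \emph{pure} pieces $R_1^{a}\times R_2^{b}$, $a,b\in\{f,g\}$, in each parameter the output frequency $\xi_i+\eta_i$ is comparable to (and bounded below by a multiple of) the larger of $|\xi_i|,|\eta_i|$; so if $\zeta_i\in\{\xi_i,\eta_i\}$ denotes that larger frequency,
\[
|\xi_1+\eta_1|^\alpha\,|\xi_2+\eta_2|^\beta=|\zeta_1|^\alpha\,|\zeta_2|^\beta\cdot m_0(\xi_1,\xi_2,\eta_1,\eta_2),
\]
with $m_0$ smooth and obeying the bilinear bi-parameter Mikhlin conditions \eqref{eq:Mikhlin-two-param-condition} on that region. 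Absorbing each $|\zeta_i|^{\alpha_i}$ as $D_i^{\alpha_i}$ applied to whichever of $f,g$ carries the larger frequency in parameter $i$, each pure piece becomes $T_{m_0}^{\two}$ evaluated on one of the pairs
\[
\big(D_1^\alpha D_2^\beta f,\, g\big),\quad \big(f,\, D_1^\alpha D_2^\beta g\big),\quad \big(D_1^\alpha f,\, D_2^\beta g\big),\quad \big(D_2^\beta f,\, D_1^\alpha g\big),
\]
and Theorem \ref{thm:kenig-two-param} bounds each of these into $L^{s_1}_xL^{s_2}_y$. (The hypotheses $p_j,q_j>1$ force $s_1,s_2>1/2$, so Theorem \ref{thm:kenig-two-param} applies, and the natural H\"older conditions are precisely what makes each of these four exponent pairs admissible.)

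For the five pieces that are diagonal in at least one parameter the factor $|\xi_i+\eta_i|^{\alpha_i}$ cannot be absorbed, since $\xi_i+\eta_i$ may vanish on $R_i^{\Delta}$. In such a parameter I would invoke the classical redistribution argument: decompose the diagonal piece further by the output frequency $2^{j_i}$ in parameter $i$, which satisfies $j_i\le k_i+O(1)$ when $|\xi_i|\sim|\eta_i|\sim 2^{k_i}$; on it $|\xi_i+\eta_i|^{\alpha_i}\sim 2^{j_i\alpha_i}\le 2^{-(k_i-j_i)\alpha_i}\,2^{k_i\alpha_i}$, and the factor $2^{k_i\alpha_i}$ restores $D_i^{\alpha_i}$ on $f$ or on $g$ — one chooses the assignment so that, together with the behaviour in the other parameter, the piece falls into one of the four pairs above. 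What remains is, for each shift $\ell_i:=k_i-j_i\ge -O(1)$, an operator that is uniformly a bilinear bi-parameter Mikhlin multiplier, weighted by $2^{-\ell_i\alpha_i}$; Theorem \ref{thm:kenig-two-param} controls each summand, and the geometric series converges because the parameter-$1$ redistribution loses at most $2^{\ell_1\gamma_1}$ in the outer norm and $2^{\ell_1\gamma_2}$ in the inner norm, while a parameter-$2$ redistribution loses at most $2^{\ell_2\gamma_2}$ in the inner norm, where $\gamma_i:=\max(0,\tfrac1{s_i}-1)$. Summability thus requires $\alpha>\gamma_1$ and $\alpha>\gamma_2$, i.e. $s_1>\tfrac1{1+\alpha}$ and $s_2>\tfrac1{1+\alpha}$, together with $\beta>\gamma_2$, i.e. $s_2>\tfrac1{1+\beta}$ — exactly the hypotheses $\tfrac1{1+\alpha}<s_1$ and $\max(\tfrac1{1+\alpha},\tfrac1{1+\beta})<s_2$. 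Adding the finitely many pieces and the convergent series produces the four stated terms.

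The main obstacle is this quasi-Banach bookkeeping of the diagonal pieces: one has to pin down the \emph{exact} loss incurred when a fractional derivative is transferred across a bi-parameter paraproduct measured in the mixed norm $L^{s_1}_xL^{s_2}_y$ — this is what produces the sharp thresholds $\tfrac1{1+\alpha}<s_1$ and $\max(\tfrac1{1+\alpha},\tfrac1{1+\beta})<s_2$ — and one needs this loss with a constant uniform over the shift parameters $\ell_1,\ell_2$, which is precisely the regime handled by Theorem \ref{thm:kenig-two-param} (and, underneath it, by the helicoidal vector-valued machinery). Verifying that the renormalised symbols on each Whitney region keep the two parameters decoupled in the sense of \eqref{eq:Mikhlin-two-param-condition} is routine but must be done with some care.
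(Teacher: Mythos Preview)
Your overall strategy is right and matches the paper's outline: decompose into nine bi-parameter paraproduct regions, handle the four pure pieces as bi-parameter Mikhlin multipliers acting on the redistributed inputs via Theorem~\ref{thm:kenig-two-param}, and treat the diagonal pieces by a Kato--Ponce shift argument. The paper does not give the proof here; it refers to \cite{vv_BHT}, \cite{quasiBanachHelicoid} and says only that it \emph{relies on the multiple vector-valued paraproduct estimates} of Theorem~\ref{thm:main-thm-paraprod}.

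There is, however, a real gap in your diagonal analysis. The shifted operator that remains after redistribution in parameter~$1$, namely
\[
T_{\ell_1}(F,G)=\sum_{k}\tilde Q^{(1)}_{k-\ell_1}\big(Q^{(1)}_k F\cdot Q^{(1)}_k G\big)\otimes(\text{parameter-}2\text{ piece}),
\]
is \emph{not} ``uniformly a bilinear bi-parameter Mikhlin multiplier'': each $\partial_{\xi_1}$ landing on the output bump $\hat\psi\big((\xi_1+\eta_1)2^{-(k-\ell_1)}\big)$ produces a factor $2^{\ell_1-k}$ instead of the required $2^{-k}\sim|(\xi_1,\eta_1)|^{-1}$, so the constant in \eqref{eq:Mikhlin-two-param-condition} grows like $2^{N\ell_1}$ with the order $N$ of differentiation. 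Applying Theorem~\ref{thm:kenig-two-param} as a black box therefore gives a loss far worse than $2^{\ell_1\gamma}$, and the series does not sum. The correct loss $2^{\ell_1\max(0,1/s_i-1)}$ must be extracted by a \emph{direct} argument on $T_{\ell_1}$: use the subadditivity of $\|\cdot\|_{L^{s_1}_xL^{s_2}_y}^{\min(s_1,s_2)}$ to split the sum in $k$, then exploit that $Q^{(1)}_kF\cdot Q^{(1)}_kG$ is band-limited at scale $2^k$ in parameter~$1$ while $\tilde Q^{(1)}_{k-\ell_1}$ convolves at the coarser scale $2^{k-\ell_1}$, which for $s<1$ costs exactly $2^{\ell_1(1/s-1)}$. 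In the mixed-norm setting this has to be run with the parameter-$2$ structure carried along as a vector-valued coefficient, and that is precisely why the paper invokes Theorem~\ref{thm:main-thm-paraprod} rather than Theorem~\ref{thm:kenig-two-param}. You half-acknowledge this in your final paragraph (``underneath it, by the helicoidal vector-valued machinery''), but the body of the proposal --- in particular the claim of a uniform Mikhlin bound for the shifted pieces --- is inconsistent with it.
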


The proof of the Leibniz rule above (as presented in \cite{vv_BHT} and \cite{quasiBanachHelicoid}) relies on multiple-vector-valued inequalities for paraproducts (that were obtained in \cite{vv_BHT} and \cite{quasiBanachHelicoid}), which we formulate in the following way:
\begin{theorem}
\label{thm:main-thm-paraprod}
Consider the tuples $R_1=\left(r_1^1, \ldots, r_1^n \right), R_2=\left(r_2^1, \ldots, r_2^n \right)$ and $R=\left(r^1, \ldots, r^n \right)$ satisfying for every $1 \leq j \leq n: 1<r_1^j, r_2^j \leq \infty, \frac{1}{2}<r^j <\infty$, and $\frac{1}{r_1^j}+\frac{1}{r_2^j}=\frac{1}{r^j}$. Then the paraproduct $\Pi$ satisfies the estimates
\[
\Pi : L^p\left( \rr R; L^{R_1}\left( \ii W, \mu  \right)  \right) \times L^q\left( \rr R; L^{R_2}\left( \ii W, \mu  \right)  \right) \to L^s\left( \rr R; L^{R}\left( \ii W, \mu  \right)  \right),
\]  
for any $1<p, q \leq \infty, \frac{1}{2}<s<\infty$ with $\frac{1}{p}+\frac{1}{q}=\frac{1}{s}$. 
\end{theorem}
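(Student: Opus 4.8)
\emph{The plan.} I would run the helicoidal scheme in the form set up in Section~\ref{sec:method-proof}: first establish a scalar localized estimate for a discrete model of the paraproduct; then promote it to the depth-$n$ vector-valued localized estimate by iterating the VVST induction step $n$ times; and finally unfold the depth-$n$ estimate into the global bound of the statement through a stopping time adapted to the relevant maximal averages, with one interpolation at the very end.

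\emph{Step 1: the scalar localized estimate.} Using the reduction recalled around \eqref{eq:cond-Fourier-coeff} (restriction of the symbol to Whitney cubes about the origin, followed by a Fourier series expansion), it is enough to treat a discrete model paraproduct $\Pi(f,g)=\sum_{P}|I_P|^{-1/2}\langle f,\phi_P^1\rangle\langle g,\phi_P^2\rangle\phi_P^3$, where $P$ ranges over a collection of tri-tiles, two of the three families $(\phi_P^j)_P$ are $L^2$-normalized lacunary wave packets, and the third is an $L^2$-normalized bump adapted to $I_P$. The target is the analogue of \eqref{eq:loc-BHT-first} with $BHT_{I_0}$ replaced by $\Pi_{I_0}$, which for the paraproduct holds in a strictly larger range of exponents: the two Littlewood--Paley legs only require $s_j\in(1,\infty)$, there is no upper constraint on the exponent attached to the non-lacunary leg, and there is \emph{no} joint condition of the type $\frac1{s_1}+\frac1{s_2}+\frac1{s_3}<2$ --- precisely because the non-lacunary leg rules out the overlapping-trees phenomenon responsible for that restriction in the $BHT$ case. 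The bound itself is obtained by the standard time-frequency machinery: after a stopping time on the local $L^{s_j}$ averages, the tiles with $I_P\subseteq I_0$ are organized into trees, a single tree $T$ is estimated by $|I_T|$ times the product of the three local averages appearing on the right-hand side (evaluated on $I_T$), and the sum over the disjoint maximal trees is controlled by a Bessel/energy argument together with the John--Nirenberg-type decay for sizes.

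\emph{Steps 2--3: depth induction and unfolding.} With Step~1 as the base case $n=0$, one applies $n$ times the depth-raising step of Section~\ref{sec:method-proof}: from the depth-$k$ version of the localized estimate --- with $f,g,h$ replaced by the $L^{R_j}(\ii W,\mu)$-norms of the corresponding vector-valued functions --- a VVST stopping time produces the depth-$(k+1)$ version of the same shape, the only structural input being the wave packet decomposition together with the scalar bound of Step~1; this part is insensitive to the operator. For the global inequality, note first that, given $p,q,s$ and $R_1,R_2,R$ as in the statement, one may choose $s_1,s_2,s_3\in(1,\infty)$ with $\frac1{s_1}>\max(\frac1p,\frac1{r_1^j})$, $\frac1{s_2}>\max(\frac1q,\frac1{r_2^j})$ and $\frac1{s_3}>\max(\frac1{s'},\frac1{(r^j)'})$ for every $1\le j\le n$: the first two are possible exactly because $p,q>1$ and $r_1^j,r_2^j>1$, the third because $\frac1{s'}<1$ and $\frac1{(r^j)'}<1$, and the resulting range for $s$ is precisely $\frac12<s<\infty$ (the endpoints $p=\infty$ or $q=\infty$ being harmless, as the matching $\frac1{s_j}$ are then only required to be positive). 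Then, in the Banach range, after reducing to restricted-type $\vec f,\vec g,\vec h$ so that $L^1$ and $L^2$ information is attuned last, one dualizes against $\vec h\in L^{s'}(\rr R;L^{R'}(\ii W,\mu))$, decomposes $\rr R$ into the dyadic intervals $I_0$ produced by a stopping time adapted to the level sets of $\ic M_{s_1}(\|\vec f\|_{L^{R_1}})$, $\ic M_{s_2}(\|\vec g\|_{L^{R_2}})$ and $\ic M_{s_3}(\|\vec h\|_{L^{R'}})$, applies the depth-$n$ localized estimate on each $I_0$, and sums the resulting geometric series over the generations of the stopping time; a concluding interpolation removes the restricted-type reduction and produces the full set $\ii D$. (One could instead route through the SST stopping time in the proof of Theorem~\ref{thm:local->sparse} to obtain a sparse domination for the vector-valued extension of $\Pi$ and deduce the bound from it.)

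\emph{The main difficulty.} For $\Pi$ the time-frequency core is genuinely softer than for $BHT$ --- no orthogonality has to be preserved, so nothing forces a restriction of the type $s>2/3$ --- and the delicate point is instead the quasi-Banach regime $s<1$ (equivalently, small $r^j$): there duality in $L^{s'}$ is unavailable, and one must linearize, pass to restricted-type functions, and arrange the stopping time so that the output $L^s$-quasi-norm is recovered from a superposition over essentially disjoint sets. This is exactly where the subadditivity input enters (cf.\ Theorem~\ref{thm:sparse-BHT} and the discussion around it) and it is why the case $\frac12<s_2<1$ called for a separate treatment in \cite{quasiBanachHelicoid}; the remainder is careful but routine bookkeeping of the helicoidal iteration.
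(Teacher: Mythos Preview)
Your proposal is correct and follows exactly the helicoidal scheme laid out in Section~\ref{sec:method-proof}; the paper itself does not spell out a separate proof of Theorem~\ref{thm:main-thm-paraprod} but refers to \cite{vv_BHT} and \cite{quasiBanachHelicoid}, where precisely this induction (scalar localized estimate $\Rightarrow$ depth-$n$ localized estimate via $\ii P(n)\Rightarrow\ii P^*(n+1)$ $\Rightarrow$ global bound by stopping time and interpolation) is carried out. One small simplification worth making explicit in your Step~1: for the paraproduct the frequency information is trivial (everything sits at the origin), so there is no genuine tree decomposition---the whole collection $\rr P(I_0)$ is already a single tree---and the scalar localized estimate reduces to a Carleson-embedding/square-function bound with size exponents that can be pushed arbitrarily close to~$1$ on each leg; this is exactly why no $\theta_j$-type constraints survive and the full range $1<p,q\le\infty$, $\tfrac12<s<\infty$ is reached, as you correctly note.
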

\begin{rremark}
\label{remark:new-param-vv}
More generally, if $ \lbrace \Pi_w \rbrace_{w \in \ii W}$ is a family of paraproducts uniformly bounded with respect to the parameter $w \in \ii W$, then
\begin{equation}
\label{eq:mvv-ineq-param-w-parap}
 \big \|  \| \Pi_w (\vec f, \vec g)\|_{L^{R}\left( \ii W, \mu  \right) } \big\|_{L^s(\rr R)} \lesssim  \big \|  \| \vec f\|_{L^{R_1}\left( \ii W, \mu  \right) } \big\|_{L^p(\rr R)} \, \big \|  \| \vec g\|_{L^{R_2}\left( \ii W, \mu  \right) } \big\|_{L^q(\rr R)},
\end{equation}
for any $R_1, R_2, R, p, q$ and $s$ as above. 

The same is true for the operators of Theorem \ref{thm:main-thm-BHT} (and within the same range as that of Theorem \ref{thm:main-thm-BHT}): we can consider a family $\lbrace BHT_w \rbrace_{w \in \ii W}$ of operators whose symbols are singular along a given line, as long as we have uniform boundedness in the parameter $w \in \ii W$:
\begin{equation}
\label{eq:mvv-ineq-param-w}
\big \|  \big\| BHT_w(\vec f, \vec g)\big\|_{L^R\left( \ii W, \mu \right)} \big\|_{L^s\left( \rr R \right)} \lesssim \big \|  \big\|  \vec f  \big\|_{L^{R_1}\left( \ii W, \mu \right)}  \big\|_{L^p\left( \rr R \right)} \big \|  \big\|  \vec g \big\|_{L^{R_2}\left( \ii W, \mu \right)}  \big\|_{L^q\left( \rr R \right)}.
\end{equation}
\end{rremark}

Leibniz-type rules have been investigated also in \cite{bi-parameter_paraproducts}, \cite{multilinear_harmonic}, \cite{graf-Leibniz_rules}, \cite{TorresWardLeibniz}, and mixed  norm estimates were considered recently in  \cite{HartTorresWu-smoothing-bil}, though in the setting of reflexive Banach spaces. In \cite{BourgainLi-kato} and \cite{GrafakosMaldonadoNaibo-Kato-endpoint} $L^\infty$ estimates are studied (but not in mixed norm spaces).

The multiple vector-valued inequality for $BHT$ from Theorem \ref{thm:main-thm-BHT} implies the boundedness of $BHT \otimes \Pi \otimes \ldots \otimes \Pi$ within the same range as that of $BHT$, as proved in \cite{vv_BHT}:
\begin{theorem}
\label{tensor product BHT d-paraproducts_intro}
For any $p, q, r$ with $\ds \frac{1}{p}+\frac{1}{q}=\frac{1}{r}$, with $1< p, q \leq \infty$ and $2/3 < r < \infty$:
\[
\|  BHT \otimes \Pi \otimes \ldots \otimes \Pi(f, g)   \|_{L^r(\rr{R}^{d+1})} \lesssim \|f\|_{L^{p}(\rr{R}^{d+1})} \| g  \|_{L^q(\rr{R}^{d+1})}.
\]
The same is true for $\ds \Pi \otimes \ldots \otimes \Pi \otimes BHT \otimes \Pi \otimes \ldots \otimes \Pi$.
\end{theorem}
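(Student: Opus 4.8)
The plan is to derive Theorem~\ref{tensor product BHT d-paraproducts_intro} from the two multiple vector-valued results already in hand --- Theorem~\ref{thm:main-thm-BHT} for $BHT$ and Theorem~\ref{thm:main-thm-paraprod} for $\Pi$ --- by ``peeling off'' one paraproduct factor at a time. I would argue by induction on the number $d$ of paraproducts, with the \emph{strengthened} induction hypothesis that $BHT\otimes\Pi\otimes\cdots\otimes\Pi$ ($d$ copies of $\Pi$, on $\rr R^{d+1}$) admits \emph{all} the multiple vector-valued extensions of Theorem~\ref{thm:main-thm-BHT}, with the full range $\ii D_{R_1,R_2,R}$ and with the vector variables $\ii W$ free. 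The strengthening is unavoidable: peeling off one paraproduct reduces a depth-$n$ vector-valued extension of $BHT\otimes\Pi^{\otimes d}$ to a depth-$(n+1)$ one of $BHT\otimes\Pi^{\otimes(d-1)}$. The base case $d=0$ is Theorem~\ref{thm:main-thm-BHT} itself.

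For the inductive step, write $BHT\otimes\Pi^{\otimes d}=U\otimes\Pi$, where $U:=BHT\otimes\Pi^{\otimes(d-1)}$ acts on the variables $\vec x=(x_0,\dots,x_{d-1})$ and the last $\Pi$ acts on the new variable $t=x_d$. The idea is to rerun the proof of the multiple vector-valued bound for $\Pi$ (Theorem~\ref{thm:main-thm-paraprod}) in the $t$-variable, with two substitutions: the pointwise product inside the paraproduct is replaced throughout by ``$U$ applied in $\vec x$'', and every use of Hölder's inequality is replaced by the appropriate vector-valued extension of $U$, supplied by the induction hypothesis. Concretely, decompose $\Pi$ into its three paraproduct pieces; for the representative piece $\sum_k Q_k(P_k\,\cdot\,Q_k)$, and using that $U$ acts on variables disjoint from those of the last $\Pi$, one has
\[
(U\otimes\Pi)(f,g)(\vec x,t)=\sum_k Q_k^{(t)}\big[\,G_k(\vec x,\,\cdot\,)\,\big](t),\qquad G_k:=U^{(\vec x)}\big(P_k^{(t)}f,\ Q_k^{(t)}g\big),
\]
where $Q_k^{(t)},P_k^{(t)}$ are the Littlewood--Paley/averaging projections in $t$ and $U^{(\vec x)}$ means $U$ applied in $\vec x$, pointwise in $t$. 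The sequence $(G_k)_k$ is precisely the image of $\big((P_k^{(t)}f)_k,(Q_k^{(t)}g)_k\big)$ under the diagonal $\ell^\infty\times\ell^2\to\ell^2$ vector-valued extension of $U$ (for the other two pieces one needs $\ell^2\times\ell^\infty\to\ell^2$, resp.\ the dual trilinear form with $\ell^2\times\ell^2\times\ell^\infty$). A Littlewood--Paley inequality in $t$ bounds the frequency-localized outer sum by $\big\|(\sum_k|G_k|^2)^{1/2}\big\|$; the vector-valued extension of $U$ (applied in $\vec x$, for fixed $t$) bounds this; and a Hölder inequality in $t$ then leaves $\|(P_k^{(t)}f)_k\|_{L^p(\ell^\infty_k)}\lesssim\|\ic M^{(t)}f\|_{L^p}\lesssim\|f\|_{L^p}$ and $\|(Q_k^{(t)}g)_k\|_{L^q(\ell^2_k)}\lesssim\|g\|_{L^q}$, together with their iterated-$L^\sigma$-valued analogues --- which closes the induction. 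The crucial algebraic point is that the triples $(\ell^\infty,\ell^2,\ell^2)$, $(\ell^2,\ell^\infty,\ell^2)$ and $(\ell^2,\ell^2,\ell^1)$ all lie in the range $\ii D$ of $BHT$ (hence, by the induction hypothesis, of $U$) for \emph{every} $(p,q,s)$ with $1<p,q\le\infty$, $\tfrac23<s<\infty$, $\tfrac1p+\tfrac1q=\tfrac1s$: this just amounts to checking that the linear system for $\theta_1,\theta_2,\theta_3$ in Theorem~\ref{thm:main-thm-BHT} is solvable, and the threshold $s>2/3$ is exactly what makes it so. Theorem~\ref{tensor product BHT d-paraproducts_intro} is the case of trivial vector variable, and $BHT$ in any slot is obtained by choosing which variable to peel off last.

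The step I expect to be the main obstacle is the outer summation in the quasi-Banach regime --- when $s\le 1$, or when some $r^j\le 1$ in the vector-valued formulation. There the dual Littlewood--Paley inequality $\|\sum_k Q_k^{(t)}G_k\|_{L^s}\lesssim\|(\sum_k|G_k|^2)^{1/2}\|_{L^s}$ is no longer valid in the Banach sense and has to be upgraded to the Hardy space $H^s(\rr R_t)$, and in the $L^R(\ii W,\mu)$-valued setting it becomes a multiple vector-valued square-function inequality of Chang--Fefferman type (cf.\ Remark~\ref{remark-d-param-mixed}). A second, smaller, difficulty is the endpoint $p=\infty$ or $q=\infty$, where $\|(\sum_k|Q_k^{(t)}g|^2)^{1/2}\|_{L^\infty}\lesssim\|g\|_{L^\infty}$ fails and must be replaced by the Carleson-measure ($BMO$) form of the paraproduct bound. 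Both ingredients are, however, already present in the proof of Theorem~\ref{thm:main-thm-paraprod}, so the helicoidal machinery supplies them at every depth and the induction goes through.
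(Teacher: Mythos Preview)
Your proposal is correct and follows essentially the same route as the paper. The paper states (without giving details here, referring to \cite{vv_BHT}) that Theorem~\ref{tensor product BHT d-paraproducts_intro} follows from the multiple vector-valued inequality for $BHT$ in Theorem~\ref{thm:main-thm-BHT}; the key observation is precisely the one you isolate, namely that the three Littlewood--Paley tuples $(\infty,2,2)$, $(2,\infty,2)$, $(2,2,1)$ coming from the paraproduct pieces are all in the local $L^2$ region, so by the Corollary following Theorem~\ref{thm:main-thm-BHT} they impose no restriction and $\ii D_{R_1,R_2,R}=Range(BHT)$.

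The only organizational difference is that the paper's approach unfolds all $d$ paraproducts at once and invokes a single depth-$d$ (or deeper) application of Theorem~\ref{thm:main-thm-BHT}, whereas you peel them off one at a time with a strengthened induction hypothesis. Your version proves a bit more along the way (the full vector-valued extension of each intermediate operator $BHT\otimes\Pi^{\otimes k}$), which is exactly what the induction needs; the paper's direct route is slightly leaner but equivalent in content. Your identification of the quasi-Banach and $L^\infty$-endpoint issues is accurate, and as you note these are already handled in the proof of Theorem~\ref{thm:main-thm-paraprod} (and Remark~\ref{remark-d-param-mixed}).
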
 

Our result gives a definite answer to a question from \cite{bi-parameter_paraproducts}, which was partially resolved in \cite{vv_bht-Prabath}. The same method of the proof and the multiple vector-valued inequalities \eqref{eq:mvv-ineq-param-w-parap} and \eqref{eq:mvv-ineq-param-w} yield a similar result for a more general $(d+1)$-parameter multiplier $T^{(d+1)}_m$, whose symbol is singular along a (non-degenerate) line in one of the parameters (hence, it behaves like the bilinear Hilbert transform) and acts as a bilinear Mikhlin multiplier in each of the remaining parameters. 

This can be extended to more singular $n$-linear operators, as those studied in \cite{multilinearMTT}, for which we need the corresponding vector-valued estimates from \cite{sparse-hel}.

\begin{theorem}
\label{thm:d-param-k-singular}
Let $d \geq 1$, $n \geq 2$ and $1 \leq k < \frac{n+1}{2}$. For any $1 \leq j \leq d+1$, let  $\Gamma_j'$ be a subspace of $\Gamma_j:=\lbrace (\xi_1^j, \ldots, \xi_n^j, \xi_{n+1}^j) \in \rr R^{n+1} : \xi_1^j + \ldots +\xi_n^j + \xi_{n+1}^j =0   \rbrace$ of dimension $k_j$, where $k_j=k$ for an index $1\leq j' \leq d+1$ and $k_j=0$ for the remaining indices. Assume that $\Gamma_{j'}'$ is not degenerate (in the sense of \cite{multilinearMTT}) and that $m: \rr R^{\left( d+1 \right) \left( n+1 \right)} \to \rr C$ satisfies the regularity condition
\[
\partial_{(\xi_1^1, \ldots, \xi_{n+1}^1)}^{\alpha_1} \ldots \partial_{(\xi_1^{d+1}, \ldots, \xi_{n+1}^{d+1})}^{\alpha_{d+1}} \, m(\vec{\xi}_1, \ldots, \vec{\xi}_{n+1})    \lesssim \prod_{j=1}^{d+1} \dist ((\xi_1^j, \ldots, \xi_{n+1}^j), \Gamma_j')^{- |\alpha_j|}
\]
for all partial derivatives $\partial_{(\xi_1^j, \ldots, \xi_{n+1}^j)}^{\alpha_j}$ on $\Gamma_j$ up to some finite order. Let $T^{\left( d+1 \right)}_m$ be an $n$-linear, $(d+1)$-parameter operator associated to the $(n+1)$-linear form
\[
\Lambda(f_1, \ldots, f_{n+1})=\int_{\vec \xi_1+\ldots + \vec \xi_{n+1}=\vec 0} m(\vec{\xi}_1, \ldots, \vec{\xi}_{n+1}) \hat{f}_1(\vec \xi_1) \cdot \ldots \cdot \hat{f}_{n+1}(\vec \xi_{n+1}) d \vec \xi_1 \ldots \vec \xi_{n+1}.
\]
Then $T^{\left( d+1 \right)}_m$ is a bounded operator from $L^{p_1}(\rr R^{d+1}) \times \ldots \times L^{p_n}(\rr R^{d+1})$ into $L^{p'_{n+1}}(\rr R^{d+1})$ for $(p_1, \ldots, p'_{n+1})$ H\"older tuples satisfying
\[
\frac{1}{p_1}+\ldots +\frac{1}{p_{n+1}}=1, \quad 1<p_1, \ldots, p_n \leq \infty, \quad \frac{1}{n}<p_{n+1}' <\infty, \quad \frac{1}{p_j} < 1-\alpha_j \,\, \forall \, 1 \leq j \leq n+1
\]
where the exponents $\alpha_j \in (0, \frac{1}{2})$ for all $1 \leq j \leq n+1$ are defined by
\begin{equation}
\label{def:exp-alpha_j}
\alpha_j:=\sum_{\substack{1 \leq i_1<\ldots < i_k \leq n+1 \\ i_s =j \text{  for some   } 1\leq  s \leq k}} \theta_{i_1, \ldots, i_k}
\end{equation}
with $ 0 \leq \theta_{i_1, \ldots, i_k} \leq 1$ positive numbers indexed by ordered $k$-tuples such that 
\[
\sum_{1 \leq i_1 < \ldots < i_k \leq n+1} \theta_{i_1, \ldots, i_k}=1.
\]
\end{theorem}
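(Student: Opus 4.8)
The plan is to reduce the general $n$-linear, $(d+1)$-parameter operator $T^{(d+1)}_m$ to a finite superposition of tensor products of a single "$k$-singular multiplier in one parameter" factor with ordinary bilinear (more precisely $n$-linear) Mikhlin multipliers in the remaining $d$ parameters, and then to apply the multiple vector-valued machinery exactly as in the proof of Theorem \ref{tensor product BHT d-paraproducts_intro}. First, I would perform the standard Whitney/Fourier-series decomposition in each parameter separately: for the $j'$-th parameter (the one carrying the degenerate-free subspace $\Gamma'_{j'}$ of dimension $k$), the symbol is decomposed relative to Whitney cubes with respect to $\Gamma'_{j'}$, producing modulated wave-packet pieces of the same type as in \cite{multilinearMTT}; for every other parameter $j \ne j'$, the condition $\Gamma'_j = \{0\}$ means the symbol behaves like a classical $n$-linear Mikhlin multiplier in that parameter, so the decomposition of \cite{multilinear_harmonic} applies and yields $n$-linear paraproducts $\Pi$ in that variable, with rapidly decaying Fourier coefficients as in \eqref{eq:cond-Fourier-coeff}, \eqref{eq:cond-Fourier-coeff-bi}. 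After summing the coefficients (which converge because of the decay), $T^{(d+1)}_m$ is written as a finite sum of operators of the form $S \otimes \Pi \otimes \cdots \otimes \Pi$ (with the factor $S$ in slot $j'$), where $S$ is the single-parameter $n$-linear operator governed by the non-degenerate subspace $\Gamma'_{j'}$.

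Second, I would invoke the scalar bound for $S$: by \cite{multilinearMTT} the single-parameter operator $S$ maps $L^{p_1} \times \cdots \times L^{p_n} \to L^{p'_{n+1}}$ precisely when the H\"older and the $\frac{1}{p_j} < 1 - \alpha_j$ constraints hold, with the $\alpha_j$ defined through the $\theta_{i_1,\ldots,i_k}$ as in \eqref{def:exp-alpha_j}. By the helicoidal method — more precisely by the vector-valued estimates of \cite{sparse-hel}, which upgrade such scalar bounds to multiple vector-valued ones in iterated $L^R$ spaces within the same range of exponents — $S$ admits a depth-$d$ multiple vector-valued extension $\overrightarrow{S}^d_{\vec R}$ in the $\Gamma'_{j'}$ parameter. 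Symmetrically, Theorem \ref{thm:main-thm-paraprod} (and Remark \ref{remark:new-param-vv}) gives multiple vector-valued extensions for the paraproduct factors for all $1 < p, q \le \infty$, $\tfrac12 < s < \infty$.

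Third, I would run the tensoring argument: treating the $j'$-th parameter as the "base" $\rr R$ and all other $d$ parameters as the vector index $w$ in an iterated $L^R$ space (and then cycling the roles), one iterates. At each step one applies the multiple vector-valued estimate for $\Pi$ (with the other parameters as the new vector-valued index, which is legitimate by \eqref{eq:mvv-ineq-param-w-parap}), peeling off one parameter at a time, and finally applies the multiple vector-valued extension of $S$ in the remaining direction. Since every estimate used holds on the full range declared for it — the paraproduct on $1<p,q\le\infty$, $\tfrac12 < s$, and $S$ on the constrained range $\frac{1}{p_j} < 1-\alpha_j$ — the composition closes exactly on the tuples claimed in the statement; the condition $\frac{1}{n} < p'_{n+1} < \infty$ with $1 < p_j \le \infty$ is inherited from the $S$-factor, which is the binding one.

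The main obstacle is bookkeeping the exponents through the iteration: one must check that the intermediate iterated $L^R$ exponents produced when peeling off paraproduct parameters can always be chosen inside both the paraproduct range and, simultaneously, the range $\{ \tfrac{1}{r^\ell_k}, \tfrac1p < 1-\alpha_j \}$ required by the vector-valued version of $S$ from \cite{sparse-hel} — i.e., that the nonempty-parameter-set description in Theorem \ref{thm:main-thm-BHT}-style results has a genuine analogue here with the $\alpha_j$ in place of the $BHT$ thresholds $\tfrac{1+\theta_j}{2}$. This is where the non-degeneracy hypothesis on $\Gamma'_{j'}$ and the explicit form \eqref{def:exp-alpha_j} of the $\alpha_j$ are essential: they guarantee the single-parameter vector-valued estimate holds on an open range of exponents, which is exactly what makes the iteration non-vacuous. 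A secondary (purely technical) point is uniformity of the wave-packet constants in the modulation parameters $n_1, n_2, \ldots$ after the Whitney decomposition, handled as in the $BHT$ case by the polynomial decay \eqref{eq:cond-Fourier-coeff} absorbing the at-most-polynomial growth of the operatorial bounds in those parameters.
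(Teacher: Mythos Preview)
Your proposal is correct and follows exactly the approach the paper indicates: the text immediately preceding the theorem explains that it is obtained by ``the same method of the proof'' as Theorem~\ref{tensor product BHT d-paraproducts_intro}, together with the multiple vector-valued inequalities \eqref{eq:mvv-ineq-param-w-parap}, \eqref{eq:mvv-ineq-param-w} and the vector-valued estimates for the $k$-singular operator from \cite{sparse-hel}. Your decomposition into $S\otimes\Pi\otimes\cdots\otimes\Pi$, the use of \cite{multilinearMTT} for the scalar range of $S$, and the iterated peeling of paraproduct parameters via Theorem~\ref{thm:main-thm-paraprod} are precisely the ingredients the paper points to; the paper itself does not spell out a detailed proof beyond this outline.
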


In short, the above theorem says that the tensor product of an $n$-linear operator given by a symbol singular along a $k$-dimensional space (for $1 \leq k<\frac{n+1}{2}$) with a $d$-parameter Mikhlin multiplier has the same range of boundedness as the $k$-singular operator.

\subsection{Sparse Domination and weighted estimates}
 The link between weighted and vector-valued estimates is not fully understood in the particular case of the bilinear Hilbert transform and other similar operators. Nevertheless, the local estimate \eqref{eq:loc-BHT-first} (and its quasi-Banach version) implies the following weighted result:

 \begin{theorem}
 \label{thm:weighted-BHT}
Let $(q_1, q_2, q) \in Range(BHT)$, $q^*:=\min (1, q)$ and $q^{**}=\max(1, q)$. Then for any weights $w_1, w_2, w$ so that $w=w_1 \cdot w_2$, we have
\[
BHT: L^{q_1}(w_1^{q_1}) \times  L^{q_2}(w_2^{q_2}) \to  L^{q}(w^{q}),
\]
provided there exist $0 \leq \theta_1, \theta_2, \theta_3 <1$ with $\theta_1+\theta_2+\theta_3 =1$ and $s_1, s_2, s_3$ satisfying
\[
\frac{1}{q_1}<\frac{1}{s_1} <\frac{1+\theta_1}{2}, \quad \frac{1}{q_2}<\frac{1}{s_2} <\frac{1+\theta_2}{2}, \quad \frac{1}{s_3} < \frac{1}{q^*} -\frac{\theta_1+\theta_2}{2},
\]
 and provided the vector weight condition
\begin{equation*}
\sup_{Q \subset \rr R} \Big(\aver{Q} w_1^{\frac{1}{\frac{1}{q_1}-\frac{1}{s_1}}}\Big)^{\frac{1}{s_1}-\frac{1}{q_1}} \cdot \Big(\aver{Q} w_2^{\frac{1}{\frac{1}{q_2}-\frac{1}{s_2}}}\Big)^{\frac{1}{s_2}-\frac{1}{q_2}} \cdot \Big(\aver{Q} w^{\frac{1}{\frac{1}{s_3}-\frac{1}{\left( q^{**} \right)'}}}\Big)^{\frac{1}{s_3}-\frac{1}{\left(q^{**}\right)'}} <+\infty.
\end{equation*}
 is satisfied. 
 \end{theorem}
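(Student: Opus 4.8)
The plan is to derive Theorem~\ref{thm:weighted-BHT} from the localized estimate \eqref{eq:loc-BHT-first} (and its quasi-Banach counterpart) by first passing through a sparse domination statement for the trilinear form of $BHT$, and then running a weighted interpolation/duality argument adapted to the limited-range exponents $s_1,s_2,s_3$. First I would invoke the sparse domination produced from \eqref{eq:loc-BHT-first} via a stopping time of type SST (as in the proof of Theorem~\ref{thm:local->sparse}): there is a sparse collection $\ic S$ of dyadic cubes such that
\[
\big| \langle BHT(f,g), h \rangle \big| \lesssim \sum_{Q \in \ic S} |Q| \, \big( \aver{Q} |f|^{s_1} \big)^{\frac{1}{s_1}} \big( \aver{Q} |g|^{s_2} \big)^{\frac{1}{s_2}} \big( \aver{Q} |h|^{s_3} \big)^{\frac{1}{s_3}},
\]
valid for any admissible triple $(s_1,s_2,s_3)$ with $\frac{1}{s_1}+\frac{1}{s_2}+\frac{1}{s_3}<2$ arising from a $\theta$-decomposition; the dichotomy between $q \leq 1$ and $q>1$ enters exactly because subadditivity of $\|\cdot\|_q^{q}$ (resp.\ the triangle inequality) is used to sum the local pieces, which is why $q^*$ and $q^{**}$ appear in the final conditions.

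Next I would reduce the operator bound to a bound on the sparse form $\Lambda_{\ic S}(f,g,h)$ against $L^{q_1}(w_1^{q_1}) \times L^{q_2}(w_2^{q_2}) \times L^{(q^{**})'}(w^{-(q^{**})'})$ (after normalizing $\|h\|$ suitably and using $w=w_1 w_2$). The core computation is then the standard one for weighted bounds of sparse forms: write each average $\big(\aver{Q}|f|^{s_j}\big)^{1/s_j}$ as $\big(\aver{Q}(|f| w_j)^{s_j} w_j^{-s_j}\big)^{1/s_j}$, apply H\"older inside $Q$ to split off the weight with exponent dictated by $\frac{1}{s_j}-\frac{1}{q_j}$, and likewise for $h$ with exponent $\frac{1}{s_3}-\frac{1}{(q^{**})'}$; this is exactly where the vector weight $A_{\vec{q}}$-type condition in the statement is consumed, bounding the product of the three weighted averages by the finite supremum times maximal-function averages of $|f|w_1$, $|g|w_2$, $|h|w^{-1}$. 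Then I would use the sparseness of $\ic S$ (pairwise-disjoint major subsets $E_Q \subset Q$ with $|E_Q| \gtrsim |Q|$) to replace the sum over $Q$ by an integral, producing a product of three maximal operators $\ic M_{t_j}$ (with $t_j$ determined by the H\"older split) applied to $|f|w_1$, $|g|w_2$, $|h|w^{-1}$, and close by H\"older in the exponents $q_1, q_2, (q^{**})'$ together with the $L^{q_j/t_j}$-boundedness of $\ic M$ — which is where the strict inequalities $\frac{1}{q_j}<\frac{1}{s_j}$ and $\frac{1}{s_3}<\frac{1}{q^*}-\frac{\theta_1+\theta_2}{2}$ guarantee $t_j < q_j$ (resp.\ the dual exponent constraint), so all the maximal functions act on the correct Lebesgue scale.

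The main obstacle, I expect, is the bookkeeping in the quasi-Banach regime $q<1$: there the sparse form must be summed using $q$-subadditivity rather than the triangle inequality, so one cannot directly dualize $\|BHT(f,g)\|_q$ against $L^{q'}$ (that space is trivial). The remedy, following \cite{sparse-hel}, is to prove the sparse domination pointwise/pre-summation first and only then take the $L^q$ quasi-norm, which forces the exponent $q^*=\min(1,q)$ into the third factor's constraint and $q^{**}=\max(1,q)$ into the weight condition; checking that the resulting bilinear maximal bound (the product $\ic M_{t_1}(|f|w_1)\cdot \ic M_{t_2}(|g|w_2)$ controlled in $L^q(w^q)$) is consistent with $\frac{1}{s_3}<\frac{1}{q^*}-\frac{\theta_1+\theta_2}{2}$ is the delicate point and uses the freedom to choose $s_3$ strictly larger than the critical value. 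Everything else — the admissibility of $(s_1,s_2,s_3)$ from the $\theta_j$'s, and the fact that one may always enlarge the $s_j$'s — is exactly as recorded after \eqref{eq:loc-BHT-first}.
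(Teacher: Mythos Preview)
Your approach is the paper's approach: the paper simply asserts that Theorem~\ref{thm:weighted-BHT} ``follow[s] from a multiple vector-valued sparse domination'' (Theorem~\ref{thm:sparse-BHT}), and the route you sketch --- sparse domination via the SST stopping time, then H\"older inside each cube to factor out the weight averages appearing in the hypothesis, then sparseness to pass to maximal functions and close in $L^{q_1}\times L^{q_2}\times L^{(q^{**})'}$ --- is exactly the intended one. The handling of the quasi-Banach case through the $\|\cdot\|_q^q$-sparse domination (Proposition~\ref{prop:Lq:local->sparse}) and the identification of where $q^*,q^{**}$ enter are also correct.

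There is, however, one genuine slip in your bookkeeping. If you run the sparse domination with the \emph{same} exponents $s_1,s_2,s_3$ that appear in the weight condition and then H\"older inside each $Q$ so that the weight factor has exponent exactly $\frac{1}{s_j}-\frac{1}{q_j}$, the function factor that remains is $\langle fw_1\rangle_{Q,t_1}$ with $t_1=q_1$ (and similarly $t_2=q_2$, $t_3=(q^{**})'$). At that point the remaining sparse form has averages at the \emph{endpoint} exponents, and neither the maximal-function bound $\ic M_{q_1}:L^{q_1}\to L^{q_1}$ nor the Carleson embedding $\sum_Q(\aver{Q}|F|^{q_1})|Q|\lesssim\|F\|_{q_1}^{q_1}$ is available --- both require a strict inequality. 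So the step ``$t_j<q_j$ because $\frac{1}{q_j}<\frac{1}{s_j}$'' does not follow; that inequality only says $s_j<q_j$, which is irrelevant after the H\"older split.

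The fix is small but essential: invoke the sparse domination with exponents $\tilde s_j$ \emph{strictly smaller} than the $s_j$ from the hypothesis (i.e.\ $\frac{1}{s_j}<\frac{1}{\tilde s_j}<\frac{1+\theta_j}{2}$, and for the third index $\frac{1}{s_3}<\frac{1}{\tilde s_3}<\frac{1}{q^*}-\frac{\theta_1+\theta_2}{2}$). This is allowed precisely because those upper bounds are strict. Then H\"older inside $Q$ with the weight exponent still equal to $\frac{1}{s_j}-\frac{1}{q_j}$ forces $\frac{1}{t_j}=\frac{1}{\tilde s_j}-\frac{1}{s_j}+\frac{1}{q_j}>\frac{1}{q_j}$, i.e.\ $t_j<q_j$ strictly, and now your maximal-function/Carleson step closes. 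In short, the slack comes from the strict inequalities $\frac{1}{s_j}<\frac{1+\theta_j}{2}$ (and its analogue for $s_3$), not from $\frac{1}{q_j}<\frac{1}{s_j}$.
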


We note that, for $q \leq 1$, we have $\frac{1}{\left( q^{**} \right)'}=0$, and the right-most expression is just $ \big( \aver{Q} w^{s_3} \big)^\frac{1}{s_3}$. The reflexive Banach case, i.e. corresponding to $1<q_1, q_2, q<\infty$, was proved in \cite{weighted_BHT}.

Extrapolation provides a means for extending an operator's range of boundedness. Hence, in \cite{extrap-BHT}, a limited range multilinear extrapolation theory was developed. This yields certain weighted vector-valued extensions for $BHT$, but surprisingly, not all those of Theorem \ref{thm:main-thm-BHT}. Instead, all the weighted estimates for $BHT$ from \cite{extrap-BHT} follow directly from Theorem \ref{thm:weighted-BHT}: we start by fixing $0 \leq \theta_1, \theta_2, \theta_3 <1$ with $\theta_1+ \theta_2+\theta_3=1$, and we set
\[
\frac{1}{r_1^-}:=\frac{1+\theta_1}{2}, \quad \frac{1}{r_2^-}:=\frac{1+\theta_2}{2},
\]
while $0 <r_1^+, r_2^+ <\infty$ are so that $\ds \frac{1}{r_1^+}+\frac{1}{r_2^+}=\frac{\theta_1+\theta_2}{2}=1-\frac{1+\theta_3}{2}$. Then we have

\begin{corollary}
\label{cor:weighted-est}
If $(q_1, q_2, q) \in Range(BHT)$ with $1<q_1, q_2<\infty, \, r_1^-<q_1 <r_1^+, \, r_2^-<q_2 <r_2^+$, $w_1, w_2$ and $w=w_1 \cdot w_2$ are weights so that
\[
w_1^{q_1} \in A_{\frac{q_1}{r_1^-}} \cap RH_{\big( \frac{r_1^+}{q_1} \big)'}, \quad w_2^{q_2} \in A_{\frac{q_2}{r_2^-}} \cap RH_{\big( \frac{r_2^+}{q_2} \big)'},
\]
then we have that 
\[
BHT: L^{q_1}(w_1^{q_1}) \times L^{q_2}(w_2^{q_2}) \to L^q(w^q).
\]
\end{corollary}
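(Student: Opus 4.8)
The plan is to deduce the Corollary directly from Theorem~\ref{thm:weighted-BHT}: with $\theta_1,\theta_2,\theta_3$ fixed as in the statement, I will produce exponents $s_1,s_2,s_3$ satisfying the three inequalities required there, and then check that the individual hypotheses $w_1^{q_1}\in A_{q_1/r_1^-}\cap RH_{(r_1^+/q_1)'}$ and $w_2^{q_2}\in A_{q_2/r_2^-}\cap RH_{(r_2^+/q_2)'}$ force the bilinear weight (bump) condition of that theorem. Two standard ingredients are used. First, the \emph{openness} (self-improvement) of the Muckenhoupt and reverse-H\"older classes, i.e. $A_p=\bigcup_{\varepsilon>0}A_{p-\varepsilon}$ and $RH_t\subset\bigcup_{\varepsilon>0}RH_{t+\varepsilon}$. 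Second, the elementary dictionary translating $v=w^{q_0}\in A_{q_0/\rho^-}\cap RH_{(\rho^+/q_0)'}$ into the two-sided local estimate $\sup_Q\big(\aver{Q}w^{-\beta}\big)^{1/\beta}\big(\aver{Q}w^{u}\big)^{1/u}<+\infty$ whenever $\frac1\beta=\frac1{\rho^-}-\frac1{q_0}$ and $\frac1u=\frac1{q_0}-\frac1{\rho^+}$; both directions come from unwinding the definitions of $A_p$ and $RH_t$ together with H\"older's inequality on the probability space $(Q,\tfrac{dx}{|Q|})$ (the computation identifies $q_0(\rho^+/q_0)'=u$ and $q_0/(q_0/\rho^--1)=\beta$).

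Choice of exponents. Keeping $\theta_1,\theta_2,\theta_3$ as given, so that $\frac1{r_i^-}=\frac{1+\theta_i}{2}$ and $\frac1{r_1^+}+\frac1{r_2^+}=\frac{\theta_1+\theta_2}{2}$, openness of $A_p$ applied to $w_i^{q_i}\in A_{q_i/r_i^-}$ lets me pick $s_i\in(r_i^-,q_i)$ with $w_i^{q_i}\in A_{q_i/s_i}$, and openness of $RH$ applied to $w_i^{q_i}\in RH_{(r_i^+/q_i)'}$ lets me pick $\rho_i^+\in(q_i,r_i^+)$ with $w_i^{q_i}\in RH_{(\rho_i^+/q_i)'}$. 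Set $\frac1{\beta_i}:=\frac1{s_i}-\frac1{q_i}>0$, $\frac1{u_i}:=\frac1{q_i}-\frac1{\rho_i^+}>0$, $\sigma_3:=\frac1{u_1}+\frac1{u_2}>0$, and finally $\frac1{s_3}:=\sigma_3+\frac1{(q^{**})'}$. Then $\frac1{q_i}<\frac1{s_i}<\frac1{r_i^-}=\frac{1+\theta_i}{2}$ by construction. Moreover $\rho_i^+<r_i^+$ gives the strict inequality $\sigma_3<\big(\frac1{q_1}-\frac1{r_1^+}\big)+\big(\frac1{q_2}-\frac1{r_2^+}\big)=\frac1q-\frac{\theta_1+\theta_2}{2}$; since $q_i<r_i^+$ forces $\frac1q=\frac1{q_1}+\frac1{q_2}>\frac1{r_1^+}+\frac1{r_2^+}=\frac{\theta_1+\theta_2}{2}$, we obtain, when $q\le1$, $\frac1{s_3}=\sigma_3<\frac1q-\frac{\theta_1+\theta_2}{2}=\frac1{q^*}-\frac{\theta_1+\theta_2}{2}$, and, when $q>1$, $\frac1{s_3}=\sigma_3+1-\frac1q<1-\frac{\theta_1+\theta_2}{2}=\frac1{q^*}-\frac{\theta_1+\theta_2}{2}$. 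Hence $s_1,s_2,s_3$ meet all the hypotheses of Theorem~\ref{thm:weighted-BHT} (recall $(q_1,q_2,q)\in Range(BHT)$ is assumed).

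Verification of the bump. In the bilinear weight condition the first factor equals $\big(\aver{Q}w_1^{1/(1/q_1-1/s_1)}\big)^{1/s_1-1/q_1}=\big(\aver{Q}w_1^{-\beta_1}\big)^{1/\beta_1}$, the second factor $\big(\aver{Q}w_2^{-\beta_2}\big)^{1/\beta_2}$, and the third factor $\big(\aver{Q}w^{1/\sigma_3}\big)^{\sigma_3}$ (for $q\le1$ this is $\big(\aver{Q}w^{s_3}\big)^{1/s_3}$, as noted after Theorem~\ref{thm:weighted-BHT}). Since $\frac1{u_1}+\frac1{u_2}=\sigma_3$ and $w=w_1w_2$, H\"older on $(Q,\tfrac{dx}{|Q|})$ gives $\big(\aver{Q}w^{1/\sigma_3}\big)^{\sigma_3}\le\big(\aver{Q}w_1^{u_1}\big)^{1/u_1}\big(\aver{Q}w_2^{u_2}\big)^{1/u_2}$, so the left-hand side of the bump is dominated by
\[
\prod_{i=1,2}\ \sup_{Q}\ \big(\aver{Q}w_i^{-\beta_i}\big)^{1/\beta_i}\big(\aver{Q}w_i^{u_i}\big)^{1/u_i}.
\]
Each of the two suprema is finite by the dictionary above: with $\frac1{\beta_i}=\frac1{s_i}-\frac1{q_i}$ and $\frac1{u_i}=\frac1{q_i}-\frac1{\rho_i^+}$, this is exactly the two-sided bump attached to $w_i^{q_i}\in A_{q_i/s_i}\cap RH_{(\rho_i^+/q_i)'}$. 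Concretely, $RH_{(\rho_i^+/q_i)'}$ gives $\big(\aver{Q}w_i^{u_i}\big)^{1/u_i}\lesssim\big(\aver{Q}w_i^{q_i}\big)^{1/q_i}$, and then $A_{q_i/s_i}$ gives $\big(\aver{Q}w_i^{-\beta_i}\big)^{1/\beta_i}\big(\aver{Q}w_i^{q_i}\big)^{1/q_i}\lesssim1$. Thus the bump holds, and Theorem~\ref{thm:weighted-BHT} yields $BHT:L^{q_1}(w_1^{q_1})\times L^{q_2}(w_2^{q_2})\to L^q(w^q)$.

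The only genuinely delicate point is the exponent arithmetic combined with the \emph{necessity} of the self-improvement step: the hypotheses, taken at face value, give the two-sided bump for $w_i$ only with $s_i=r_i^-$ and $\rho_i^+=r_i^+$, i.e. with non-strict inequalities, whereas Theorem~\ref{thm:weighted-BHT} demands $\frac1{s_i}<\frac{1+\theta_i}{2}$ and $\frac1{s_3}<\frac1{q^*}-\frac{\theta_1+\theta_2}{2}$ strictly; openness of $A_p$ and of the reverse-H\"older classes is precisely what creates the room to move $s_i$ and $\rho_i^+$ strictly into the admissible intervals. The dichotomy $q\le1$ versus $q>1$ is harmless: it merely shifts $\frac1{s_3}$ by $\frac1{(q^{**})'}\in\{0,\tfrac1{q'}\}$, which is exactly absorbed in the definition $\frac1{s_3}=\sigma_3+\frac1{(q^{**})'}$. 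Everything else is H\"older's inequality on the normalized cube.
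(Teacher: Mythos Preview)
Your proof is correct and follows exactly the route the paper indicates: the Corollary is stated as a direct consequence of Theorem~\ref{thm:weighted-BHT}, and you have supplied the details the paper omits---choosing $s_1,s_2,s_3$ via the openness of the $A_p$ and $RH_t$ classes, and verifying the vector weight condition by splitting the $w$-average with H\"older and reducing to the individual $A_{q_i/s_i}\cap RH_{(\rho_i^+/q_i)'}$ bounds. The exponent arithmetic (including the $q\le1$ versus $q>1$ dichotomy) is handled correctly.
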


The above classes of weights are the Muckenhoupt weights $A_p$ and Reverse H\"older $RH_q$ (for more details, see \cite{RFCuervaBook}).

If we consider weighted vector-valued results (which in general are direct consequences of extrapolation), we obtain through the helicoidal method the following:
\begin{corollary}
\label{cor:vv--weighted-est}
Let $0 \leq \theta_1, \theta_2, \theta_3<1$ with $\theta_1+\theta_2+\theta_3=1$, and consider $n$-tuples $(R_1, R_2, R)$, and $(q_1, q_2, q) \in Range(BHT)$ so that $\ds \frac{1}{r_1^j}+\frac{1}{r_2^j}=\frac{1}{r^j}$ for all $1 \leq j \leq n$, $1<q_1, q_2<\infty$ and 
\[
\frac{1}{q_1}, \frac{1}{r_1^j} < \frac{1+\theta_1}{2}, \quad \frac{1}{q_2}, \frac{1}{r_2^j} < \frac{1+\theta_2}{2}, \quad  \frac{1}{q'}, \frac{1}{(r^j)'} < \frac{1+\theta_3}{2}, \quad \forall \, 1 \leq j \leq n.
\]
Let $r_1^-, r_1^+, r_2^-, r_2^+$ be defined by
\[
\frac{1}{r_1^-}:=\frac{1+\theta_1}{2}, \quad \frac{1}{r_2^-}:=\frac{1+\theta_2}{2},
\]
while $0 <r_1^+, r_2^+ <\infty$ are so that $\ds \frac{1}{r_1^+}+\frac{1}{r_2^+}=\frac{\theta_1+\theta_2}{2}=1-\frac{1+\theta_3}{2}$. Then, provided
\[
 r_1^-<q_1 <r_1^+, r_2^-<q_2 <r_2^+,
\]
 we have the vector-valued weighted estimates
\[
BHT: L^{q_1}\big(  \rr R; L^{R_1}(\ii W, \mu) \big)(w_1^{q_1}) \times L^{q_2}\big(  \rr R; L^{R_2}(\ii W, \mu) \big)(w_2^{q_2})  \to L^{q}\big(  \rr R; L^{{R}}(\ii W, \mu) \big)(w^{q}), 
\]
where $w_1, w_2$ and $w$ are weights so that $w=w_1 \cdot w_2$ and
\[
w_1^{q_1} \in A_{\frac{q_1}{r_1^-}} \cap RH_{\big( \frac{r_1^+}{q_1} \big)'}, \quad w_2^{q_2} \in A_{\frac{q_2}{r_2^-}} \cap RH_{\big( \frac{r_2^+}{q_2} \big)'}.
\]
\end{corollary}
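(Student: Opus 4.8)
The plan is to obtain the corollary by feeding the scalar weighted estimate of Corollary~\ref{cor:weighted-est} into a limited-range multilinear Rubio de Francia extrapolation scheme (as in \cite{extrap-BHT}), applied $n$ times so as to build up the iterated target spaces one coordinate at a time. The reason Corollary~\ref{cor:weighted-est} is the right input is that it is already phrased in terms of Muckenhoupt/Reverse-Hölder classes: for the fixed $\theta_1,\theta_2,\theta_3$ (hence fixed $r_1^\pm, r_2^\pm$) and the fixed Hölder tuple $(q_1,q_2,q)$, it gives
\[
BHT : L^{q_1}(w_1^{q_1}) \times L^{q_2}(w_2^{q_2}) \to L^q(w^q)
\]
for \emph{every} pair of weights with $w_1^{q_1}\in A_{q_1/r_1^-}\cap RH_{(r_1^+/q_1)'}$ and $w_2^{q_2}\in A_{q_2/r_2^-}\cap RH_{(r_2^+/q_2)'}$ --- which is exactly the hypothesis that a limited-range multilinear extrapolation theorem consumes.

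First I would record the numerology and check that the hypotheses place all relevant exponents inside the admissible range. Since $\frac{1}{r_i^-}=\frac{1+\theta_i}{2}$ and $\frac{1}{r_1^+}+\frac{1}{r_2^+}=1-\frac{1+\theta_3}{2}$, the assumptions $\frac{1}{q_i},\frac{1}{r_i^j}<\frac{1+\theta_i}{2}$ say precisely that $q_i$ and each $r_i^j$ lie above $r_i^-$, while $\frac{1}{q'},\frac{1}{(r^j)'}<\frac{1+\theta_3}{2}$, combined with the Hölder relations $\frac{1}{q_1}+\frac{1}{q_2}=\frac{1}{q}$ and $\frac{1}{r_1^j}+\frac{1}{r_2^j}=\frac{1}{r^j}$, supplies the complementary bounds; choosing $r_1^+,r_2^+$ as the statement permits, one verifies $r_i^-<q_i<r_i^+$ and, coordinate by coordinate, $r_i^-<r_i^j<r_i^+$, so that the base estimate and all the target exponents sit in the domain where the extrapolation machinery operates, with the associated weight class depending only on $(q_i,r_i^-,r_i^+)$ and not on the auxiliary exponents $r_i^j$.

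Then I would run the extrapolation. One application of limited-range multilinear extrapolation to the base estimate above upgrades it to the $\ell^{r_1^1}\times\ell^{r_2^1}\to\ell^{r^1}$ vector-valued inequality on $L^{q_1}(w_1^{q_1})\times L^{q_2}(w_2^{q_2})\to L^q(w^q)$ --- crucially with the \emph{same} weight classes, this being the key stability feature of the limited-range theory. Iterating $n$ times, peeling off one coordinate of $L^{R_1}(\ii W,\mu),L^{R_2}(\ii W,\mu),L^{R}(\ii W,\mu)$ at each step (each intermediate coordinate exponent again lying in $(r_i^-,r_i^+)$ by the previous paragraph), produces exactly the claimed multiple vector-valued weighted estimate; and since the weight class is never modified along the iteration, the final hypotheses on $w_1^{q_1}$ and $w_2^{q_2}$ coincide with those of Corollary~\ref{cor:weighted-est}.

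The main obstacle --- indeed the only delicate point --- is the bookkeeping just described: showing that the $\theta$-formulation of the hypotheses really does land $(q_1,q_2,q)$ together with every coordinate tuple $(r_1^j,r_2^j,r^j)$ simultaneously inside the admissible range of multilinear limited-range extrapolation, for one common choice of $r_1^\pm,r_2^\pm$, and checking the endpoint cases (e.g. when some $r_i^j$ is large, or when $q\le 1$, where $\frac{1}{(q^{**})'}=0$). Everything else is a direct invocation of known results: Corollary~\ref{cor:weighted-est} (itself a consequence of the local estimate \eqref{eq:loc-BHT-first} through Theorem~\ref{thm:weighted-BHT}) for the base case, and the extrapolation theorem of \cite{extrap-BHT} for the passage to the vector-valued setting. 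As an alternative one could dispense with extrapolation entirely and reprove the corollary within the helicoidal framework, by establishing a weighted, multiple vector-valued analogue of \eqref{eq:loc-BHT-first} and running the corresponding weighted VVST stopping time, exactly as in the proof of Theorem~\ref{thm:weighted-BHT}; but that route is substantially more laborious and, for the present statement, unnecessary.
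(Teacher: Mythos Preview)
Your approach via limited-range multilinear extrapolation from \cite{extrap-BHT} does not give the full corollary; it only proves a strictly weaker statement. The gap is in your claim that ``coordinate by coordinate, $r_i^-<r_i^j<r_i^+$'' follows from the hypotheses. It does not. The assumptions on the tuples $R_1,R_2,R$ give you $\frac{1}{r_i^j}<\frac{1+\theta_i}{2}=\frac{1}{r_i^-}$, i.e.\ $r_i^j>r_i^-$, and from $\frac{1}{(r^j)'}<\frac{1+\theta_3}{2}$ together with H\"older you get only the \emph{joint} inequality
\[
\frac{1}{r_1^j}+\frac{1}{r_2^j}>\frac{\theta_1+\theta_2}{2}=\frac{1}{r_1^+}+\frac{1}{r_2^+}.
\]
This does not force $r_1^j<r_1^+$ and $r_2^j<r_2^+$ individually. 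For instance, with $\theta_1=\theta_2=\theta_3=\frac13$, $r_1^+=r_2^+=6$, one may take $r_1^1=100$, $r_2^1=\frac{100}{49}$, $r^1=2$: all the $\theta$-conditions of the corollary hold, yet $r_1^1\gg r_1^+$, so the extrapolation theorem does not apply. You cannot repair this by ``choosing $r_1^+,r_2^+$ as the statement permits'': the pair $(r_1^+,r_2^+)$ is part of the given data, since it fixes the Reverse H\"older classes in which the weights $w_i^{q_i}$ are assumed to lie.

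The paper flags exactly this discrepancy right after stating the corollary: extrapolation requires $r_i^-<r_i^j<r_i^+$ for every $j$, whereas Corollary~\ref{cor:vv--weighted-est} only asks for $r_i^-<r_i^j$. The paper therefore does \emph{not} use extrapolation here. Instead it deduces Corollaries~\ref{cor:weighted-est} and~\ref{cor:vv--weighted-est} simultaneously from the multiple vector-valued sparse domination of Theorem~\ref{thm:sparse-BHT}: one proves the local depth-$n$ estimate $\ii P^{**}(n)$, runs the SST stopping time of Theorem~\ref{thm:local->sparse} to get a sparse bound for the full vector-valued form, and then reads off the weighted estimate from standard properties of sparse forms. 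The ``alternative'' route you dismiss as laborious and unnecessary is precisely the one that is needed, and is the one the paper takes.
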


If $q_1, q_2, q, R_1, R_2, R$ are as above, unweighted vector valued estimates
\[
BHT: L^{q_1}\big(  \rr R; L^{R_1}(\ii W, \mu) \big) \times L^{q_1}\big(  \rr R; L^{R_2}(\ii W, \mu) \big)  \to L^{q}\big(  \rr R; L^{{R}}(\ii W, \mu) \big) 
\]
exist, as a consequence of Theorem \ref{thm:main-thm-BHT}.

The extrapolation results from \cite{extrap-BHT} require that the $n$-tuples $R_1$, $R_2$ satisfy
\[
 r_1^-< q_1, r_1^j <r_1^+, \quad r_2^-<q_2, r_2^j <r_2^+, \quad \forall \, 1 \leq j \leq n.
\]

In contrast, even though such a condition needs to be true for $q_1$ and $q_2$, we only demand of the $n$-tuples $R_1, R_2$ to verify
\[
 r_1^-<r_1^j , \quad r_2^-<r_2^j , \quad \forall \, 1 \leq j \leq n.
\]

The weighted results of Theorem \ref{thm:weighted-BHT}, Corollary \ref{cor:weighted-est} and \ref{cor:vv--weighted-est} follow from a multiple vector-valued sparse domination. We recall the following definition:
 \begin{definition}
 \label{def-sparse-collection}
Let $0< \eta <1$. A collection $\ic S$ of dyadic intervals is called \emph{$\eta$-sparse} if one can choose pairwise disjoint measurable sets $E_Q \subseteq Q$ with $\vert E_Q \vert \geq \eta \vert Q \vert$ for all $Q \in \ic S$. 
 \end{definition} 

From the localized vector-valued estimates, we deduced in \cite{sparse-hel}
\begin{theorem}
\label{thm:sparse-BHT}
Let $0 \leq \theta_1, \theta_2, \theta_3<1$ with $\theta_1+\theta_2+\theta_3=1$, and consider $n$-tuples $(R_1, R_2, R)$ so that $\ds \frac{1}{r_1^j}+\frac{1}{r_2^j}=\frac{1}{r^j}$, $(r_1^j, r_2^j, r^j) \in Range(BHT)$  for all $1 \leq j \leq n$,  and we have simultaneously
\begin{equation}
\label{eq:cond-r_j-theta}
 \frac{1}{r_1^j} < \frac{1+\theta_1}{2}, \quad  \frac{1}{r_2^j} < \frac{1+\theta_2}{2}, \quad  \frac{1}{(r^j)'} < \frac{1+\theta_3}{2}, \quad \forall 1 \leq j \leq n.
\end{equation}

Let $s_1, s_2, s_3$ be so that
\begin{equation}
\label{eq:condition-on-s}
\frac{1}{s_1} <\frac{1+\theta_1}{2}, \,\frac{1}{s_2} <\frac{1+\theta_2}{2}, \quad \frac{1}{s_3} < \frac{1}{q} -\frac{\theta_1+\theta_2}{2}.
\end{equation}

Then for any vector-valued functions $\vec f, \vec g$ so that $\| \vec f (x, \cdot) \|_{L^{R_1}(\ii W, \mu)}, \| \vec g (x, \cdot) \|_{L^{R_2}(\ii W, \mu)}$ are locally integrable, and any locally $q$-integrable function $v$, we can construct a sparse collection $\ic S$ of dyadic intervals, depending on the functions $\vec f, \vec g$ and $v$, and on the exponents $s_1, s_2, s_3, q$, for which
{\fontsize{10}{10}\begin{align}
\label{eQ:sparse-dom-Lq}
\big\| \| BHT_{\rr P}(\vec f, \vec g)  \|_{L^{R}(\ii W, \mu)}  \cdot v   \big\|_q^q  \lesssim \sum_{Q \in \ic S} &\big( \frac{1}{|Q|} \int_{\rr R} \| \vec f(x, \cdot) \|_{L^{R_1}(\ii W, \mu)}^{s_1} \cdot \ci_{Q} dx \big)^\frac{q}{s_1} \cdot \big( \frac{1}{|Q|} \int_{\rr R} \| \vec g(x, \cdot) \|_{L^{R_2}(\ii W, \mu)}^{s_2} \cdot \ci_{Q} dx \big)^\frac{q}{s_2}  \\ &\cdot \big( \frac{1}{|Q|} \int_{\rr R} |v(x)|^{s_{3}} \cdot \ci_{Q} dx \big)^\frac{q}{s_{3}} \cdot |Q|. \nonumber
\end{align}}
\end{theorem}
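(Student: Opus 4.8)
The plan is to run the ``SST'' unfolding scheme: feed the \emph{localized} depth-$n$ vector-valued estimate into a sparse collection built by a stopping time on the averages of $\vec f$, $\vec g$ (and, if convenient, $v$). First I would reduce to the case where $\rr P$ is a finite collection of tiles, all with spatial interval contained in one large dyadic interval $Q_0$; the general statement then follows by monotone convergence once the bound is uniform in $\rr P$ and $Q_0$. The engine of the argument is the depth-$n$ vector-valued analogue of the localized estimate \eqref{eq:loc-BHT-first} (the localized precursor of Theorem \ref{thm:main-thm-BHT}, in its Banach or quasi-Banach incarnation, valid under \eqref{eq:cond-r_j-theta} and $(r_1^j,r_2^j,r^j)\in Range(BHT)$): for any dyadic $I_0$ and any collection $\rr P'$ of tiles with spatial interval in $I_0$, the trilinear expression $\langle\|BHT_{\rr P'}(\vec f,\vec g)\|_{L^R(\ii W,\mu)},h\rangle$ is controlled by the product of the $\sssize$ of $\vec f$ (the sup over relevant $I\subseteq I_0$ of $(\frac{1}{|I|}\int\|\vec f\|_{L^{R_1}}^{s_1}\ci_I)^{1/s_1}$), the analogous $\sssize$ of $\vec g$, and $\sup_I(\frac{1}{|I|}\int|h|^{\sigma}\ci_I)^{1/\sigma}$ with $\frac1\sigma<\frac{1+\theta_3}{2}$, times $|I_0|$; equivalently, an $L^{\tilde q}$-norm version with constant $(\sssize\,\vec f)\cdot(\sssize\,\vec g)\cdot|I_0|^{1/\tilde q}$, including the quasi-Banach range $\tilde q<1$.

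Second, I would construct $\ic S$. Starting from $Q_0$ and writing $A_1(Q):=(\frac{1}{|Q|}\int_{\rr R}\|\vec f\|_{L^{R_1}}^{s_1}\ci_Q)^{1/s_1}$, $A_2(Q)$ analogously with $\vec g,s_2$, and $A_3(Q):=(\frac{1}{|Q|}\int_{\rr R}|v|^{s_3}\ci_Q)^{1/s_3}$, I declare $Q'\subsetneq Q$ a stopping child of $Q$ if it is maximal dyadic with $A_i(Q')>C\,A_i(Q)$ for some $i\in\{1,2,3\}$. The weak $(1,1)$ bound for $\ic M$, applied to the three functions $\|\vec f\|_{L^{R_1}}^{s_1}$, $\|\vec g\|_{L^{R_2}}^{s_2}$, $|v|^{s_3}$, forces $|\bigcup(\text{children of }Q)|\le\frac12|Q|$ once $C$ is large, so $E_Q:=Q\setminus\bigcup(\text{children})$ satisfies $|E_Q|\ge\frac12|Q|$; hence the family $\ic S$ of all selected intervals (together with $Q_0$) is $\frac12$-sparse.

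Third comes the unfolding. Partition the tiles as $\rr P=\bigsqcup_{Q\in\ic S}\rr P_Q$, where $\rr P_Q$ collects the $P\in\rr P$ whose spatial interval lies in $Q$ but in none of its stopping children. The key point is that for every dyadic $I$ with $I_P\subseteq I\subseteq Q$ for some $P\in\rr P_Q$ one has $A_i(I)\lesssim A_i(Q)$, $i=1,2,3$ (such $I$ cannot sit inside a stopping child, by maximality of the latter), so the $\sssize$s of $\vec f,\vec g,v$ seen by $\rr P_Q$ are $\lesssim A_1(Q),A_2(Q),A_3(Q)$. When $q\le1$, the inequality $\|\sum_Q F_Q\|_q^q\le\sum_Q\|F_Q\|_q^q$ applied to $BHT_{\rr P}(\vec f,\vec g)=\sum_{Q\in\ic S}BHT_{\rr P_Q}(\vec f,\vec g)$ reduces everything to the single-scale bound $\|\,\|BHT_{\rr P_Q}(\vec f,\vec g)\|_{L^R}\,v\,\|_q^q\lesssim A_1(Q)^qA_2(Q)^qA_3(Q)^q|Q|$; this follows by splitting $\frac1q=\frac1{\tilde q}+\frac1{s_3}$ via H\"older (allowed since $\frac1{s_3}<\frac1q$), localizing the $L^{\tilde q}$-norm of $\|BHT_{\rr P_Q}(\vec f,\vec g)\|_{L^R}$ to a $\ci_Q$-tail neighbourhood of $Q$ using the rapid decay of the wave packets, and then invoking the localized depth-$n$ estimate of step one on $\rr P_Q$ — which requires precisely $\frac1{\tilde q}>\frac{\theta_1+\theta_2}{2}$, i.e. the stated constraint $\frac1{s_3}<\frac1q-\frac{\theta_1+\theta_2}{2}$ — to produce the factor $A_1(Q)A_2(Q)|Q|^{1/\tilde q}$, the remaining factor being the $L^{s_3}(\ci_Q)$-average of $v$. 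When $q>1$ one instead dualizes on $L^q$: testing against $h$ with $\|h\|_{q'}\le1$, folding $v\cdot h$ into the third ``size'', and controlling the $h$-contribution by a maximal average $(\ic M(|h|^{t}))^{1/t}$ for suitable $t<q'$ (this is the origin of the $q^{**}=\max(1,q)$ dichotomy in Theorem \ref{thm:weighted-BHT}); summing over $\ic S$ then uses sparsity. Letting $\rr P\uparrow$ and $Q_0\uparrow\rr R$ finishes.

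\smallskip
The hard part is not the sparse bookkeeping but the localized depth-$n$ vector-valued estimate itself in its $L^{\tilde q}$-norm form, especially when $\tilde q<1$: this is where the quasi-Banach helicoidal machinery (iteration through the levels, restricted-type reductions, interpolation performed last) is doing all the work, and it is what ties the admissible range of $(s_1,s_2,s_3)$ to the $\theta_j$'s. A secondary, more technical, subtlety is checking that the tiles assigned to $\rr P_Q$ genuinely only ``see'' averaging intervals on which the stopping time has not yet triggered, so that their $\sssize$s are bounded by $A_i(Q)$ rather than by something larger coming from deep inside a stopping child.
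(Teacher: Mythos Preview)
Your proposal is essentially the paper's own argument: the sparse stopping time you describe is exactly the construction in Theorem~\ref{thm:local->sparse} (same selection rule on the three averages $A_i$, same sparseness verification via weak $(1,1)$ for $\ic M$, same partition $\rr P=\bigsqcup_{Q\in\ic S}\rr P_Q$), and the input you feed it---the localized depth-$n$ estimate, i.e.\ $\ii P^{**}(n)$, obtained from $\ii P^*(n)$ by the mock-interpolation of Proposition~\ref{prop:restricted->general}---is precisely what the paper uses.

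One refinement worth noting: your dichotomy at $q\le 1$ versus $q>1$ is not quite the right cut. What is actually needed to push the sum inside is subadditivity of $\|\cdot\|_{L^R}^q$, i.e.\ $q\le\min_j r^j$; since the theorem allows $r^j<1$, the condition $q\le1$ is neither necessary nor sufficient. The paper organizes the two cases accordingly: when $\|\cdot\|_{L^R}^q$ is subadditive one argues exactly as you do (Proposition~\ref{prop:Lq:local->sparse}); when it is not, rather than dualizing directly on $L^q$ as you propose, the paper first runs the whole sparse argument for a subadditive exponent $\tau<q$ and then bootstraps from the $\tau$-sparse bound to the $q$-sparse bound by writing $\|\cdot\|_q^q=\|\,|\cdot|^\tau\,\|_{q/\tau}^{q/\tau}$ and dualizing on $L^{q/\tau}$ (Proposition~\ref{prop:sparse->sparse->no-subadditivity}). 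This produces the shift $\frac{1}{\tilde s_3}<\frac{1}{s_3(\tau)}-\frac{1}{\tau}+\frac{1}{q}$, which after optimizing in $\tau$ collapses to the same constraint $\frac{1}{s_3}<\frac{1}{q}-\frac{\theta_1+\theta_2}{2}$ that you correctly identified. Your direct dualization would also work in the Banach range, but the $\tau$-bootstrap is cleaner when some $r^j<1$.
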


We note that the case $q=1$ is equivalent to a sparse domination of the trilinear form $\Lambda_{BHT; \rr P}$. Here $BHT_{\rr P}$ represents a discretized model operator (see the definition \eqref{eq:BHT-model} below), hence there is a fixed dyadic grid associated to the spatial discretization.

\begin{rremark}
The Lebesgue exponents $s_1, s_2, s_3$ and $q$ appearing in the sparse domination result must satisfy condition \eqref{eq:condition-on-s}. We note that $s_3 \in (1, \infty)$ implies  the restriction 
\[
\frac{1}{s_1}+\frac{1}{s_2}<1+\frac{1}{q},
\]
and hence an $L^q$-sparse domination result as in \eqref{eQ:sparse-dom-Lq} valid for all $0<q<\infty$ is only possible if $\frac{1}{s_1}+\frac{1}{s_2}<1$.
\end{rremark}

\smallskip
The sparse domination theorem above implies the following vector-valued Fefferman-Stein type estimate:
\begin{corollary}
\label{corollary-Fefferman-Stein}
If the Lebesgue exponents $q, s_1, s_2$ and the $n$-tuples $R_1, R_2, R$ are as in Theorem \ref{thm:sparse-BHT} (that is, they are conditioned by certain $0\leq \theta_1, \theta_2, \theta_3<1$ with $\theta_1+\theta_2+\theta_3=1$ and also $\ds \frac{1}{s_1}+\frac{1}{s_2}<1+\frac{1}{q}$), then 
\begin{equation}
\label{eq:Fefferman-Stein}
\big\| \| BHT_{\rr P}(\vec f, \vec g)  \|_{L^{R}(\ii W, \mu)} \big\|_q \lesssim \big\| \ic M_{s_1}  \big( \| \vec f(x, \cdot) \|_{L^{R_1}(\ii W, \mu)}  \big)  \cdot  \ic M_{s_2}  \big( \| \vec g(x, \cdot) \|_{L^{R_2}(\ii W, \mu)}  \big)  \big\|_q.
\end{equation}

In particular, if $2\leq r^j_1, r^j_2 \leq \infty$ for all $1 \leq j \leq n$ (i.e., if we are in the locally $L^2$ case), then 
\[
\big\| \| BHT_{\rr P}(\vec f, \vec g)  \|_{L^{R}(\ii W, \mu)} \big\|_q \lesssim \big\| \ic M_{2}  \big( \| \vec f(x, \cdot) \|_{L^{R_1}(\ii W, \mu)}  \big)  \cdot  \ic M_{2}  \big( \| \vec g(x, \cdot) \|_{L^{R_2}(\ii W, \mu)}  \big)  \big\|_q.
\]
Also, if $2 \leq r^j_1 \leq \infty$ for all $1 \leq j \leq n$ and $0<q<2$, then for a certain $\epsilon>0$ we have
\[
\big\| \| BHT_{\rr P}(\vec f, \vec g)  \|_{L^{R}(\ii W, \mu)} \big\|_q \lesssim \big\| \ic M_{2}  \big( \| \vec f(x, \cdot) \|_{L^{R_1}(\ii W, \mu)}  \big)  \cdot  \ic M_{1+\epsilon}  \big( \| \vec g(x, \cdot) \|_{L^{R_2}(\ii W, \mu)}  \big)  \big\|_q.
\]

If $\ic M_{s_1, s_2}$ denotes the bilinear maximal operator
\[
\ic M_{s_1, s_2}(f, g)(x):=\sup_{Q \ni x} \big( \frac{1}{|Q|} \int_Q |f(y)|^{s_1} dy  \big)^\frac{1}{s_1} \big( \frac{1}{|Q|} \int_Q |g(y)|^{s_2} dy  \big)^\frac{1}{s_2},
\]
we can also formulate the Fefferman-Stein inequality as:  
\[\big\| \| BHT_{\rr P}(\vec f, \vec g)  \|_{L^{R}(\ii W, \mu)} \big\|_q \lesssim \big\| \ic M_{s_1, s_2}  \big( \| \vec f(x, \cdot) \|_{L^{R_1}(\ii W, \mu)}, \| \vec g(x, \cdot) \|_{L^{R_2}(\ii W, \mu)}  \big)  \big\|_q,
\]
for any $0 < q< \infty$, $s_1, s_2$ and $n$-tuples $R_1, R_2, R$ as in Theorem \ref{thm:sparse-BHT}.
\end{corollary}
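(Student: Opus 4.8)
The plan is to deduce the Fefferman--Stein inequality \eqref{eq:Fefferman-Stein} directly from the sparse domination \eqref{eQ:sparse-dom-Lq} of Theorem \ref{thm:sparse-BHT}, by taking $v \equiv 1$ and estimating the right-hand side of \eqref{eQ:sparse-dom-Lq} by the product of the two maximal functions. First I would apply Theorem \ref{thm:sparse-BHT} with $v \equiv 1$; note that $\frac{1}{s_1}+\frac{1}{s_2}<1+\frac{1}{q}$ guarantees we may also pick $s_3 \in (1,\infty)$ satisfying \eqref{eq:condition-on-s}, so the hypotheses are met and we obtain a sparse collection $\ic S$ with
\[
\big\| \| BHT_{\rr P}(\vec f, \vec g)  \|_{L^{R}(\ii W, \mu)} \big\|_q^q  \lesssim \sum_{Q \in \ic S} \big( \aver{Q} F^{s_1} \ci_Q \big)^{\frac{q}{s_1}} \big( \aver{Q} G^{s_2} \ci_Q \big)^{\frac{q}{s_2}} \big( \aver{Q} \ci_Q \big)^{\frac{q}{s_3}} |Q|,
\]
where I abbreviate $F(x):=\| \vec f(x,\cdot) \|_{L^{R_1}(\ii W,\mu)}$ and $G(x):=\| \vec g(x,\cdot) \|_{L^{R_2}(\ii W,\mu)}$. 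Since $\ci_Q$ is bounded and has integral comparable to $|Q|$, the last factor is $O(1)$, so the sum is controlled by $\sum_{Q \in \ic S} \big( \aver{Q} F^{s_1} \ci_Q \big)^{q/s_1} \big( \aver{Q} G^{s_2} \ci_Q \big)^{q/s_2} |Q|$.

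Next I would bound each localized average by a pointwise maximal function on the sparse set $E_Q$. For $x \in E_Q \subseteq Q$ one has $\big( \aver{Q} F^{s_1} \ci_Q \big)^{1/s_1} \lesssim \ic M_{s_1}(F)(x)$ and similarly for $G$ (the tail $\ci_Q$ only improves this, being dominated by the maximal function over dilates of $Q$). Using $|Q| \le \eta^{-1} |E_Q|$ and the disjointness of the $E_Q$, I get
\[
\sum_{Q \in \ic S} \big( \aver{Q} F^{s_1} \ci_Q \big)^{\frac{q}{s_1}} \big( \aver{Q} G^{s_2} \ci_Q \big)^{\frac{q}{s_2}} |Q| \lesssim \sum_{Q \in \ic S} \int_{E_Q} \big( \ic M_{s_1}(F) \cdot \ic M_{s_2}(G) \big)^q dx \le \int_{\rr R} \big( \ic M_{s_1}(F) \cdot \ic M_{s_2}(G) \big)^q dx,
\]
which is exactly $\big\| \ic M_{s_1}(F) \cdot \ic M_{s_2}(G) \big\|_q^q$. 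This proves \eqref{eq:Fefferman-Stein}. The three displayed specializations then follow: in the locally $L^2$ case one may take $\theta_1,\theta_2$ near $0$ so that $\frac{1+\theta_i}{2}$ is close to $\frac12$, whence $s_1, s_2$ can be taken arbitrarily close to (and above) $2$, and $\ic M_{s_i} \le \ic M_2$ for $s_i \le 2$ — more precisely one takes $s_i$ slightly below $2$ if allowed, or invokes that \eqref{eq:constraints-r_j-p}-type room lets $s_i=2$ work; when only the first index is locally $L^2$ and $q<2$, choosing $\theta_1$ near $0$ and $\theta_2$ near $1$ forces $\frac{1}{s_2} < \frac1q - \frac{\theta_1}{2} \approx \frac1q$ but also $\frac{1}{s_2} < \frac{1+\theta_2}{2} \approx 1$, so one can pick $s_2 = 1+\epsilon$ for suitable $\epsilon>0$ as long as $1+\epsilon$ is admissible, i.e. $\frac{1}{1+\epsilon} < \min(1, \frac1q - \frac{\theta_1}{2})$, which holds for $\epsilon$ small since $q<2$. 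The bilinear maximal reformulation is immediate since trivially $\ic M_{s_1,s_2}(F,G)(x) \ge \big(\aver{Q} F^{s_1}\big)^{1/s_1}\big(\aver{Q} G^{s_2}\big)^{1/s_2}$ for every $Q \ni x$, so the same chain of inequalities — using $\ic M_{s_1,s_2}(F,G)$ in place of the product $\ic M_{s_1}(F)\cdot\ic M_{s_2}(G)$ at the stage where we pass to $E_Q$ — gives the final display for every $0<q<\infty$.

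The main obstacle is purely bookkeeping rather than conceptual: one has to verify carefully that for the specialized ranges (e.g. $s_2 = 1+\epsilon$) the triple of exponents can genuinely be realized by an admissible choice of $\theta_1, \theta_2, \theta_3$ compatible with \eqref{eq:cond-r_j-theta} \emph{and} \eqref{eq:condition-on-s} simultaneously, and that the sparse collection delivered by Theorem \ref{thm:sparse-BHT} indeed has its sparseness constant $\eta$ independent of $\vec f, \vec g$ — which it does, since $\eta$ is fixed in the construction. The tail factor $\ci_Q$ requires a standard but slightly delicate argument: one splits $\rr R$ into $Q$ and its dyadic dilates $2^k Q \setminus 2^{k-1}Q$, on which $\ci_Q \lesssim 2^{-100k}$, and sums the resulting geometric series after bounding each piece by $\ic M_{s_i}(F)(x)$ evaluated at a point of $Q$; the rapid decay absorbs the growth $|2^kQ|/|Q| = 2^k$. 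None of this affects the final constants qualitatively.
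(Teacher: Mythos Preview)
Your approach is correct and matches the paper's: the corollary is deduced from Theorem~\ref{thm:sparse-BHT} by setting $v\equiv 1$, observing that the hypothesis $\frac{1}{s_1}+\frac{1}{s_2}<1+\frac{1}{q}$ leaves room for an admissible $s_3$, and then using the disjoint sets $E_Q$ to bound the sparse sum by $\|\ic M_{s_1}(F)\cdot\ic M_{s_2}(G)\|_q^q$. The paper gives no more detail than this (see Remark~\ref{remark:rangeFeffStein}\,(c)).

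One small correction in your discussion of the local $L^2$ specialization: you write that $s_i$ is taken ``arbitrarily close to (and above) $2$'' and then that ``$\ic M_{s_i}\le\ic M_2$ for $s_i\le 2$,'' which is inconsistent (and the inequality $\ic M_{s_i}\le\ic M_2$ actually goes the other way when $s_i>2$). The clean argument is simply that any $\theta_1,\theta_2>0$ with $\theta_1+\theta_2<1$ already gives $\frac{1+\theta_i}{2}>\frac12$, so $s_i=2$ is directly admissible in \eqref{eq:condition-on-s}, and $\frac{1}{r_i^j}\le\frac12<\frac{1+\theta_i}{2}$ handles \eqref{eq:cond-r_j-theta}. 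No limiting procedure is needed.
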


Such Fefferman-Stein inequalities imply at once the $L^{q_1} \times L^{q_2} \to L^q$ boundedness of the (multiple vector-valued) bilinear Hilbert transform operator, provided $s_1<q_1$ and $s_2<q_2$. From here, the range of boundedness is also visible: $ \frac{1}{q}=\frac{1}{q_1}+\frac{1}{q_2}< \frac{1}{s_1}+\frac{1}{s_2}<1+\frac{\theta_1+\theta_2}{2}<\frac{3}{2}$, and hence $\frac{2}{3}<q$. 

Similar multiple vector-valued and weighted results hold also for paraproducts (multilinear Mikhlin multipliers whose symbol is singular at a point). In the scalar case, Fefferman-Stein inequalities were obtained in \cite{CoifmanMeyer-Commutators}, and a pointwise sparse domination in \cite{Nazarov-Lerner-DyadicCalculus}. For a comprehensive discussion of the general multiple vector-valued case (which includes $L^\infty$ spaces), see Section 6.4 of \cite{sparse-hel}.

\smallskip
The paper is organized as follows: in Section \ref{sec:BHTsurvey} we present an overview of the study of the bilinear Hilbert transform operator, elaborating from the local $L^2$ case of \cite{LaceyThieleBHTp>2} to the localization principle from \cite{vv_BHT}, discussing on the way connections with the outer measure theory of \cite{outer_measures}. In Section \ref{sec:var-Carleson} we illustrate the localization principle for the variational Carleson operator of \cite{variational_Carleson}, which yields a significant simplification of the proof and a Fefferman-Stein inequality similar to Theorem \ref{thm:Fefferman-Stein-BHT}. Sections \ref{sec:method-proof} and \ref{sec:sparse-dom} are devoted to presenting the methods of the proof, for the vector-valued estimates, and sparse vector-valued estimates, respectively. 

\subsection*{Acknowledgments}

C. B. wishes to express her gratitude to Fr\'ed\'eric Bernicot and Teresa Luque for several discussions on sparse domination and weighted theory.

C. B. was partially supported by NSF grant DMS 1500262  and ERC project FAnFArE no. 637510. C. M. was partially supported by NSF grant DMS 1500262; he also acknowledges partial support from a grant from the Ministry of Research and Innovation of Romania, CNCS--UEFISCDI, project PN--III--P4--ID--PCE--2016--0823 within PNCDI--III.

During the Spring Semester of 2017, C. M. was a member of the MSRI in Berkeley, as part of the Program in Harmonic Analysis, and during the Fall semester of 2017, he was visiting the Mathematics Department of the Universit\'e Paris-Sud Orsay, as a Simons Fellow. He is very grateful to both institutions for their hospitality, and to the Simons Foundation for their generous support.

\section{A survey of the bilinear Hilbert transform operator} 
\label{sec:BHTsurvey}

We recall a few properties of the bilinear Hilbert transform operator, following the presentation in \cite{multilinear_harmonic}. In its multiplier form, it can be represented as
\[
BHT(f, g)(x):= \int_{\xi_1<\xi_2} \hat{f}(\xi_1) \hat{g}(\xi_2) e^{2 \pi i x \left( \xi_1+\xi_2  \right)} d \xi_1 d \xi_2.
\]

In consequence, $BHT$ can be written as a superposition of discrete model operators, each of which allows for a decomposition over a rank-1 family $\rr P$ of tri-tiles:
\begin{equation}
\label{eq:BHT-model}
BHT_{\rr{P}}(f,g)(x)=\sum_{P \in \rr{P}} \frac{1}{|I_P|^{1/2}} \langle f, \phi_{P_1}^1 \rangle  \langle g, \phi_{P_2}^2 \rangle \phi^3_{P_3}(x).
\end{equation}

The classical proof consists in using a careful stopping time in order to decompose the collection $\rr P$ into subcollections that are easier to estimate (\emph{trees} or \emph{Carleson-Fefferman sets}, as referred in \cite{LaceyThieleBHTp>2}). Later on, in \cite{biest}, the \emph{bi-est} operator defined by
\begin{equation}
\label{def:bi-est}
(f_1, f_2, f_3) \mapsto \int_{\xi_1< \xi_2<\xi_3} \hat{f}_1(\xi_1)\,  \hat{f}_2(\xi_2) \,  \hat{f}_3(\xi_3) \, e^{2 \pi i x \left( \xi_1+\xi_2+\xi_3  \right)} d\xi_1 d \xi_2 d \xi_3
\end{equation}
was investigated. The problem can be reduced to the study of a local composition of two bilinear Hilbert transforms, and a global estimate for the trilinear form $\Lambda_{BHT; \rr P}$ provides an efficient way of keeping track of the local compositions. Such a global estimate is the \emph{generic size and energy} estimate (formulated in \eqref{eq:gen-size-energy}), which can summed up in the following way: the trilinear form associated to the $BHT_{\rr P}$ operator satisfies
\begin{equation}
\label{eq:BHT-en-size}
\vert \Lambda_{BHT; \rr P} (f, g, h) \vert \lesssim \big( \sssize_{\rr P} \,f \big)^{\theta_1} \cdot \big( \sssize_{\rr P}\, g \big)^{\theta_2} \cdot  \big( \sssize_{\rr P} \, h \big)^{\theta_3} \cdot  \|f\|_2^{1-\theta_1} \cdot  \|g\|_2^{1-\theta_2} \cdot   \|h\|_2^{1-\theta_3},
\end{equation}
where $0 \leq \theta_1, \theta_2, \theta_3<1$ and $\theta_1+\theta_2+\theta_3=1$.

The quantity ``$\sssize_{ \rr P} \, f $" is similar to a maximal operator:
{\fontsize{10}{10}\begin{equation}
\label{def:sssize}
\sssize_{\rr P} \, f :=\sup_{P \in \rr P} \frac{1}{\vert I_P  \vert} \int_{\rr R} \vert f(x)  \vert \cdot \ci_{I_P}^{M}(x) dx, \quad \sssize^s_{\rr P} \, f := \sup_{P \in \rr P} \big( \frac{1}{\vert I_P  \vert} \int_{\rr R} \vert f(x)  \vert^s \cdot \ci_{I_P}^{M}(x) dx \big)^\frac{1}{s}.
\end{equation}}
Since it involves $L^1$ (or $L^s$) averages, it represents an $L^1$ quantity (respectively $L^s$). On the other hand, at the level of the collection of tiles $\rr P$, it should be regarded as an $\ell^\infty$ quantity.

With the purpose of reassembling $\|f\|_p$ out of an $L^1$ quantity and an $L^2$ norm, it is natural to work with restricted type functions: that is, functions that are bounded above by characteristic functions of finite-measure sets. We recall that interpolation theory (more precisely \emph{generalized restricted type interpolation}, as it was called in \cite{wave_packet}) reduces the general case to that of restricted type functions, a simplification often used in the field.

In tackling the $L^{q_1} \times L^{q_2} \to L^q$ boundedness of the bilinear Hilbert transform, there are two cases that are worth a closer look:
\begin{enumerate}[label=(\alph*), ref=\alph*]
\item \label{item-introd-local-L2} the ``local $L^2$" case, when $2<q_1, q_2, q'<\infty$ 
\item\label{item-introd-whole-range} the general case, when $1<q_1, q_2 \leq \infty, \, \frac{2}{3}<q<\infty$.
\end{enumerate}
The first case was treated in \cite{LaceyThieleBHTp>2}, and the second in \cite{initial_BHT_paper}; the methods that apply to situation \eqref{item-introd-local-L2} needed an extra push in order to deal with \eqref{item-introd-whole-range}, and one of the additional features was interpolation between the estimates for the bilinear Hilbert transform and those for its adjoints. As we sketch the proof of the boundedness of $BHT$, we will present the two cases separately, as well as a third situation, when the interpolation of the adjoint operators can be avoided by using a localization technique and additional stopping times.

\subsection{Generic ``size and energy" estimates}
\label{sec:generic-size-energy-est}
Regarded as a bilinear multiplier, the bilinear Hilbert transform is defined by the symbol $sgn (\xi_1-\xi_2)$, or equivalently (up to adding $f(x) \cdot g(x)$ and multiplying by a constant) by the symbol $\one_{\lbrace \xi_1<\xi_2 \rbrace }$. Hence the model operator defined in \eqref{eq:BHT-model} is indexed after a family $\rr P$ of tri-tiles $P=(P_1, P_2, P_3)$ which is said to have rank $1$. More exactly, each $P_j=I_P \times \omega_{P_j}$ is a rectangle of area $1$, where $I_P$ is a dyadic interval, while $\omega_{P_j}$ is in a (possibly) translated dyadic grid, which is considered to be fixed. The \emph{rank $1$} condition refers to the fact that once one of the $\omega_{P_{j_0}}$ is fixed, we know the position of the other $\omega_{P_j}$s as well. In fact, $\ds \lbrace   \omega_{P_1} \times \omega_{P_2} \rbrace_{P \in \rr P}$ represents a Whitney decomposition of the frequency region $\lbrace \xi_1<\xi_2  \rbrace$, and in consequence $\xi_1 \in \omega_{P_1}, \xi_2 \in \omega_{P_2}$, while $\xi_1+\xi_2 \in \omega_{P_3}$.

An order relation can be introduced on the collection of time-frequency tiles, which will further allow us to define \emph{tree structures} on the rank-$1$ collection $\rr P$ of tri-tiles.

\begin{definition}
If $P=I_P \times \omega_P$ and $P'=I_{P'}\times \omega_{P'}$ are two tiles, we say that
\begin{itemize}
\item  $P' \leq P$, if $I_{P'} \subseteq I_P$ and $\omega_P \subseteq 3 \omega_{P'}$;
\item  $P' \lesssim P$, if $I_{P'} \subseteq I_P$ and $\omega_P \subseteq 100 C_0 \omega_{P'}$, for a fixed constant $C_0$;
\item  $P' \lesssim ' P$, if $P' \lesssim P$ but $P' \nleq P$.
\end{itemize}

If $j \in \lbrace 1, 2, 3 \rbrace$, a \emph{$j$-tree} with top $P_T=(P_{T,1}, P_{T,2}, P_{T,3} )$ is a subcollection $T \subseteq \rr P$ of tri-tiles such that
\[
P_j \leq P_{T,j} \quad \text{for all} \quad P \in T.
\]

As a consequence of the rank-$1$ property, if $T$is a $j$-tree and $\jmath \in \lbrace 1, 2, 3 \rbrace, \jmath \neq j$, we have that $P_{\jmath} \lesssim' P_{T, \jmath}$ for all $P \in T$. Sometimes, we say that $T$ is a \emph{$\jmath$-lacunary} tree.
\end{definition}

A key step in proving the boundedness of $BHT$ in \cite{initial_BHT_paper} consists in organizing $\rr P$ into collections of trees (which can be thought of as \emph{forests}, in the spirit of \cite{fefferman-Carleson}), according to their sizes. This motivates the concept of \emph{size} associated to a subcollection $\rr P' \subseteq \rr P$: for any $1 \leq j \leq 3$,
\begin{equation}
\label{eq:def-size}
\ssize_{\rr P'}^j (f):=\sup_{T \subseteq \rr P'}  \big( \frac{1}{|I_T|} \sum_{P \in T} | \langle f, \phi_{P_j}^j    \rangle|^2    \big)^\frac{1}{2},
\end{equation}
where $T$ ranges over all $j$-lacunary trees $T \subseteq \rr P'$ (or equivalently, over all trees $T \subseteq \rr P'$ which are $i$-trees for some $1 \leq i \leq 3, i \neq j$).

Due to John-Nirenberg inequality, the $L^2$-defined $\ssize_{\rr P'}^j (f)$ is equivalent to an $L^{1, \infty}$ quantity
 \[
 \ssize_{\rr P'}^j (f):=\sup_{T \subseteq \rr P'} \frac{1}{|I_T|^{1/2}} \big\|   \big( \sum_{P \in T} \frac{| \langle f, \phi_{P_j}^j    \rangle |^2}{|I_P|} \one_{I_P}  \big)^\frac{1}{2} \big\|_2 \sim \sup_{T \subseteq \rr P'} \frac{1}{|I_T|} \big\|   \big( \sum_{P \in T} \frac{| \langle f, \phi_{P_j}^j    \rangle |^2}{|I_P|} \one_{I_P}  \big)^\frac{1}{2} \big\|_{1, \infty},
 \]
where the sup is considered as before over all $j$-lacunary trees $T \subseteq \rr P'$. Because of this, and due to the weak type $(1, 1)$ boundedness of the square function, we have that
\[
 \ssize_{\rr P'}^j (f) \lesssim \sssize_{\rr P'}(f)=\sup_{P \in \rr P'} \frac{1}{\vert I_P  \vert} \int_{\rr R} \vert f(x)  \vert \cdot \ci_{I_P}^{M}(x) dx,
\]
hence $\ssize_{\rr P'}^j (f)$, acting on functions, is indeed an $L^1$ quantity.

In \cite{biest}, where local compositions of two $BHT$-like operators were investigated, a concept dual to the $\ssize$ was introduced: given a subcollection $\rr P' \subseteq \rr P$, 
\begin{equation}
\label{eq:def-energy}
\eenergy_{\rr P'}^j (f):=\sup_{n \in \rr Z} \sup_{\mathbf{T}} 2^n \big( \sum_{T \in \mathbf{T}} |I_T|  \big)^\frac{1}{2}, 
\end{equation}
where $\mathbf{T}$ ranges over all subcollections of $j$-strongly disjoint trees $T \subseteq \rr P'$ (which are $i$-trees for some $1 \leq i \leq 3, i \neq j$)  with the property that 
\[
\big( \frac{1}{|I_T|} \sum_{P \in T} | \langle f, \phi_{P_j}^j    \rangle|^2    \big)^\frac{1}{2} \geq 2^{n-1},
\]
and at the same time, for any subtree $T' \subseteq T, T \in \mathbf{T}$, we have 
\[
\big( \frac{1}{|I_{T'}|} \sum_{P \in T'} | \langle f, \phi_{P_j}^j    \rangle|^2    \big)^\frac{1}{2} \leq 2^n.
\]

As the name suggests, the $\eenergy$ is an $L^2$ quantity and in fact we have that
\[
\eenergy_{\rr P'}^j (f) \lesssim \|f\|_2.
\]

On this account, the estimate \eqref{eq:BHT-en-size} of the trilinear form associated with $BHT$ is a consequence of the more general
\begin{align}
\label{eq:gen-size-energy}
\vert \Lambda_{BHT;\rr P}(f, g, h)\vert  & \lesssim \big( \ssize_{\rr P}^1\, f \big)^{\theta_1} \cdot \big( \ssize_{\rr P}^2 \, g \big)^{\theta_2} \cdot \big( \ssize_{\rr P}^3 \, h \big)^{\theta_3} \\
& \cdot \big( \eenergy_{\rr P}^1\, f  \big)^{1-\theta_1} \cdot \big( \eenergy_{\rr P}^2 \, g  \big)^{1-\theta_2} \cdot \big( \eenergy_{\rr P}^3 h \, \big)^{1-\theta_3},\nonumber
\end{align}
which holds true whenever $0 \leq \theta_1, \theta_2, \theta_3 <1$ with $\theta_1+ \theta_2+\theta_3 =1$.

Strictly speaking, an earlier version of the energy was introduced in \cite{biest-Walsh}, where a Walsh model of the \emph{bi-est} operator was studied. This energy was defined as
\[
``\eenergy_{\rr P}(f)":=\sup_{\mathbf{T}} \big( \sum_{T \in \mathbf{T}} \sum_{P \in T} \vert  \langle f, \phi_P \rangle\vert^2 \big)^\frac{1}{2},
\]
(where supremum is taken over collections of strongly disjoint trees), and clearly it is larger than $\eenergy_{\rr P}(f)$ from \eqref{eq:def-energy}. In the Walsh case, disjointness of tiles immediately entails orthogonality and hence $``\eenergy_{\rr P}(f)" \lesssim \|f\|_2$. But this need not be the case in the Fourier setting: $``\eenergy_{\rr P}(f)"$ it is not necessarily bounded above by $\|f\|_2$ (see \cite[Chapter~5]{wave_packet}). This issue led to the definition of energy as in \eqref{eq:def-energy}. The lower bound condition which needs to be satisfied by each of the considered trees makes the $\eenergy$ a weak-$L^2$ rather than an $L^2$ quantity.

 \smallskip
Now we recall how the general size and energy estimate taking the form of \eqref{eq:BHT-en-size} or \eqref{eq:gen-size-energy} implies the boundedness of $BHT$, when working with restricted type functions.

\subsubsection*{\textbf{The local $L^2$ case \eqref{item-introd-local-L2}:}}
Given that $\eenergy_{\rr P'}^j (f) \lesssim \|f\|_2$, we obtain for functions $f, g, h$ satisfying $|f|\leq \one_F, |g| \leq \one_G$ and $|h| \leq \one_H$
\begin{equation}
\label{eq:discard-size}
\vert \Lambda_{BHT;\rr P}(f, g, h)\vert   \lesssim  |F|^\frac{1-\theta_1}{2} \, |G|^\frac{1-\theta_2}{2} \, |H|^\frac{1-\theta_3}{2}.
\end{equation}
Here we can discard the $\ssize_{\rr P}$ part of \eqref{eq:gen-size-energy}, since the functions are all bounded above by characteristic functions, and in consequence $\ssize_{\rr P} \lesssim 1$. Restricted type interpolation then implies the boundedness of $BHT$ in the ``local $L^2$" range: $BHT:L^p \times L^q \to L^s$, whenever $2< p, q, s' <\infty$ with $\frac{1}{p}+\frac{1}{q}=\frac{1}{s}$.

\subsubsection*{\textbf{Whole range: case \eqref{item-introd-whole-range}:}}
Thanks to restricted weak type interpolation (see \cite{multilinearMTT}, \cite{wave_packet}, \cite{vv_BHT}, \cite{multilinear_harmonic}), the framework can be simplified to the following: given $F, G, H$ measurable sets of finite measure, if suffices to find $H' \subseteq H$ major subset (this means $|H'|>\dfrac{|H|}{2}$) so that
\begin{equation}
\label{eq:tril-form-BHT}
\vert \Lambda_{\rr P}(f, g, h) \vert \lesssim |F|^{a_1} \cdot |G|^{a_2} \cdot  |H|^{a_3}
\end{equation}
for all functions $f, g$ and $h$ satisfying $|f| \leq \one_F, |g| \leq \one_G$ and $|h| \leq \one_{H'}$, and for various parameters $a_1, a_2, a_3$ with the property that $a_1+a_2+a_3=1, \, a_1, a_2 \in (0,1), a_3 \in (-1, 1)$. The numbers $a_1, a_2, a_3$ will stand for reciprocals of Lebesgue exponents $\frac{1}{p}, \frac{1}{q}, \frac{1}{s'}$; since $\frac{2}{3} < s<\infty$, $\frac{1}{s'}$ lies inside the interval $\big(-\frac{1}{2}, 1 \big)$.

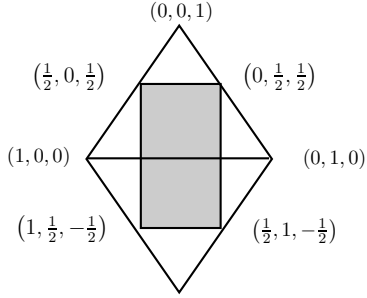
\begin{figure}
\begin{center}
\psscalebox{.7 .7} % Change this value to rescale the drawing.
{
\begin{pspicture}(3.5 ,-4.521844)(11.520909,2)
\definecolor{colour0}{rgb}{0.8,0.8,0.8}
\psframe[linecolor=black, linewidth=0.04, fillstyle=solid,fillcolor=colour0, dimen=outer](8.877778,-0.48926708)(7.322222,-3.2670448)
\rput[bl](4.672727,-2.0963378){ $\left(1, 0, 0 \right)$}
\rput[bl](4.9636364,-3.5145197){\large $\left( 1, \frac{1}{2}, -\frac{1}{2}\right)$}
\rput[bl](7.509091,0.66729856){$\left( 0, 0, 1\right)$}
\rput[bl](10.290909,-2.1327014){ $\left( 0, 1, 0 \right)$}
\rput[bl](9.309091,-3.5327015){ $\left( \frac{1}{2}, 1, -\frac{1}{2}\right)$}
\pscustom[linecolor=black, linewidth=0.04]
{
\newpath
\moveto(1.2857143,-5.7638702)
}
\pscustom[linecolor=black, linewidth=0.04]
{
\newpath
\moveto(5.0,-5.7638702)
}
\psline[linecolor=black, linewidth=0.04](6.3333335,-1.9226004)(9.777778,-1.9226004)
\psdiamond[linecolor=black, linewidth=0.04, dimen=outer](8.071428,-1.9352988)(1.7857143,2.5714285)
\rput[bl](5.2636366,-0.6145196){\large $\left(  \frac{1}{2}, 0, \frac{1}{2}\right)$}
\rput[bl](9.263637,-0.6145196){\large $\left( 0, \frac{1}{2}, \frac{1}{2}\right)$}
\end{pspicture}}
\end{center}
\caption{Range of Lebesgue exponents for $BHT$ obtained directly from the generic size-energy estimate}
\label{fig1}
\end{figure}
%\captionof{figure}{Range of Lebesgue exponents for $BHT$ obtained directly from the generic size-energy estimate}

\smallskip

The major subset $H'$ is constructed by removing a certain ``small" part of $H$ where $\Lambda_{\rr P}(f, g, h)$ is difficult to control: if
\begin{equation}
\label{eq:exceptional-set}
\Omega:=\big\lbrace x: \ic M(\one_F) (x) > C \frac{|F|}{|H|} \big\rbrace \cup \big\lbrace x: \ic M(\one_G) (x) > C \frac{|G|}{|H|} \big\rbrace,
\end{equation}
then we set $H':=H \setminus \Omega$, which is a major subset if $C$ is chosen large enough.

As a consequence, the ``sizes" and ``energies" can be estimated as follows:
\[
\ssize_{\rr P}^1 (f) \lesssim \min \big( \frac{|F|}{|H|}, 1   \big), \quad \ssize_{\rr P}^2 (g) \lesssim \min \big( \frac{|G|}{|H|}, 1   \big), \quad \ssize_{\rr P}^3 (h) \lesssim 1, 
\] 
\begin{flushleft}
\[
\text{   and also} \quad \eenergy_{\rr P}^1 (f) \lesssim |F|^\frac{1}{2}, \quad \eenergy_{\rr P}^2 (g) \lesssim |G|^\frac{1}{2}, \quad \eenergy_{\rr P}^3 (h) \lesssim |H|^\frac{1}{2}. 
\]
\end{flushleft}

All of the above imply, for any $0 \leq a, b \leq 1$, and any $0\leq \theta_1, \theta_2, \theta_3<1$ with $\theta_1+\theta_2+\theta_3=1$, that 
\begin{align*}
\vert  \Lambda_{\rr P}(f, g, h)\vert  & \lesssim \Big( \frac{|F|}{|H|} \Big)^{a \theta_1} \cdot \Big( \frac{|G|}{|H|} \Big)^{b \theta_2} \cdot |F|^\frac{1-\theta_1}{2}\cdot  |G|^\frac{1-\theta_2}{2} |H|^\frac{1-\theta_3}{2}  \\ &  = |F|^{a \theta_1+\frac{1-\theta_1}{2 }} \cdot  |G|^{b \theta_2+\frac{1-\theta_2}{2 }} \cdot  |H|^{\frac{1-\theta_3}{2 }-a \theta_1-b \theta_2}.
\end{align*}

\smallskip

This is similar to the estimate \eqref{eq:tril-form-BHT}, which was our goal. However, if we set 
\[
a_1=a \theta_1+\frac{1-\theta_1}{2 }, \quad a_2=b \theta_2+\frac{1-\theta_2}{2 }, \quad a_3=\frac{1-\theta_3}{2 }-a \theta_1-b \theta_2, 
\]
the conditions we had on $\theta_j, a$ and $b$ imply that 
\begin{equation}
\label{eq:silly-eq}
 \vert a_1-a_2 \vert <\frac{1}{2},
\end{equation}
and we only obtain $L^p$ estimates for the Lebesgue exponents presented in Figure \ref{fig1}. A routine check shows that we can obtain $(a_1,a_2, a_3)$ arbitrarily close to any of the points
\[
\big(\frac{1}{2}, 0, \frac{1}{2} \big), \quad \big(1, \frac{1}{2}, - \frac{1}{2} \big),  \quad \big(\frac{1}{2}, 1, -\frac{1}{2} \big), \quad \big(0, \frac{1}{2}, \frac{1}{2} \big), 
\]
whose convex hull is the region in Figure \ref{fig1}. However, it is impossible to have $(a_1, a_2, a_3)$ close to the points $(1, 0, 0)$ or $(0, 1, 0)$: if $a_1 \sim 1$, then necessarily $a \sim 1, \theta_1 \sim 1$, which implies $\theta_2 \sim 0$ and in consequence $\ds a_2 \sim \frac{1}{2}$.

In order to obtain the rest of the $L^p$ estimates (see Figure \ref{fig:rangeBHT}), we have to use restricted weak type estimates for the adjoints of $BHT$, and then to interpolate carefully between them.
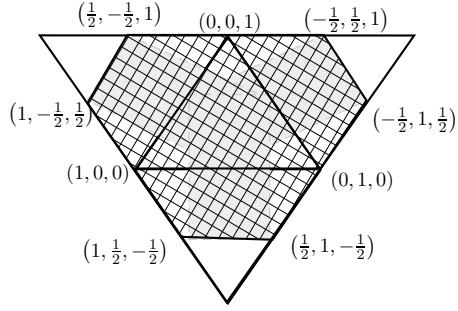
\begin{figure}
\psscalebox{.7 .7} % Change this value to rescale the drawing.
{
\begin{pspicture}(0,-5)(12.303637,2)
\definecolor{colour0}{rgb}{0.8,0.8,0.8}
\definecolor{colour1}{rgb}{0.92941177,0.92941177,0.92941177}
\definecolor{colour2}{rgb}{0.93333334,0.92941177,0.92941177}
\pspolygon[linecolor=colour0, linewidth=0.024, linestyle=dashed, dash=0.17638889cm 0.10583334cm, fillstyle=solid,fillcolor=colour1](7.2972975,-0.55923706)(8.054054,-1.8565344)(10.7027025,-0.6132911)(9.8918915,0.62995213)(9.8918915,0.62995213)
\rput[bl](4.8727274,-2.1963377){ $\left(1, 0, 0 \right)$}
\rput[bl](5.163636,-3.7145195){ $\left( 1, \frac{1}{2}, -\frac{1}{2}\right)$}
\rput[bl](7.509091,0.66729856){$\left( 0, 0, 1\right)$}
\rput[bl](9.890909,-2.3327014){ $\left( 0, 1, 0 \right)$}
\rput[bl](9.109091,-3.6327014){ $\left( \frac{1}{2}, 1, -\frac{1}{2}\right)$}
\pscustom[linecolor=black, linewidth=0.04]
{
\newpath
\moveto(1.2857143,-5.7638702)
}
\pscustom[linecolor=black, linewidth=0.04]
{
\newpath
\moveto(5.0,-5.7638702)
}
\psline[linecolor=black, linewidth=0.04](6.3333335,-1.9226004)(9.777778,-1.9226004)
\psdiamond[linecolor=black, linewidth=0.04, dimen=outer](8.071428,-1.9352988)(1.7857143,2.5714285)
\pspolygon[linecolor=black, linewidth=0.04](4.5,0.62184405)(11.609756,0.619405)(8.04878,-4.453766)
\pspolygon[linecolor=colour0, linewidth=0.024, linestyle=dashed, dash=0.17638889cm 0.10583334cm, fillstyle=solid,fillcolor=colour1](8.810811,-0.55923706)(8.054054,-1.9105884)(5.4054055,-0.6132911)(6.1621623,0.57589805)(6.1621623,0.62995213)
\psframe[linecolor=colour0, linewidth=0.024, linestyle=dashed, dash=0.17638889cm 0.10583334cm, fillstyle=solid,fillcolor=colour2, dimen=outer](8.877778,-0.48926708)(7.322222,-3.2670448)
\pspolygon[linecolor=black, linewidth=0.04, fillstyle=crosshatch, hatchwidth=0.008, hatchangle=60, hatchsep=0.2212](5.4054055,-0.66734517)(7.189189,-3.2078857)(8.918919,-3.2619398)(10.7027025,-0.6132911)(9.8918915,0.62995213)(6.1621623,0.62995213)
\pstriangle[linecolor=black, linewidth=0.04, dimen=outer](8.075,-1.928156)(3.55,2.55)
\rput[bl](3.7636364,-1.1145196){ $\small\left( 1, -\frac{1}{2}, \frac{1}{2}\right)$}
\rput[bl](10.663636,-1.2145196){ $ \small\left( -\frac{1}{2}, 1, \frac{1}{2}\right)$}
\rput[bl](9.363636,0.6854804){ $\tiny \left( -\frac{1}{2}, \frac{1}{2}, 1\right)$}
\rput[bl](5.0636363,0.7854804){ $\small \left( \frac{1}{2}, -\frac{1}{2}, 1 \right)$}
\end{pspicture}
}
\caption{Range of Lebesgue exponents for $BHT$ and its adjoints $BHT^{*, 1}$ and $BHT^{*,2}$}
\label{fig-adjoints-interp}
\end{figure}
\smallskip

The adjoint $BHT^{*,1}$, which is defined by
\[
\Lambda_{BHT; \rr P}(f, g, h)= \Lambda_{BHT^{*,1}; \rr P}(h, g, f)=\int_{\rr R} BHT^{*,1}_{\rr P}(h, g)(x) f(x) dx,
\]
is bounded whenever the Lebesgue exponents $\big(\frac{1}{p}, \frac{1}{q}, \frac{1}{s'} \big)$ are located in the convex hull of 
\[
\big( \frac{1}{2}, \frac{1}{2}, 0 \big), \quad \big( -\frac{1}{2}, 1, \frac{1}{2}\big), \quad \big( -\frac{1}{2}, \frac{1}{2}, 1 \big), \quad \big( \frac{1}{2}, 0, \frac{1}{2}\big).
\]

Similarly, $BHT^{*,2}$ is bounded if $\big(\frac{1}{p}, \frac{1}{q}, \frac{1}{s'} \big)$ lie inside the convex hull of 
\[
\big( \frac{1}{2}, \frac{1}{2}, 0 \big), \quad \big( 1, -\frac{1}{2}, \frac{1}{2}\big), \quad \big( \frac{1}{2}, -\frac{1}{2}, 1 \big), \quad \big( 0, \frac{1}{2}, \frac{1}{2}\big).
\]

As a result, the trilinear form is bounded whenever $\big(\frac{1}{p}, \frac{1}{q}, \frac{1}{s'} \big)$ are contained in the convex hull of 
\[
\big( 1, -\frac{1}{2}, \frac{1}{2}, 0 \big), \quad \big( 1, \frac{1}{2}, -\frac{1}{2}\big), \quad \big( \frac{1}{2}, 1, -\frac{1}{2}\big), \quad \big(- \frac{1}{2}, 1, \frac{1}{2}\big), \quad \big(- \frac{1}{2}, \frac{1}{2}, 1\big), \quad \big(\frac{1}{2}, - \frac{1}{2}, 1\big),
\]
which includes $Range(BHT)$ (compare Figure \ref{fig:rangeBHT} to Figure \ref{fig-adjoints-interp}).

\subsection{Localization: the helicoidal method at level $0$}
\label{sec:localization}
We now take a look at the localization principle of \cite{vv_BHT}, which proves to be relevant even in the scalar case: a consequence which will be discussed in Section \ref{sec:BHT-avoid-interpolation} is that all the known $L^p$ estimates for $BHT$ can be obtained without using interpolation of adjoint operators.
\smallskip
This localization technique was first designed in \cite{myphdthesis} for proving the Rubio de Francia inequality for iterated Fourier integrals of Section \ref{sec:RF-for-BHT}. Later it became clear that we can capitalize on the local estimates if we also take into account the operatorial norm, and this led to the helicoidal method. In order to make these ideas clear, we need to introduce the notion of a localized size.

\begin{definition}
If $I_0$ is a fixed dyadic interval and $\ds \rr P(I_0):= \lbrace P \in \rr P: I_P \subseteq I_0   \rbrace $, the localized size is defined by
\[
\sssize_{\rr P(I_0)} \, f :=\max \big( \frac{1}{\vert I_0  \vert} \int_{\rr R} \vert f(x)  \vert \cdot \ci_{I_0}^{M}(x) dx ,\sup_{P \in \rr P(I_0)} \frac{1}{\vert I_P  \vert} \int_{\rr R} \vert f(x)  \vert \cdot \ci_{I_P}^{M}(x) dx\big).
\]
The difference between this and formula \eqref{def:sssize} is that here we take into account the average over the interval $I_0$ as well.
\end{definition}

The generic size and energy estimate \eqref{eq:gen-size-energy} is re-interpreted locally in the form of the following lemma:

\begin{lemma}
\label{lemma-local-gen-size-energy}
Let $I_0$ be a dyadic interval, $F, G, H$ sets of finite measure, $\rr P$ a rank-$1$ collection of tri-tiles and $\ds \rr P(I_0):= \lbrace P \in \rr P: I_P \subseteq I_0   \rbrace$. Then we have
\begin{equation}
\label{eq:local-gen-size-energy}
\vert \Lambda_{\rr P (I_0)} (f, g, h)  \vert \lesssim \big(  \sssize _{\rr P(I_0)} \one_F  \big)^\frac{1+\theta_1}{2} \cdot \big(  \sssize _{\rr P(I_0)} \one_G  \big)^\frac{1+\theta_2}{2} \cdot \big(  \sssize _{\rr P(I_0)} \one_H  \big)^\frac{1+\theta_3}{2} \cdot |I_0|,
\end{equation}
for any functions $f, g, h$ satisfying $|f| \leq \one_F, |g| \leq \one_G, |h| \leq \one_H$, and any $0 \leq \theta_1, \theta_2, \theta_3 <1$, with $\theta_1+\theta_2 +\theta_3 =1$.
\begin{proof}
Since all the time-frequency tri-tiles have their spatial information concentrated inside $I_0$ (the $I_P \subseteq I_0$ assumption), we would expect to have a better estimate for the energy: $\ds \eenergy_{\rr P(I_0)}^1(f) \lesssim \| f \cdot \ci_{I_0}\|_2 $. This is indeed the case, as we will see shortly (although formulated in a different way, this represented a central estimate in \cite{multilinearMTT}). Such an inequality, together with \eqref{eq:gen-size-energy}, immediately imply the local estimate above since $\frac{1-\theta_1}{2}+\frac{1-\theta_2}{2} +\frac{1-\theta_3}{2}=1$ and
\[
\eenergy_{\rr P(I_0)}^1(f) \lesssim \| f \cdot \ci_{I_0}\|_2 \lesssim \| \one_F \cdot \ci_{I_0} \|_2 \lesssim  \big( \sssize_{\rr P(I_0)} \one_F \big)^{\frac{1}{2}} \cdot |I_0|^\frac{1}{2}.
\] 

We outline the ideas behind the estimate $\ds \eenergy_{\rr P(I_0)}(f) \lesssim \|f \cdot \ci_{I_0}\|_2$, as they appeared in \cite{multilinearMTT}. We pick $n \in \rr Z$ and $\mathbf{T}$ a collection of disjoint trees as in the energy definition from \eqref{eq:def-energy}. Then we have 
\[
\big(\eenergy_{\rr P(I_0)}(f) \big)^2 \sim 2^{2n} \, \sum_{T \in \mathbf{T}} |I_T|,
\]
and it will be enough to show 
\[
\sum_{T \in \mathbf{T}} \sum_{P \in T} |\langle f, \phi_P \rangle|^2 \lesssim 2^{n} \big( \sum_{T \in \mathbf{T}} |I_T|  \big)^\frac{1}{2} \, \| f \cdot \ci_{I_0}\|_2.
\]
Following a $T\,T^*$-like argument, this reduces to proving
\begin{equation}
\label{eq:ineq-TT-star}
\| \big( \sum_{T \in \mathbf{T}} \sum_{P \in T} \langle f, \phi_P \rangle \phi_P   \big) \, \ci_{I_0}^{-\frac{N}{2}}    \|_2  \lesssim 2^{n} \big( \sum_{T \in \mathbf{T}} |I_T|  \big)^\frac{1}{2}.
\end{equation}
Note that on the left hand side we are multiplying by $\ci_{I_0}^{-\frac{N}{2}}$, a function which grows like $\big(\frac{\dist(x, I_0)}{|I_0|} \big)^{N}$ away from $I_0$. In the classical case, when we want to prove $\eenergy_{\rr P}(f) \lesssim \|f\|_2$, such an expression doesn't appear and the proof relies only on orthogonality. In order to prove \eqref{eq:ineq-TT-star}, we will also use space or frequency decay.

As prescribed in \cite{multilinearMTT}, we should aim to prove for any $I \subseteq I_0$
\[
\| \big( \sum_{T \in \mathbf{T}} \sum_{\substack{P \in T \\ I_P =I}} \langle f, \phi_P \rangle \phi_P   \big) \, \ci_{I_0}^{-\frac{N}{2}}    \|_{L^2(\rr R \setminus 2\,I_0)}^2  \lesssim \frac{|I|^3}{|I_0|^3} \,  2^{2n}\, \big( \sum_{T \in \mathbf{T}} |I_T|  \big).
\]
At the end of a straightforward calculation, this would immediately yield \eqref{eq:ineq-TT-star}, since on $2\, I_0$ the function $\ci_{I_0}^{-\frac{N}{2}}$ is constant. On $\rr R \setminus 2I_0$ we use a smooth decomposition of $\ci_{I_0}^{-\frac{N}{2}}$ (see for example \cite{multilinear_harmonic}):
\[
\ci_{I_0}^{-\frac{N}{2}}=\sum_{\kappa \geq 1} 2^{\frac{\kappa \, N}{2} } \tilde{\ci}_{I_0, \kappa}, 
\]
where $\tilde \ci_{I_0, \kappa}$ is a smooth function adapted to the set $2^{\kappa+1}I_0 \setminus 2^{\kappa}I_0$. This allows for a simplification in the $\| \cdot \|_{L^2(\rr R \setminus 2\,I_0)}$ norm, since now we need an upper bound for
\[
\| \big( \sum_{T \in \mathbf{T}} \sum_{\substack{P \in T \\ I_P =I}} \langle f, \phi_P \rangle \phi_P   \big) \, \tilde \ci_{I_0}    \|^2_{L^2(\rr R)},
\]
albeit with a $2^{-\kappa (N+1)}$ decay.
Such an expression is easier to work with, as it equals 
\[
\sum_{T \in \mathbf{T}} \sum_{\substack{P \in T \\ I_P =I}} \sum_{T' \in \mathbf{T}} \sum_{\substack{P' \in T' \\ I_{P'} =I}} \langle f, \phi_P  \rangle \overline{\langle f, \phi_{P'}  \rangle}  \int_{\rr R} \phi_P(x) \overline{\phi_{P'}(x)} \tilde \ci_{I_0}^2(x) dx,
\]
and we already know that $|\langle f, \phi_P  \rangle| \leq 2^n |I_P|^{\frac{1}{2}}$ and similarly for $P'$. We fix $P \in T$ a tile; this will imply that all other tiles $P'$ with $I_P'=I=I_P$ come from different trees and all the frequency intervals $\omega_{P'}$ are mutually disjoint. By translation invariance we can also assume $\omega_P$ is centered at $0$. Using integration by parts $M$ times, we obtain that 
{\fontsize{10}{10}\begin{align*}
\big| \int_{\rr R} \phi_P(x) \overline{\phi_{P'}(x)} \tilde \ci_{I_0}^2(x) dx  \big| = \big| \int_{\rr R} \Phi^{[M]}_P(x) \frac{d^M}{d \, x^M} (\tilde \ci_{I_0}^2(x) ) dx  \big| \lesssim (|I|\,\dist( \omega_P, \omega_{P'}))^{-M} \, \big( \frac{|I|}{|I_0|} \big)^{M-1}  2^{-M \,\kappa}.
\end{align*}}

Above $\frac{d^M}{ d\, x^M} \Phi^{[M]}_P(x)= \phi_P(x) \overline{\phi_{P'}(x)}$ and because the Fourier transform of $\phi_P(x) \overline{\phi_{P'}(x)}$ is supported on the set $\lbrace |\xi| \sim  |\omega_{P'}|=\dist(\omega_P, \omega_{P'}) \rbrace$, we have 
\[
|\Phi^{[M]}_P(x)| \lesssim |I|^{-1} \dist(\omega_P, \omega_{P'})^{-M}.
\]

Because of the frequency disjointness, we can clearly sum in $P'$ an eventually we obtain
\[
\| \big( \sum_{T \in \mathbf{T}} \sum_{\substack{P \in T \\ I_P =I}} \langle f, \phi_P \rangle \phi_P   \big) \, \tilde \ci_{I_0}    \|^2_{L^2(\rr R)} \lesssim 2^{\kappa (N-M)} \frac{|I|^3}{|I_0|^3} \,  2^{2n}\, \big( \sum_{T \in \mathbf{T}} |I_T|  \big).
\]
Afterwards we sum in $\kappa$ (we just choose $M > N +10$) and over $I \subseteq I_0$ to obtain \eqref{eq:ineq-TT-star}.
\end{proof}
\end{lemma}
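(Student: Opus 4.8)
The plan is to obtain \eqref{eq:local-gen-size-energy} from the generic size--energy estimate \eqref{eq:gen-size-energy} (applied to the subcollection $\rr P(I_0)$) together with one extra ingredient: a \emph{localized} energy bound
\[
\eenergy_{\rr P(I_0)}^j(f) \lesssim \| f \cdot \ci_{I_0} \|_2, \qquad j \in \{1,2,3\},
\]
which is the only place where the hypothesis $I_P \subseteq I_0$ for all $P \in \rr P(I_0)$ is used. Granting this, the lemma is immediate. For $|f| \le \one_F$ one has, on the one hand, $\ssize_{\rr P(I_0)}^j(f) \le \ssize_{\rr P(I_0)}^j(\one_F) \lesssim \sssize_{\rr P(I_0)}\one_F$ (via the $L^{1,\infty}$ reformulation of the size and the weak $(1,1)$ bound for the square function recalled above), and, on the other hand, $\| f \cdot \ci_{I_0} \|_2 \le \| \one_F \cdot \ci_{I_0}\|_2 \lesssim (\sssize_{\rr P(I_0)}\one_F)^{1/2}\,|I_0|^{1/2}$. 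Feeding these, and the analogous bounds for $g$ and $h$, into \eqref{eq:gen-size-energy} and using $\tfrac{1-\theta_1}{2}+\tfrac{1-\theta_2}{2}+\tfrac{1-\theta_3}{2}=1$, the powers of $\sssize_{\rr P(I_0)}\one_F$ combine into the exponent $\theta_1+\tfrac{1-\theta_1}{2}=\tfrac{1+\theta_1}{2}$ (and similarly for $G$, $H$), while the three half-powers of $|I_0|$ collect into a single $|I_0|$; that is exactly \eqref{eq:local-gen-size-energy}.

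The real content is therefore the localized energy estimate, and this is where I expect the main difficulty. First I would fix $n\in\rr Z$ and a family $\mathbf{T}$ of $j$-strongly disjoint trees realizing the energy as in \eqref{eq:def-energy}, so that $\big(\eenergy_{\rr P(I_0)}^j(f)\big)^2\sim 2^{2n}\sum_{T\in\mathbf{T}}|I_T|$, and then reduce, by a $TT^*$ argument, to
\[
\Big\| \Big( \sum_{T\in\mathbf{T}}\sum_{P\in T}\langle f,\phi_P\rangle\phi_P \Big)\,\ci_{I_0}^{-N/2} \Big\|_2 \lesssim 2^n \Big(\sum_{T\in\mathbf{T}}|I_T|\Big)^{1/2}.
\]
On $2I_0$ the weight $\ci_{I_0}^{-N/2}$ is comparable to a constant, so the classical orthogonality proof of $\eenergy_{\rr P}(f)\lesssim\|f\|_2$ applies there verbatim; the trouble is the complement $\rr R\setminus 2I_0$, where the weight grows polynomially and orthogonality alone is insufficient. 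There the plan is to freeze a spatial scale $I\subseteq I_0$, decompose $\ci_{I_0}^{-N/2}=\sum_{\kappa\ge1}2^{\kappa N/2}\tilde\ci_{I_0,\kappa}$ with $\tilde\ci_{I_0,\kappa}$ adapted to the annulus $2^{\kappa+1}I_0\setminus 2^\kappa I_0$, expand the square, and estimate each cross term $\int\phi_P\overline{\phi_{P'}}\,\tilde\ci_{I_0,\kappa}^2$ by integrating by parts $M$ times. Because $\widehat{\phi_P\overline{\phi_{P'}}}$ is supported where $|\xi|\sim\dist(\omega_P,\omega_{P'})$, and distinct trees of $\mathbf{T}$ contribute frequency-disjoint tiles at a common scale, each such term gains a factor $\big(|I|\,\dist(\omega_P,\omega_{P'})\big)^{-M}\big(|I|/|I_0|\big)^{M-1}2^{-M\kappa}$; using $|\langle f,\phi_P\rangle|\le 2^n|I_P|^{1/2}$ and summing the frequency-disjoint $P'$ then yields, for each $\kappa$ and each scale $I$, a contribution bounded by $2^{\kappa(N-M)}\,\tfrac{|I|^3}{|I_0|^3}\,2^{2n}\sum_{T}|I_T|$.

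The remaining obstacle is one of bookkeeping, but it is the crux: one must take $M$ large compared with $N$ (for instance $M>N+10$) so that the integration-by-parts gain $2^{-M\kappa}$ strictly beats the growth $2^{\kappa N/2}$ of the weight uniformly over the annuli; the geometric sum in $\kappa$ then converges, and the leftover $|I|^3/|I_0|^3$ is summable over the dyadic scales $I\subseteq I_0$ --- precisely the step that turns the scale-by-scale $\ell^2$ sums into the single clean factor $\sum_{T\in\mathbf{T}}|I_T|$ and delivers the displayed inequality. With the localized energy estimate in hand, the combination with \eqref{eq:gen-size-energy} from the first paragraph completes the proof; the auxiliary reductions (restricted-type functions, the concluding interpolation) are routine and may be quoted from \cite{biest}, \cite{multilinearMTT}, \cite{wave_packet}.
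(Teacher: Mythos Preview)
Your proposal is correct and follows essentially the same approach as the paper's proof: reduce to the localized energy bound $\eenergy_{\rr P(I_0)}^j(f)\lesssim\|f\cdot\ci_{I_0}\|_2$, then feed this and the size bound into \eqref{eq:gen-size-energy}; for the energy bound itself, you perform the same $TT^*$ reduction, the same splitting into $2I_0$ versus the annular decomposition on $\rr R\setminus 2I_0$, the same integration-by-parts with frequency separation at a fixed spatial scale, and the same choice $M>N+10$ to sum in $\kappa$ and over scales $I\subseteq I_0$. The only superfluous remark is the closing line about restricted-type reductions and interpolation --- the lemma is already stated for restricted-type functions, so nothing further is needed.
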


\subsubsection{Avoiding interpolation of adjoint operators}\label{sec:BHT-avoid-interpolation}We take another look at the $L^p$ estimates for $BHT$; that is, we want to deduce the estimate \eqref{eq:tril-form-BHT} by making use of Lemma \ref{lemma-local-gen-size-energy} instead of the generic estimate \eqref{eq:gen-size-energy} (although Lemma \ref{lemma-local-gen-size-energy} is a consequence of the latter). We will see that there is a way of recovering all the known $L^p$ estimates for $BHT$ without using the adjoint operators $BHT^{*,1}$ and $BHT^{*,2}$.

The localization Lemma \ref{lemma-local-gen-size-energy} is efficient only if the intervals $I_0$ satisfy certain conditions. Hence localization coordinates with a (triple) stopping time and generates a new partition of the collection of tiles $\rr P$. Each piece can be estimated in a precise way, and interpolation with the adjoint operators can be avoided. A similar principle of localization (and an underlying stopping time enclosed in the definition of super level measures) is used in \cite{outer-measure-var-Carleson} for defining \emph{iterated outer measures}. This doubles the number of stopping times, but allows for a formulation of Carleson embeddings in any $L^p$ range, $1<p<\infty$.

In order to prove the boundedness of $BHT$, we start with $F, G, H$ sets of finite measure, and we construct the major subset as before $H':=H \setminus \Omega$, where $\Omega$ is again defined by \eqref{eq:exceptional-set}. We want to obtain, for any functions $f, g, h$ satisfying $|f|\leq \one_F, |g|\leq \one_G$ and $|h| \leq \one_{H'}$, the estimate
\[
\vert \Lambda_{\rr P}(f, g, h) \vert \lesssim |F|^{a_1} |G|^{a_2} |H|^{a_3},
\]
where $(a_1, a_2, a_3)$ can be chosen arbitrarily close to any tuple $(\frac{1}{p}, \frac{1}{q}, \frac{1}{s'})$, with $(p, q, s) \in Range(BHT)$.

We are going to describe a procedure for selecting the \emph{relevant} dyadic intervals and the associated collections of tiles. $\rr P_{Stock}$ denotes the collection of tiles available at the moment of the selection. Also, $\ic D_{Stock}$ denotes the collection of dyadic intervals $I$ for which there exists some $P \in \rr P_{Stock}$ with $I_P \subseteq I$.

Here we assume that all the tiles in $\rr P$ satisfy $I_P \cap \Omega^c \neq \emptyset$. This will imply that $ \sssize_{\rr P} \one_F \lesssim  \min \Big(C \frac{|F|}{|H|},  1  \Big)$; if not, we need to make another decomposition $\rr P :=\bigcup_{d \geq 0} \rr P_d$, where
\[
\rr P_d=:\lbrace P \in \rr P: 1+\frac{\dist (I_P, \Omega^c)}{|I_P|} \sim 2^d   \rbrace.
\]
This will imply that
\[
\sssize_{\rr P_d} \one_F \lesssim 2^d \min \big( 1, \frac{|F|}{|H|}  \big), \quad \sssize_{\rr P_d} \one_G \lesssim 2^d \min \big( 1, \frac{|G|}{|H|}  \big), \quad \text{and     } \sssize_{\rr P_d} \one_{H' }\lesssim 2^{-Md}.
\]
Because of the fast decay in the $\sssize_{\rr P_d} \one_{H'}$ estimate, the summation in $d$ is not problematic, except for some technical difficulties which we are going to avoid essentially by assuming that $d=0$.

To start with, we initialize $\rr P_{Stock}:= \rr P$, and let $\ic S_1^1$ be the collection of maximal dyadic intervals $I_0 \in \ic D_{Stock}$ so that 
\[
\frac{1}{|I_0|} \int_{\rr R} \one_F \cdot \ci_{I_0} dx > \frac{1}{2} \min \Big(C \frac{|F|}{|H|},  1  \Big).
\]
Then we reset $\ds\rr P_{Stock}:=\rr P_{Stock}\setminus \big( \bigcup_{I_0 \in \ic S_1^1} \lbrace P \in  \rr P: I_P \subseteq I_0   \rbrace \big)$. 

Next, we define $\ic S_1^2$ to be the collection of maximal intervals $I_0 \in \ic D_{Stock}$ with the property that 
\[
\frac{1}{|I_0|} \int_{\rr R} \one_F \cdot \ci_{I_0} dx > \frac{1}{2^2} \min \Big(C \frac{|F|}{|H|},  1  \Big).
\]
We continue the procedure until the collection $\rr P$ of tri-tiles is exhausted, i.e. until $\rr P_{Stock}=\emptyset$. This will produce collections of intervals $\ic S_1^1, \ldots, \ic S_1^k, \ldots$, whose union we denote by $\ic S_1$.

Similarly, and independently, we construct $\ds \ic S_2:=\bigcup_{k=1}^\infty \ic S_2^k$ associated to the function $\one_G$, and $\ds \ic S_3:=\bigcup_{k=1}^\infty \ic S_3^k$ associated to $\one_{H'}$. Each of $\ic S_1, \ic S_2, \ic S_3$ determines a natural decomposition of the initial collection $\rr P$ as 
\[
\rr P:=\bigcup_{S_j \in \ic S_j}  \rr P_{S_j}, \quad \text{for every}\quad 1\leq j \leq 3,
\]
where by $\rr P_{S_j}$ we mean the collection of tritiles $P \in \rr P_j$ such that $I_P \subseteq S_j$ and there exists no other $S'_j \in \ic S_j$ so that $I_P \subseteq S'_j \subseteq S_j$. If $S_1 \in \ic S_1^k$, we deduce, from the selection algorithm, that
\[
\sssize_{\rr P_{S_1}} \one_F \lesssim  2^{-k} \min \Big( \frac{|F|}{|H|}, 1 \Big).
\]
In the same way, if $S_2 \in S_2^k$, then 
\[
\sssize_{\rr P_{S_2}} \one_G \lesssim  2^{-k} \min \Big( \frac{|G|}{|H|}, 1 \Big),
\]
and if $S_3 \in \ic S_3^k$, then $\ds \sssize_{\rr P_{S_3}} \one_{H'} \lesssim  2^{-k}$.

In addition, each of the collections $\ic S_j$, for $1 \leq j \leq 3$, is a Cantor-like set, or has a \emph{Carleson packing property}  (as referred in \cite{tree_vs_Carleson_box}):
\begin{equation}
\label{eq:Carleson-condition}
\text{if  } S_0 \in \ic S_j , \text{  then    } \sum_{\substack{S \in \ic S_j \\ S \subseteq S_0 }} |S| \leq \tilde C \, |S_0|. 
\end{equation}
This is because every $\ic S_j^k$ represents a part of a maximal covering of the level set of $\ic M(f)$, where $f$ is one of the functions $\one_F, \one_G$ or $\one_{H'}$. A proof can be found in Proposition 14, \cite{sparse-hel}. The geometrical structure of the collection $\ic S_j$ is illustrated in Figure \ref{fig:sparse}.

\bigskip
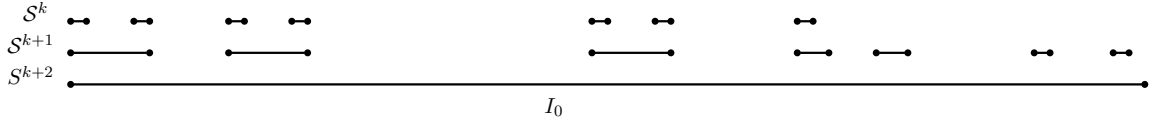
\begin{figure}
\psscalebox{.75 .75} % Change this value to rescale the drawing.
{
\begin{pspicture}(0,-0.985)(20.16,0.985)
\psline[linecolor=black, linewidth=0.04, dotsize=0.06cm 1.6]{*-*}(1.12,-0.425)(20.16,-0.425)
\rput[bl](9.52,-0.985){$I_0$}
\psline[linecolor=black, linewidth=0.04, dotsize=0.06cm 1.6]{*-*}(1.12,0.135)(2.52,0.135)
\psline[linecolor=black, linewidth=0.04, dotsize=0.06cm 1.6]{*-*}(3.92,0.135)(5.32,0.135)
\psline[linecolor=black, linewidth=0.04, dotsize=0.06cm 1.6]{*-*}(10.36,0.135)(11.76,0.135)
\psline[linecolor=black, linewidth=0.04, dotsize=0.06cm 1.6]{*-*}(14.0,0.135)(14.56,0.135)
\psline[linecolor=black, linewidth=0.04, dotsize=0.06cm 1.6]{*-*}(15.4,0.135)(15.96,0.135)
\psline[linecolor=black, linewidth=0.04, dotsize=0.06cm 1.6]{*-*}(19.88,0.135)(19.6,0.135)(19.6,0.135)
\psline[linecolor=black, linewidth=0.04, dotsize=0.06cm 1.6]{*-*}(18.48,0.135)(18.2,0.135)
\psline[linecolor=black, linewidth=0.04, dotsize=0.06cm 1.6]{*-*}(1.12,0.695)(1.4,0.695)
\psline[linecolor=black, linewidth=0.04, dotsize=0.06cm 1.6]{*-*}(2.52,0.695)(2.24,0.695)
\psline[linecolor=black, linewidth=0.04, dotsize=0.06cm 1.6]{*-*}(3.92,0.695)(4.2,0.695)
\psline[linecolor=black, linewidth=0.04, dotsize=0.06cm 1.6]{*-*}(5.32,0.695)(5.04,0.695)
\psline[linecolor=black, linewidth=0.04, dotsize=0.06cm 1.6]{*-*}(10.36,0.695)(10.64,0.695)
\psline[linecolor=black, linewidth=0.04, dotsize=0.06cm 1.6]{*-*}(11.76,0.695)(11.48,0.695)
\psline[linecolor=black, linewidth=0.04, dotsize=0.06cm 1.6]{*-*}(14.0,0.695)(14.28,0.695)
\rput[bl](0.28,0.695){$\Huge \mathcal{S}^k$}
\rput[bl](0.0,0.135){$\Huge \mathcal{S}^{k+1}$}
\rput[bl](0.0,-0.425){$\Huge S^{k+2}$}
\end{pspicture}
}
\caption{Geometry of the collections of intervals $\ic S^k$}
\label{fig:sparse}
\end{figure}
\smallskip

The  trilinear form associated to the model operator $BHT_{\rr P}$ can be estimated by
{\fontsize{10}{10}\begin{align*}
\vert \Lambda_{BHT; \rr P} (f, g, h)  \vert &\lesssim \sum_{n_1, n_2, n_3 \geq 0} \sum_{\substack{S_1 \in \ic S_1^{n_1} \\ S_2 \in \ic S_2^{n_2} \\S_3 \in \ic S_3^{n_3}}} \sum_{P \in \rr P_{S_1}\cap \rr P_{S_2} \cap \rr P_{S_3}} \frac{1}{|I_P|^{1/2}} \vert \langle f, \phi_{P_1}^1 \rangle  \langle g, \phi_{P_2}^2 \rangle \langle h, \phi_{P_3}^3 \rangle \vert \\
&\lesssim \sum_{j=1}^3 \sum_{n_j \geq 0} \sum_{S_j \in \ic S_j^{n_j}} 2^{- \frac{n_1 \left( 1+\theta_1 \right)}{2}} \cdot 2^{- \frac{n_2 \left( 1+\theta_2 \right)}{2}} \cdot 2^{- \frac{n_3 \left( 1+\theta_3 \right)}{2}} \\
& \cdot \min \big( C \frac{|F|}{|H|}, 1  \big)^\frac{1+\theta_1}{2} \cdot  \min \big( C \frac{|G|}{|H|}, 1  \big)^\frac{1+\theta_2}{2} \cdot  1^\frac{1+\theta_3}{2} \cdot |S_1 \cap S_2 \cap S_3|.
\end{align*}}

Now we need to estimate the sum of the spatial supports; first, we note that
\[
\sum_{S_j \in \ic S_j^{n_j}} \vert S_1 \cap S_2 \cap S_3 \vert \lesssim \sum_{S_1 \in \ic S_1^{n_1} } |S_1| \lesssim 2^{n_1} \cdot \min \big( C \frac{|F|}{|H|}, 1  \big)^{-1} \, |F|.
\]

Similarly, 
\[
\sum_{S_j \in \ic S_j^{n_j}} \vert S_1 \cap S_2 \cap S_3 \vert \lesssim  2^{n_2} \cdot \min \big( C \frac{|G|}{|H|}, 1  \big)^{-1} \, |G|,  \quad \sum_{S_j \in \ic S_j^{n_j}} \vert S_1 \cap S_2 \cap S_3 \vert \lesssim   2^{n_3}\, |H|.
\]
So in fact, we can use the geometric mean of the three expressions above: if $0 \leq \alpha_1, \alpha_2, \alpha_3 \leq 1$, with $\alpha_1+\alpha_2+ \alpha_3 = 1$, we have 
\[
\sum_{S_j \in \ic S_j^{n_j}} \vert S_1 \cap S_2 \cap S_3 \vert \lesssim 2^{n_1 \cdot \alpha_1} \, 2^{n_2 \cdot \alpha_2} \, 2^{n_3 \cdot \alpha_3} \min \big( C \frac{|F|}{|H|}, 1  \big)^{-\alpha_1} \cdot |F|^{\alpha_1} \cdot \min \big( C \frac{|G|}{|H|}, 1  \big)^{-\alpha_2} \cdot |G|^{\alpha_2} \cdot |H|^{\alpha_3}.
\]

This implies that
\[
\vert \Lambda_{BHT; \rr P} (f, g, h)  \vert \lesssim |F|^{a \left(\frac{1+\theta_1}{2} -\alpha_1  \right)+\alpha_1} \cdot |G|^{b \left(\frac{1+\theta_2}{2} -\alpha_2  \right)+\alpha_2} \cdot |H|^{\alpha_3 - a \left(\frac{1+\theta_1}{2} -\alpha_1  \right)-b \left(\frac{1+\theta_2}{2} -\alpha_2  \right)},
\]
for any $0 \leq a, b \leq 1$, provided that
\begin{equation}
\label{eq:cond-conv}
\frac{1+\theta_1}{2}>\alpha_1, \quad \frac{1+\theta_2}{2}>\alpha_2, \quad \frac{1+\theta_3}{2}>\alpha_3.
\end{equation}
The condition above ensures the convergence of the exponential series, and indeed it is possible to find such $\alpha_j$ because $\ds \frac{1+\theta_1}{2}+\frac{1+\theta_2}{2}+\frac{1+\theta_3}{2}=2$, while $\alpha_1+\alpha_2+ \alpha_3 = 1$.

If $\ds a_1=a \left(\frac{1+\theta_1}{2} -\alpha_1  \right)+\alpha_1, \, a_2=b \left(\frac{1+\theta_2}{2} -\alpha_2  \right)+\alpha_2$ and $a_3=\alpha_3 - a \left(\frac{1+\theta_1}{2} -\alpha_1  \right)-b \left(\frac{1+\theta_2}{2} -\alpha_2  \right)$, it remains to check that we can choose them arbitrarily close to any of the points 
\[
(1, 0, 0),\quad  (0, 1, 0), \quad (0, 0, 1), \quad  \big(1, \frac{1}{2}, - \frac{1}{2} \big), \quad \big(\frac{1}{2}, 1, -\frac{1}{2} \big),
\]
since the convex hull of these points represents exactly $Range(BHT)$.

This is surely possible. For example, in order to have $(a_1, a_2, a_3)$ arbitrarily close to $(1, 0, 0)$, we can choose $a=1,\, b=0,\,  \theta_1 \sim 1$, which implies $\theta_2 \sim 0,\, \theta_3 \sim 0$. Then we further choose $\alpha_2 \sim 0$ and $\alpha_3 \sim 0$, which agrees with the constraint \eqref{eq:cond-conv}.

\subsection{Connections with outer measures}
\label{sec:outer-measures}

We've seen the role played by the generic size and energy estimate
\begin{align*}
\vert \Lambda_{BHT;\rr P}(f_1, f_2, f_3)\vert  & \lesssim \big( \sssize_{\rr P}\, f_1 \big)^{\theta_1} \cdot \big( \sssize_{\rr P} \, f_2 \big)^{\theta_2} \cdot \big( \sssize_{\rr P} \, f_3 \big)^{\theta_3} \\
& \cdot \big( \eenergy_{\rr P}\, f_1 \big)^{1-\theta_1} \cdot \big( \eenergy_{\rr P} \, f_2  \big)^{1-\theta_2} \cdot \big( \eenergy_{\rr P} f_3 \, \big)^{1-\theta_3}\nonumber.
\end{align*}

In particular, for functions bounded above by characteristic functions of sets of finite measure, it implies the following:
\begin{enumerate}[label=(\alph*), ref=\alph*]
\item \label{item-Local l2}In the ``local $L^2$" case, when $2 \leq p, q, s' < \infty$, we can ignore the``size" and deduce the regular estimate $\ds BHT: L^p \times L^q \to L^s$ by using solely restricted type interpolation, as in \eqref{eq:discard-size}; there is no requirement to remove an exceptional set. 
\item \label{item-outside-local-L2} In order to prove all known estimates for $BHT$, we need to make use of both the energy and the size, the latter having to be evaluated on certain level sets. For that reason, we assume $|f_j| \leq \one_{E_j}$, and we carry out the analysis on subcollections $\rr P_{\#}$ having the property that $\sssize_{\rr P_\#} \one_{E_j} \lesssim \# |E_j|$. Nevertheless, we still have the constraint $\big|  \frac{1}{p_1} -\frac{1}{p_2} \big| < \frac{1}{2}$ and in consequence, we need to use multilinear interpolation between adjoint operators.
\item \label{item-local} The local estimate \eqref{eq:local-gen-size-energy} combines together the size and energy information, and the interpolation between adjoint operators is not necessary anymore. Instead, it is replaced by an additional stopping time.
\end{enumerate}

Next, we will rewrite some of the arguments presented earlier in a different language. This is mainly motivated by the remarks \eqref{item-Local l2}-\eqref{item-local}, in an attempt to bring to light certain pre-existing structures. For this purpose, we define
\begin{equation}
\label{def-Lq-mock}
\|F_j\|_{\ii L^{q_j}_{mock}}:= \big(  \sssize_{\rr P} f_j \big)^{\theta_j} \cdot \big( \eenergy_{\rr P}  f_j  \big)^{1-\theta_j},
\end{equation}
where $\frac{1}{q_j}=\frac{\theta_j}{\infty}+\frac{1-\theta_j}{2}$, while for every $1 \leq j \leq 3$, $F_j$ is a function defined on the collection of tiles $\rr P$, and which depends on $f_j$:
\[
F_j(P)=\langle f_j, \phi_P  \rangle, \quad \text{for all   } P \in \rr P.
\]

Notice that $\|F_j\|_{\ii L^{q_j}_{mock}}$ resembles an interpolation quantity between $\sssize_{\rr P} f_j$ and $\eenergy_{\rr P}  f_j $, and this is consistent with the earlier intuition: $\sssize_{\rr P} f_j$ is an $L^\infty$-type quantity as a supremum of averages taken over a certain collection of intervals, while $\eenergy_{\rr P}  f_j $ is an $L^{2, \infty}$-type quantity.

Certainly, the functions $F_j$ need not be defined on the collection $\rr P$ of tri-tiles, but rather on the $j^{\text{th}}$ coordinate projection of $\rr P$:
\[
\rr P_j:=\lbrace I_P \times \omega_{P_j} : P= \left(  I_P \times \omega_{P_1},  I_P \times \omega_{P_2},  I_P \times \omega_{P_2} \right) \in \rr P   \rbrace.
\]
However, we need to understand the interactions of the tri-tiles in the collection $\rr P$ (summarized in the estimate \eqref{eq:gen-size-energy}) before moving on to studying the collections $\rr P_j$. We abuse notation and denote by $\rr P$ any of the collections $\rr P_j$; it will be clear from the context whether it is a collection of tiles (which is the case when considering $\ii L^q_{mock}$ spaces) or tri-tiles (this corresponds to the analysis of the bilinear operator or of its associated trilinear form).

With these notations, the main inequality \eqref{eq:gen-size-energy} reads as
\begin{equation}
\label{eq:tril-form-L-mock}
\vert \Lambda_{BHT; \rr P}(f_1, f_2, f_3)  \vert \lesssim \|F_1\|_{\ii L^{q_1}_{mock}} \|F_2\|_{\ii L^{q_2}_{mock}} \|F_3\|_{\ii L^{q_3}_{mock}},
\end{equation}
and we are left with proving
\begin{equation}
\label{eq:tril-carleson-embeddings}
\|F_1\|_{\ii L^{q_1}_{mock}} \|F_2\|_{\ii L^{q_2}_{mock}} \|F_3\|_{\ii L^{q_3}_{mock}} \lesssim \|f_1\|_{p_1}  \|f_2\|_{p_2}  \|f_3\|_{p_3},
\end{equation}
where $2 \leq q_j \leq \infty$, $\frac{1}{q_1}+\frac{1}{q_2}+\frac{1}{q_3}=1, \frac{1}{p_1}+\frac{1}{p_2}+\frac{1}{p_3}=1$.

\smallskip
We need to understand how $\|F_j\|_{\ii L^{q_j}_{mock}}$ relates to $\|f_j\|_{p_j}$. But this is precisely what we described earlier for restricted-type functions, in the proofs of the cases \eqref{item-Local l2} and \eqref{item-outside-local-L2}: under the assumption that $|f| \leq \one_E$, we showed estimates of the type
\[
(\sssize_{\rr P} f )^{\theta} (\eenergy_{\rr P} f)^{1-\theta} \lesssim |E|^\frac{1}{p}=\| \one_E\|_p.
\]

As it often happens when studying the behavior of an operator through the associated multilinear form, the last function $f_3$ plays the role of a testing function, used to liniarize an $L^{s, \infty}$ norm of the operator (see Lemma 2.6 of \cite{multilinear_harmonic}). In fact, we require that $1<p_1, p_2 \leq \infty$, and $p_3$ is so that $\frac{1}{p_1}+\frac{1}{p_2}+\frac{1}{p_3}=1$, $-\frac{1}{2}<p_3<\infty$. For these reasons, it suffices to examine the $L^{p_j} \mapsto \ii L^{q_j}_{mock}$ Carleson embedding for $j=1, 2$.

\smallskip
We present the corresponding Carleson embeddings from the cases \eqref{item-Local l2} and \eqref{item-outside-local-L2} in the next Propositions:

\subsubsection*{\textbf{The case $2<p=q$:}}
In the ``local $L^2$" case (when $2< p_j=q_j \leq \infty$), as mentioned in \eqref{item-Local l2}, we discard the size and obtain immediately, for $q>2$ and a function $f$ satisfying $|f| \leq \one_E$, the following:
\[
\|F \|_{\ii L^{q}_{mock}} \lesssim |E|^\frac{1}{q}, \quad \text{where  } \frac{1}{q}=\frac{1-\theta}{2} <\frac{1}{2}.
\]

We can formulate this as:
\begin{proposition}
\label{propo:Carleson-embedding-mock-above-L2}
Let $2<q \leq \infty$ and let $f$ be a function so that $|f| \leq \one_E$. Then
\[
\|F \|_{\ii L^{q}_{mock}} \lesssim |E|^\frac{1}{q}.
\]
\end{proposition}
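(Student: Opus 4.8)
The plan is simply to unwind the definition \eqref{def-Lq-mock} and to bound the two factors $\sssize_{\rr P} f$ and $\eenergy_{\rr P} f$ separately, using the two ``trivial'' a priori estimates recalled earlier in the section: the size is controlled by $\|f\|_\infty$, and the energy is controlled by $\|f\|_2$ (see the lines following \eqref{eq:def-energy}, and also the heuristic of case \eqref{item-Local l2} of discarding the size). First I would record that, since $2<q\le\infty$, the exponent $\theta$ attached to $f$ in $\|F\|_{\ii L^{q}_{mock}}=(\sssize_{\rr P} f)^\theta\,(\eenergy_{\rr P} f)^{1-\theta}$ is determined by $\frac1q=\frac{1-\theta}{2}$, i.e. $\theta=1-\frac2q\in[0,1]$, with $\theta<1$ precisely when $q<\infty$.

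For the size factor: since $|f|\le\one_E\le 1$ pointwise, and since $\frac{1}{|I_P|}\int_{\rr R}\ci_{I_P}^M(x)\,dx\lesssim 1$ uniformly in $P$ (the tail weight $\ci_{I_P}^M(x)=(1+\dist(x,I_P)/|I_P|)^{-100M}$ is integrable for $M\ge 1$, with integral comparable to $|I_P|$), we obtain directly
\[
\sssize_{\rr P} f=\sup_{P\in\rr P}\frac{1}{|I_P|}\int_{\rr R}|f(x)|\,\ci_{I_P}^M(x)\,dx\lesssim 1.
\]
For the energy factor: the bound $\eenergy_{\rr P} f\lesssim\|f\|_2$ together with $|f|\le\one_E$ gives $\eenergy_{\rr P} f\lesssim\|\one_E\|_2=|E|^{1/2}$. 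Combining the two bounds with their respective exponents,
\[
\|F\|_{\ii L^{q}_{mock}}\lesssim 1^{\theta}\cdot\big(|E|^{1/2}\big)^{1-\theta}=|E|^{\frac{1-\theta}{2}}=|E|^{1/q},
\]
which is the asserted inequality. In the endpoint $q=\infty$ we have $\theta=1$ and the statement reduces to $\sssize_{\rr P} f\lesssim 1$.

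There is no genuine obstacle here: the proposition is precisely the quantitative version of the ``discard the size'' observation of the local $L^2$ case --- once $2<q$, the size of a function dominated by $\one_E$ is automatically $O(1)$, so all the content is carried by the energy estimate $\eenergy_{\rr P} f\lesssim\|f\|_2$, which is already available. The only point worth spelling out in a full write-up is the uniform bound $\frac{1}{|I_P|}\int_{\rr R}\ci_{I_P}^M\lesssim 1$, and that is immediate from the definition of the tails $\ci_I$.
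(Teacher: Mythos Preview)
Your proof is correct and matches the paper's approach exactly: the paper does not even spell out a formal argument for this proposition, merely noting that one ``discards the size'' (which is $\lesssim 1$ for $|f|\le\one_E$) and uses $\eenergy_{\rr P} f\lesssim\|f\|_2\le |E|^{1/2}$ together with $\frac{1}{q}=\frac{1-\theta}{2}$. Your write-up simply makes this explicit.
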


This is an $L^q \mapsto \ii L^q_{mock}$ Carleson embedding ``above $L^2$" for restricted-type functions.

\subsubsection*{\textbf{The case $p<2$:}} 
This situation is especially interesting because it arises in the study of the bilinear Hilbert transform, when at least one of $p_1, p_2 <2$, while $\frac{1}{p_1}+\frac{1}{p_2} > \frac{3}{2}$, and we can't do  without using the sizes as well as the energies.

\begin{proposition}
\label{prop:L-mock-below-L2}
 Assume that $|f|\leq \one_E$ and $\sssize_{\rr P}(f) \leq \# |E|$. Then
\[
\|F \|_{\ii L^q_{mock}} \lesssim \#^{\frac{1}{p}-\frac{1}{q}} |E|^\frac{1}{p},
\]
whenever $q' \leq p<2$. That is, we have an $L^p \mapsto \ii L^q_{mock}$ Carleson embedding ``below $L^2$", for restricted-type functions, provided $q' \leq p <2$.
\begin{proof}
The proof is straightforward: since $|f|\leq \one_E$, we actually have $\ds \sssize_{\rr P}f \lesssim \min (1, \# |E|)$, and in consequence, 
\[
\|F \|_{\ii L^q_{mock}} \lesssim  (\sssize_{\rr P}f)^{a} \, (\eenergy_{\rr P} f)^{1-\theta} \lesssim \min (1, \# |E|) ^ a \cdot |E|^{\frac{1-\theta}{2}},
\]
for any $0 \leq a \leq \theta=\frac{1}{q'}-\frac{1}{q}$ (since $\frac{1}{q}=\frac{1-\theta}{2}$). If we set 
\[
a=\frac{1}{p}-\frac{1}{q} \leq \frac{1}{q'}-\frac{1}{q} 
\]
we obtain the conclusion. Note that $0 \leq a \leq \frac{1}{q'}-\frac{1}{q}$ is equivalent to $q' \leq p < 2 <q$.
\end{proof}
\end{proposition}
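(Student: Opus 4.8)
The plan is to unwind the definition \eqref{def-Lq-mock} of $\|F\|_{\ii L^q_{mock}}$ and then combine two structural bounds that are already at our disposal with an elementary splitting of exponents. By \eqref{def-Lq-mock} we have $\|F\|_{\ii L^q_{mock}} = (\sssize_{\rr P} f)^{\theta}\cdot(\eenergy_{\rr P} f)^{1-\theta}$, where $\theta$ is fixed by $\frac1q = \frac{1-\theta}{2}$, so that $\theta = 1-\frac2q\in(0,1)$ because $q>2$. The first ingredient is the energy estimate $\eenergy_{\rr P} f \lesssim \|f\|_2 \le \|\one_E\|_2 = |E|^{1/2}$. The second is that, since $|f|\le\one_E\le 1$, the size $\sssize_{\rr P} f$ is a supremum of $L^1$-normalized averages of a function bounded by $1$ against the rapidly decaying weights $\ci_{I_P}^M$, hence $\sssize_{\rr P} f\lesssim 1$; together with the hypothesis $\sssize_{\rr P} f\le\#|E|$ this yields $\sssize_{\rr P} f\lesssim\min(1,\#|E|)\le\#|E|$.

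Next I would split the size exponent. Put $a:=\frac1p-\frac1q$. Since $p<2<q$ we have $a>0$, and since $p\ge q'$ we have $\frac1p\le\frac1{q'}=1-\frac1q$, whence $a=\frac1p-\frac1q\le 1-\frac2q=\theta$; so $0\le a\le\theta$. Writing $(\sssize_{\rr P} f)^{\theta}=(\sssize_{\rr P} f)^{a}\cdot(\sssize_{\rr P} f)^{\theta-a}$ and absorbing the second factor into the implicit constant via $\sssize_{\rr P} f\lesssim 1$ (legitimate exactly because $\theta-a\ge 0$), we obtain $\|F\|_{\ii L^q_{mock}}\lesssim(\sssize_{\rr P} f)^{a}\cdot(\eenergy_{\rr P} f)^{1-\theta}$.

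Finally I would insert the two a priori bounds: $(\sssize_{\rr P} f)^{a}\lesssim(\#|E|)^{a}=\#^{a}|E|^{a}$ and $(\eenergy_{\rr P} f)^{1-\theta}\lesssim|E|^{(1-\theta)/2}=|E|^{1/q}$, and then multiply, using $a+\frac1q=\frac1p$, to conclude $\|F\|_{\ii L^q_{mock}}\lesssim\#^{1/p-1/q}|E|^{1/p}$. There is no serious obstacle here; the one point deserving a moment's attention is that the admissible interval $0\le a\le\theta$ for the exponent split matches precisely the stated hypothesis $q'\le p<2<q$ — in particular, the lower bound $p\ge q'$ is exactly what guarantees that the leftover power $\theta-a$ of the (uniformly bounded) size stays nonnegative, so that it may be discarded.
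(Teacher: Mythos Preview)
Your proof is correct and follows essentially the same approach as the paper: both use $\sssize_{\rr P} f\lesssim\min(1,\#|E|)$ together with $\eenergy_{\rr P} f\lesssim|E|^{1/2}$, then exploit $\sssize_{\rr P} f\lesssim 1$ to lower the size exponent from $\theta$ to $a=\frac1p-\frac1q$, with the admissibility constraint $0\le a\le\theta$ matching $q'\le p<2$. You spell out the exponent split $(\sssize_{\rr P} f)^{\theta}=(\sssize_{\rr P} f)^{a}(\sssize_{\rr P} f)^{\theta-a}$ a bit more explicitly than the paper does, but the argument is the same.
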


Now, if we want to use the Carleson embedding below $L^2$ in order to deduce \eqref{eq:tril-carleson-embeddings} from \eqref{eq:tril-form-L-mock}, we note that we still have the constraint 
\[
\big\vert \frac{1}{p_1} -\frac{1}{p_2}\big\vert \leq \frac{1}{2},
\]
which is similar to \eqref{eq:silly-eq}. This is because we assume $\frac{1}{q_1} \leq \frac{1}{p_1} < \frac{1}{q_1'}, \frac{1}{q_2} \leq \frac{1}{p_2} < \frac{1}{q_2'}$, and in consequence $\vert \frac{1}{p_1} -\frac{1}{p_2}\vert \leq \frac{1}{q_3} < \frac{1}{2}$ (all the $\ii L^q_{mock}$ spaces are well defined only for $q>2$).

Using the localization, however, we are able to obtain directly the full range of boundedness for $BHT$. This was presented in Section \ref{sec:BHT-avoid-interpolation}.

In Proposition \ref{prop:L-mock-below-L2}, the assumption $\sssize_{\rr P}(f) \leq \# \, |E|$ is a remnant of the fact that historically we assume all the tiles in $\rr P$ are away from the exceptional set. But the following is also true (and the proof is identical):

\begin{thmbis}{prop:L-mock-below-L2}
 Assume that $|f|\leq \one_E$ and $\sssize_{\rr P}(f) \leq \tilde{ \#}$. Then
\begin{equation}
\label{eq:prop-2-prim}
\|F \|_{\ii L^q_{mock}} \lesssim  \min (\tilde{\#}, 1)^{1-\frac{2}{q}} |E|^\frac{1}{q}.
\end{equation}
\end{thmbis}

In order to have an $L^p$ norm of $\one_E$ on the right hand side of \eqref{eq:prop-2-prim}, it is natural to consider $\tilde{\#}= \# \, |E|$, in which case we recover the result of Proposition \ref{prop:L-mock-below-L2}.

\subsubsection*{\textbf{Localized Carleson embeddings}}

Recall that in order to prove direct estimates for $BHT$, the ``global" size and energy lemma was inefficient, and we used instead (infinitely many times, through three additional stopping times) a ``local" version of it. Consequently, there is also a natural``local Carleson embedding" inherently associated to it and which we will present next.

We let $I_0$ be a fixed dyadic interval and $\rr P(I_0)$ represents 
\[
\rr P(I_0):=\lbrace P \in \rr P: I_P \subseteq I_0  \rbrace.
\]
We recall that $\rr P, \,\rr P(I_0)$ are collection of tiles, and $F : \rr P \mapsto \rr C$ is a function defined on $\rr P$. Then 
\[
\| F  \|_{\ii L^q_{mock; I_0}}:=\| F \cdot \one_{\rr P\left( I_0 \right)} \|_{\ii L^q_{mock}}.
\]

\begin{proposition}
\label{prop:localized-embedding-mock}
If $I_0$ is a fixed dyadic interval, $f$ a function satisfying $|f| \leq \one_E$ and $\ds \sssize_{\rr P(I_0)}(f) \leq \# |E|$, then
\[
\|F\|_{\ii L^q_{mock; I_0}} \lesssim\big( \# |E| \big)^\frac{1}{p} |I_0|^\frac{1}{q},
\]
for any $q' \leq p$.
\end{proposition}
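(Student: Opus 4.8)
The plan is to deduce this from two ingredients already available: the definition \eqref{def-Lq-mock} of the mock norm, and the localized energy estimate $\eenergy_{\rr P(I_0)}(f) \lesssim \|f\cdot \ci_{I_0}\|_2$ proved inside Lemma \ref{lemma-local-gen-size-energy}. The idea is that passing to $\rr P(I_0)$ replaces $\|f\|_2$ by a quantity controlled by $\sssize_{\rr P(I_0)} f$ times $|I_0|^{1/2}$, and this is precisely what produces the factor $|I_0|^{1/q}$ in the conclusion.

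First I would unfold the definitions: by \eqref{def-Lq-mock},
\[
\|F\|_{\ii L^q_{mock;I_0}} = \big(\sssize_{\rr P(I_0)} f\big)^{\theta}\,\big(\eenergy_{\rr P(I_0)} f\big)^{1-\theta},
\]
where $\theta$ is determined by $\frac1q=\frac{1-\theta}{2}$, i.e.\ $\theta=\frac1{q'}-\frac1q\in(0,1)$ (recall $\ii L^q_{mock}$ is defined only for $q>2$). Next I would bound the localized energy: starting from $\eenergy_{\rr P(I_0)}(f)\lesssim \|f\cdot\ci_{I_0}\|_2$, and using $|f|\leq\one_E\leq 1$ so that $|f|^2\leq |f|$ (and adjusting the fixed exponent in $\ci_{I_0}$, as is done routinely throughout the paper), one gets
\[
\|f\cdot\ci_{I_0}\|_2^2 \lesssim \int_{\rr R}|f(x)|\,\ci_{I_0}^M(x)\,dx = |I_0|\cdot\frac{1}{|I_0|}\int_{\rr R}|f(x)|\,\ci_{I_0}^M(x)\,dx \leq |I_0|\cdot\sssize_{\rr P(I_0)} f,
\]
the last step being exactly the average-over-$I_0$ term built into the definition of $\sssize_{\rr P(I_0)}$. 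Hence $\eenergy_{\rr P(I_0)}(f)\lesssim \big(\sssize_{\rr P(I_0)} f\big)^{1/2}|I_0|^{1/2}$.

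Plugging this into the identity for $\|F\|_{\ii L^q_{mock;I_0}}$ and collecting powers of the size,
\[
\|F\|_{\ii L^q_{mock;I_0}} \lesssim \big(\sssize_{\rr P(I_0)} f\big)^{\theta+\frac{1-\theta}{2}}|I_0|^{\frac{1-\theta}{2}} = \big(\sssize_{\rr P(I_0)} f\big)^{1/q'}\,|I_0|^{1/q},
\]
since $\theta+\frac{1-\theta}{2}=\frac{1+\theta}{2}=\frac1{q'}$ and $\frac{1-\theta}{2}=\frac1q$. It then remains to trade $(\sssize_{\rr P(I_0)} f)^{1/q'}$ for $(\#|E|)^{1/p}$. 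For this I would use that $|f|\leq\one_E\leq 1$ together with $\frac1{|I|}\int\ci_I^M\lesssim 1$ forces $\sssize_{\rr P(I_0)} f\lesssim 1$, which combined with the hypothesis $\sssize_{\rr P(I_0)} f\leq \#|E|$ gives $\sssize_{\rr P(I_0)} f\lesssim \min(\#|E|,1)$; then, because $q'\leq p$ means $\frac1{q'}\geq\frac1p$, raising to the power $\frac1{q'}$ yields $(\sssize_{\rr P(I_0)} f)^{1/q'}\lesssim (\#|E|)^{1/p}$ in both cases $\#|E|\leq 1$ and $\#|E|>1$, which completes the argument.

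I do not expect a genuine obstacle: the substantive input is the localized energy estimate, imported from Lemma \ref{lemma-local-gen-size-energy}. The only point requiring care is that the hypothesis $q'\leq p$ is precisely what legitimizes passing from the natural exponent $1/q'$ on the size to the exponent $1/p$ appearing in the statement, and this step relies on first having observed that $\sssize_{\rr P(I_0)} f$ is bounded, so that the truncation by $\min(\cdot,1)$ is available; the weight-exponent bookkeeping (replacing $\ci_{I_0}^2$ by $\ci_{I_0}^M$, etc.) is cosmetic and in the spirit of the rest of the paper.
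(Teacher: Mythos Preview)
Your proof is correct and follows essentially the same route as the paper: both use the localized energy estimate from Lemma~\ref{lemma-local-gen-size-energy} to get $\eenergy_{\rr P(I_0)}(f)\lesssim(\sssize_{\rr P(I_0)} f)^{1/2}|I_0|^{1/2}$, substitute into the mock-norm definition to reach $(\sssize_{\rr P(I_0)} f)^{1/q'}|I_0|^{1/q}$, and then pass from exponent $1/q'$ to $1/p$ using $\sssize\lesssim 1$ together with $q'\leq p$. Your handling of the last step via $\min(\#|E|,1)$ is slightly more explicit than the paper's, but the argument is the same.
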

\begin{proof}
Following the result in Lemma \ref{lemma-local-gen-size-energy}, we have that
\[
\eenergy_{\rr P(I_0)}(f) \lesssim \| f \cdot \ci_{I_0}  \|_2 \lesssim \big(\sssize_{\rr P(I_0)}(f) \cdot |I_0| \big)^\frac{1}{2}.
\]
This implies
\begin{align*}
\|F\|_{\ii L^q_{mock; I_0}} &\lesssim \big(  \sssize_{\rr P(I_0)} \one_E \big)^\frac{1+\theta}{2} \cdot |I_0|^\frac{1-\theta}{2}=  \big(  \sssize_{\rr P(I_0)} \one_E \big)^\frac{1}{q'} \cdot |I_0|^\frac{1}{q}\\
& \lesssim \big(  \sssize_{\rr P(I_0)} \one_E \big)^\frac{1}{p} \cdot |I_0|^\frac{1}{q} \lesssim \big( \# |E| \big)^\frac{1}{p} \cdot |I_0|^\frac{1}{q}.
\end{align*}
Hence we are done: upon localization onto the interval $I_0$, there is a way of controlling $\| F\|_{\ii L^q_{mock};I_0}$, which is only defined for $q>2$, by an $L^p$ norm of $f$ (represented by $|E|^\frac{1}{p}$) for $q' \leq p <2$, provided the collection $\rr P(I_0)$ also satisfies $\sssize_{\rr P(I_0)}(\one_E) \leq \# \, |E|$.
\end{proof}

A similar result is available if we assume that $\sssize_{\rr P}(f) \lesssim \tilde{\#}$:

\begin{thmbis}{prop:localized-embedding-mock}
If $I_0$ is a fixed dyadic interval, $f$ a function satisfying $|f| \leq \one_E$ and $\ds \sssize_{\rr P(I_0)}(f) \leq \tilde{\#}$, then
\[
\|F\|_{\ii L^q_{mock; I_0}} \lesssim\min (\tilde{\#}, 1)^\frac{1}{q'}\, |I_0|^\frac{1}{q}.
\]
\end{thmbis}

\subsubsection*{\textbf{Outer measure spaces}}
Let $F: \rr P \to \rr C$ be defined by $F(P)=\langle f, \phi_P \rangle$, as before. From definition \eqref{def-Lq-mock}, it is apparent that $\| F \|_{\ii L^q_{mock}}$ represents an interpolation quantity between $\sssize_{\rr P}(f)$ and $\eenergy_{\rr P}(f)$, and none of them suffices individually for obtaining the whole range of boundedness for $BHT$. They can also be regarded as $L^\infty$ and respectively $L^{2, \infty}$ ``norms" of $F$, the function defined on the collection $\rr P$ of tiles. This can be made precise, as it was done in \cite{outer_measures}, where the outer measure $\ii L^q(\rr P, \sigma, S)$ spaces were introduced.

Supporting the point of view that averages rather than pointwise values describe the behavior of a function, the \emph{$L^p$ theory for outer measure spaces} of \cite{outer_measures} starts out with predefined averages over the generating sets of the outer measure. These predefined averages are very much related to the \emph{sizes} and \emph{energies} of the previous sections: $\| F\|_{\ii L^\infty\left(\rr P, \sigma, S\right)}$ represents a substitute for $\ssize_{\rr P}(f)$, and $\| F\|_{\ii L^{2,\infty} \left(\rr P, \sigma, S\right)}$ replaces $\eenergy_{\rr P}(f)$. 

Moreover, for $ 2< q <\infty$, $\ii L^q(\rr P, \sigma, S)$ serves as an interpolation space between $\ii L^{2, \infty}(\rr P, \sigma, S)$ and $\ii L^\infty(\rr P, \sigma, S)$ and satisfies 
\[
\|F\|_{\ii L^{q} \left( \rr P, \sigma , S \right)} \lesssim \|F\|_{\ii L^{\infty} \left( \rr P, \sigma , S \right)} ^\theta \cdot \|F\|_{\ii L^{2, \infty} \left( \rr P, \sigma , S \right)} ^{1-\theta}, \quad \text{where} \quad \frac{1}{q}=\frac{1-\theta}{2} +\frac{\theta}{\infty}.
\]

Via an outer measure H\"older inequality, the study of the trilinear form from \eqref{eq:tril-form-BHT} can be reduced to that of \emph{Carleson embeddings}. This was the case also earlier, when we were working with the $\ii L^q_{mock}$ formalism. Hence we will be focusing in the remainder of the section mostly on Carleson embeddings, presenting the three cases discussed previously in \eqref{item-Local l2} - \eqref{item-local}.

Here $\rr P$ is a (possibly finite) collection of time-frequency tiles $P=I_P \times \omega_P$, as described in the beginning of Section \ref{sec:generic-size-energy-est}. The collection generating the outer measure is
\[
\mathbf E := \lbrace T \subseteq \rr P : T \text{  is  a lacunary tree }   \rbrace,
\]
and for any tree $T \in \mathbf E$, we set $\sigma(T)=|I_T|$. Then the outer measure generated by $\sigma$, denoted $\mu$, is defined for any $\rr P' \subseteq \rr P$ by:
\[
\mu(\rr P'):= \inf \lbrace \sum_{T \in \mathbf{T'} \subseteq \mathbf{E} }|I_T| : \rr P' \subseteq \rr P_{\mathbf{T'}}:= \bigcup_{T \in \mathbf{T'}} T   \rbrace.
\]
That is, we consider the infimum over all collections of lacunary trees that cover $\rr P'$. It turns out this is comparable to 
\[
\mu_*(\rr P'):=\sup \lbrace \sum_{T \in \mathbf{T'} \subseteq \mathbf{E} }|I_T| : \rr P' = \bigcup_{T \in \mathbf{T'}} T , \text{ and } \mathbf{T'} \text{  is a collection of disjoint trees}  \rbrace,
\]
which would be the equivalent of an \emph{inner measure} on $\rr P$. If $F : \rr P \to \rr C$, then its $L^2$ size is a function on $\mathbf{E}$ defined by
\[
S_2(F)(T):= \big(  \frac{1}{|I_T|} \sum_{P \in T} |F(P)|^2  \big)^{\frac{1}{2}}= \| F\|_{\ell^2_T \left( |I_T|^{-1} \right)}.
\]
Also, $\ds S_\infty(F)(T):=\sup_{P \in T}\dfrac{|F(P)|}{|I_P|^{1/2}}$. 

The size, i.e. the function which determines the predefined averages on the generating collection $\mathbf E$ of the outer measure space, used in \cite{outer_measures}, \cite{Carleson-embedding-belowL2}, is $S:=S_2+S_\infty$. This points out to a combination of $L^2$ information and $L^1$ maximal averages.

If $F: \rr P \to \rr C$ and $\lambda >0$, the \emph{super level measure} $\mu(S(F)>\lambda)$ is defined by
\begin{equation}
\label{eq:def-size-out}
\mu(S(F)>\lambda)=\inf \lbrace \mu(\rr P') : \big(  \frac{1}{|I_T|} \sum_{P \in T \cap \left(\rr P '\right)^c} |F(P)|^2  \big)^{\frac{1}{2}} +  \sup_{P \in T \cap \left(\rr P '\right)^c}\dfrac{|F(P)|}{|I_P|^{1/2}} \leq \lambda \text{  for all trees  }T \in \mathbf E  \rbrace.
\end{equation}
Above, the infimum is taken over all subsets $\rr P' \subseteq \rr P$, the complement of which doesn't contain any trees $T$ of size $S(F)(T) > \lambda$.

Finally, the \emph{outer $L^p$ quasi-norms are defined in the following way}:
\[
\|F\|_{\ii L^\infty \left( \rr P, \sigma , S \right)}:= \sup_{T \in \mathbf E} S(F)(T),
\]
and, for $0< p<\infty$
\[
\|F\|_{\ii L^{p,\infty} \left( \rr P, \sigma , S \right)}:= \sup_{\lambda >0} \lambda \, \mu (S(F)>\lambda)^\frac{1}{p},
\]
\[
\|F\|_{\ii L^{p} \left( \rr P, \sigma , S \right)}:= \big(  \int_{0}^\infty p \lambda^{p-1} \mu (S(F)>\lambda) d \lambda \big)^\frac{1}{p}.
\]

A Marcinkiewicz interpolation theorem is available for quasi-sublinear operators taking values in outer measure spaces (Proposition 3.5 of \cite{outer_measures}). Moreover, if the size function $S$ defined on $\mathbf{E}$ is subadditive, which is the case with $S_2, S_\infty$ and $S=S_2+S_\infty$, restricted type interpolation in the context of outer measure spaces is almost identical to the classical case, modulo technical difficulties. On that account, there is no loss of information in studying in the first place how an operator acts on functions bounded above by characteristic functions. 
\smallskip
The application 
\[
f \mapsto F \qquad \text{defined by} \quad F(P)=\langle f, \phi_P \rangle
\]
can be regarded as a linear operator from the space of functions on $\rr R$ to the space of functions on $\rr P$. Using the notation of the previous section, we notice that
\[
\ssize_{\rr P} (f) = \|F\|_{\ii L^\infty \left( \rr P, \sigma , S_2 \right)} \leq  \|F\|_{\ii L^\infty \left( \rr P, \sigma , S \right)}, \quad \eenergy_{\rr P}(f) \lesssim \|F\|_{\ii L^{2,\infty}\left( \rr P, \sigma , S_2 \right)} \leq \|F\|_{\ii L^{2,\infty}\left( \rr P, \sigma , S \right)}.
\]

In fact, we have an equivalence:
\begin{equation}
\label{eq:equiv-size-en}
\sssize_{\rr P} (f) \sim \|F\|_{\ii L^\infty \left( \rr P, \sigma , S \right)}, \qquad \eenergy_{\rr P}(f) \sim \|F\|_{\ii L^{2,\infty}\left( \rr P, \sigma , S \right)}.
\end{equation}

The second identity will make the object of Lemma \ref{lemma-energy-control}, while the first one is a consequence of John-Nirenberg (and the trivial observation that any tile $P$ can be regarded as a lacunary tree). Hence the expression $\ds \big( \sssize_{\rr P} (f) \big)^\theta \cdot \big( \eenergy_{\rr P}(f) \big)^{1-\theta}$, which was defined before as $\ii L^q_{mock}$ (provided $\frac{1}{q}=\frac{1-\theta}{2}+\frac{\theta}{\infty}$), can be thought of as an interpolation quasi-norm between $\| \cdot\|_{\ii L^\infty \left( \rr P, \sigma , S \right)}$ and $\| \cdot \|_{\ii L^{2,\infty}\left( \rr P, \sigma , S \right)}$.

The general size and energy estimate \eqref{eq:gen-size-energy}, which was reformulated as \eqref{eq:tril-form-L-mock} using the $\ii L^q_{mock}$ spaces, becomes, due to an outer H\"older inequality, 
\[
\vert \Lambda_{BHT; \rr P}(f_1, f_2, f_3)  \vert \lesssim \|F_1\|_{\ii L^{q_1}(\rr P, \sigma, S)}  \|F_2\|_{\ii L^{q_2}(\rr P, \sigma, S)}\|F_3\|_{\ii L^{q_3}(\rr P, \sigma, S)},
\]
where $\ds \frac{1}{q_1}+\frac{1}{q_2}+\frac{1}{q_3}=\frac{1-\theta_1}{2}+\frac{1-\theta_2}{2}+\frac{1-\theta_3}{2}=1$ and $F_j(P)=\langle f_j, \phi_P  \rangle$ for all $P \in \rr P, 1 \leq j \leq 3$.

If we knew that $\|F_j\|_{\ii L^{q_j}(\rr P, \sigma, S)} \lesssim \|f_j\|_{p_j}$ (such an inequality will represent an $L^{p_j} \mapsto \ii L^{q_j}(\rr P, \sigma, S)$ Carleson embedding), then H\"older's inequality above would imply that
\[
\vert \Lambda_{BHT; \rr P}(f_1, f_2, f_3) \lesssim \|f_1\|_{p_1} \|f_2\|_{p_2} \|f_3\|_{p_3},
\]
where, as usual, we require that $\frac{1}{p_1}+\frac{1}{p_2}+\frac{1}{p_3}=1$. If $p_3'<1$, i.e. $L^{p_3'}$ is a quasi-Banach space, the estimate above can be reformulated by dualizing $\| \cdot \|_{L^{p_3', \infty}}$ as in Lemma 2.6 of \cite{multilinear_harmonic}.

Next, we claim that

\begin{proposition}
\label{prop:Lq-lessLq-mock}
Let $\rr P, \sigma, S$ be as above. Then
\begin{equation}
\label{eq:mock-larger}
\|F\|_{\ii L^{q}(\rr P, \sigma, S)} \lesssim \| F\|_{\ii L^{q}_{mock}}, \qquad \text{for all     } 2<q \leq \infty.
\end{equation}
\end{proposition}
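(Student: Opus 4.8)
The plan is to read the inequality off the interpolation structure of the outer $L^p$ spaces, using the equivalences \eqref{eq:equiv-size-en}. By definition \eqref{def-Lq-mock}, $\|F\|_{\ii L^q_{mock}}=\big(\sssize_{\rr P}f\big)^{\theta}\big(\eenergy_{\rr P}f\big)^{1-\theta}$ with $\frac1q=\frac{1-\theta}{2}$, so in particular $1-\theta=\frac2q$; and by \eqref{eq:equiv-size-en}, $\sssize_{\rr P}(f)\sim\|F\|_{\ii L^{\infty}(\rr P,\sigma,S)}$ and $\eenergy_{\rr P}(f)\sim\|F\|_{\ii L^{2,\infty}(\rr P,\sigma,S)}$. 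Hence it suffices to prove the outer interpolation bound
\[
\|F\|_{\ii L^{q}(\rr P,\sigma,S)}\lesssim\|F\|_{\ii L^{\infty}(\rr P,\sigma,S)}^{\theta}\cdot\|F\|_{\ii L^{2,\infty}(\rr P,\sigma,S)}^{1-\theta},\qquad \frac1q=\frac{1-\theta}{2},
\]
which is precisely the log-convexity inequality for outer $L^p$ quasi-norms recalled in the previous section; I would include the short proof for completeness.

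First I would record two elementary properties of the super level measure $\lambda\mapsto\mu(S(F)>\lambda)$. From the very definition of $\|\cdot\|_{\ii L^{2,\infty}(\rr P,\sigma,S)}$ one has $\mu(S(F)>\lambda)\le\lambda^{-2}\|F\|_{\ii L^{2,\infty}(\rr P,\sigma,S)}^{2}$ for every $\lambda>0$. On the other hand, $\|F\|_{\ii L^{\infty}(\rr P,\sigma,S)}=\sup_{T\in\mathbf E}S(F)(T)$, so whenever $\lambda\ge\|F\|_{\ii L^{\infty}(\rr P,\sigma,S)}$ no tree $T$ satisfies $S(F)(T)>\lambda$; thus $\rr P'=\emptyset$ is admissible in \eqref{eq:def-size-out} and $\mu(S(F)>\lambda)=0$. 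Next I would plug these into the layer-cake formula:
\[
\|F\|_{\ii L^{q}(\rr P,\sigma,S)}^{q}=\int_{0}^{\infty}q\lambda^{q-1}\mu(S(F)>\lambda)\,d\lambda=\int_{0}^{\|F\|_{\ii L^{\infty}(\rr P,\sigma,S)}}q\lambda^{q-1}\mu(S(F)>\lambda)\,d\lambda,
\]
where on the last range the integrand is at most $q\lambda^{q-3}\|F\|_{\ii L^{2,\infty}(\rr P,\sigma,S)}^{2}$. Since $q>2$ the exponent $q-3$ exceeds $-1$, so the integral converges and
\[
\|F\|_{\ii L^{q}(\rr P,\sigma,S)}^{q}\le\frac{q}{q-2}\,\|F\|_{\ii L^{\infty}(\rr P,\sigma,S)}^{q-2}\,\|F\|_{\ii L^{2,\infty}(\rr P,\sigma,S)}^{2}.
\]
Taking $q$-th roots and using $1-\theta=\frac2q$ gives the displayed interpolation bound, hence the proposition. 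The case $q=\infty$ (i.e.\ $\theta=1$) is trivial, since $\|F\|_{\ii L^{\infty}_{mock}}=\sssize_{\rr P}f\sim\|F\|_{\ii L^{\infty}(\rr P,\sigma,S)}$.

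There is no genuine obstacle here — the argument is just bookkeeping with the super level measure. The only point that needs attention is the convergence of the $\lambda$-integral near $0$, and this is exactly where the hypothesis $q>2$ is used (equivalently $\theta<1$, which is also the condition under which $\ii L^q_{mock}$ is defined in the first place; for $q\le2$ the right-hand side would not even make sense). One could alternatively invoke the Marcinkiewicz interpolation theorem for outer measure spaces (Proposition~3.5 of \cite{outer_measures}) applied to the identity map $F\mapsto F$ between $\ii L^{2,\infty}$, $\ii L^{\infty}$ and $\ii L^{q}$, but the direct computation above is shorter and self-contained.
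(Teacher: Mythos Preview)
Your proof is correct and is essentially the same as the paper's: both truncate the layer-cake at $\|F\|_{\ii L^{\infty}(\rr P,\sigma,S)}$, use the Chebyshev bound $\mu(S(F)>\lambda)\le\lambda^{-2}\|F\|_{\ii L^{2,\infty}}^{2}$, and then invoke the equivalences \eqref{eq:equiv-size-en} (the direction $\|F\|_{\ii L^{2,\infty}}\lesssim\eenergy_{\rr P}(f)$ being Lemma~\ref{lemma-energy-control}). The only cosmetic difference is that the paper uses the dyadic sum $\sum_{n}2^{nq}\mu(S(F)>2^{n})$ in place of your continuous $\lambda$-integral.
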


So we can use the estimates from the previous section in order to deduce the $L^{p_j} \mapsto \ii L^{q_j}(\rr P, \sigma, S)$ Carleson embeddings, at least for restricted type functions. For this, we need to prove the end-point estimate corresponding to $\theta=0$: $\ds \|F\|_{\ii L^{2, \infty}(\rr P, \sigma, S)} \lesssim \eenergy_{\rr P}(f)$. Assuming this to be true, we show how it implies the inequality \eqref{eq:mock-larger}.

\smallskip
\begin{proof}[Proof of Proposition \ref{prop:Lq-lessLq-mock}]
Note that 
\[
\| F\|_{\ii L^{q}(\rr P, \sigma, S)}^q \lesssim \sum_{n \in \rr Z} 2^{nq} \mu \big( S(F)> 2^n  \big),
\]
and we only need to consider those values of $n$ for which $2^n \leq \| F\|_{\ii L^{\infty}(\rr P, \sigma, S)}$. Then, given that $\frac{1}{q}=\frac{1-\theta}{2}$, we have
\begin{align*}
\| F\|_{\ii L^{q}(\rr P, \sigma, S)}^q &\lesssim \sum_{n \leq \| F\|_{\ii L^{\infty}(\rr P, \sigma, S)}} 2^{n \left( q-2 \right)}  \, 2^{2n}\mu \big( S(F)> 2^n  \big) \\
&\lesssim  \| F\|_{\ii L^{2, \infty}(\rr P, \sigma, S)}^2   \sum_{n \leq \| F\|_{\ii L^{\infty}(\rr P, \sigma, S)}} 2^{n \left( q-2 \right)} \lesssim \big( \sssize_{\rr P} f  \big)^{q-2} \, \big( \eenergy_{\rr P} f \big)^2:= \| F\|_{\ii L^{q}_{mock}}^q.
\end{align*}
\end{proof}

We are left with proving the following:

\begin{lemma}
\label{lemma-energy-control}
Let $\rr P$ be a finite family of tiles. Then for any function $f \in \ic S(\rr R)$, we have 
\[
\|F\|_{\ii L^{2, \infty}(\rr P, \sigma, S)} \lesssim \eenergy_{\rr P}(f).
\]
\begin{proof}
Recall that  
\[
\|F\|_{\ii L^{2, \infty}(\rr P, \sigma, S)}^2 =\sup_{\lambda>0} \, \lambda^2 \mu(S(F)>\lambda)=\sup_{\lambda>0} \lambda^2 \, \inf \lbrace \mu(\rr P') : S(F \cdot \one_{\left( \rr P' \right)^c})(T) \leq \lambda  \quad \forall \, T \in \mathbf{E}  \rbrace. 
\]

Let $\lambda>0$ and $\rr P' \subseteq \rr P$ be so that $S(F \cdot \one_{\left( \rr P' \right)^c})(T) \leq \lambda  \quad \forall \, T \in \mathbf{E}$.  Since we are trying to minimize $\mu(\rr P')$, we can assume that for any tree $T \subset \rr P'$ we have $S(F)(T) > \lambda$.

A classical stopping-time argument (see Proposition 6.15 of \cite{multilinear_harmonic}) allows us to decompose $\rr P'$ into trees in the following way:
\[
\rr P':= \bigcup_{n} \bigcup_{T \in \mathbf{T}_n} T,
\]
where for every $n$ as above, $\mathbf{T}_n$ represents a collection of strongly disjoint trees with
\[
S_2(F)(T) \geq 2^{n-1}\quad \forall \, T \in \mathbf{T}_n, \quad S_2(F)(T') \leq 2^{n}\quad \forall \, T' \subseteq T \in \mathbf{T}_n.
\]
In particular, since every tile $P$ is also a tree, we have that $S_\infty(F)(T) \leq 2^n$ and $S_{2}(F)(T) \sim 2^n$ for all $T \in \mathbf{T}_n$. Consequently, $\mathbf{T}_n \neq \emptyset$ only if $2^{n} \geq \lambda$.

Then we obtain
\begin{align*}
\lambda^2 \mu( \rr P') & \leq \lambda^2 \sum_{2^{n} \geq \lambda} \sum_{T \in \mathbf{T}_n} |I_T| \leq \sum_{2^{n} \geq \lambda} \lambda^2 \, 2^{-2n} \cdot \big( 2^{2n} \sum_{T \in \mathbf{T}_n} |I_T| \big) \\
&\leq \sum_{2^{n} \geq \lambda} \lambda^2 \, 2^{-2n} \cdot \big( \eenergy_{\rr P}(f)  \big)^2 \lesssim \big( \eenergy_{\rr P}(f)  \big)^2.
\end{align*}

But $\lambda >0$ and $\rr P'$ were arbitrary; hence we can deduce that $\|F\|_{\ii L^{2, \infty}(\rr P, \sigma, S)} \lesssim \eenergy_{\rr P}(f)$.
\end{proof}
\end{lemma}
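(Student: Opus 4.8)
The plan is to unfold the definition of the weak outer $L^2$ quasi-norm and then reduce matters to the stopping-time tree decomposition that already underlies the definition of $\eenergy_{\rr P}$. Recall that
\[
\|F\|_{\ii L^{2, \infty}(\rr P, \sigma, S)}^2 = \sup_{\lambda>0} \lambda^2 \, \mu\big(S(F)>\lambda\big),
\]
so it suffices to fix $\lambda>0$ and exhibit a subcollection $\rr P' \subseteq \rr P$ whose complement carries no tree of size larger than $\lambda$ and for which $\lambda^2 \mu(\rr P') \lesssim \big(\eenergy_{\rr P}(f)\big)^2$. Since we are free to minimize $\mu(\rr P')$, we may assume that every tree $T \subseteq \rr P'$ satisfies $S(F)(T) > \lambda$. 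In particular, because in a finite collection every single tile is itself a (lacunary) tree, the $S_\infty$ component of the size is automatically controlled on $(\rr P')^c$ as soon as the $S_2$ component is, so no separate bookkeeping for $S_\infty$ is required.

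Next I would invoke the standard stopping-time selection (as in Proposition 6.15 of \cite{multilinear_harmonic}) to organize $\rr P'$ as $\rr P' = \bigcup_n \bigcup_{T \in \mathbf{T}_n} T$, where each $\mathbf{T}_n$ is a family of strongly disjoint trees with $S_2(F)(T) \geq 2^{n-1}$ for every $T \in \mathbf{T}_n$ and $S_2(F)(T') \leq 2^n$ for every subtree $T' \subseteq T \in \mathbf{T}_n$. The maximality condition makes $\mathbf{T}_n$ a legitimate competitor in the supremum defining $\eenergy_{\rr P}(f)$ at level $2^n$, so from \eqref{eq:def-energy} we get $2^{2n} \sum_{T \in \mathbf{T}_n} |I_T| \lesssim \big(\eenergy_{\rr P}(f)\big)^2$ for every $n$; moreover $\mathbf{T}_n = \emptyset$ unless $2^n \gtrsim \lambda$, by the minimality assumption on $\rr P'$. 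Summing a geometric series,
\begin{align*}
\lambda^2 \, \mu(\rr P') &\leq \lambda^2 \sum_{2^n \gtrsim \lambda} \sum_{T \in \mathbf{T}_n} |I_T| = \sum_{2^n \gtrsim \lambda} \lambda^2\, 2^{-2n} \Big( 2^{2n}\sum_{T \in \mathbf{T}_n} |I_T| \Big) \\
&\lesssim \big(\eenergy_{\rr P}(f)\big)^2 \sum_{2^n \gtrsim \lambda} \lambda^2 \, 2^{-2n} \lesssim \big(\eenergy_{\rr P}(f)\big)^2,
\end{align*}
and taking the supremum over $\lambda>0$ finishes the proof.

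The only genuinely delicate point is the interface between the outer-measure bookkeeping and the energy definition: one must verify that the stopping-time output $\mathbf{T}_n$ satisfies \emph{both} the lower bound on each selected tree \emph{and} the upper bound on all of its subtrees — these are precisely the two constraints in \eqref{eq:def-energy} that make $\eenergy_{\rr P}$ a weak-$L^2$ rather than an $L^2$ quantity — and that, because single tiles count as trees, the $S_\infty$ part of $S = S_2 + S_\infty$ is subsumed by the $S_2$ stopping time. Both facts are standard consequences of the selection procedure once it is run with the correct thresholds; everything else is the routine geometric summation above.
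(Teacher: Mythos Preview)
Your proof is correct and follows essentially the same approach as the paper: fix $\lambda$, take $\rr P'$ minimal, invoke the stopping-time decomposition of Proposition~6.15 of \cite{multilinear_harmonic} into strongly disjoint trees $\mathbf{T}_n$, compare with the energy definition \eqref{eq:def-energy}, and sum the geometric series. The only difference is cosmetic ordering (you discuss the $S_\infty$ bookkeeping slightly earlier than the paper does).
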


Now we take a look at the $L^p \mapsto \ii L^q$ Carleson embeddings in the following situations: $p=q>2$ (corresponding to \eqref{item-Local l2}), $q' \leq p< 2< q$ (this is \eqref{item-outside-local-L2}), as well as a localized version (situation \eqref{item-local}).

\subsubsection*{\textbf{The case $2<p=q$ for outer measures:}}
We restate the Carleson embedding ``above $L^2$" in the case of a function bounded above by a characteristic function of a set of finite measure (the general case can be deduced through restricted type interpolation).

\begin{proposition}
\label{prop:Carleson-embedding-outer-meas-above-L2}
Let $2<q \leq \infty$ and let $f$ be a function so that $|f| \leq \one_E$. Then
\[
\|F \|_{\ii L^{q}\left( \rr P, \sigma, S  \right)} \lesssim |E|^\frac{1}{q}.
\]
\begin{proof}
Given that $\ds \|F\|_{\ii L^{q}(\rr P, \sigma, S)} \lesssim \| F\|_{\ii L^{q}_{mock}}$ for any $2<q \leq \infty$, the result above follows from Proposition \ref{propo:Carleson-embedding-mock-above-L2}.
\end{proof}
\end{proposition}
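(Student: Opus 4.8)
The plan is to obtain the estimate by composing two facts that have already been set up: the comparison of the genuine outer $L^q$ quasi-norm with the mock one, and the mock Carleson embedding above $L^2$ for restricted-type functions. Concretely, Proposition \ref{prop:Lq-lessLq-mock} gives $\|F\|_{\ii L^{q}(\rr P, \sigma, S)} \lesssim \|F\|_{\ii L^{q}_{mock}}$ for every $2<q\leq\infty$, so it suffices to control $\|F\|_{\ii L^{q}_{mock}}$; but that is exactly the content of Proposition \ref{propo:Carleson-embedding-mock-above-L2}, namely $\|F\|_{\ii L^{q}_{mock}}\lesssim |E|^{1/q}$ whenever $|f|\leq\one_E$. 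Chaining the two inequalities yields the claim. If one prefers to see the bound directly: writing $\tfrac1q=\tfrac{1-\theta}{2}$, the hypothesis $|f|\leq\one_E$ forces $\sssize_{\rr P} f\lesssim 1$ (since $\ci_{I_P}^M$ has integral $\lesssim|I_P|$) and $\eenergy_{\rr P} f\lesssim\|f\|_2\leq|E|^{1/2}$, so by definition \eqref{def-Lq-mock}, $\|F\|_{\ii L^{q}_{mock}}=(\sssize_{\rr P} f)^{\theta}(\eenergy_{\rr P} f)^{1-\theta}\lesssim |E|^{(1-\theta)/2}=|E|^{1/q}$.

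So the only step with real content is Proposition \ref{prop:Lq-lessLq-mock}, whose proof in turn reduces to the layer-cake summation $\|F\|_{\ii L^{q}(\rr P, \sigma, S)}^q\lesssim\sum_n 2^{nq}\mu(S(F)>2^n)$ truncated at $2^n\lesssim\|F\|_{\ii L^{\infty}(\rr P, \sigma, S)}\sim\sssize_{\rr P} f$, together with the endpoint weak-type estimate $\|F\|_{\ii L^{2,\infty}(\rr P, \sigma, S)}\lesssim\eenergy_{\rr P}(f)$. Thus the genuine obstacle — and the one place where the combinatorics of strongly disjoint trees and the precise definition \eqref{eq:def-energy} of the energy enter — is Lemma \ref{lemma-energy-control}; once that is granted (as it is, above), Proposition \ref{prop:Carleson-embedding-outer-meas-above-L2} is immediate. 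I would therefore present the proof of this proposition as the one-line deduction from Proposition \ref{prop:Lq-lessLq-mock} and Proposition \ref{propo:Carleson-embedding-mock-above-L2}, and leave the substance in the earlier lemma.

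Finally, the restriction to $|f|\leq\one_E$ is only cosmetic: since the size $S=S_2+S_\infty$ is subadditive, restricted-type interpolation in the outer-measure framework (Proposition 3.5 of \cite{outer_measures}) upgrades the displayed inequality to the full $L^q\mapsto\ii L^q(\rr P, \sigma, S)$ Carleson embedding with no loss. That refinement will not be needed for the applications to $BHT$ in the local $L^2$ range, where restricted-type functions already suffice.
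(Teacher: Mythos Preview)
Your proof is correct and follows exactly the same approach as the paper: invoke Proposition~\ref{prop:Lq-lessLq-mock} to pass from the outer $\ii L^q$ norm to the mock one, then apply Proposition~\ref{propo:Carleson-embedding-mock-above-L2}. The additional unpacking of the size and energy bounds and the commentary on Lemma~\ref{lemma-energy-control} and restricted-type interpolation are accurate but go beyond what the paper's one-line proof records.
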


\subsubsection*{\textbf{The case $p<2$ for outer measures:}}

For an $L^p \mapsto \ii L^q(\rr P, \sigma, S)$ Carleson embedding with $p<2$, we need to make an extra assumption on $\sssize_{\rr P}(f)$, which is similar to supposing that the maximal function of $f$ is bounded above.
\begin{proposition}
\label{prop:Lq-outer-below-L2}
Assume that $|f|\leq \one_E$ and $\sssize_{\rr P}(f) \leq \tilde{\#}$. Then
\[
\|F\|_{\ii L^q(\rr P, \sigma, S)} \lesssim \min(\tilde{\#, 1})^{1-\frac{2}{q}}\, |E|^\frac{1}{q}.
\]
In particular, if $\tilde{\#}=\# \, |E|$, 
\[
\|F\|_{\ii L^q(\rr P, \sigma, S)} \lesssim \#^{\frac{1}{p}-\frac{1}{q}} |E|^\frac{1}{p},
\]
whenever $q' \leq p <2$.
\begin{proof}
Again, this follows from Propositions \ref{prop:L-mock-below-L2} and \ref{prop:Lq-lessLq-mock}.
\end{proof}
\end{proposition}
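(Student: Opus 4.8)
The plan is to derive both inequalities by chaining two facts already in hand: Proposition~\ref{prop:Lq-lessLq-mock}, which gives $\|F\|_{\ii L^{q}\left( \rr P, \sigma, S  \right)} \lesssim \| F\|_{\ii L^{q}_{mock}}$ for all $2<q\le\infty$, and the variant of Proposition~\ref{prop:L-mock-below-L2} recorded in \eqref{eq:prop-2-prim}, which says that $|f|\le\one_E$ together with $\sssize_{\rr P}(f)\le\tilde{\#}$ forces $\| F\|_{\ii L^q_{mock}} \lesssim \min(\tilde{\#},1)^{1-\frac{2}{q}}\,|E|^{\frac{1}{q}}$. Under the hypotheses of the present statement both of these apply verbatim, so composing them yields the first inequality. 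It is worth noting, for the legitimacy of the factor $\min(\tilde{\#},1)$, that $|f|\le\one_E$ already forces $\sssize_{\rr P}(f)\lesssim 1$, since $\frac{1}{|I_P|}\int \one_E\,\ci_{I_P}^{M}\lesssim 1$; combined with the assumption $\sssize_{\rr P}(f)\le\tilde{\#}$ this gives $\sssize_{\rr P}(f)\lesssim\min(\tilde{\#},1)$.

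For the ``in particular'' clause I would specialize to $\tilde{\#}=\#\,|E|$ and reduce the claim to the elementary pointwise bound
\[
\min(\#|E|,1)^{1-\frac{2}{q}}\,|E|^{\frac{1}{q}}\;\lesssim\;\#^{\frac{1}{p}-\frac{1}{q}}\,|E|^{\frac{1}{p}},\qquad q'\le p<2<q,
\]
which I would verify by splitting into cases. The inequalities $q'\le p<2<q$ force $\frac{1}{p}-\frac{1}{q}>0$ and $\frac{1}{q'}\ge\frac{1}{p}$. If $\#|E|\le 1$ the left side equals $\#^{1/q'-1/q}|E|^{1/q'}$, and its ratio to the right side is $(\#|E|)^{1/q'-1/p}\le 1$. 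If $\#|E|>1$ the left side is $|E|^{1/q}$, and $|E|^{1/q}\lesssim \#^{1/p-1/q}|E|^{1/p}$ is equivalent to $(\#|E|)^{1/p-1/q}\gtrsim 1$, which holds since the exponent is positive. Either way the bound follows, and then one more application of Proposition~\ref{prop:Lq-lessLq-mock} transfers it to $\|F\|_{\ii L^q\left( \rr P, \sigma, S  \right)}$.

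I do not expect a genuine obstacle here: all the analytic content sits upstream in Proposition~\ref{propo:Carleson-embedding-mock-above-L2} and Lemma~\ref{lemma-energy-control} (via Propositions~\ref{prop:L-mock-below-L2} and \ref{prop:Lq-lessLq-mock}), and what remains is the exponent bookkeeping in the case analysis above. The only point that demands a little care is keeping straight which of $\tilde{\#}$ and $\#\,|E|$ is being used at each step, and checking that the choice $\tilde{\#}=\#\,|E|$ in the second part is consistent with the hypothesis $\sssize_{\rr P}(f)\le\tilde{\#}$, i.e. that one really may assume $\sssize_{\rr P}(f)\le\#\,|E|$ in the intended applications.
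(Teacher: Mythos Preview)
Your proposal is correct and takes essentially the same approach as the paper: compose Proposition~\ref{prop:Lq-lessLq-mock} with Proposition~\ref{prop:L-mock-below-L2} (and its variant~\eqref{eq:prop-2-prim}). Your case analysis for the ``in particular'' clause is fine but unnecessary, since Proposition~\ref{prop:L-mock-below-L2} already states the bound $\|F\|_{\ii L^q_{mock}}\lesssim \#^{1/p-1/q}|E|^{1/p}$ for $q'\le p<2$ directly under the hypothesis $\sssize_{\rr P}(f)\le\#|E|$.
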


If we look at the triples $(p_1, p_2, p_3')$ of Lebesgue exponents for which an estimate $BHT: L^{p_1} \times L^{p_2} \to L^{p_3'}$ can be deduced by using the above Carleson embedding, we again have the constraint
\[
\big| \frac{1}{p_1} -\frac{1}{p_2} \big|<\frac{1}{2}.
\]
This is because 
\[
\big| \frac{1}{p_1} -\frac{1}{p_2} \big| \leq \max \big( \big| \frac{1}{q_1'}-\frac{1}{q_2}  \big|, \,  \big| \frac{1}{q_2'}-\frac{1}{q_1}  \big|  \big) = 1-\frac{1}{q_1}-\frac{1}{q_2}=\frac{1}{q_3}<\frac{1}{2}.
\]

\subsubsection*{\textbf{Localized Carleson embeddings for outer measures}}

Here we want to formulate a local version of the Carleson embedding. We assume that all the tiles have their spatial information contained inside a fixed dyadic interval (i.e., $I_P \subseteq I_0 \, \forall \, P$).

\begin{proposition}
\label{prop:localized-embedding-outer-measures}
If $I_0$ is a fixed dyadic interval, $\rr P(I_0)$ is a collection of tiles so that $I_P \subseteq I_0$ for all $P \in \rr P(I_0)$, $f$ a function satisfying $|f| \leq \one_E$ and $\ds \sssize_{\rr P(I_0)}(f) \leq \tilde{\#}$, then
\[
\|F\|_{\ii L^q \left( \rr P(I_0), \sigma, S \right)} \lesssim \min( \tilde{\#}, 1)^\frac{1}{q'} |I_0|^\frac{1}{q}.
\]
In particular, if $\tilde{\#}= \# \, |E|$, we have
\[
\|F\|_{\ii L^q \left( \rr P(I_0), \sigma, S \right)} \lesssim\big( \# |E| \big)^\frac{1}{p} |I_0|^\frac{1}{q},
\]
for any $q' \leq p$.
\begin{proof}
All that changes through the localization is the collection of tiles (and in consequence also the generating set $\mathbf{E}$). But we still have 
\[
\|F\|_{\ii L^q(\rr P\left(I_0 \right), \sigma, S)} \lesssim \|F\|_{\ii L^q_{mock; I_0}} \quad \forall \, 2< q \leq \infty.
\]
Hence the conclusion follows from Proposition \ref{prop:localized-embedding-mock}.
\end{proof}
\end{proposition}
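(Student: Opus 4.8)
The plan is to observe that the localized statement follows from the machinery already in place by exactly the same mechanism as Proposition~\ref{prop:localized-embedding-mock}, once one notices that replacing $\rr P$ by $\rr P(I_0)$ changes nothing structural in the associated outer measure space. First I would note that $\rr P(I_0)$ is again a finite family of time--frequency tiles, so the outer measure $\sigma$ (with $\sigma(T)=|I_T|$), the generating collection $\mathbf E$ of lacunary trees, and the size $S=S_2+S_\infty$ are defined on $\rr P(I_0)$ exactly as on $\rr P$. Consequently Lemma~\ref{lemma-energy-control} applies verbatim, giving the endpoint control $\|F\|_{\ii L^{2,\infty}(\rr P(I_0),\sigma,S)}\lesssim \eenergy_{\rr P(I_0)}(f)$, and the $L^\infty$ side of \eqref{eq:equiv-size-en} reads $\|F\|_{\ii L^\infty(\rr P(I_0),\sigma,S)}\sim \sssize_{\rr P(I_0)}(f)$.

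Next I would run the interpolation argument from the proof of Proposition~\ref{prop:Lq-lessLq-mock}, again with $\rr P$ replaced by $\rr P(I_0)$: starting from $\|F\|_{\ii L^q(\rr P(I_0),\sigma,S)}^q\lesssim \sum_n 2^{nq}\,\mu(S(F)>2^n)$, restricting the sum to those $n$ with $2^n\lesssim \|F\|_{\ii L^\infty(\rr P(I_0),\sigma,S)}$, and using $2^{2n}\mu(S(F)>2^n)\lesssim \|F\|_{\ii L^{2,\infty}(\rr P(I_0),\sigma,S)}^2$, one obtains, for $2<q\le\infty$,
\[
\|F\|_{\ii L^q(\rr P(I_0),\sigma,S)}\lesssim \big(\sssize_{\rr P(I_0)}f\big)^{1-\frac{2}{q}}\big(\eenergy_{\rr P(I_0)}f\big)^{\frac{2}{q}}=\|F\|_{\ii L^q_{mock;I_0}},
\]
which is the localized analogue of \eqref{eq:mock-larger}. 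This reduces the proposition entirely to a statement about $\|F\|_{\ii L^q_{mock;I_0}}$.

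Finally I would invoke Proposition~\ref{prop:localized-embedding-mock} (in its primed variant) to conclude $\|F\|_{\ii L^q_{mock;I_0}}\lesssim \min(\tilde\#,1)^{1/q'}|I_0|^{1/q}$ under the hypotheses $|f|\le\one_E$ and $\sssize_{\rr P(I_0)}(f)\le\tilde\#$; combined with the previous display this is the first claimed inequality. Specializing to $\tilde\#=\#\,|E|$ and using $\min(\#|E|,1)^{1/q'}\le(\#|E|)^{1/p}$ whenever $q'\le p$ yields the ``in particular'' part. The only step that genuinely exploits the localization --- and hence the one I would be most careful about --- is the improved energy bound $\eenergy_{\rr P(I_0)}(f)\lesssim \|f\cdot\ci_{I_0}\|_2$ that sits inside Proposition~\ref{prop:localized-embedding-mock}; it comes from the $TT^*$-type argument of Lemma~\ref{lemma-local-gen-size-energy}, and it is the single place where the hypothesis $I_P\subseteq I_0$ for all $P$ is actually used. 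Everything else is a mechanical transcription of the global argument, so no new obstacle should arise.
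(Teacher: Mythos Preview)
Your proposal is correct and follows essentially the same approach as the paper's proof: observe that the outer measure machinery is unchanged when one restricts to $\rr P(I_0)$, deduce the localized comparison $\|F\|_{\ii L^q(\rr P(I_0),\sigma,S)}\lesssim\|F\|_{\ii L^q_{mock;I_0}}$, and then invoke Proposition~\ref{prop:localized-embedding-mock} (and its primed variant). The paper states this in two lines; you have simply unpacked the details behind each step, which is fine.
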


The Carleson embeddings presented in Propositions \ref{prop:Carleson-embedding-outer-meas-above-L2} and \ref{prop:Lq-outer-below-L2} can be formulated directly for general functions, not only for those bounded above by characteristic functions of sets of finite measure. In the local $L^2$ case, this is precisely Theorem 5.1 of \cite{outer_measures}, and a corresponding Carleson embedding outside local $L^2$ was obtained in \cite{Carleson-embedding-belowL2}.

\section{An inquiry into the variational Carleson operator}
\label{sec:var-Carleson}

The variational Carleson operator defined by
\begin{equation}
\label{def:varCarleson}
\ic{C}^{var, r}(f)(x):=\sup_{K} \sup_{n_0< \ldots < n_K} \big( \sum_{\kappa=1}^K  \big|  \int_{a_{n_{\kappa-1}}}^{a_{n_{\kappa}}} \hat{f}(\xi) \, e^{2 \pi i x \xi} d \xi \, \big|^r    \big)^\frac{1}{r}
\end{equation}
is a generalization of the classical Carleson operator
\[
\ic{C}(f)(x)=  \sup_{N} \big| \int_{-\infty}^N \, \hat{f}(\xi) \, e^{2 \pi i x \xi} d \xi \,   \big|.
\]

The Carleson operator $\ic C$ of \cite{initial_Carleson} played an important role in establishing pointwise convergence of Fourier series of generic functions in $L^p(\rr T)$, via a transference principle. The variational Carleson on the other hand represents a tool for measuring the rate of convergence of Fourier series in $L^p(\rr T)$, and in particular implies their pointwise convergence.

The boundedness of $\ic{C}^{var, r}$ was proved in \cite{variational_Carleson}. In the present section, we will provide a shorter and somehow simpler proof based on \emph{localization}. The local result is interesting in itself, and it implies by means of the helicoidal method both sparse domination and vector-valued estimates, as will be explained in Sections \ref{sec:method-proof} and \ref{sec:sparse-dom}. 

We start by recalling a few results and concepts related to $\ic{C}^{var, r}$.

\begin{theorem}[\cite{variational_Carleson}]\label{thm:C-var}
Let $r>2$. Then $\ds \ic{C}^{var, r} : L^p(\rr R) \to L^p(\rr R)$ for any $r'<p<\infty$, and $\ds \ic{C}^{var, r}: L^{r', 1}(\rr R) \to L^{r', \infty}(\rr R)$. Both conditions $r>2$ and $p>r'$ are necessary.
\end{theorem}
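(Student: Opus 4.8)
The proof I would give is the one announced in the text: establish a \emph{localized} estimate for the model sum of the variational Carleson operator (in the spirit of Lemma \ref{lemma-local-gen-size-energy}), and then promote it to the global $L^p$ bounds by the stopping-time / packing argument already used for $BHT$ in Section \ref{sec:BHT-avoid-interpolation}. First I would recall the wave-packet decomposition of $\ic C^{var,r}$: after discretization, the operator is dominated (up to harmless superpositions) by a model sum indexed by a rank-$1$ collection $\rr P$ of tiles, where the variational structure is encoded by allowing the ``output'' frequency of each tile to range over an arbitrary increasing sequence of cut-offs. The key linearization is that one tests against a function $h$ on the output side and, crucially, selects for each spatial interval a single term of the variation sum (an $\ell^r$-to-$\ell^\infty$ duality via an auxiliary sequence), so the trilinear form $\Lambda(f,h)$ that one must bound looks formally like the Carleson form but with the extra $r$-variation bookkeeping. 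Following \cite{variational_Carleson}, the relevant analogue of \eqref{eq:gen-size-energy} is a size--energy estimate in which the energy of the ``lacunary'' component of $f$ is an honest $L^2$ quantity while the size of the output is controlled by an $L^{r'}$-type quantity — this is where the exponent $r>2$ enters, since one needs $r'<2$ for the energy on that side to beat the trivial bound.

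The heart of the argument is then the localized size--energy lemma: for a fixed dyadic $I_0$ and $\rr P(I_0)=\{P\in\rr P: I_P\subseteq I_0\}$, one shows
\[
\vert \Lambda_{\rr P(I_0)}(f,h)\vert \lesssim \big(\sssize_{\rr P(I_0)}\one_F\big)^{\frac{1+\theta_1}{2}}\cdot\big(\sssize_{\rr P(I_0)}\one_H\big)^{\frac{1+\theta_2}{2}}\cdot |I_0|
\]
for restricted-type $f,h$, with $\theta_1+\theta_2=1$, by combining the size--energy estimate with the improved local energy bound $\eenergy_{\rr P(I_0)}(f)\lesssim \|f\cdot\ci_{I_0}\|_2$ — exactly the $TT^*$-plus-spatial-decay argument carried out in the proof of Lemma \ref{lemma-local-gen-size-energy}, which goes through verbatim for the variational model tiles since it only uses the spatial localization $I_P\subseteq I_0$ and frequency disjointness of lacunary tiles. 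One then runs the triple stopping time of Section \ref{sec:BHT-avoid-interpolation}: remove the exceptional set $\Omega=\{\ic M\one_F > C|F|/|H|\}$ to build a major subset $H'$, decompose $\rr P$ along maximal dyadic intervals according to the dyadic levels of the averages of $\one_F$ and $\one_{H'}$, use the Carleson packing property \eqref{eq:Carleson-condition} of the resulting collections, and sum the geometric series in the stopping-time levels (the constraint \eqref{eq:cond-conv} is satisfiable because $\frac{1+\theta_1}{2}+\frac{1+\theta_2}{2}=\frac{3}{2}>1$). This yields $\vert\Lambda(f,h)\vert\lesssim |F|^{a_1}|H|^{a_2}$ for $(a_1,a_2)$ ranging over the region corresponding to $r'<p<\infty$, i.e. the $L^p\to L^p$ bounds; the weak-type endpoint $L^{r',1}\to L^{r',\infty}$ comes from the same estimate at the corner, testing against $h=\one_{H'}$ and tracking that one only loses the $\log$-free major-subset reduction.

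The main obstacle — and the place where the variational operator genuinely differs from $BHT$ or the plain Carleson operator — is establishing the underlying size--energy inequality with the correct $L^{r'}$ control on the output side, i.e. proving that the stopping-time selection of a single term per interval from the $r$-variation sum can be organized into trees whose ``size'' is measured in $L^{r'}$ rather than $L^\infty$, and that the corresponding energy still satisfies $\eenergy\lesssim\|\cdot\|_2$. This requires the more delicate tree estimates of \cite{variational_Carleson} (in particular handling the overlapping ``long variation'' and ``short variation'' parts and the loss coming from the arbitrary length $K$ of the partition), and it is essentially the only part not reducible to a quotation of the $BHT$ machinery; everything downstream — localization, the $TT^*$ energy improvement, the stopping time, the summation — is a faithful transcription of the arguments already in the excerpt. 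The necessity of $r>2$ and $p>r'$ I would dispatch by the standard counterexamples (a lacunary Fourier series / Rademacher-type construction shows $r>2$ is forced, and testing on a single modulated bump shows $p>r'$), as in \cite{variational_Carleson}.
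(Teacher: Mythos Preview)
Your local estimate is too strong and cannot hold. If
\[
\vert \Lambda_{\rr P(I_0)}(f,h)\vert \lesssim \big(\sssize_{\rr P(I_0)}\one_F\big)^{\frac{1+\theta_1}{2}}\big(\sssize_{\rr P(I_0)}\one_H\big)^{\frac{1+\theta_2}{2}}|I_0|
\]
were true for all $\theta_1+\theta_2=1$, the stopping-time argument of Section~\ref{sec:BHT-avoid-interpolation} would yield $\ic C^{var,r}:L^p\to L^p$ for \emph{every} $1<p<\infty$, contradicting the necessity of $p>r'$. The $BHT$ template with symmetric $\frac{1+\theta_j}{2}$ exponents does not transfer here, because the two sides of the bilinear form are genuinely asymmetric: the $f$-side carries an $L^2$-based size $\ssize^{\mathscr{e}}$, but the output side carries the \emph{density} $\ssize^{\mathscr{m}}$ of \eqref{def:size-g-var-C}, an $L^{r'}$ quantity encoding the linearization of the $\ell^r$ variation. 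There is no $L^2$ energy for $g$ to which a $TT^*$ argument could apply ``verbatim''.

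What the paper actually proves (Proposition~\ref{prop:local-C-var}) is the asymmetric local bound
\[
\big| \Lambda_{\ic C^{var,r};\rr P(I_0)}(f,g)\big| \lesssim \big(\sssize_{\rr P(I_0)}\one_F\big)^{\frac{1}{r'}-\epsilon}\,\big(\sssize_{\rr P(I_0)}g\big)\,|I_0|,
\]
with the exponent on $\one_F$ capped at $\frac{1}{r'}$ --- precisely what forces the range $p>r'$. Two ingredients you are missing produce this: first, a separate decomposition lemma for the density side (Lemma~\ref{lemma:dec-size-g-vac-C-local}) with the $L^{r'}$ Carleson bound $\sum_T|I_T|\lesssim \lambda^{-r'}\|g\cdot\ci_{I_0}\|_{r'}^{r'}$; second, the factorization $g=g_1\cdot g_2$ with $g_1=|g|^{1/r}$, so that $|g_2|^{r'}=|g|$ and one can pair the $L^{r'}$ tree estimate \eqref{eq:tree-est-Lr'} against $g_1$ via H\"older to land on an $L^1$ size of $g$. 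Interpolating the two decomposition bounds with weights $\theta_1,\theta_2$ forces $\frac{1}{r}<\theta_1<\frac{1}{2}$ (convergence of both geometric series), and the resulting exponent on $\one_F$ is $1-\theta_1<\frac{1}{r'}$, not $\frac{1+\theta_1}{2}$. The rest of your outline --- the exceptional set, the double stopping time, the packing argument --- is correct once the right local estimate is in place.
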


The study of the variational Carleson can be reduced to that of a certain model operator. First, we consider $K \in \rr{Z}^+$ to be fixed and $\lbrace  a_{\kappa}(x)\rbrace_{0 \leq \kappa \leq K}$ measurable complex valued functions so that $ \big(\sum\limits_{\kappa} | a_{\kappa}(x)|^{r'}  \big)^\frac{1}{r'}=1$ and
\[
\ic{C}^{var, r}(f)(x)=\sum_{\kappa=1}^K a_\kappa(x) \, \int_{\rr R} \one_{\left[ \xi_{\kappa-1}\left( x\right), \xi_{\kappa}\left( x \right) \right]}(\xi) \, \hat{f}(\xi) e^{2 \pi i x \xi} d \xi.
\]

Then this can be approximated by a discretized model operator
\[
\ic{C}_{\rr P}^{var, r}(f)(x):=\sum_{P \in \rr P} \langle f, \phi_P \rangle \phi_P(x) a_P(x),
\]
where every tile $P \in \rr P$ is of the form $P=I_P \times \omega_P$ and each $\omega_P$ is the union of three frequency intervals: $\omega_l, \omega_u, \omega_h$. If $\xi_\kappa(\cdot)$ were fixed, the intervals $\omega_u$ would represent a Whitney decomposition of the frequency interval $(\xi_{\kappa-1}, \xi_{\kappa})$. Since this is not the case, the intervals $\omega_l$ and $\omega_h$ are employed in order to capture the low and high frequency information. That is, if $x \in I_P$, there exists at most an index $\kappa$ with $1 \leq \kappa \leq K$ so that $\xi_{\kappa-1}(x ) \in \omega_l$ (and $\xi_\kappa(x) \in \omega_h$). Then $a_P(x)=a_\kappa(x)$ if such such an index exists; otherwise, $a_P(x)=0$. The wave packet $\phi_P$ is associated to the ``Heisenberg box" $I_P \times \omega_u$.

The connection between the frequency intervals $\omega_l, \omega_u$ and $\omega_h$ associated to a tile $P=I_P \times \omega_P$ can be formulated in the following way: there exist constants $ 1 \leq C_3 <C_2<C_1$, such that
\[
\supp (\hat{\phi_P}) \subset C_3\, \omega_u, \quad C_2 \, \omega_u \cap C_2\, \omega_l = \emptyset, \quad C_2 \, \omega_u \cap C_2\, \omega_h = \emptyset, \quad C_2 \, \omega_l \subset C_1\, \omega_u, \quad  C_2 \, \omega_u \subset C_1 \, \omega_l.
\]

As usual, we can assume that $\rr P$ is a finite collection of multi-tiles. $\Xi^{\text{top}}$ denotes the admissible collection of tree tops, and it can be assumed to be finite. The approach will be similar to the one employed for the bilinear Hilbert transform: the tiles will be organized into collections of trees, which are `generating sets" easier to estimate.

\begin{definition}
\label{def:tree-var-Carleson}
A tree $T=(T, I_T, \xi_T)$ is represented by a collection $T \subset \rr P$ of multi-tiles, a spatial top interval $I_T$ and the frequency information $\xi_T \in \Xi^{\text{top}}$, with the property that $P \in T$ provided
\begin{equation}
\label{eq:def-tree-var-C}
I_P \subseteq I_T, \qquad \omega_T:=\big[ \xi_T -\frac{C_2-1}{4 |I_T|}, \xi_T+\frac{C_2 -1}{4 |I_T|}  \big) \subseteq \omega_m:= \text{conv}\, (C_2 \, \omega_l \cup C_2 \, \omega_u).
\end{equation} 

A tree $(T, I_T, \xi_T)$ is called \emph{$l$-overlapping} if $\xi_T \in C_2 \, \omega_l$ for all $P \in T$, and \emph{$l$-lacunary} if $\xi_T \notin C_2 \, \omega_l$ for all $P \in T$.
\end{definition}

Next, we introduce the suitable notions of sizes. Since our method is based on localization, we will not deal with energies as in \eqref{eq:def-energy}. What we refer to as $\ssize^{\mathscr{e}}$ and $\ssize^{\mathscr{m}}$ are called in \cite{variational_Carleson} \emph{energy} and \emph{density}, respectively.

\begin{definition}
Let $\rr P$ be a finite collection of multi-tiles. Then
{\fontsize{10}{10}\begin{equation}
\label{def:size-f-var-C}
\ssize^{\mathscr{e}}_{\rr P}(f):=\sup_{\substack{T \subset \rr P \\ $l$-\text{overlapping tree}}} \big( \frac{1}{|I_T|} \sum_{P \in T} |\langle f, \phi_P   \rangle|^2  \big)^\frac{1}{2}
\end{equation}}
and 
{\fontsize{10}{10}\begin{equation}
\label{def:size-g-var-C}
\ssize^{\mathscr{m}}_{\rr P}(g):= \sup_{P \in \rr P} \sup_{\substack{P' \in \bar{\rr P} \\ P' \geq P, I_{P'} \subset 9 I_P}} \big( \frac{1}{|I_{P'}|} \int_{\rr R} |g(x)|^{r'} \ci_{I_{P'}}^M(x) \cdot \sum_{\kappa=1}^K |a_{\kappa}(x)|^{r'} \cdot \one_{\omega_{P'}}(\xi_{\kappa-1}(x)) \, dx   \big)^{\frac{1}{r'}}.
\end{equation}}
Above, the supremum is taken over all admissible tiles $P'=I_{P'} \times \omega_{P'}$, such that $I_P \subseteq I_{P'}$ and $\omega_{P'} \subseteq \omega_P$, where
\[
\omega_{P'}:=\big[ \xi_{P'}-\frac{C_2-1}{4 |I_{P'}|}, \xi_{P'}+\frac{C_2-1}{4 |I_{P'}|}  \big), \quad \text{for some   } \xi_{P'} \in \Xi^{\text{top}}.
\]
\end{definition}

\begin{rremark}
We have that 
\[
\ssize_{\rr P}^{\mathscr{e}}(f)=\ssize_{\rr P}(f) \leq \sssize_{\rr P}(f),
\]
where $\ssize_{\rr P}(f) $ is the size defined in \eqref{eq:def-size} for the bilinear Hilbert transform. Also, since $\sum_{\kappa=1}^K \vert a_{\kappa}(x)\vert ^{r'} \leq 1$, $\ssize^{\mathscr{m}}_{\rr P}(g)$ can be controlled by the $L^{r'}$ size of (see definition \eqref{def:sssize}):
\[
\ssize^{\mathscr{m}}_{\rr P}(g) \lesssim \sssize^{r'}_{\rr P}(g).
\]
\end{rremark}

These notions of sizes are motivated, as usual, by the \emph{tree estimate}. Though the formulation is slightly different, this is precisely Proposition 5.1 of \cite{variational_Carleson}.

\begin{proposition}\label{prop:tree-est-C-var} Let $T \subset \rr P$ be a tree. Then 
\begin{equation}
\label{eq:tree-est-C-var}
\big \vert  \int_{\rr R} \sum_{P \in T} \langle f, \phi_P  \rangle \phi_P(x) a_P(x) g(x) dx \big \vert \lesssim \ssize^{\mathscr{e}}_{T}(f) \cdot \ssize^{\mathscr{m}}_{T}(g) \cdot \vert I_T \vert
\end{equation}
and also,  
\begin{equation}
\label{eq:tree-est-Lr'}
\big \|\sum_{P \in T} \langle f, \phi_P  \rangle \phi_P\, a_P\, g \big \|_{L^{r'}} \lesssim \ssize^{\mathscr{e}}_{T}(f) \cdot \ssize^{\mathscr{m}}_{T}(g) \cdot \vert I_T \vert^{\frac{1}{r'}}.
\end{equation}
Furthermore, for $\ell \geq 0$, we have 
\[
\big \|\sum_{P \in \rr P} \langle f, \phi_P  \rangle \phi_P \,a_P \, g \big \|_{L^1(\rr R \setminus 2^{\ell} I_T)} \lesssim 2^{-\ell (N-10)} \ssize^{\mathscr{e}}_{T}(f) \cdot \ssize^{\mathscr{m}}_{ T}(g) \cdot \vert I_T \vert
\]
and 
\[
\big \|\sum_{P \in \rr P} \langle f, \phi_P  \rangle \phi_P \, a_P \, g \big \|_{{L^{r'}(\rr R \setminus 2^{\ell} I_T)}} \lesssim 2^{-\ell (N-10)} \ssize^{\mathscr{e}}_{T}(f) \cdot \ssize^{\mathscr{m}}_{ T}(g) \cdot \vert I_T \vert^{\frac{1}{r'}}.
\]
\end{proposition}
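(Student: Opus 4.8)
The plan is to follow the classical tree-splitting strategy, which goes back to the analysis of the Carleson operator and appears in essentially this form in \cite{variational_Carleson}. The key structural fact to exploit is that any tree $T$ decomposes as a union of an $l$-overlapping tree $T^{\mathscr{e}}$ (handled through orthogonality, i.e. through $\ssize^{\mathscr{e}}$) and a collection of maximal $l$-lacunary subtrees (each handled through the density-type quantity $\ssize^{\mathscr{m}}$, which controls the measure of the portion of $I_{T'}$ on which the selector functions $a_\kappa$ are ``active'' with the right frequency). First I would treat the $l$-overlapping piece: there, all the wave packets $\phi_P$ have frequency supports $C_3\,\omega_u$ lying in pairwise disjoint regions and the tree top frequency $\xi_T$ is common, so that $\sum_{P \in T^{\mathscr{e}}} \langle f, \phi_P\rangle \phi_P$ is essentially an orthogonal sum modulated by a single frequency; after demodulating, Bessel's inequality gives $\|\sum_{P \in T^{\mathscr{e}}} \langle f, \phi_P\rangle \phi_P \ci_{I_T}^{-N/2}\|_2 \lesssim \ssize^{\mathscr{e}}_T(f)\,|I_T|^{1/2}$, and then Cauchy--Schwarz against $g$ localized to $I_T$ (using that $a_P(x)$ is an $\ell^{r'}$-unit-normalized vector, so $|a_P(x)| \leq 1$) yields the bound $\ssize^{\mathscr{e}}_T(f)\,\ssize^{\mathscr{m}}_T(g)\,|I_T|$ — here one pays attention to the fact that on the $l$-overlapping tree the constraint $\xi_T \in C_2\,\omega_l$ for all $P$ means $\ssize^{\mathscr{m}}_T(g)$ controls $\big(\frac{1}{|I_T|}\int |g|^{r'} \ci_{I_T}^M \sum_\kappa |a_\kappa|^{r'}\one_{\omega_T}(\xi_{\kappa-1})\big)^{1/r'}$.

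Next I would treat the $l$-lacunary subtrees. Decompose $T = T^{\mathscr{e}} \cup \bigcup_j T_j$ where the $T_j$ are the maximal $l$-lacunary trees inside $T$ with tops $(T_j, I_{T_j}, \xi_{T_j})$. For a single $l$-lacunary tree $T_j$, the point is that the wave packets $\phi_P$ with $P \in T_j$ still sum (after suitable demodulation and a Fubini/reorganization over scales $|I_P|$) to something bounded pointwise by $\ssize^{\mathscr{e}}_{T_j}(f)$ times a smooth bump on $I_{T_j}$ — this is the standard lacunary tree estimate, using the square-function/Bessel bound together with the nestedness of the spatial intervals. Integrating against $a_P(x) g(x)$ and invoking the definition of $\ssize^{\mathscr{m}}$ (which, on the lacunary tree, controls exactly the measure of $\{x \in I_{T_j} : \xi_{\kappa-1}(x) \in \omega_{P}\}$-weighted average of $|g|^{r'}$), one gets $|\int_{\rr R}\sum_{P \in T_j}\langle f,\phi_P\rangle\phi_P a_P g| \lesssim \ssize^{\mathscr{e}}_{T_j}(f)\,\ssize^{\mathscr{m}}_{T_j}(g)\,|I_{T_j}|$. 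Since the tops $I_{T_j}$ of maximal lacunary subtrees of a fixed tree are pairwise disjoint and contained in $I_T$, one sums $\sum_j |I_{T_j}| \leq |I_T|$ and bounds each $\ssize_{T_j} \leq \ssize_T$, giving \eqref{eq:tree-est-C-var}. For the $L^{r'}$ version \eqref{eq:tree-est-Lr'} one argues identically but keeps the output of the tree estimate as an $L^{r'}(I_T)$ bound rather than testing against $g$ at the last step; the role of the exponent $r'$ enters because $\ssize^{\mathscr{m}}$ is defined with an $L^{r'}$ average and because the selector vector $(a_\kappa)$ is $\ell^{r'}$-normalized, so $\|(a_P(\cdot))\|_{\ell^\infty_P} \leq 1$ pointwise — one uses this together with a discrete $\ell^{r'}$-to-$\ell^\infty$ type bound over the at-most-one active index $\kappa$ per point.

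Finally, the tail estimates on $\rr R \setminus 2^\ell I_T$ follow from the rapid spatial decay of the wave packets: replacing the localizing weight $\ci_{I_T}^M$ by its restriction to $2^{\kappa+1}I_T \setminus 2^\kappa I_T$ (the smooth annular decomposition used in the proof of Lemma \ref{lemma-local-gen-size-energy}) and summing the resulting $2^{-\kappa(N-O(1))}$ factors over $\kappa \geq \ell$ produces the claimed $2^{-\ell(N-10)}$ gain, both in $L^1$ and in $L^{r'}$. The main obstacle I anticipate is the $l$-lacunary tree estimate: one must carefully verify that the interaction of the frequency cutoffs $\omega_l, \omega_u, \omega_h$ (with the constants $C_1 > C_2 > C_3$) with the variable frequency boundaries $\xi_\kappa(x)$ really does reduce to a clean pointwise bound of the lacunary sum by $\ssize^{\mathscr{e}}$ times a bump, and that the ``at most one active $\kappa$ per $x \in I_P$'' property is used correctly so that $\sum_\kappa |a_\kappa(x)|^{r'}\one_{\omega_P}(\xi_{\kappa-1}(x))$ genuinely captures the right portion of $g$. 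The rest is bookkeeping with Bessel's inequality and geometric series, entirely parallel to the $BHT$ analysis already presented.
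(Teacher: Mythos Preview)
The paper does not actually give its own proof of this proposition: immediately before the statement it says ``Though the formulation is slightly different, this is precisely Proposition 5.1 of \cite{variational_Carleson}.'' So there is nothing in the paper to compare your argument against; what you have sketched is essentially the strategy of \cite{variational_Carleson}, which is exactly the reference the paper defers to.

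Your outline --- split $T$ into its $l$-overlapping part and maximal $l$-lacunary subtrees, use square-function/Bessel control for the former, pointwise control and disjointness of tops for the latter, and wave-packet decay for the tails --- is the correct plan and matches that reference. One point to tighten: in the $l$-overlapping case, writing ``Cauchy--Schwarz against $g$'' is too crude, since $\ssize^{\mathscr m}_T(g)$ is an $L^{r'}$ average weighted by $\sum_\kappa |a_\kappa|^{r'}\one_{\omega_{P'}}(\xi_{\kappa-1})$, not an $L^2$ quantity. The actual argument in \cite{variational_Carleson} uses the $\ell^{r'}$-normalization of $(a_\kappa(x))_\kappa$ together with an $L^r$--$L^{r'}$ H\"older step (the $L^r$ bound on the wave-packet sum coming from interpolating the $L^2$ Bessel bound with a trivial $L^\infty$ bound), and it is precisely here that the frequency selector $\one_{\omega_l}(\xi_{\kappa-1}(x))$ makes the density $\ssize^{\mathscr m}$ appear. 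Once you replace ``Cauchy--Schwarz'' by this $L^r$/$L^{r'}$ argument, the rest of your sketch goes through as written.
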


The next step consists in decomposing the collection $\rr P$ of tiles, according to $\ssize_{\rr P}^{\mathscr{e}}(f)$ and $\ssize_{\rr P}^{\mathscr{m}}(g)$. This procedure is achieved by iterating the next two lemmas below. As mentioned previously, we work in a localized setting (here $I_0$ is a fixed dyadic interval, and $\rr P(I_0)$ are the tiles such that $I_P \subseteq I_0$); the general case will be obtained as a result of another stopping time. Though we double the number of stopping times, the exceptional set becomes much simpler than the one in \cite{variational_Carleson}. In some sense, instead of removing the trees where the \emph{counting function} $\sum_{ T \in \mathbf{T}} \one_T$ is too large (this is described in Section 7 of \cite{variational_Carleson}), we rearrange the multi-tiles (and hence the trees) through the stopping times.

We proceed with the two decomposition lemmas, which are similar to Propositions 4.3 and 4.4 of \cite{variational_Carleson}. 
\begin{lemma}
\label{lemma:dec-size-f-var-C-local}
Let $I_0$ be a fixed interval, $f$ a locally integrable function such that $\ssize^{\mathscr{e}}_{\rr P\left( I_0 \right)}(f) \leq \mathscr{E}$. Then there exists a decomposition $\rr P(I_0)=\rr P'(I_0) \cup \rr P''(I_0)$ so that $\ssize^{\mathscr{e}}_{\rr P \left( I_0 \right)}(f) \leq \mathscr{E}/2$ and $\rr P''(I_0)$ can be written as a union of disjoint trees $\rr P ''(I_0)=\bigcup\limits_{T \in \mathbf{T}} T$ with the property that
\begin{equation}
\label{eq:decay-count-func-f}
\sum_{T \in \mathbf{T}} |I_T| \lesssim \mathscr{E}^{-2} \, \| f \cdot \ci_{I_0}  \|_2^2.
\end{equation}
\end{lemma}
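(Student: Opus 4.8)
The plan is to run a standard selection-based stopping time with respect to the quantity $\ssize^{\mathscr{e}}_{\rr P(I_0)}$, exactly analogous to the exceptional-set construction used for the energy in \cite{biest} and reformulated in Lemma \ref{lemma-energy-control} of the present paper. Concretely: among all $l$-overlapping trees $T \subseteq \rr P(I_0)$ whose energy-type average $\big(\frac{1}{|I_T|}\sum_{P\in T}|\langle f,\phi_P\rangle|^2\big)^{1/2}$ exceeds $\mathscr{E}/2$, select one whose top $I_T$ is maximal (say, leftmost among the maximal-scale choices), remove from $\rr P(I_0)$ the full tree $T$ together with all multi-tiles $P$ that are $\leq$-comparable to the top of $T$ in the appropriate ordering (this is the usual "fatten the tree so the leftover has no large subtree with the same top" step), and iterate. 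When the procedure terminates we set $\rr P''(I_0)$ to be the union of all the selected (fattened) trees, enumerated as $\bigcup_{T\in\mathbf T}T$, and $\rr P'(I_0):=\rr P(I_0)\setminus\rr P''(I_0)$. By construction no $l$-overlapping tree inside $\rr P'(I_0)$ has energy average $>\mathscr{E}/2$, which gives $\ssize^{\mathscr{e}}_{\rr P'(I_0)}(f)\le \mathscr{E}/2$; and the trees in $\mathbf T$ are pairwise disjoint because each is removed before the next is chosen.

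The content of the lemma is the packing bound \eqref{eq:decay-count-func-f}. First I would record that, by the maximality/selection step, each selected tree $T\in\mathbf T$ may be taken to be $l$-overlapping with
\[
\frac{1}{|I_T|}\sum_{P\in T}|\langle f,\phi_P\rangle|^2 \gtrsim \mathscr{E}^2,
\]
so that it suffices to prove
\[
\sum_{T\in\mathbf T}\ \sum_{P\in T}|\langle f,\phi_P\rangle|^2 \lesssim \|f\cdot\ci_{I_0}\|_2^2 .
\]
This is precisely an energy estimate for the collection $\mathbf T$ of (strongly) disjoint trees living inside $I_0$, and it is exactly the inequality treated in the proof of Lemma \ref{lemma-local-gen-size-energy} — namely $\eenergy_{\rr P(I_0)}(f)\lesssim \|f\cdot\ci_{I_0}\|_2$ — once one checks that the trees produced here satisfy the disjointness hypotheses in the definition \eqref{eq:def-energy} of the energy. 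Indeed, I would verify that the selection algorithm yields $l$-strongly disjoint trees (the frequency tops $\xi_T$ are separated because two comparable tops would have been absorbed into a single fattened tree), which is the structural input the $TT^*$ argument in Lemma \ref{lemma-local-gen-size-energy} needs; that argument then delivers $\sum_{T}\sum_{P\in T}|\langle f,\phi_P\rangle|^2 \lesssim \mathscr{E}\big(\sum_T |I_T|\big)^{1/2}\|f\cdot\ci_{I_0}\|_2$, and combining with the lower bound $\sum_{P\in T}|\langle f,\phi_P\rangle|^2\gtrsim \mathscr{E}^2|I_T|$ on the left gives $\mathscr{E}^2\sum_T|I_T|\lesssim \mathscr{E}\big(\sum_T|I_T|\big)^{1/2}\|f\cdot\ci_{I_0}\|_2$, i.e. \eqref{eq:decay-count-func-f} after dividing.

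**Main obstacle.** The routine parts (running the stopping time, getting $\ssize^{\mathscr{e}}\le\mathscr{E}/2$ on the leftover, disjointness of the selected trees) are bookkeeping. The real work is the energy/packing inequality $\eenergy_{\rr P(I_0)}(f)\lesssim\|f\cdot\ci_{I_0}\|_2$ for trees confined to $I_0$: unlike the global energy bound it is \emph{not} pure orthogonality, because the localization weight $\ci_{I_0}$ forces one to pay attention to tiles whose spatial support leaks outside $I_0$. This is handled exactly as in the proof of Lemma \ref{lemma-local-gen-size-energy}: decompose $\ci_{I_0}^{-N/2}=\sum_{\kappa\ge1}2^{\kappa N/2}\tilde\ci_{I_0,\kappa}$ into pieces adapted to the annuli $2^{\kappa+1}I_0\setminus 2^\kappa I_0$, run the $TT^*$ argument at each fixed spatial scale $I\subseteq I_0$ using the frequency-disjointness of same-scale tiles from distinct trees and repeated integration by parts to extract the gain $(|I|/|I_0|)^{M-1}2^{-M\kappa}$, then sum in $\kappa$ (choosing $M>N+10$) and in $I\subseteq I_0$. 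So the only genuinely new thing to check here, beyond invoking Lemma \ref{lemma-local-gen-size-energy}, is that the $l$-overlapping trees of Definition \ref{def:tree-var-Carleson} for $\ic C^{var,r}$ slot into that machinery — which they do, since the wave packets $\phi_P$ are attached to the Heisenberg boxes $I_P\times\omega_u$ just as in the $BHT$ model and the relevant frequency separations are built into the definition of the trees.
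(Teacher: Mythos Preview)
Your proposal is correct and follows precisely the approach the paper indicates: the paper does not give a full proof of this lemma but states that it follows the classical tree-selection argument (as in Proposition 6.5 of \cite{multilinear_harmonic}) together with the localized energy bound $\eenergy_{\rr P(I_0)}(f)\lesssim\|f\cdot\ci_{I_0}\|_2$ established in Lemma~\ref{lemma-local-gen-size-energy}. Your identification of the $TT^*$ argument with the annular decomposition of $\ci_{I_0}^{-N/2}$ as the only nontrivial ingredient, and your observation that the $l$-overlapping trees for $\ic C^{var,r}$ fit into that machinery because the wave packets $\phi_P$ are adapted to $I_P\times\omega_u$, are exactly on target.
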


\begin{lemma}
\label{lemma:dec-size-g-vac-C-local}
Let $I_0$ be a fixed interval, $g$ a locally $L^{r'}$-integrable function such that $\ssize_{\rr P \left( I_0 \right)}^{\mathscr{m}}(g) \leq \lambda$. Then there exists a decomposition $\rr P(I_0)=\rr P'(I_0) \cup \rr P''(I_0)$ so that $\ssize^{\mathscr{m}}_{\rr P \left( I_0 \right)}(g) \leq \lambda/2$ and $\rr P''(I_0)$ can be written as a union of disjoint trees $\rr P ''(I_0)=\bigcup\limits_{T \in \mathbf{T}} T$ with the property that
\begin{equation}
\label{eq:decay-count-func-g}
\sum_{T \in \mathbf{T}} |I_T| \lesssim \lambda^{-r'} \, \| g \cdot \ci_{I_0}  \|_{r'}^{r'}.
\end{equation}
\end{lemma}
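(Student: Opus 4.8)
The statement is the exact localized analogue of Proposition 4.4 of \cite{variational_Carleson} (with the evident correction that the size bound on the left refers to $\rr P'(I_0)$, not $\rr P(I_0)$), so the plan is to run the same greedy stopping-time selection on $\rr P(I_0)$ while carrying the weight $\ci_{I_0}$ along at every step. Call a tile $P\in\rr P(I_0)$ \emph{heavy} if there is an admissible enlargement $P'=I_{P'}\times\omega_{P'}$ with $I_P\subseteq I_{P'}\subset 9I_P$, $\omega_{P'}\subseteq\omega_P$ and
\[
\frac{1}{|I_{P'}|}\int_{\rr R}|g(x)|^{r'}\,\ci_{I_{P'}}^M(x)\sum_{\kappa=1}^K|a_\kappa(x)|^{r'}\,\one_{\omega_{P'}}(\xi_{\kappa-1}(x))\,dx>\Big(\frac{\lambda}{2}\Big)^{r'};
\]
by \eqref{def:size-g-var-C} these are precisely the tiles capable of forcing $\ssize^{\mathscr{m}}$ above $\lambda/2$. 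Among all heavy tiles still in stock, I would repeatedly select one whose witnessing interval $I_{P'}$ has maximal length (leftmost such, to break ties), record the pair $(I_{P'},\xi_{P'})$ as a tree top, let $T$ be the maximal tree in $\rr P(I_0)$ with that top, remove $T$ from the stock, and iterate until no heavy tile remains; then put $\rr P''(I_0):=\bigcup_{T\in\mathbf T}T$ and let $\rr P'(I_0)$ be the remainder. By construction the trees in $\mathbf T$ are pairwise disjoint as collections of tiles, and since every heavy tile has been placed into some selected tree, $\ssize^{\mathscr{m}}_{\rr P'(I_0)}(g)\leq\lambda/2$.

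For the count \eqref{eq:decay-count-func-g}, each selected top $P_T=I_T\times\omega_{P_T}$ (with $I_T=I_{P'}$) satisfies $|I_T|<(\lambda/2)^{-r'}\int|g(x)|^{r'}\ci_{I_T}^M(x)\sum_\kappa|a_\kappa(x)|^{r'}\one_{\omega_{P_T}}(\xi_{\kappa-1}(x))\,dx$, so summing in $T$ and exchanging the sum with the integral reduces the estimate to the pointwise bound
\[
\sum_{T\in\mathbf T}\ci_{I_T}^M(x)\,\one_{\omega_{P_T}}(\xi_{\kappa-1}(x))\lesssim\ci_{I_0}^{M'}(x)\qquad\text{for every }x\text{ and every }1\leq\kappa\leq K,
\]
after which $\sum_\kappa|a_\kappa(x)|^{r'}\leq1$, $\ci_{I_0}^{M'}\leq\ci_{I_0}^{r'}$ for $M'\geq r'$, and $\int_{\rr R}|g(x)|^{r'}\ci_{I_0}^{r'}(x)\,dx=\|g\cdot\ci_{I_0}\|_{r'}^{r'}$ finish the proof. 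To get this pointwise bound I would use the strong disjointness of the selected tops: at each dyadic spatial scale the intervals $I_{P_T}$ lie in a fixed grid and the maximality built into the selection forces the accompanying frequency intervals $\omega_{P_T}$ — all of length comparable to the reciprocal of that scale — to be pairwise disjoint, so that for fixed $x$ and fixed value $\xi_{\kappa-1}(x)$ only $O(1)$ trees per spatial scale and per dyadic distance band of $x$ from $I_T$ contribute; the $\ci^M$ tails then sum geometrically, and since every $I_T\subseteq I_0$ the total is controlled by $\ci_{I_0}^{M'}(x)$ after shrinking $M$.

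The main obstacle is exactly this last overlap estimate: one must check that the greedy choice genuinely produces strongly disjoint tree tops for $\rr P(I_0)$, which rests on the grid / rank-$1$ structure of $\rr P$ together with the compatibility relations among $\omega_l,\omega_u,\omega_h$ recorded before Definition \ref{def:tree-var-Carleson} (it is here that one rules out, e.g., a long nested chain of selected trees all carrying the same $\xi_{\kappa-1}(x)$ in their frequency windows), and on the fact that the localization weight $\ci_{I_0}$ can be inserted at each stage at the cost of only a harmless power of the decay exponent. Everything else is a routine Calder\'on--Zygmund-type covering argument, entirely parallel to Lemma \ref{lemma:dec-size-f-var-C-local} and to \cite{variational_Carleson}; indeed Lemma \ref{lemma:dec-size-f-var-C-local} is the same argument with $r'=2$, the $\omega_{P_T}$-restriction absent, and $\ssize^{\mathscr{e}}$ in place of $\ssize^{\mathscr{m}}$.
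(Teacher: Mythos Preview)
Your proposal is correct and follows essentially the same route the paper indicates: the paper does not write out a proof but states that \eqref{eq:decay-count-func-g} follows from ``the spatial decay of the wave packets, together with the arguments of Proposition 4.4 of \cite{variational_Carleson},'' which is precisely the greedy top-selection plus pointwise overlap estimate you outline (and you correctly flag the typo $\rr P(I_0)\to\rr P'(I_0)$). One small remark: the witnessing intervals $I_{P'}$ sit in $9I_P\subset 9I_0$ rather than in $I_0$ itself, but since $\ci_{9I_0}\sim\ci_{I_0}$ this changes nothing in your final pointwise bound.
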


The proofs of the above decomposition results follow by classical arguments. In fact, since we don't employ the $BMO$ estimate of $\sum_{T \in \mathbf{T}} \one_T$, Lemma \ref{lemma:dec-size-f-var-C-local} is more similar to Proposition 6.5 of \cite{multilinear_harmonic} than to Proposition 4.4 of \cite{variational_Carleson}. The decay on the right hand side of \eqref{eq:decay-count-func-f} is due to an argument that can be found in Lemma\ref{lemma-local-gen-size-energy}, while in \eqref{eq:decay-count-func-g} we only use the spatial decay of the wave packets, together with the arguments of Proposition 4.4 of \cite{variational_Carleson}.

Now we are ready to present our localization result for $\mathcal{C}^{var, r}$. It is visible from Definition \ref{def:size-g-var-C} that $\ssize_{\rr P}^{\mathscr{m}}(g)$ is an $L^{r'}$ quantity, but in the end we want to obtain an $L^1$ maximal average. The key observation here is that any locally integrable function $g$ can be written as the product of an $L^{r'}$-integrable function $g_2$ and an $L^r$ integrable function $g_1$ simply by setting 
\[
g=g_1 \cdot g_2,\quad \text{ with    }  g_1(x)=|g(x)|^\frac{1}{r}, \quad g_2(x)=\frac{g(x)}{g_1(x)} \text{  if    }g(x) \neq 0 , \quad g_2(x)=0 \text{   if    } g(x)=0.  
\]

Alternatively, we could also work with restricted-type functions (i.e. assume that $|g(x)| \leq \one_G$), but this causes a loss of information for the sparse domination.

\begin{proposition}
\label{prop:local-C-var}
Let $I_0$ be a fixed dyadic interval and $\rr P(I_0)$ a collection of multi-tiles with $I_P \subseteq I_0$. Assume that $F$ is a set of finite measure, $f$ a function satisfying $|f(x)| \leq \one_F(x)$ for a.e. $x$, and $g$ a locally integrable function. Then
\begin{equation}
\label{eq:local-form-C-var}
\big| \Lambda_{\ic C^{var, r}; \rr P(I_0)} (f, g) \big| \lesssim \big(  \sssize_{\rr P \left(I_0\right)} \one_F \big)^{\frac{1}{r'}-\epsilon} \, \big(  \sssize_{\rr P \left( I_0 \right)} g \big) \, |I_0|.
\end{equation}
\begin{proof}

We start by writing $g=g_1 \cdot g_2$, with $g_1 \in L^{r}_{loc}(\rr R)$ and $g_2 \in L^{r'}_{loc}(\rr R)$, as explained previously. Next, we apply iteratively the decomposition procedure of Lemma \ref{lemma:dec-size-f-var-C-local}, obtaining in this way $\ds \rr P(I_0)=\bigcup_{n_1} \bigcup_{T \in \mathbf{T}_{n_1}}T$, where for every $T \in \mathbf{T}_{n_1}$ we have $\ssize_{T}^{\mathscr{e}}(f) \sim 2^{-n_1} \leq \ssize_{\rr P(I_0)}^{\mathscr{e}}(f) \lesssim \sssize_{\rr P(I_0)} \one_F$, and moreover
\[
\sum_{T \in \mathbf{T}_{n_1}} |I_T| \lesssim 2^{2 n_1} \, \| f \cdot \ci_{I_0}  \|_2^2 \lesssim 2^{2 n_1} \, \| \one_F \cdot \ci_{I_0}  \|_2^2.
\]
Similarly, Lemma \ref{lemma:dec-size-g-vac-C-local} applied to the function $g_2$ yields the decomposition $\rr P(I_0):=\bigcup_{n_2} \bigcup_{T \in \mathbf{T}_{n_2}} T$, where for each tree $T \in \mathbf{T}_{n_2}$ we have $\ssize_T^{\mathscr{m}} g_2 \sim 2^{-n_2}$ and 
\[
\sum_{T \in \mathbf{T}_{n_2}} |I_T| \lesssim 2^{r' n_2} \|  g_2 \cdot \ci_{I_0} \|_{r'}^{r'}=2^{r' n_2} \|  g \cdot \ci_{I_0} \|_1.
\] 
Above, we use the fact that $|g_2(x)|^{r'}=|g(x)|$, and for simplicity, we don't explicitly write how $\ci_{I_0}$ changes as well.

Now we are ready to estimate the bilinear form associated to $\ic C^{var, r}_{\rr P(I_0)}$:
\begin{align*}
& \big| \int_{\rr R} \sum_{P \in \rr P(I_0)} \langle f, \phi_P \rangle  \phi_P\, a_P\, g dx   \big| =  \big| \int_{\rr R} \sum_{P \in \rr P(I_0)} \langle f, \phi_P \rangle  \phi_P\, a_P\, g_1 \cdot g_2 dx   \big| \\
&\lesssim \sum_{n_1, n_2} \sum_{T \in \mathbf{T}_{n_1} \cap \mathbf{T}_{n_2}}  \big| \int_{\rr R} \big( \sum_{P \in T} \langle f, \phi_P \rangle  \phi_P\, a_P\, g_1 \cdot g_2 \big) \cdot \one_{I_T} dx   \big| \\
&+ \sum_{\ell \geq 0} \sum_{n_1, n_2} \sum_{T \in \mathbf{T}_{n_1} \cap \mathbf{T}_{n_2}}  \big| \int_{\rr R} \big( \sum_{P \in T} \langle f, \phi_P \rangle  \phi_P\, a_P\, g_1 \cdot g_2 \big) \cdot \one_{2^{\ell +1 }I_T \setminus 2^\ell I_T} dx   \big|:= (I)+(II).
\end{align*}

The second term is a technical variation of the first one and can be dealt with as a result of the fast decay of $\ic C^{var, r}_T$ away from the tree $T$. For that reason, we only focus on $(I)$. For every $T \in \mathbf{T}_{n_1} \cap \mathbf{T}_{n_2}$,
\[
\big| \int_{\rr R} \big( \sum_{P \in T} \langle f, \phi_P \rangle  \phi_P\, a_P\, g_1 \cdot g_2 \big) \cdot \one_{I_T} dx \big| \lesssim \|g_1 \cdot \one_{I_T} \|_r \cdot \| \big( \sum_{P \in T} \langle f, \phi_P \rangle  \phi_P\, a_P\, g_2 \big) \cdot \one_{I_T} \|_{r'}.
\]

Since $g_1$ was defined as $g_1(x)=|g(x)|^\frac{1}{r}$, we have that 
\[
\|g_1 \cdot \one_{I_T} \|_r= \|g_1 \cdot \one_{I_T} \|_1^{\frac{1}{r}} \lesssim \big( |I_T| \cdot \sssize_{\rr P(I_0)} g \big)^\frac{1}{r}.
\]
Also, since $T \in \mathbf{T}_{n_1} \cap \mathbf{T}_{n_2}$, Proposition \ref{prop:tree-est-C-var} yields 
\[
\| \big( \sum_{P \in T} \langle f, \phi_P \rangle  \phi_P\, a_P\, g_2 \big) \cdot \one_{I_T} \|_{r'} \lesssim 2^{-n_1} \, 2^{-n_2} \, |I_T|^\frac{1}{r'}.
\]

This implies 
{\fontsize{10}{10}\[
(I) \lesssim \sum_{n_1, n_2} \sum_{T \in \mathbf{T}_{n_1} \cap \mathbf{T}_{n_2}}  2^{-n_1} \, 2^{-n_2} \, |I_T|^\frac{1}{r'} \big( |I_T| \cdot \sssize_{\rr P(I_0)} g \big)^\frac{1}{r} = \sum_{n_1, n_2} \sum_{T \in \mathbf{T}_{n_1} \cap \mathbf{T}_{n_2}}  2^{-n_1} \, 2^{-n_2} \, |I_T| \, \big( \cdot \sssize_{\rr P(I_0)} g \big)^\frac{1}{r}.
\]}

For the sum of the tops of the trees we have the inequality
\[
\sum_{T \in \mathbf{T}_{n_1} \cap \mathbf{T}_{n_2}} |I_T| \lesssim \min ( \sum_{T \in \mathbf{T}_{n_1}} |I_T|, \sum_{T \in \mathbf{T}_{n_2}} |I_T| ) \lesssim \min( 2^{2 n_1} \| \one_F \cdot \ci_{I_0} \|_2^2, 2^{r' n_2} \| g \cdot \ci_{I_0} \|_1).
\]
We use interpolation, in the usual way: if $0 \leq \theta_1, \theta_2 \leq 1$ with $ \theta_1+\theta_2=1$, then 
{\fontsize{10}{10} \[
 \sum_{T \in \mathbf{T}_{n_1} \cap \mathbf{T}_{n_2}} |I_T| \lesssim \big(  2^{2 n_1} \| \one_F \cdot \ci_{I_0} \|_2^2 \big)^{\theta_1} \,\big( 2^{r' n_2} \| g \cdot \ci_{I_0} \|_1)^{\theta_2} \lesssim \big(  2^{2 n_1}\, |I_0|\, \sssize_{\rr P(I_0)} \one_F \big)^{\theta_1} \,\big( 2^{r' n_2}\, |I_0| \, \sssize_{\rr P(I_0)} g )^{\theta_2}.
 \]}

Hence
\[
(I) \lesssim \sum_{n_1, n_2} 2^{-n_1 \left( 1- 2 \theta_1 \right)} 2^{-n_2 \left( 1- r' \theta_2  \right)} \big( \sssize_{\rr P(I_0)}\one_F \big)^{\theta_1} \, \big(\sssize_{\rr P(I_0)}g  \big)^{\theta_2 +\frac{1}{r}} \cdot |I_0|.
\]

We recall that the decomposition procedures of Lemmas \ref{lemma:dec-size-f-var-C-local} and \ref{lemma:dec-size-g-vac-C-local} continue provided $2^{-n_1} \lesssim \sssize_{\rr P(I_0)} \one_F$  and $2^{-n_2} \lesssim \sssize^{r'}_{\rr P(I_0)}g_2 \lesssim \big( \sssize_{\rr P(I_0)}g  \big)^\frac{1}{r'}$.

As a consequence, if $\theta_1, \theta_2$ are so that $1- 2 \theta_1  >0$ and $1- r' \theta_2 >0$, we obtain
\begin{align*}
(I) &\lesssim \big( \sssize_{\rr P(I_0)} \one_F \big)^{1- 2 \theta_1} \cdot \big( \sssize_{\rr P(I_0)}g  \big)^{\frac{1}{r'} \left( 1- r' \theta_2 \right)  } \cdot  \big( \sssize_{\rr P(I_0)}f  \big)^{\theta_1} \, \big(\sssize_{\rr P(I_0)}g  \big)^{\theta_2 +\frac{1}{r}} |I_0| \\
&\lesssim \big( \sssize_{\rr P(I_0)} \one_F \big)^{1- \theta_1} \cdot \big( \sssize_{\rr P(I_0)}g  \big) \cdot |I_0|.
\end{align*}

The conditions on $\theta_1$ and $\theta_2$ can be summed up as $\ds \frac{1}{r}< \theta_1 < \frac{1}{2}$; by taking $\theta_1=\frac{1}{r} +\epsilon$, we obtain \eqref{eq:local-form-C-var}, which ends the proof.
\end{proof}
\end{proposition}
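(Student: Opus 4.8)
The plan is to run the usual time--frequency decomposition of $\rr P(I_0)$ into trees sorted by the two sizes $\ssize^{\mathscr{e}}$ and $\ssize^{\mathscr{m}}$, bound the contribution of each tree by the tree estimate of Proposition \ref{prop:tree-est-C-var}, and reassemble the sum by interpolating between the two counting--function bounds. The crucial preliminary move is the one highlighted just before the statement: $\ssize^{\mathscr{m}}$ is an $L^{r'}$--type quantity, while the right--hand side of \eqref{eq:local-form-C-var} carries the $L^1$ average $\sssize_{\rr P(I_0)} g$, so I would factor $g = g_1 \cdot g_2$ with $g_1 = |g|^{1/r}$ and $g_2 = g/g_1$ (and $g_2 = 0$ where $g = 0$), so that $|g_1|^r = |g_2|^{r'} = |g|$. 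Then $g_1 \in L^r_{\loc}$, $g_2 \in L^{r'}_{\loc}$, the $L^{r'}$ average of $g_2$ is the $(1/r')$--th power of the $L^1$ average of $g$, and $\|g_1 \one_{I}\|_r = \|g_1 \one_{I}\|_1^{1/r}$ for any interval $I$. (One could instead work with restricted--type $g$, but that loses information needed for sparse domination.)

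Next I would apply Lemma \ref{lemma:dec-size-f-var-C-local} iteratively to $f$ and Lemma \ref{lemma:dec-size-g-vac-C-local} iteratively to $g_2$, producing decompositions $\rr P(I_0) = \bigcup_{n_1}\bigcup_{T \in \mathbf{T}_{n_1}} T = \bigcup_{n_2}\bigcup_{T\in\mathbf{T}_{n_2}} T$ with $\ssize^{\mathscr{e}}_T(f)\sim 2^{-n_1}\lesssim \sssize_{\rr P(I_0)}\one_F$ and $\sum_{T\in\mathbf{T}_{n_1}}|I_T|\lesssim 2^{2n_1}\|\one_F \ci_{I_0}\|_2^2$ from \eqref{eq:decay-count-func-f}, and $\ssize^{\mathscr{m}}_T(g_2)\sim 2^{-n_2}\lesssim (\sssize_{\rr P(I_0)}g)^{1/r'}$ and $\sum_{T\in\mathbf{T}_{n_2}}|I_T|\lesssim 2^{r'n_2}\|g_2\ci_{I_0}\|_{r'}^{r'}=2^{r'n_2}\|g\ci_{I_0}\|_1$ from \eqref{eq:decay-count-func-g}. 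Expanding the bilinear form as a sum over $T\in\mathbf{T}_{n_1}\cap\mathbf{T}_{n_2}$ and splitting space into $I_T$ and the annuli $2^{\ell+1}I_T\setminus 2^\ell I_T$, the annular contribution $(II)$ is controlled by the fast spatial decay in Proposition \ref{prop:tree-est-C-var} and summed in $\ell$, so the heart is the local part $(I)$. On each tree, H\"older splits the local integral into $\|g_1\one_{I_T}\|_r = \|g_1\one_{I_T}\|_1^{1/r}\lesssim (|I_T|\,\sssize_{\rr P(I_0)}g)^{1/r}$ and $\|(\sum_{P\in T}\langle f,\phi_P\rangle\phi_P a_P g_2)\one_{I_T}\|_{r'}\lesssim 2^{-n_1}2^{-n_2}|I_T|^{1/r'}$ by \eqref{eq:tree-est-Lr'}, so $(I)\lesssim \sum_{n_1,n_2} 2^{-n_1}2^{-n_2}(\sssize_{\rr P(I_0)} g)^{1/r}\sum_{T\in\mathbf{T}_{n_1}\cap\mathbf{T}_{n_2}}|I_T|$.

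For the sum of the tree tops I would take the geometric mean of the two counting bounds with weights $\theta_1,\theta_2\geq 0$, $\theta_1+\theta_2=1$, i.e. $\sum_{T\in\mathbf{T}_{n_1}\cap\mathbf{T}_{n_2}}|I_T|\lesssim (2^{2n_1}|I_0|\,\sssize_{\rr P(I_0)}\one_F)^{\theta_1}(2^{r'n_2}|I_0|\,\sssize_{\rr P(I_0)} g)^{\theta_2}$, which turns $(I)$ into $\sum_{n_1,n_2} 2^{-n_1(1-2\theta_1)}2^{-n_2(1-r'\theta_2)}(\sssize_{\rr P(I_0)}\one_F)^{\theta_1}(\sssize_{\rr P(I_0)} g)^{\theta_2+1/r}|I_0|$. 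Provided $1-2\theta_1>0$ and $1-r'\theta_2>0$, both geometric series converge; summing them and absorbing the residual geometric factors using the stopping thresholds $2^{-n_1}\lesssim\sssize_{\rr P(I_0)}\one_F$ and $2^{-n_2}\lesssim(\sssize_{\rr P(I_0)} g)^{1/r'}$, the total exponent of $\sssize_{\rr P(I_0)}\one_F$ becomes $(1-2\theta_1)+\theta_1=1-\theta_1$ and that of $\sssize_{\rr P(I_0)} g$ becomes $(1-r'\theta_2)/r'+\theta_2+1/r=1/r'+1/r=1$, yielding $(I)\lesssim(\sssize_{\rr P(I_0)}\one_F)^{1-\theta_1}(\sssize_{\rr P(I_0)} g)|I_0|$. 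The two conditions amount to $1/r<\theta_1<1/2$, a window that is nonempty exactly because $r>2$, and choosing $\theta_1=1/r+\epsilon$ gives $1-\theta_1=1/r'-\epsilon$, which is \eqref{eq:local-form-C-var}.

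The main obstacle is precisely the bookkeeping in this last paragraph: one has to check that the interpolation weights $\theta_1,\theta_2$, the residual geometric factors absorbed through the stopping conditions, and the extra $1/r$ coming from $g_1$ combine to give exactly the exponents $1/r'-\epsilon$ on $\sssize_{\rr P(I_0)}\one_F$ and $1$ on $\sssize_{\rr P(I_0)} g$, and that the admissible range $1/r<\theta_1<1/2$ is genuinely nonempty — this is where the hypothesis $r>2$ is used. The tail piece $(II)$, though it requires keeping track of the $2^{-\ell(N-10)}$ decay across annuli, is routine given Proposition \ref{prop:tree-est-C-var} and does not affect the final exponents.
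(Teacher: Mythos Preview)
Your proposal is correct and follows essentially the same approach as the paper's own proof: the factorization $g=g_1g_2$, the two iterative tree decompositions via Lemmas \ref{lemma:dec-size-f-var-C-local} and \ref{lemma:dec-size-g-vac-C-local}, the H\"older split on each tree combined with the $L^{r'}$ tree estimate \eqref{eq:tree-est-Lr'}, the geometric--mean interpolation of the two counting bounds, and the final choice $\theta_1=1/r+\epsilon$ all match the paper line by line.
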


\begin{rremark}
Notice that Proposition \ref{prop:local-C-var} is stating that the bilinear form associated to $\ic C^{var, r}$, localized to the interval $I_0$, is controlled by the product of two maximal averages, which are themselves adapted to the interval $I_0$ and the information contained in $\rr P(I_0)$. Both $\ssize_{\rr P}^{\mathscr{e}}$ and $\ssize_{\rr P}^{\mathscr{m}}$ are replaced by the $L^1$ maximal average $\sssize_{\rr P}$. 
\end{rremark}

Once we have the local result of Proposition \ref{prop:local-C-var}, we obtain the global result of Theorem \ref{thm:C-var} by performing a double stopping time (for each of the functions $f$ and $g$, both of which are now bounded above by characteristic functions of certain sets). The analysis is similar to the one presented in Section \ref{sec:BHT-avoid-interpolation} for the bilinear Hilbert transform, and for that reason, we don't write down the details. 

We note however that the local estimate of Proposition \ref{prop:local-C-var} is sufficient for obtaining further multiple vector-valued estimates, as well as sparse domination. The presentation of the exact scheme makes the subject of the next two sections. As a consequence of the sparse domination, we have a Fefferman-Stein inequality for the variational Carleson operator (the case $r=\infty$ corresponds to the Carleson operator of \cite{initial_Carleson}):

\begin{corollary}
\label{cor:Feff-Stein-varCarleson}
For any $2<r \leq \infty$, $0<p<\infty$, $\epsilon>0$, and any $m$-tuple $R=(r^1, \ldots, r^m)$ with $r'< r^j<\infty$, we have 
\[
\big\| \big\| \ic C^{var, r} f(x, \cdot) \big\|_{ L^R(\ii W, \mu)}  \big\|_p \lesssim \big\|  \ic M_{r'+\epsilon} \big( \| f(x, \cdot)\|_{L^R(\ii W, \mu)} \big)\big\|_p.
\]
\end{corollary}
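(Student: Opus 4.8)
The plan is to run, for $\ic C^{var, r}$, the same two-step machine — helicoidal vectorization followed by a sparse stopping time — that produces Theorem \ref{thm:sparse-BHT} out of \eqref{eq:loc-BHT-first}, now starting from the local estimate of Proposition \ref{prop:local-C-var}, and then to read off the Fefferman--Stein bound from the resulting sparse domination exactly as Corollary \ref{corollary-Fefferman-Stein} is read off from Theorem \ref{thm:sparse-BHT}. First I would feed Proposition \ref{prop:local-C-var} into the iterative procedure of Section \ref{sec:method-proof}, vectorizing only the slot carrying $f$ while the linearizing function keeps its $L^1$ maximal average. Each stage of that induction is a single VVST stopping time and preserves the shape of the localized estimate, so the output is: for every dyadic interval $I_0$, every $m$-tuple $R = (r^1, \dots, r^m)$ with $r' < r^j < \infty$, and every $\epsilon > 0$, there is $s_1$ with $\tfrac1{s_1} < \tfrac1{r'}$ (for instance $s_1 = r' + \epsilon$) such that
\[
\big| \Lambda_{\ic C^{var, r}; \rr P(I_0)}(\vec f, g) \big| \lesssim \big( \sssize^{s_1}_{\rr P(I_0)} \| \vec f(x, \cdot) \|_{L^R(\ii W, \mu)} \big) \cdot \big( \sssize_{\rr P(I_0)} |g| \big) \cdot |I_0|
\]
for every scalar function $g$. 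The hypothesis $r' < r^j$ is exactly what licenses this step: at each level one has to pass from the $L^{r'}$ average implicit in the density-type size to an $L^{r^j}$ average with room to spare, which is the same obstruction that forces $p > r'$ in Theorem \ref{thm:C-var}.

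Next, applying to the estimate just obtained the SST stopping time from the proof of Theorem \ref{thm:local->sparse} (the construction behind Theorem \ref{thm:sparse-BHT}), one gets, for any locally $p$-integrable $v$, a sparse collection $\ic S$ — depending on $\vec f$, $v$ and the exponents — with
\[
\big\| \| \ic C^{var, r}_{\rr P}(\vec f)(x, \cdot) \|_{L^R(\ii W, \mu)} \cdot v \big\|_p^p \lesssim \sum_{Q \in \ic S} \Big( \frac1{|Q|} \int_{\rr R} \| \vec f(x, \cdot) \|_{L^R(\ii W, \mu)}^{s_1} \ci_Q \, dx \Big)^{\frac p{s_1}} \Big( \frac1{|Q|} \int_{\rr R} |v(x)|^{s_2} \ci_Q \, dx \Big)^{\frac p{s_2}} |Q|,
\]
with $s_1 = r' + \epsilon$ and $s_2$ chosen close to $1$ (as in \eqref{eq:condition-on-s}) so that the geometric series implicit in the stopping time converge; here $\ic C^{var, r}_{\rr P}$ is the linearized model operator $\sum_P \langle \, \cdot \, , \phi_P \rangle \phi_P \, a_P$ applied coordinatewise, and the fixed dyadic grid is harmless since we average in the end. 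Passing from $\|\ic C^{var, r}(\vec f)\|_p$ to the bilinear form $\Lambda$ uses duality in the $\rr R$-variable and in the $L^R(\ii W, \mu)$-variables (legitimate since $1 < r^j < \infty$), hence requires $p > 1$; but for $p$ below that threshold the right-hand side of the asserted inequality is itself typically infinite — just as for $BHT$ — so no generality is lost.

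Finally, choosing $v \equiv 1$ collapses the second average to $1$, and bounding the first average at an arbitrary point of $Q$ by $\inf_{x \in Q} \ic M_{s_1}(\| \vec f(x, \cdot) \|_{L^R(\ii W, \mu)})(x)$ and using pairwise disjoint $E_Q \subseteq Q$ with $|E_Q| \ge \eta |Q|$ (Definition \ref{def-sparse-collection}) gives
\[
\sum_{Q \in \ic S} \inf_{x \in Q} \ic M_{s_1}\big( \| \vec f(x, \cdot) \|_{L^R(\ii W, \mu)} \big)(x)^p \, |Q| \le \frac1\eta \sum_{Q \in \ic S} \int_{E_Q} \ic M_{s_1}\big( \| \vec f(x, \cdot) \|_{L^R(\ii W, \mu)} \big)^p \, dx \le \frac1\eta \big\| \ic M_{r' + \epsilon}\big( \| \vec f(x, \cdot) \|_{L^R(\ii W, \mu)} \big) \big\|_p^p,
\]
which, with $\vec f$ specialized to a scalar $f$, is the claimed inequality. (When $p > 1$ one may instead keep $v$ general, dualize $\| \cdot \|_p$ and absorb the $v$-factor using the $L^{s_2}$-boundedness of $\ic M$; either route gives the same conclusion.)

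The main obstacle is the first step: one must verify that Proposition \ref{prop:local-C-var} has precisely the form that the induction in Section \ref{sec:method-proof} propagates — a product of an $L^{s}$-type maximal average of $\vec f$ (here with the self-improving exponent $\tfrac1{r'} - \epsilon$) with an $L^1$ maximal average of the linearizing function, times $|I_0|$ — and that the energy-free decomposition lemmas (Lemmas \ref{lemma:dec-size-f-var-C-local} and \ref{lemma:dec-size-g-vac-C-local}) mesh with the vector-valued stopping times. Once this structural match is in place, the remaining steps are routine transcriptions of the $BHT$ arguments, the only genuinely new bookkeeping being the constraint $r' < r^j$ inherited from the scalar theory.
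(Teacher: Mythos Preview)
Your proposal is correct and follows essentially the same route the paper indicates: feed the local estimate of Proposition~\ref{prop:local-C-var} into the helicoidal induction of Section~\ref{sec:method-proof} (with mock interpolation, Proposition~\ref{prop:restricted->general}, to pass from restricted-type to general functions), obtain a vector-valued localized estimate with an $L^{r'+\epsilon}$ size on the $f$-slot and an $L^{1+\epsilon}$ size on the linearizing slot, run the SST stopping time of Theorem~\ref{thm:local->sparse} to get sparse domination, and read off the Fefferman--Stein inequality exactly as in Corollary~\ref{corollary-Fefferman-Stein}.

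One minor remark on your handling of small $p$: your observation that the right-hand side is infinite for $p\le 1$ is correct here (since $r'+\epsilon>1$, and $\ic M_s(g)\notin L^p$ on $\rr R$ for $p\le s$ unless $g\equiv 0$), so the shortcut is legitimate. The paper's general machinery (Proposition~\ref{prop:Lq:local->sparse} and the quasi-Banach version in Section~\ref{sec:method-proof}) would cover this range non-vacuously on the torus, but on $\rr R$ your argument suffices. Also, bear in mind that after linearization the model operator depends on the measurable choices $a_\kappa,\xi_\kappa$, which in the vector-valued setting may vary with the parameter $w\in\ii W$; this is exactly the situation of Remark~\ref{remark:new-param-vv}, and the helicoidal induction accommodates it without change because the bound in Proposition~\ref{prop:local-C-var} is uniform in those choices.
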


\smallskip

\section{The method of the proof: vector-valued inequalities}
\label{sec:method-proof}

We now present the ideas behind the proof of the vector-valued estimates, focusing on the $BHT$ operator. First, we study the Banach case: $1 < r_1^j, r_2^j  \leq \infty , 1\leq r^j <\infty$ and $\dfrac{1}{r_1^j}+ \dfrac{1}{r_2^j}=\dfrac{1}{r^j}$ for all $1 \leq j \leq n$.

The interpolation theory extends without difficulty to the vector-valued setting: if $\ds \vec f : \rr R \to L^{R_1}\left( \ii W, \mu \right) , \vec g : \rr R \to L^{R_2}\left( \ii W, \mu \right)$ and $\ds\vec h : \rr R \to L^{R'}\left( \ii W, \mu \right)$ are so that
\[ \| \vec f_{w}(x)  \|_{L^{R_1}\left( \ii W, \mu \right)} \leq \one_F(x), \quad  \| \vec g_{w}(x)  \|_{L^{R_2}\left( \ii W, \mu \right)} \leq \one_G(x), \quad  \| \vec h_{w}(x)  \|_{L^{R'}\left( \ii W, \mu \right)} \leq \one_{H'}(x),
\]
then it is enough to check that 
\[
\vert  \int_{\ii W}\Lambda_{BHT(\rr P)}(\vec f_w, \vec g_w, \vec h_w)  d w\vert \lesssim \vert F \vert^{\alpha_1} \cdot \vert G \vert^{\alpha_2} \cdot  \vert H \vert^{\alpha_3},
\]
for $(\alpha_1, \alpha_2, \alpha_3)$ arbitrarily close to $(\frac{1}{p}, \frac{1}{q}, \frac{1}{s'})$, with $\alpha_1+\alpha_2+\alpha_3=1$. By scaling invariance, we can assume that $|H|=1$.

\begin{rremark}
If we are not in the Banach setting, the only possibility is that $r^j<1$ for some $1 \leq j \leq n$. In that case, it is enough to prove
\[
\big\|   \big\|  BHT (\vec f_w, \vec g_w)  \big\|_{L^{R}_w\left( \ii W, \mu \right)}\big\|_{L^{\tilde s, \infty}_x} \lesssim \vert F\vert^{\frac{1}{s_1}}  \cdot \vert G \vert^{\frac{1}{s_2}},
\]
for any $(s_1, s_2, \tilde s)$ H\"older tuple arbitrarily close to $(p, q, s)$, provided $\ds \| \vec f_{w}(x)  \|_{L^{R_1}\left( \ii W, \mu \right)} \leq \one_F(x), \quad  \| \vec g_{w}(x)  \|_{L^{R_2}\left( \ii W, \mu \right)} \leq \one_G(x)$.

The interpolation theory and the suitable dualization of $L^{\tilde s, \infty}$ are explained in detail in \cite{quasiBanachHelicoid}.
\end{rremark}

The result in Theorem \ref{thm:main-thm-BHT} is proved by induction: assuming a local vector-valued inequality of ``depth $n-1$" (if $n=1$, that amounts to the scalar-valued result) with sharp estimates for the operatorial norm, we can prove the vector-valued inequality of depth $n$ (global or local). 

Localizations play a very important role in our proof, as they also do in the proof of the sparse domination.

Given a fixed dyadic interval $I_0$, we recall the definition
\[
\rr P(I_0):= \lbrace P \in \rr P: I_P \subseteq I_0  \rbrace.
\]

Also, since the interval $I_0$ and the collection $\rr P(I_0)$ are fixed, we use the notation
\[
\sssize_{I_0} \, f:=\sssize_{\rr P(I_0)} \, f = \max \big( \frac{1}{\vert I_0 \vert} \int_{\rr R} \vert f(x)   \vert  \cdot \ci_{I_0}(x) dx ,\, \sup_{P \in \rr P(I_0)}\frac{1}{\vert I_P \vert} \int_{\rr R} \vert f(x)   \vert  \cdot \ci_{I_P}(x) dx \big).
\]

Ultimately, the vector-valued inequality reduces to proving inductively the following two statements for $\ds \Lambda_{BHT; \rr P(I_0)}^{F, G, H}(f, g, h):=\Lambda_{BHT; \rr P(I_0)} (f \cdot \one_F, g \cdot \one_G, h \cdot \one_{H'})$:
{\fontsize{10}{10}\begin{align}
\label{eq-helM-n}
\tag*{$\ii P (n)$}
 \lft \Lambda_{BHT; \rr P(I_0)}^{F, G, H'}(\vec f,\vec g, \vec h) \rg &\lesssim \left( \sssize_{I_0} \one_F \right)^{\frac{1+\theta_1}{2}-\frac{1}{r_1}-\epsilon} \left( \sssize_{I_0} \one_G \right)^{\frac{1+\theta_2}{2}-\frac{1}{r_2}-\epsilon} \left( \sssize_{I_0} \one_{H'} \right)^{\frac{1+\theta_3}{2} -\frac{1}{r'}-\epsilon}\\
& \qquad \cdot  \big\| \big\|  \vec f\big\|_{L^{R^n_1}} \cdot \ci_{I_0} \big\|_{r_1} \big\| \big\| \vec g  \big\|_{L^{R^n_2}}\cdot \ci_{I_0} \big\|_{r_2} \big\| \big\| \vec h \big\|_{L^{\left(R^n\right)'}}\cdot \ci_{I_0} \big\|_{r'} \nonumber
\end{align}}
and, for functions $\vec f, \vec g, \vec h$ satisfying $\big\|\vec f(x) \big\|_{L^{R^n_1}}\leq \one_F(x), \big\| \vec g(x) \big\|_{L^{R^n_2}}\leq \one_G(x)$ and $\big\| \vec h(x) \big\|_{L^{\left(R^n\right)'}}\leq \one_{H'}(x)$ respectively, 
{\fontsize{10}{10}\begin{align}
\label{eq-helM-n-sizes-only}
\tag*{$\ii P^* (n)$} \lft \Lambda_{BHT; \rr P(I_0)}^{F, G, H'}(\vec f, \vec g, \vec h) \rg \lesssim \left( \sssize_{I_0} \one_F \right)^{\frac{1+\theta_1}{2}-\epsilon} \left( \sssize_{I_0} \one_G \right)^{\frac{1+\theta_2}{2}-\epsilon} \left( \sssize_{I_0} \one_{H'} \right)^{\frac{1+\theta_3}{2}-\epsilon} \cdot \lft I_0 \rg.
\end{align}}

In $\ii P^*(n)$, we need the exponents of the sizes to be positive:
\[
\frac{1+\theta_1}{2}-\frac{1}{r_1} >0, \quad \frac{1+\theta_2}{2}-\frac{1}{r_2}>0, \quad \frac{1+\theta_3}{2}-\frac{1}{r'}>0.
\]
Hence our restrictions for $\ii D_{r_1, r_2, r'}$ and $\ii D_{R_1, R_2, R'}$. The conditions above are automatically satisfied in the local $L^2$ case, when $2 \leq r_1, r_2, r' \leq \infty$, and in consequence, in that situation, we have $\ii D_{r_1, r_2, r'}=Range(BHT)$.

The proof of Theorem \ref{thm:main-thm-BHT} follows by induction: $\ii P(n) \Rightarrow \ii P^*(n+1)$ is a consequence of H\"older's inequality, and $\ii P^*(n) \Rightarrow \ii P(n)$ follows from restricted-type interpolation. A more accurate version of $\ii P^*(n)$ is the following: if $\vec f, \vec g, \vec h$ are vector-valued functions satisfying $\big\|\vec f(x) \big\|_{L^{R^n_1}}\leq \one_{E_1}(x), \big\| \vec g(x) \big\|_{L^{R^n_2}}\leq \one_{E_2}(x)$ and $\big\| \vec h(x) \big\|_{L^{\left(R^n\right)'}}\leq \one_{E_3'}(x)$ respectively, then
{\fontsize{9.5}{10}\begin{align}
\label{eq-helM-n-sizes-only-fns}
\tag*{$\tilde{\ii P^{*}} (n):$}  \lft \Lambda_{BHT; \rr P(I_0)}^{F, G, H'}(\vec f,\vec g, \vec h) \rg &\lesssim \left( \sssize_{I_0} \one_F \right)^{{\small \frac{1+\theta_1}{2}-\frac{1}{r_1}-\epsilon}} \left( \sssize_{I_0} \one_G \right)^{{\small \frac{1+\theta_2}{2}-\frac{1}{r_2}-\epsilon}} \left( \sssize_{I_0} \one_{H'} \right)^{{\small \frac{1+\theta_3}{2} -\frac{1}{r'}-\epsilon}}\\
& \qquad \cdot |E_1|^\frac{1}{r_1}\,  |E_2|^\frac{1}{r_2}\,  |E_3|^\frac{1}{r'}\,  \nonumber.
\end{align}}
All of the above statements will be made precise in the vector-valued case (that is, for depth-$1$ vector-valued inequalities); the multiple vector-valued case follows by iteration.

\smallskip
We note that $\ii P^*(0)$ is precisely the statement of Lemma \ref{lemma-local-gen-size-energy} in Section \ref{sec:localization} and it is an immediate consequence of \eqref{eq:BHT-en-size}, where the $L^2$ norms are localized as well: if $\vert f(x) \vert \leq \one_F(x), \vert g(x) \vert \leq \one_G(x)$ and $\vert h(x) \vert \leq \one_{H'}(x)$, then \eqref{eq:BHT-en-size} implies that 
{\fontsize{10}{10}\begin{align*}
\vert \Lambda_{BHT; \rr P(I_0)}(f, g, h) \vert &\lesssim \big( \ssize_{\rr P} \,f \big)^{\theta_1} \cdot \big( \ssize_{\rr P}\, g \big)^{\theta_2} \cdot  \big( \ssize_{\rr P} \, h \big)^{\theta_3} \cdot  \|f \cdot \ci_{I_0}\|_2^{1-\theta_1} \cdot  \|g \cdot \ci_{I_0}\|_2^{1-\theta_2} \cdot   \|h \cdot \ci_{I_0}\|_2^{1-\theta_3} \\
&\lesssim  \left( \sssize_{I_0} \one_F \right)^{\frac{1+\theta_1}{2}-\epsilon} \left( \sssize_{I_0} \one_G \right)^{\frac{1+\theta_2}{2}-\epsilon} \left( \sssize_{I_0} \one_{H'} \right)^{\frac{1+\theta_3}{2}-\epsilon} \cdot \lft I_0 \rg,
\end{align*}}
since $\theta_1+\theta_2+\theta_3=1$.

However, for proving vector-valued estimates we need the stronger inequality $\ii P(0)$, which requires a more careful analysis: we need to estimate in an optimal way the operatorial norm associated to $\Lambda_{BHT; \rr P(I_0)}^{F, G, H'}$.

In fact, $\ii P(0)$ can be obtained from $\ii P^*(0)$ via generalized restricted type interpolation, making use of a triple stopping time which allows us to recover $L^p$ norms from sizes. Given $E_1, E_2$ and $E_3$ sets of finite measure, first we construct a major subset $E_3'$ of $E_3$ by setting $E_3':=E_3 \setminus \tilde \Omega$, where
\[
\tilde \Omega:=\big\lbrace  x: \ic M(\one_{E_1})> C \frac{\vert E_1 \vert}{\vert E_3 \vert} , \, \ic M(\one_{E_2})> C \frac{\vert E_2 \vert}{\vert E_3 \vert}   \big\rbrace.
\]
Then, for any functions $f, g, h$ so that  $\vert f(x) \vert \leq \one_{E_1}, \vert g(x) \vert \leq \one_{E_2}$ and $\vert h(x) \vert \leq \one_{E_3'}$, we want to prove that 
\begin{equation}
\label{eq:restr-L}
\vert \Lambda_{BHT; \rr P(I_0)}^{F, G, H'}(f, g, h) \vert \lesssim \| \Lambda_{I_0}^{F, G, H'} \| \cdot \vert E_1 \vert^{\frac{1}{r_1}}\cdot \| \vert E_2 \vert^{\frac{1}{r_2}} \cdot \| \vert E_3 \vert^{\frac{1}{r'}},
\end{equation}
where $\|\Lambda_{I_0}^{F, G, H'}\|$ represents the ``operatorial norm"
\[
\|\Lambda_{I_0}^{F, G, H'}\|:=\left( \sssize_{I_0} \one_F \right)^{\frac{1+\theta_1}{2}-\frac{1}{r_1}-\epsilon} \left( \sssize_{I_0} \one_G \right)^{\frac{1+\theta_2}{2}-\frac{1}{r_2}-\epsilon} \left( \sssize_{I_0} \one_{H'} \right)^{\frac{1+\theta_3}{2} -\frac{1}{r'}-\epsilon}.
\]

In the model operator for $BHT$, we can assume that 
\begin{equation}
\label{eq:assumption-d}
1+\frac{\dist(I_P, \tilde \Omega^c)}{\vert I_P \vert } \sim 2^d, \quad \text{for  } d\geq 0.
\end{equation}
This will allow us to say that 
\[
\sssize_{\rr P}\, \one_{E_1} \lesssim 2^d \frac{|E_1|}{|E_3|}, \quad \sssize_{\rr P} \,\one_{E_2} \lesssim 2^d \frac{|E_2|}{|E_3|}, \quad \sssize_{\rr P} \,\one_{E'_3} \lesssim 2^{-Md},
\]
and summing in $d$ will not be an issue because of the decay in $\sssize_{\rr P} \one_{E'_3}$.

The triple stopping time (similar to that of Section \ref{sec:localization}) will allow us to write $\ds \rr P(I_0):=\bigcup_{n_1, n_2, n_3} \bigcup_{I \in \ii I^{n_1, n_2, n_3}} \rr P(I)$, where for each subcollection $\rr P(I)$ we know
\[
\sssize_{\rr P(I)} \one_{E_1} \lesssim \min( 2^{-n_1},2^d \frac{|E_1|}{|E_3|}), \, \sssize_{\rr P(I)} \one_{E_2} \lesssim \min( 2^{-n_2},2^d \frac{|E_2|}{|E_3|}), \,\sssize_{\rr P(I)} \one_{E'_3} \lesssim \min( 2^{-n_3},2^{-Md} ).
\]

Now we use $\ii P^*(0)$ on the interval $I \subseteq I_0$, with the sets $F, G, H$ replaced by $F \cap E_1, G \cap E_2$ and $H \cap E_3$ respectively:
\begin{align*}
 \lft \Lambda_{BHT; \rr P(I)}^{F, G, H'}(\vec f,\vec g, \vec h) \rg &\lesssim \left( \sssize_{I} \one_F \right)^{\frac{1+\theta_1}{2}-\alpha_1-\epsilon} \left( \sssize_{I} \one_G \right)^{\frac{1+\theta_2}{2}-\alpha_2-\epsilon} \left( \sssize_{I} \one_{H'} \right)^{\frac{1+\theta_3}{2} -\alpha_3-\epsilon}\\
& \qquad \cdot 2^{-n_1 \cdot \alpha_1} \cdot  2^{-n_2 \cdot \alpha_2} \cdot  2^{-n_3 \cdot \alpha_3} \cdot |I|,
\end{align*}
for some $0 \leq \alpha_j \leq \dfrac{1+\theta_j}{2}$.

Using the monotonicity of the $\sssize$ ( since $I \subseteq I_0$ and $\rr P(I) \subseteq \rr P(I_0)$, we have $\sssize_I \,f \leq \sssize_{I_0} f$), and an estimate of  $\ds \sum_{I \in \ii I^{n_1, n_2, n_3}} |I|$, we can recover \eqref{eq:restr-L}, with an extra $2^{-10 d}$ decay as a consequence of assumption \eqref{eq:assumption-d}.

Now we sketch the proof of the implication $\ii P(0) \Rightarrow \ii P^*(1)$. For clarity, we assume in the presentation that the vector-spaces involved are $\ell^{r_1}, \ell^{r_2}$ and $\ell^r$. Then, for sequences of function $\lbrace f_k \rbrace_k, \lbrace g_k \rbrace_k$ and $\lbrace h_k \rbrace_k$ satisfying 
\[
\|\lbrace f_k \rbrace\|_{\ell^{r_1}} \leq \one_F(x), \quad \|\lbrace g_k \rbrace\|_{\ell^{r_2}} \leq \one_G(x), \quad \|\lbrace h_k \rbrace\|_{\ell^{r'}} \leq \one_{H'}(x), \quad 
\]
we want to prove
\[
\vert \sum_k \Lambda_{BHT; \rr P(I_0)} (f_k, g_k, h_k) \vert \lesssim \left( \sssize_{I_0} \one_F \right)^{\frac{1+\theta_1}{2}-\epsilon} \left( \sssize_{I_0} \one_G \right)^{\frac{1+\theta_2}{2}-\epsilon} \left( \sssize_{I_0} \one_{H'} \right)^{\frac{1+\theta_3}{2}-\epsilon} \cdot \lft I_0 \rg.
\]

First, we note that $\Lambda_{BHT; \rr P(I_0)} (f_k, g_k, h_k) =\Lambda_{BHT; \rr P(I_0)} ^{F, G, H'}(f_k, g_k, h_k)$ and using $\ii P(0)$ we have
{\fontsize{10}{10}\begin{align*}
\vert \Lambda_{BHT; \rr P(I_0)} (f_k, g_k, h_k)  \vert & \lesssim \left( \sssize_{I_0} \one_F \right)^{\frac{1+\theta_1}{2}-\frac{1}{r_1}-\epsilon} \left( \sssize_{I_0} \one_G \right)^{\frac{1+\theta_1}{2}-\frac{1}{r_2}-\epsilon} \left( \sssize_{I_0} \one_{H'} \right)^{\frac{1+\theta_1}{2} -\frac{1}{r'}-\epsilon} \\
& \qquad \cdot  \big\|   f_k \cdot \ci_{I_0} \big\|_{r_1} \big\|   f_k \cdot \ci_{I_0} \big\|_{r_2} \big\|   h_k \cdot \ci_{I_0} \big\|_{r'}.
\end{align*}}

Summing in $k$ via H\"older's inequality the expressions in the second line, we obtain
\[
\frac{ \|  \one_{F} \cdot \ci_{I_0} \|_{r_1}}{\vert I_0 \vert^\frac{1}{r_1}} \cdot \frac{ \|  \one_{G} \cdot \ci_{I_0} \|_{r_2}}{\vert I_0 \vert^\frac{1}{r_2}} \cdot \frac{ \|  \one_{H'} \cdot \ci_{I_0} \|_{r'}}{\vert I_0 \vert^\frac{1}{r'}} \cdot \vert I_0 \vert,
\]
which is bounded above by
\[
\left( \sssize_{I_0} \one_F \right)^{\frac{1}{r_1}} \left( \sssize_{I_0} \one_G \right)^{\frac{1}{r_2}} \left( \sssize_{I_0} \one_{H'} \right)^{\frac{1}{r'}} \cdot \lft I_0 \rg.
\]
This end the proof of ``$\ii P(0) \Rightarrow \ii P^*(1)$".

\subsection{The quasi-Banach case}
If $r^j<1$ for some $1 \leq j \leq n$, the inductive statements for the localized trilinear form $\Lambda_{BHT; \rr P(I_0)}^{F, G, H'}$ are replaced by statements for the localized operator
\[
BHT_{I_0}^{F, G, H'}(f, g)(x):=BHT_{I_0}(f \cdot \one_F, g \cdot \one_G)(x) \cdot \one_{H'}(x).
\]
Then, the two inductive statements are:
{\fontsize{9}{10}
\begin{align}
\label{Pn-bht}
\tag*{$\ii P(n):$} 
\big\|  \big\|   BHT_{\rr P \left( I_0 \right)}^{F, G, H'} \left(\vec f, \vec g  \right)\big \|_{L^{R^n}} \big\|_s & \lesssim \left( \sssize_{\rr P\left( I_0 \right)} \one_F  \right)^{\frac{1+\theta_1}{2}-\frac{1}{p}-\epsilon} \left( \sssize_{\rr P\left( I_0\right)} \one_G  \right)^{\frac{1+\theta_2}{2}-\frac{1}{q}-\epsilon} \left( \sssize_{\rr P\left( I_0 \right)} \one_{H'}  \right)^{\frac{1+\theta_3}{2}-\frac{1}{s'}-\epsilon} \\
&\cdot \quad \big \|  \big\|  \vec f \big\|_{L^{R^n_1}} \cdot \ci_{I_0} \big\|_p \big \|  \big\|  \vec g\big\|_{L^{R^n_2}} \cdot \ci_{I_0} \big\|_q. \nonumber
\end{align}}
Also, whenever $\big\|  \vec f (x)\big\|_{L^{R^n_1}} \leq \one_{F}(x)$ and $\big\|  \vec g(x)\big\|_{L^{R^n_2}} \leq \one_G(x)$, we have
{\fontsize{9}{10}\begin{align}
\label{Pn-star-bht}
\tag*{$\ii P^*(n):$} 
\big\|  \big\|   BHT_{\rr P \left( I_0 \right)}^{F, G, H'} \left(\vec f, \vec g  \right)\big \|_{L^{R^n}} \big\|_s & \lesssim \left( \sssize_{\rr P\left( I_0 \right)} \one_F  \right)^{\frac{1+\theta_1}{2}-\epsilon} \left( \sssize_{\rr P\left( I_0\right)} \one_G  \right)^{\frac{1+\theta_2}{2}-\epsilon} \left( \sssize_{\rr P\left( I_0 \right)} \one_{H'}  \right)^{\frac{1+\theta_3}{2}-\frac{1}{s'}-\epsilon} \cdot \lft I_0 \rg^{1/s}.
\end{align}}
 
Notice that we obtain a gain in the exponent for $\sssize_{\rr P(I_0)} \one_{H'}$, if $s<1$, compared to the similar estimate for the trilinear form. This is a key fact in obtaining also the quasi-Banach valued extensions. 
 
\section{The method of the proof: sparse domination} 
\label{sec:sparse-dom}

In Section \ref{sec:localization}, we have obtained implicitly that
{\fontsize{9}{9}\begin{align*}
\vert \Lambda_{BHT; \rr P} (f, g, h)  \vert \lesssim \sum_{\substack{S_1 \in \ic S_1 \\ S_2 \in \ic S_2 \\S_3 \in \ic S_3}}   \big( \sssize_{\rr P (S_1)} \one_F  \big)^\frac{1+\theta_1}{2} \cdot   \big(  \sssize_{\rr P (S_2)} \one_G  \big)^\frac{1+\theta_2}{2} \cdot  \big( \sssize_{\rr P (S_3)} \one_{H'}  \big)^\frac{1+\theta_3}{2} \cdot |S_1 \cap S_2 \cap S_3|,
\end{align*}}where the functions $f, g, h$ satisfy $|f| \leq \one_F, |g| \leq \one_G, |h| \leq \one_{H'}$, and  $0 \leq \theta_1, \theta_2, \theta_3 <1$, with $\theta_1+\theta_2 +\theta_3 =1$.

Such estimates are sometimes called in literature ``sparse estimates", since the collections of intervals $\ic S_1, \ic S_2$ and $\ic S_3$ are sparse. In fact, we've seen that each of them verifies \eqref{eq:Carleson-condition}, which is called a $\tilde C$-``Carleson condition", and which in turn is equivalent to $\frac{1}{\tilde C}$-sparse property of Definition \ref{def-sparse-collection}, as shown in \cite{Nazarov-Lerner-DyadicCalculus}.

In applications, ideally one would like to obtain similar estimates, but with only one sparse set rather than three, and also one would like it to be available for arbitrary functions, not only for those bounded above by characteristic functions of sets. The goal of this section is to describe our recent work in \cite{sparse-hel}, which realizes this task.

The local estimate \eqref{eq-helM-n-sizes-only} of Section \ref{sec:method-proof} is formulated for functions that are bounded above by characteristic functions of finite sets. However, at the cost of an $\epsilon$ in the exponent of the sizes, we obtain a similar estimate for general functions.
 
\begin{definition}
If $\rr P$ is a collection of time-frequency tiles, then we define the $L^q$ size to be
\[
\sssize_{\rr P}^q(f) := \sup_{P \in \rr P} \Big( \frac{1}{|I_P|} \int_{\rr R} |f(x)|^q \cdot \ci_{I_P} dx  \Big)^\frac{1}{q}.
\]

In the case of a collection $\rr P(I_0)$ localized to a fixed dyadic interval $I_0$ (all the tiles in the collection satisfy $I_P \subseteq I_0$ ), we have 
\[
\sssize_{\rr P(I_0)}^q (f) := \max \Big(  \sup_{P \in \rr P(I_0)} \Big( \frac{1}{|I_P|} \int_{\rr R} |f(x)|^q \cdot \ci_{I_P} dx  \Big)^\frac{1}{q}, \Big( \frac{1}{|I_0|} \int_{\rr R} |f(x)|^q \cdot \ci_{I_0} dx  \Big)^\frac{1}{q} \Big).
\]
\end{definition} 
We note that the $L^1$ size coincides with the classical $\sssize_{\rr P}(f)$, and also, 
\[ \sssize_{\rr P}^q(f) = \big(\sssize_{\rr P}( |f |^q) \big)^\frac{1}{q}, \quad \sssize_{\rr P(I_0)}^q(f) = \big(\sssize_{\rr P(I_0)}( |f |^q) \big)^\frac{1}{q}. 
\]
 
Using an argument similar to interpolation we can prove the following
\begin{proposition}[Mock interpolation]
\label{prop:restricted->general}
Let $\rr P(I_0)$ be a collection of tiles and $\Lambda_{\rr P(I_0)}$ a trilinear form satisfying, for any sets of finite measure $F, G$ and $H$, and any functions functions $f, g, h$ so that $|f| \leq \one_F, |g| \leq \one_G, |h| \leq \one_{H}$ the estimate
\[
\vert \Lambda_{\rr P(I_0)} (f, g, h)  \vert \lesssim \sssize_{\rr P(I_0)}^{q_1} (\one_F) \cdot \sssize_{\rr P(I_0)}^{q_2} (\one_G) \cdot  \sssize_{\rr P(I_0)}^{q_3} (\one_H) \, |I_0|.
\]
Then, for any $\epsilon>0$, and any locally integrable functions $f, g, h$ we have 
\[
\vert \Lambda_{\rr P(I_0)} (f, g, h)  \vert \lesssim_\epsilon \sssize_{\rr P(I_0)}^{q_1+\epsilon} (f) \cdot \sssize_{\rr P(I_0)}^{q_2+\epsilon} (g) \cdot  \sssize_{\rr P(I_0)}^{q_3+\epsilon} (h) \, |I_0|.
\]
\begin{proof}
First, note that the trilinear form above is not the one associated to the model operator of $BHT$. Instead, it can be defined on a collection of dyadic intervals, since $\sssize_{\rr P(I_0)} (f)$ only depends on the spatial intervals. For simplicity, we can assume that the functions are Schwartz. The statement remains true for more general collections $\rr P$ which need not be localized on an interval $I_0$.

Then we decompose $f=\sum_{\kappa_1} 2^{\kappa_1} \cdot f_{\kappa_1}=\sum_{\kappa_1} 2^{\kappa_1} \cdot f_{\kappa_1} \cdot \one_{F_{\kappa_1}}$, where $$F_{\kappa_1}=\lbrace 2^{\kappa_1-1} \leq |f| \leq 2^{\kappa_1}  \rbrace.$$
That is, $ 2^{\kappa_1} f_{\kappa_1}=f \cdot \one_{F_{\kappa_1}}$ represents the part of $f$ where $|f| \sim 2^{\kappa_1}$, and $|f_{\kappa_1}| \leq \one_{F_{\kappa_1}}$, so it verifies the hypotheses of the Proposition. 

Similarly, $g=\sum_{\kappa_2} 2^{\kappa_2} \cdot g_{\kappa_2}$ and $h= \sum_{\kappa_3} 2^{\kappa_3} \cdot h_{\kappa_3}$, while
\begin{align*}
\vert \Lambda_{\rr P(I_0)} (f, g, h)  \vert &\lesssim \sum_{\kappa_1, \kappa_2, \kappa_3} 2^{\kappa_1}\, 2^{\kappa_2}\, 2^{\kappa_3} \, \vert \Lambda_{\rr P(I_0)} (f_{\kappa_1}, g_{\kappa_2}, h_{\kappa_3})  \vert \\
& \lesssim \sum_{\kappa_1, \kappa_2, \kappa_3} 2^{\kappa_1}\, 2^{\kappa_2}\, 2^{\kappa_3} \, \sssize_{\rr P(I_0)}^{q_1} (\one_{F_{\kappa_1}}) \cdot \sssize_{\rr P(I_0)}^{q_2} (\one_{G_{\kappa_2}}) \cdot  \sssize_{\rr P(I_0)}^{q_3} (\one_{H_{\kappa_3}}) \cdot |I_0|.
\end{align*}

Then it suffices to prove that 
\[
\sum_{\kappa} 2^{\kappa}  \sup_{P \in \rr P(I_0) }\big( \frac{1}{|I_P|} \int_{\rr R} \one_{F_{\kappa}} \cdot \ci_{I_P} dx  \big)^\frac{1}{q_1} \lesssim \sup_{P \in \rr P(I_0) }\big( \frac{1}{|I_P|} \int_{\rr R} \big| f \cdot \ci_{I_P} \big|^{q_1+\epsilon} dx  \big)^{\frac{1}{q_1+\epsilon}}.
\]

Let $K$ be so that $\ds 2^K \sim  \sup_{P \in \rr P(I_0) }\big( \frac{1}{|I_P|} \int_{\rr R} \big| f \cdot \ci_{I_P} \big|^{q_1+\epsilon} dx  \big)^{\frac{1}{q_1+\epsilon}}$. Since $ 2^{\kappa-1} \cdot \one_{F_{\kappa}} \leq |f(x)|  \one_{F_{\kappa}}(x) \leq  2^{\kappa}\cdot \one_{F_{\kappa}} $, we have $\one_{F_{\kappa}}(x) \lesssim 2^{-\kappa \, q_1} |f(x)|^{q_1}$ and hence
\begin{equation*}
\frac{1}{\vert I_P \vert} \int_{\rr R }\one_{F_{\kappa}} \cdot \ci_{I_P} dx  \lesssim 2^{-\kappa \, \left(q_1 +\epsilon \right)} \frac{1}{\vert I_P \vert} \int_{\rr R }\big| f(x) \cdot \ci_{I_P}\big|^{q_1+\epsilon} dx \lesssim 2^{\left( K -\kappa \right) \left( q_1+\epsilon \right)}. 
\end{equation*}

We actually have more than that; namely, 
\[
\sup_{P \in \rr P (I_0)}\big( \frac{1}{|I_P|} \int_{\rr R} \one_{F_{\kappa}} \cdot \ci_{I_P} dx  \big)^\frac{1}{q_1} \lesssim \min\big(1, 2^{\left( K -\kappa \right) \left( q_1+\epsilon \right)/{q_1}}\big).
\]

We split the sum over $\kappa \leq K$ and $\kappa >K$. In the first case, we obtain 
\[
\sum_{\kappa \leq K} 2^{\kappa}  \sup_{P \in \rr P(I_0)}\big( \frac{1}{|I_P|} \int_{\rr R} \one_{F_{\kappa}} \cdot \ci_{I_P} dx  \big)^\frac{1}{q_1} \lesssim \sum_{\kappa \leq K} 2^{\kappa} \lesssim 2^K.
\]
Summation over $\kappa >K$ yields
\[
\sum_{\kappa >K} 2^\kappa  \sup_{P \in \rr P(I_0)}\big( \frac{1}{|I_P|} \int_{\rr R} \one_{F_{\kappa}} \cdot \ci_{I_P} dx  \big)^\frac{1}{q_1} \lesssim 2^{K} \sum_{\kappa >K} 2^{\left(\kappa-K\right)\left( 1-  \frac{q_1+\epsilon}{q_1}\right)} \lesssim 2^{K}.
\]
Combining the two cases together, we obtain the conclusion. The last step also shows that we cannot avoid the $\epsilon$ loss.
\end{proof}
\end{proposition}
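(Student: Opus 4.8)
The plan is to upgrade the restricted‑type estimate to general functions by a layer‑cake decomposition of each of $f$, $g$, $h$ into pieces that are, up to a scalar, dominated by characteristic functions, applying the hypothesis to each triple of pieces, and then summing the resulting triple series; the $\epsilon$ gain in the exponent is exactly what will make this summation converge. First I would observe that $\Lambda_{\rr P(I_0)}$ enters both the hypothesis and the conclusion only through the quantities $\sssize_{\rr P(I_0)}$, which depend on the spatial intervals alone, so there is no loss in assuming $f,g,h \in \ic S(\rr R)$; then every sum written below converges absolutely and the identities are legitimate.

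Next I would write $f=\sum_{\kappa_1\in\rr Z} 2^{\kappa_1} f_{\kappa_1}$, where $F_{\kappa_1}:=\lbrace 2^{\kappa_1-1}\leq |f|\leq 2^{\kappa_1}\rbrace$ and $2^{\kappa_1} f_{\kappa_1}:=f\cdot\one_{F_{\kappa_1}}$, so that $|f_{\kappa_1}|\leq\one_{F_{\kappa_1}}$, and decompose $g=\sum_{\kappa_2}2^{\kappa_2}g_{\kappa_2}$, $h=\sum_{\kappa_3}2^{\kappa_3}h_{\kappa_3}$ in the same way. Multilinearity together with the hypothesis applied to each triple $(f_{\kappa_1},g_{\kappa_2},h_{\kappa_3})$ gives
\[
\vert\Lambda_{\rr P(I_0)}(f,g,h)\vert \lesssim \sum_{\kappa_1,\kappa_2,\kappa_3} 2^{\kappa_1+\kappa_2+\kappa_3}\,\sssize_{\rr P(I_0)}^{q_1}(\one_{F_{\kappa_1}})\cdot\sssize_{\rr P(I_0)}^{q_2}(\one_{G_{\kappa_2}})\cdot\sssize_{\rr P(I_0)}^{q_3}(\one_{H_{\kappa_3}})\,\vert I_0\vert,
\]
and since the right‑hand side factors across the three indices, everything reduces to the one‑variable estimate $\sum_{\kappa}2^{\kappa}\,\sssize_{\rr P(I_0)}^{q_1}(\one_{F_{\kappa}})\lesssim_{\epsilon}\sssize_{\rr P(I_0)}^{q_1+\epsilon}(f)$ and its analogues for $g$ and $h$.

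To prove this, set $2^{K}:=\sssize_{\rr P(I_0)}^{q_1+\epsilon}(f)$. On $F_{\kappa}$ one has $|f|\geq 2^{\kappa-1}$, hence the pointwise bound $\one_{F_{\kappa}}\lesssim 2^{-\kappa(q_1+\epsilon)}|f|^{q_1+\epsilon}$; inserting this into the definition of the $L^{q_1}$ size yields, for every $P\in\rr P(I_0)$,
\[
\frac{1}{\vert I_P\vert}\int_{\rr R}\one_{F_{\kappa}}\,\ci_{I_P}\,dx \lesssim 2^{-\kappa(q_1+\epsilon)}\,\frac{1}{\vert I_P\vert}\int_{\rr R}\vert f\cdot\ci_{I_P}\vert^{q_1+\epsilon}\,dx \lesssim 2^{(K-\kappa)(q_1+\epsilon)},
\]
which combined with the trivial bound $\one_{F_{\kappa}}\leq 1$ gives $\sssize_{\rr P(I_0)}^{q_1}(\one_{F_{\kappa}})\lesssim\min\bigl(1,\,2^{(K-\kappa)(q_1+\epsilon)/q_1}\bigr)$. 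I would then split the sum at $\kappa=K$: the range $\kappa\leq K$ contributes at most $\sum_{\kappa\leq K}2^{\kappa}\lesssim 2^{K}$, and the range $\kappa>K$ contributes at most $2^{K}\sum_{\kappa>K}2^{-(\kappa-K)\epsilon/q_1}\lesssim 2^{K}$. Adding the two ranges gives the claimed bound $\lesssim 2^{K}=\sssize_{\rr P(I_0)}^{q_1+\epsilon}(f)$.

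The only genuine obstacle is the convergence of the tail $\sum_{\kappa>K}2^{-(\kappa-K)\epsilon/q_1}$ in the last step: the rest of the argument is routine book‑keeping, but at $\epsilon=0$ the large‑$\kappa$ terms are only bounded term by term and the series diverges, so the $\epsilon$ loss is unavoidable with this scheme. A minor technical point to keep track of is that $\ci_{I_P}$ is not compactly supported, so the "tails" $\int\one_{F_{\kappa}}\ci_{I_P}$ outside $I_P$ must be retained; these are already part of the definition of $\sssize$ and cause no difficulty.
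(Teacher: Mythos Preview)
Your proof is correct and follows essentially the same route as the paper: the layer-cake decomposition into level sets $F_\kappa$, reduction via multilinearity to the one-variable inequality $\sum_\kappa 2^\kappa\,\sssize^{q_1}(\one_{F_\kappa})\lesssim\sssize^{q_1+\epsilon}(f)$, the pointwise bound $\one_{F_\kappa}\lesssim 2^{-\kappa(q_1+\epsilon)}|f|^{q_1+\epsilon}$, and the split of the sum at $\kappa=K$ are all exactly what the paper does. Your remark that the divergent tail at $\epsilon=0$ forces the loss also matches the paper's closing observation.
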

 
Using Proposition \ref{prop:restricted->general}, \ref{eq-helM-n-sizes-only} implies  
 {\fontsize{10}{9}
\begin{equation}
\label{eq:local-sizes-etc} \tag*{$\ii P^{**}(n):$} 
\vert \Lambda_{BHT;\rr{P}(I_0)}(\vec f, \vec g, \vec h) \vert \lesssim  \left(\sssize_{\rr P(I_0)} \| \vec f(x, \cdot) \|_{L^{R^n_1}}^{s_1} \right)^{\frac{1}{s_1}} \cdot \left(\sssize_{\rr P(I_0)} \| \vec g(x, \cdot) \|_{L^{R^n_2}}^{s_2}\right)^{\frac{1}{s_2}} \cdot \left(\sssize_{\rr P(I_0)} \|\vec h(x, \cdot) \|_{L^{(R^n)'}}^{s_3} \right)^{\frac{1}{s_3}} \cdot \vert I_0 \vert,
\end{equation}} provided there exist $\theta_1, \theta_2, \theta_3$ so that $0 \leq \theta_1, \theta_2, \theta_3 <1,  \theta_1+\theta_2+\theta_3  =1$ and 
\[
\frac{1}{r_1^j}, \frac{1}{s_1} <\frac{1+\theta_1}{2}, \quad \frac{1}{r_2^j}, \frac{1}{s_2} <\frac{1+\theta_2}{2} \quad \frac{1}{r^j}, \frac{1}{s_3} <\frac{1+\theta_3}{2} \quad \forall 1 \leq j \leq n.
\]

\begin{rremark}
\label{remark:expl-name-hel}
As mentioned earlier, the $(n+1)$-depth estimate $\ii P^{**}(n+1)$ is obtained through a careful stopping time (which identifies the intervals realizing the maximum in the inequality above) from $\ii P^{**}(n)$ and H\"older's inequality, the latter allowing us to move to the next level of iteration. If instead we don't have the extra variable to apply H\"older's inequality to and we perform the usual stopping time procedure starting from $\ii P^{**}(n)$, at the end we obtain exactly the same estimate $\ii P^{**}(n)$, as if moving in circles. This too accounts for our choice of the name \emph{the helicoidal method}.
\end{rremark}

For the exponents $s_1, s_2, s_3$, we don't assume any H\"older-type condition. In fact, they are chosen to be as small as possible, but we still require them to satisfy for the same $\theta_1, \theta_2, \theta_3$ as above and an $\epsilon>0$ which can be as small as we wish, the condition 
\[
 \frac{1}{s_1} <\frac{1+\theta_1}{2}-\epsilon, \quad \frac{1}{s_2} <\frac{1+\theta_2}{2}-\epsilon \quad \frac{1}{s_3} <\frac{1+\theta_3}{2}-\epsilon.
\] 
 
Now we claim that a local estimate such as \eqref{eq:local-sizes-etc} implies a vector-valued sparse domination:

\begin{theorem}
\label{thm:local->sparse}
Let $\rr P$ be a collection of tiles and assume that 
{\fontsize{9}{9}\begin{equation*}
\vert \Lambda_{BHT;\rr{P}(I_0)}(\vec f, \vec g, \vec h) \vert \lesssim  \left(\sssize_{\rr P(I_0)} \| \vec f(x, \cdot) \|_{L^{R^n_1}}^{s_1} \right)^{\frac{1}{s_1}} \cdot \left(\sssize_{\rr P(I_0)} \| \vec g(x, \cdot) \|_{L^{R^n_2}}^{s_2}\right)^{\frac{1}{s_2}} \cdot \left(\sssize_{\rr P(I_0)} \|\vec h(x, \cdot) \|_{(L^{R^n)'}}^{s_3} \right)^{\frac{1}{s_3}} \cdot \vert I_0 \vert,
\end{equation*}}holds for any dyadic interval $I_0$ and any vector-valued functions $\vec f, \vec g, \vec h$ so that $\| \vec f(x, \cdot) \|_{L^{R^n_1}}, \| \vec g(x, \cdot) \|_{L^{R^n_2}}, \| \vec h(x, \cdot) \|_{L^{(R^n)'}}$ are locally integrable. Then, there exists a sparse family of dyadic intervals $\ic S$, depending on $\vec f, \vec g, \vec h$ and on the Lebesgue exponents $s_1, s_2, s_3$ so that 
{\fontsize{9}{9}
\[
\big| \Lambda_{\rr P}(\vec f, \vec g, \vec h) \big|\lesssim \sum_{Q \in \ic S} \big( \frac{1}{|Q|} \int_{\rr R} \| \vec f(x, \cdot) \|_{L^{R^n_1}}^{s_1} \cdot \ci_{Q} dx \big)^\frac{1}{s_1}  \big( \frac{1}{|Q|} \int_{\rr R} \| \vec g(x, \cdot) \|_{L^{R^n_2}}^{s_2} \cdot \ci_{Q} dx \big)^\frac{1}{s_2}\big( \frac{1}{|Q|} \int_{\rr R} \| \vec h(x, \cdot) \|_{L^{(R^n)'}}^{s_3} \cdot \ci_{Q} dx \big)^\frac{1}{s_3} \cdot |Q|.
\]}
\begin{proof}
We only treat the scalar valued case, but the general one follows in the same way. We have, for any interval $I_0,$
\begin{equation}
\label{eq:local-est-P}
\vert \Lambda_{\rr{P}(I_0)}(f, g, h) \vert \lesssim  \left(\sssize_{ \rr P(I_0)} \,\vert f \vert^{s_1} \right)^{1/{s_1}} \cdot \left(\sssize_{\rr P(I_0)} \,\vert g \vert^{s_2} \right)^{1/{s_2}} \cdot \left(\sssize_{ \rr P(I_0)} \, \vert h\vert^{s_3} \right)^{1/{s_3}} \cdot \vert I_0 \vert,
\end{equation}
and we want to construct a sparse collection of dyadic intervals $\ic S$ for which 
{\fontsize{9}{9}\begin{equation}
\label{eq:sparse-BHT}
\big| \Lambda_{\rr P}(f, g, h) \big|\lesssim \sum_{Q \in \ic S} \big( \frac{1}{|Q|} \int_{\rr R} \vert f \vert^{s_1} \cdot \ci_{Q} dx \big)^{\frac{1}{s_1}}  \big( \frac{1}{|Q|} \int_{\rr R} \vert g \vert^{s_2} \cdot \ci_{Q} dx \big)^{\frac{1}{s_2}}\big( \frac{1}{|Q|} \int_{\rr R} \vert h \vert^{s_3} \cdot \ci_{Q} dx \big)^{\frac{1}{s_3}} \cdot |Q|.
\end{equation}}

The sparse collection will be structured iteratively as $\ic S =\bigcup_{k \geq 0} \ic S_k$, and the intervals on the $k+1$ level $\ic S_{k+1}$ will consist of the ``descendants" of the intervals on the $k$-th level $\ic S_k$. That is, for every $k \geq 0$, and every $Q_0 \in \ic S_k$, we have $ch_{\ic S}(Q_0)$ the collection of direct descendants of $Q_0$ in $\ic S$, and moreover,
\[
\ic S_{k+1} =\bigcup_{Q_0 \in \ic S_k} ch_{\ic S}(Q_0).
\]
For each $Q_0 \in \ic S$, we have an associated subcollection of tiles $\rr P_{Q_0} \subseteq \rr P(Q_0)$, which induces a partition of $\rr P$:
\[
\rr P:=\bigsqcup_{Q_0 \in \ic S} \rr P_{Q_0}.
\]

The level $0$, $\ic S_0$, will consist of the maximal spatial supports of the collection $\rr P$ of tiles:
{\fontsize{10}{10}\[
\ic S_0= \lbrace I \text{ dyadic interval }: I=I_P \text{ for some } P\in \rr P \text{ and if } I \subseteq I_{P'} \text{for another } P' \in \rr P, \text{ then } I_P=I_{P'}   \rbrace.
\] }

Now we assume $\ic S_k$ was constructed and for every $Q_0 \in \ic S_k$ we define $ch_{\ic S}(Q_0)$, the union of which will represent $\ic S_{k+1}$, and $\rr P_{Q_0}$, the subcollection of tiles. For each $Q_0 \in \ic S_{k}$, the descendants $ch_{\ic S}(Q_0)$ will consist of all the maximal dyadic intervals $Q \subseteq Q_0$ so that there exists at least one tri-tile $P \in \rr P$ with $I_P \subset Q$ and so that one of the following holds:
\[
\big( \frac{1}{\vert Q \vert} \int_{\rr R} \vert f(x) \vert^{s_1} \cdot \ci_{Q}(x) dx  \big)^{1/s_1} > C \cdot \big( \frac{1}{\vert Q_0 \vert} \int_{\rr R} \vert f(x) \vert^{s_1} \cdot \ci_{Q_0}(x) dx  \big)^{1/s_1} \qquad \text{or}
\]
\[
\big( \frac{1}{\vert Q \vert} \int_{\rr R} \vert g(x) \vert^{s_2} \cdot \ci_{Q}(x) dx  \big)^{1/s_2} > C \cdot \big( \frac{1}{\vert Q_0 \vert} \int_{\rr R} \vert g(x) \vert^{s_2} \cdot \ci_{Q_0}(x) dx  \big)^{1/s_2} \qquad \text{or}
\] 
\[
\big( \frac{1}{\vert Q \vert} \int_{\rr R} \vert h(x) \vert^{s_3} \cdot \ci_{Q}(x) dx  \big)^{1/s_3} > C \cdot \big( \frac{1}{\vert Q_0 \vert} \int_{\rr R} \vert h(x) \vert^{s_3} \cdot \ci_{Q_0}(x) dx  \big)^{1/s_3} . \qquad       \text{              }  
\]

Then we have that 
{\fontsize{9}{9}
\begin{align*}
&\bigcup_{Q \in ch_{\ic S}\left( Q_0 \right)} Q \subseteq \lbrace x: \ic M_{s_1} (f \cdot \ci_{Q_0}) >  C \cdot \big( \frac{1}{\vert Q_0 \vert} \int_{\rr R} \vert f(x) \vert^{s_1} \cdot \ci_{Q_0}(x) dx  \big)^\frac{1}{s_1} \rbrace  \\ & \cup \lbrace x: \ic M_{s_2} (g \cdot \ci_{Q_0}) >  C \cdot \big( \frac{1}{\vert Q_0 \vert} \int_{\rr R} \vert g(x) \vert^{s_2} \cdot \ci_{Q_0}(x) dx  \big)^\frac{1}{s_2} \rbrace \cup \lbrace x: \ic M_{s_3} (h \cdot \ci_{Q_0}) >  C \cdot \big( \frac{1}{\vert Q_0 \vert} \int_{\rr R} \vert h(x) \vert^{s_3} \cdot \ci_{Q_0}(x) dx  \big)^\frac{1}{s_3} \rbrace,
\end{align*}}
and in consequence 
\[
\sum_{Q \in ch_{\ic S}\left( Q_0 \right)} |Q| \lesssim \frac{1}{2} |Q_0|.
\]

The last inequality implies the sparseness property, as described in Definition \ref{def-sparse-collection}, of the collection $\ic S$

For every $Q_0$ in $\ic S_k$, the associated collection of tiles $\rr P_{Q_0} \subseteq \rr P(Q_0)$ consists of all tiles $P \in \rr P$ so that 
\[
I_P \subseteq Q_0, \text{ but  } I_P \nsubseteq Q, \quad \forall \, Q \in ch_{\ic S}(Q_0).
\]

For that reason, $\sssize_{\rr P_{Q_0}}^{s_1} (f) \lesssim  \big( \frac{1}{\vert Q_0 \vert} \int_{\rr R} \vert f(x) \vert^{s_1} \cdot \ci_{Q_0}(x) dx  \big)^\frac{1}{s_1}$, and similarly for the functions $g$ and $h$. Then we have, due to \eqref{eq:local-est-P},
\begin{align*}
&\vert \Lambda_{\rr P}(f, g, h) \vert \lesssim \sum_{Q \in \ic S} \vert \Lambda_{\rr P_Q}(f, g, h) \vert \\
& \lesssim  \sum_{Q \in \ic S} \left(\sssize_{\rr P_Q} \,\vert f \vert^{s_1} \right)^{1/{s_1}} \cdot \left(\sssize_{\rr P_Q} \,\vert g \vert^{s_2} \right)^{1/{s_2}} \cdot \left(\sssize_{\rr P_q} \, \vert h\vert^{s_3} \right)^{1/{s_3}} \cdot \vert Q \vert,
\end{align*}
which further implies \eqref{eq:sparse-BHT}.
\end{proof}
\end{theorem}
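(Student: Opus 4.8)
The plan is to run the by-now-standard stopping-time machine that manufactures a sparse family, using the hypothesized local estimate in place of the usual ``good-$\lambda$'' step. First I would reduce to the scalar case: write $f, g, h$ for $\|\vec f(x,\cdot)\|_{L^{R^n_1}}$, $\|\vec g(x,\cdot)\|_{L^{R^n_2}}$, $\|\vec h(x,\cdot)\|_{L^{(R^n)'}}$ respectively, since the local estimate already packages all the time-frequency information and the vector-valued argument is then verbatim the same. It is also convenient to assume $\rr P$ finite (Schwartz data), so that the iterative construction below terminates; the general case follows by an exhaustion.

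Next I would build the sparse collection in generations $\ic S = \bigcup_{k \geq 0} \ic S_k$. The zeroth generation $\ic S_0$ is the set of maximal spatial supports $I_P$, $P \in \rr P$ (maximal among dyadic intervals occurring as some $I_{P'}$). Given $\ic S_k$, for each $Q_0 \in \ic S_k$ I declare $ch_{\ic S}(Q_0)$ to be the family of maximal dyadic $Q \subseteq Q_0$ that contain at least one $I_P$, $P \in \rr P$, and satisfy one of the three stopping inequalities
\[
\Big(\frac{1}{|Q|}\int_{\rr R}|f|^{s_1}\ci_Q\,dx\Big)^{1/s_1} > C\Big(\frac{1}{|Q_0|}\int_{\rr R}|f|^{s_1}\ci_{Q_0}\,dx\Big)^{1/s_1},
\]
and analogously for $g$ with exponent $s_2$ or $h$ with exponent $s_3$; then $\ic S_{k+1} := \bigcup_{Q_0 \in \ic S_k} ch_{\ic S}(Q_0)$. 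The union $\bigcup_{Q \in ch_{\ic S}(Q_0)} Q$ is contained in the union of the three level sets of the (tailed) maximal operators $\ic M_{s_j}$ of $f\ci_{Q_0}, g\ci_{Q_0}, h\ci_{Q_0}$ at height $C$ times the corresponding average over $Q_0$; by the weak-type bounds for $\ic M_{s_j}$ and choosing $C$ large, one gets $\sum_{Q \in ch_{\ic S}(Q_0)}|Q| \leq \tfrac12 |Q_0|$. This is exactly the packing/sparseness condition of Definition~\ref{def-sparse-collection} (equivalently the Carleson condition of \eqref{eq:Carleson-condition}), the argument being the one from Proposition~14 of \cite{sparse-hel}.

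Then I would attach to each $Q_0 \in \ic S$ the subfamily $\rr P_{Q_0}$ of all $P \in \rr P$ with $I_P \subseteq Q_0$ but $I_P \nsubseteq Q$ for every $Q \in ch_{\ic S}(Q_0)$; these partition $\rr P$. The point of the stopping rule is that for every $P \in \rr P_{Q_0}$ the interval $I_P$ fails all three stopping inequalities, since otherwise $I_P$ itself would be one of the competing intervals and hence would lie in a child. Consequently
\[
\sssize^{s_1}_{\rr P_{Q_0}}(f) \lesssim \Big(\frac{1}{|Q_0|}\int_{\rr R}|f|^{s_1}\ci_{Q_0}\,dx\Big)^{1/s_1},
\]
and likewise for $g$ and $h$ — here I also use that, after taking $I_0 = Q_0$ as the localizing interval, the average over $Q_0$ itself is one of the quantities in $\sssize_{\rr P(Q_0)}$. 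Applying the hypothesized local estimate to $\rr P_{Q_0}$ with $I_0 = Q_0$ and summing over $Q_0 \in \ic S$ yields
\[
|\Lambda_{\rr P}(f,g,h)| \leq \sum_{Q_0 \in \ic S}|\Lambda_{\rr P_{Q_0}}(f,g,h)| \lesssim \sum_{Q_0 \in \ic S}\big(\sssize^{s_1}_{\rr P_{Q_0}}f\big)\big(\sssize^{s_2}_{\rr P_{Q_0}}g\big)\big(\sssize^{s_3}_{\rr P_{Q_0}}h\big)\,|Q_0|,
\]
which is the claimed sparse bound.

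The only genuinely delicate point — the ``hard part'' — is the bookkeeping: verifying that the $\rr P_{Q_0}$ really do partition $\rr P$ (each tile descends through the generations and is captured at a unique stopping level, which is where finiteness of $\rr P$, or a limiting argument, is used), and threading the tails $\ci_Q$ through the maximal-function estimate so that the weak-type constants stay uniform in the generation. Both are handled exactly as in \cite{sparse-hel}; once $C$ is fixed large enough, the geometric decay $\sum_{ch} |Q| \leq \tfrac12|Q_0|$ simultaneously furnishes the sparseness and guarantees that the generational sum converges.
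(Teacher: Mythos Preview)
Your proposal is correct and follows essentially the same route as the paper's proof: the same generational construction of $\ic S$ starting from maximal spatial intervals, the same three-fold stopping rule on the $s_j$-averages, the same maximal-function argument for the packing condition, the same definition of $\rr P_{Q_0}$, and the same application of the local estimate followed by summation over $\ic S$. The only additions are your explicit mention of reducing to finite $\rr P$ and the closing remarks on bookkeeping, which the paper leaves implicit.
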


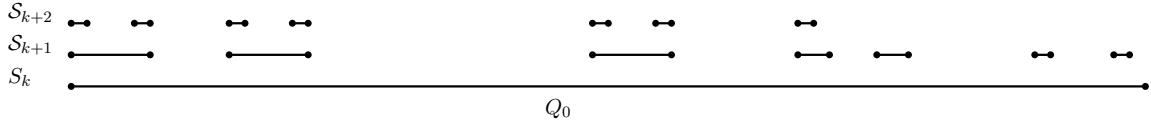
\begin{figure}
\psscalebox{.75 .75} % Change this value to rescale the drawing.
{
\begin{pspicture}(0,-0.985)(20.16,0.985)
\psline[linecolor=black, linewidth=0.04, dotsize=0.06cm 1.6]{*-*}(1.12,-0.425)(20.16,-0.425)
\rput[bl](9.52,-0.985){$Q_0$}
\psline[linecolor=black, linewidth=0.04, dotsize=0.06cm 1.6]{*-*}(1.12,0.135)(2.52,0.135)
\psline[linecolor=black, linewidth=0.04, dotsize=0.06cm 1.6]{*-*}(3.92,0.135)(5.32,0.135)
\psline[linecolor=black, linewidth=0.04, dotsize=0.06cm 1.6]{*-*}(10.36,0.135)(11.76,0.135)
\psline[linecolor=black, linewidth=0.04, dotsize=0.06cm 1.6]{*-*}(14.0,0.135)(14.56,0.135)
\psline[linecolor=black, linewidth=0.04, dotsize=0.06cm 1.6]{*-*}(15.4,0.135)(15.96,0.135)
\psline[linecolor=black, linewidth=0.04, dotsize=0.06cm 1.6]{*-*}(19.88,0.135)(19.6,0.135)(19.6,0.135)
\psline[linecolor=black, linewidth=0.04, dotsize=0.06cm 1.6]{*-*}(18.48,0.135)(18.2,0.135)
\psline[linecolor=black, linewidth=0.04, dotsize=0.06cm 1.6]{*-*}(1.12,0.695)(1.4,0.695)
\psline[linecolor=black, linewidth=0.04, dotsize=0.06cm 1.6]{*-*}(2.52,0.695)(2.24,0.695)
\psline[linecolor=black, linewidth=0.04, dotsize=0.06cm 1.6]{*-*}(3.92,0.695)(4.2,0.695)
\psline[linecolor=black, linewidth=0.04, dotsize=0.06cm 1.6]{*-*}(5.32,0.695)(5.04,0.695)
\psline[linecolor=black, linewidth=0.04, dotsize=0.06cm 1.6]{*-*}(10.36,0.695)(10.64,0.695)
\psline[linecolor=black, linewidth=0.04, dotsize=0.06cm 1.6]{*-*}(11.76,0.695)(11.48,0.695)
\psline[linecolor=black, linewidth=0.04, dotsize=0.06cm 1.6]{*-*}(14.0,0.695)(14.28,0.695)
\rput[bl](0.0,0.695){$\Huge \mathcal{S}_{k+2} \quad  $}
\rput[bl](0.0,0.135){$\Huge \mathcal{S}_{k+1}   $}
\rput[bl](0.0,-0.425){$\Huge S_{k}$}
\end{pspicture}
}
\caption{The sparse collection of intervals $\ic S= \bigcup_{k \geq 0}\ic S_k$}
\label{fig:sparse-dom}
\end{figure}

\begin{rremark}
\label{remark:stopping-times}
We can see from Figure \ref{fig:sparse-dom} that the stopping time yielding the sparse domination collection ( which we call \emph{stopping time of type $SST$}) $\ic S$ is a bottom-up procedure: we start with large spatial intervals $Q_0$ and construct the descendants $ch_{\ic S}(Q_0)$, and in doing so, the averages $\sssize_{\rr P_{Q}}^{s_1}(f)$ are increasing.

On the contrary, the stopping time for the vector-valued estimates (referred to as \emph{of type $VVST$}) is a top-down approach: we start with largest possible size, which is concentrated on smallest possible intervals, and as the procedure continues, the sizes are decreasing and the spatial intervals are becoming larger. This is illustrated in Figure \ref{fig:sparse}.
\end{rremark}

Lastly, in order to obtain the full result of Theorem \ref{thm:sparse-BHT}, i.e. a sparse domination of $\|BHT_{\rr P}(f, g) \cdot v \|_q^q$ (rather that just a sparse domination of the trilinear form), we need to rephrase Theorem \ref{thm:local->sparse}. We note that such a sparse domination is well defined in the quasi-Banach case as well.

\begin{proposition}
\label{prop:Lq:local->sparse}
Let $\rr P$ be a collection of tiles and assume that the bilinear operator $ T_{\rr P}$ satisfies
{\fontsize{9}{9}\begin{equation*}
\big\| \| T_{\rr P(I_0)}(\vec f, \vec g)  \|_{L^{R}(\ii W, \mu)}  \cdot v     \big\|_q^q  \lesssim  \left(\sssize_{\rr P(I_0)} \| \vec f(x, \cdot) \|_{L^{R^n_1}}^{s_1} \right)^{\frac{q}{s_1}} \cdot \left(\sssize_{\rr P(I_0)} \| \vec g(x, \cdot) \|_{L^{R^n_2}}^{s_2}\right)^{\frac{q}{s_2}} \cdot \left(\sssize_{\rr P(I_0)} |v|^{s_3} \right)^{\frac{q}{s_3}} \cdot \vert I_0 \vert,
\end{equation*}}for any dyadic interval $I_0$, any vector-valued functions $\vec f, \vec g$ so that $\| \vec f(x, \cdot) \|_{L^{R^n_1}}, \| \vec g(x, \cdot) \|_{L^{R^n_2}}$ are locally integrable, and any $q$-integrable function $v$. Then, provided $\ds \| \cdot \|_{{L^{R}(\ii W, \mu)}}^q$ is subadditive, there exists a sparse family of dyadic intervals $\ic S$, depending on $\vec f, \vec g, v$ and on the Lebesgue exponents $s_1, s_2, s_3$ so that 
{\fontsize{9}{9}
\[
\big\| \| T_{\rr P}(\vec f, \vec g)  \|_{L^{R}(\ii W, \mu)}  \cdot v  \big\|_q^q  \lesssim \sum_{Q \in \ic S} \big( \frac{1}{|Q|} \int_{\rr R} \| \vec f(x, \cdot) \|_{L^{R^n_1}}^{s_1} \cdot \ci_{Q} dx \big)^\frac{q}{s_1}  \big( \frac{1}{|Q|} \int_{\rr R} \| \vec g(x, \cdot) \|_{L^{R^n_2}}^{s_2} \cdot \ci_{Q} dx \big)^\frac{q}{s_2}\big( \frac{1}{|Q|} \int_{\rr R} |v|^{s_3} \cdot \ci_{Q} dx \big)^\frac{q}{s_3} \cdot |Q|.
\]}
\end{proposition}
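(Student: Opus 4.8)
The plan is to reproduce the stopping-time construction from the proof of Theorem~\ref{thm:local->sparse}, replacing the additivity of the trilinear form over a partition of the tiles by the assumed subadditivity of $\|\cdot\|_{L^{R}(\ii W,\mu)}^q$. As usual one may assume $\rr P$ is finite, passing to the general case at the end by exhaustion. First I would construct the sparse family $\ic S=\bigcup_{k\ge 0}\ic S_k$: let $\ic S_0$ consist of the maximal spatial supports $I_P$, $P\in\rr P$; and, given $Q_0\in\ic S_k$, let $ch_{\ic S}(Q_0)$ be the collection of maximal dyadic intervals $Q\subseteq Q_0$ for which there is at least one $P\in\rr P$ with $I_P\subset Q$ and such that at least one of
\[ \Big(\frac{1}{|Q|}\int_{\rr R}\|\vec f(x,\cdot)\|_{L^{R^n_1}}^{s_1}\,\ci_Q\,dx\Big)^{1/s_1}>C\,\Big(\frac{1}{|Q_0|}\int_{\rr R}\|\vec f(x,\cdot)\|_{L^{R^n_1}}^{s_1}\,\ci_{Q_0}\,dx\Big)^{1/s_1} \]
or its analogues for $(\vec g,s_2,R^n_2)$ and for $(v,s_3)$ holds, with $C$ a large absolute constant; then set $\ic S_{k+1}=\bigcup_{Q_0\in\ic S_k}ch_{\ic S}(Q_0)$. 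Since $\bigcup_{Q\in ch_{\ic S}(Q_0)}Q$ is contained in the union of the three level sets $\{\ic M_{s_j}(\cdot)>C(\cdots)\}$ of the corresponding functions localized to $Q_0$, the weak-type bounds for $\ic M_{s_1},\ic M_{s_2},\ic M_{s_3}$ give $\sum_{Q\in ch_{\ic S}(Q_0)}|Q|\le\tfrac12|Q_0|$ once $C$ is large enough, so $\ic S$ is $\tfrac12$-sparse in the sense of Definition~\ref{def-sparse-collection}.

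Next I would attach to each $Q_0\in\ic S$ the subcollection $\rr P_{Q_0}:=\{P\in\rr P:\ I_P\subseteq Q_0\ \text{and}\ I_P\not\subseteq Q\ \text{for all}\ Q\in ch_{\ic S}(Q_0)\}$, which yields a partition $\rr P=\bigsqcup_{Q_0\in\ic S}\rr P_{Q_0}$. By maximality of the children, any $P\in\rr P_{Q_0}$ obeys $\frac{1}{|I_P|}\int_{\rr R}\|\vec f(x,\cdot)\|_{L^{R^n_1}}^{s_1}\,\ci_{I_P}\,dx\lesssim\frac{1}{|Q_0|}\int_{\rr R}\|\vec f(x,\cdot)\|_{L^{R^n_1}}^{s_1}\,\ci_{Q_0}\,dx$, and similarly with $\vec g$ and $v$; the replacement of the smooth tail $\ci_{I_P}$ by $\ci_{Q_0}$ is the same routine step as in Theorem~\ref{thm:local->sparse}. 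Consequently
\[ \sssize_{\rr P_{Q_0}}^{s_1}\!\big(\|\vec f(x,\cdot)\|_{L^{R^n_1}}\big)\lesssim\Big(\frac{1}{|Q_0|}\int_{\rr R}\|\vec f(x,\cdot)\|_{L^{R^n_1}}^{s_1}\,\ci_{Q_0}\,dx\Big)^{1/s_1}, \]
and likewise for $(\vec g,s_2,R^n_2)$ and $(v,s_3)$.

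Finally, since $T_{\rr P}$ is additive in the collection of tiles, $T_{\rr P}(\vec f,\vec g)=\sum_{Q_0\in\ic S}T_{\rr P_{Q_0}}(\vec f,\vec g)$ pointwise, so the assumed subadditivity of $\|\cdot\|_{L^{R}(\ii W,\mu)}^q$ gives
\[ \big\|\,\|T_{\rr P}(\vec f,\vec g)\|_{L^{R}(\ii W,\mu)}\cdot v\,\big\|_q^q\le\sum_{Q_0\in\ic S}\big\|\,\|T_{\rr P_{Q_0}}(\vec f,\vec g)\|_{L^{R}(\ii W,\mu)}\cdot v\,\big\|_q^q. \]
I would then apply the hypothesis to each collection $\rr P_{Q_0}$ with $I_0=Q_0$ (legitimate, since $\rr P_{Q_0}$ is again a collection of tiles all of whose spatial intervals lie in $Q_0$, exactly as in the proof of Theorem~\ref{thm:local->sparse}), and insert the size bounds from the previous paragraph; the $Q_0$-term is then $\lesssim\big(\frac{1}{|Q_0|}\int_{\rr R}\|\vec f(x,\cdot)\|_{L^{R^n_1}}^{s_1}\ci_{Q_0}\,dx\big)^{q/s_1}\big(\frac{1}{|Q_0|}\int_{\rr R}\|\vec g(x,\cdot)\|_{L^{R^n_2}}^{s_2}\ci_{Q_0}\,dx\big)^{q/s_2}\big(\frac{1}{|Q_0|}\int_{\rr R}|v(x)|^{s_3}\ci_{Q_0}\,dx\big)^{q/s_3}|Q_0|$, and summing over $Q_0\in\ic S$ gives the claimed sparse domination; removing the finiteness assumption on $\rr P$ is routine. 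The step I expect to be the crux — and the reason the subadditivity of $\|\cdot\|_{L^{R}(\ii W,\mu)}^q$ is hypothesized — is precisely the passage from $\|T_{\rr P}(\vec f,\vec g)\|_{L^{R}}^q$ to $\sum_{Q_0}\|T_{\rr P_{Q_0}}(\vec f,\vec g)\|_{L^{R}}^q$: unlike the trilinear form of Theorem~\ref{thm:local->sparse}, which is genuinely additive over the partition $\rr P=\bigsqcup_{Q_0}\rr P_{Q_0}$, the quantity $\|\cdot\|_{L^{R}}^q$ is only \emph{sub}additive, and only under the stated hypothesis (automatic in the quasi-Banach regime $q\le 1$, where it is the natural triangle inequality). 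Everything else — the sparseness of $\ic S$, the applicability of the local estimate to the sub-collections $\rr P_{Q_0}$, and the smooth-tail replacement — is parallel to the trilinear-form case and introduces no new difficulty.
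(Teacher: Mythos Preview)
Your proposal is correct and follows precisely the approach the paper intends: the paper presents Proposition~\ref{prop:Lq:local->sparse} as a direct ``rephrasing'' of Theorem~\ref{thm:local->sparse} and does not write out a separate proof, and your argument supplies exactly the expected adaptation --- the same stopping-time construction of $\ic S$ and the partition $\rr P=\bigsqcup_{Q_0}\rr P_{Q_0}$, with the subadditivity of $\|\cdot\|_{L^R(\ii W,\mu)}^q$ (applied pointwise in $x$, then integrated) replacing the additivity of the trilinear form over the tile partition. There is nothing to add.
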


The subadditivity condition is fulfilled if $\ds q\leq \min_{j} r^j$. If $\ds \| \cdot \|_{{L^{R}(\ii W, \mu)}}^q$ is not subadditive, we pick $\tau< q$ so that $\ds \| \cdot \|_{{L^{R}(\ii W, \mu)}}^\tau$ is subadditive (and a sparse domination of $\big\| \| T_{\rr P}(\vec f, \vec g)  \|_{L^{R}(\ii W, \mu)}  \cdot v  \big\|_\tau^\tau$ is at our disposal) and we construct a sparse domination of $\big\| \| T_{\rr P}(\vec f, \vec g)  \|_{L^{R}(\ii W, \mu)}  \cdot v  \big\|_q^q$:

\begin{proposition}
\label{prop:sparse->sparse->no-subadditivity}
Let $s_1, s_2, s_3$ and $\tau$ be so that $\ds \| \cdot \|_{{L^{R}(\ii W, \mu)}}^\tau$ is subadditive and for any vector-valued functions $\vec f, \vec g$ with the property that $\| \vec f(x, \cdot) \|_{L^{R^n_1}}, \| \vec g(x, \cdot) \|_{L^{R^n_2}}$ are locally integrable, and any $\tau$-integrable function $v$, there exists a sparse family of dyadic intervals $\ic S$, depending on $\vec f, \vec g, v$ and on the Lebesgue exponents $s_1, s_2, s_3$ so that 
{\fontsize{9}{9}
\[
\big\| \| T_{\rr P}(\vec f, \vec g)  \|_{L^{R}(\ii W, \mu)}  \cdot v  \big\|_\tau^\tau  \lesssim \sum_{Q \in \ic S} \big( \frac{1}{|Q|} \int_{\rr R} \| \vec f(x, \cdot) \|_{L^{R^n_1}}^{s_1} \cdot \ci_{Q} dx \big)^\frac{\tau}{s_1}  \big( \frac{1}{|Q|} \int_{\rr R} \| \vec g(x, \cdot) \|_{L^{R^n_2}}^{s_2} \cdot \ci_{Q} dx \big)^\frac{\tau}{s_2}\big( \frac{1}{|Q|} \int_{\rr R} |v|^{s_3} \cdot \ci_{Q} dx \big)^\frac{\tau}{s_3} \cdot |Q|.
\]}

Then for any $q>\tau$ and any vector-valued functions $\vec f, \vec g$ with the property that $\| \vec f(x, \cdot) \|_{L^{R^n_1}}, \| \vec g(x, \cdot) \|_{L^{R^n_2}}|$ are locally integrable, and any $q$-integrable function $v$, there exist Lebesgue exponents $\tilde s_1, \tilde s_2, \tilde s_3$ and a sparse family of dyadic intervals $\ic S$, depending on $\vec f, \vec g, v$ and on the Lebesgue exponents $ s_1, s_2, s_3$ so that 
{\fontsize{9}{9}
\begin{equation}
\label{eq:sparse-dom-non-subadd}
\big\| \| T_{\rr P}(\vec f, \vec g)  \|_{L^{R}(\ii W, \mu)}  \cdot v  \big\|_q^q  \lesssim \sum_{Q \in \ic S} \big( \frac{1}{|Q|} \int_{\rr R} \| \vec f(x, \cdot) \|_{L^{R^n_1}}^{\tilde s_1} \cdot \ci_{Q} dx \big)^\frac{q}{\tilde s_1}  \big( \frac{1}{|Q|} \int_{\rr R} \| \vec g(x, \cdot) \|_{L^{R^n_2}}^{\tilde s_2} \cdot \ci_{Q} dx \big)^\frac{q}{\tilde s_2}\big( \frac{1}{|Q|} \int_{\rr R} |v|^{\tilde s_3} \cdot \ci_{Q} dx \big)^\frac{q}{\tilde s_3} \cdot |Q|.
\end{equation}}
Moreover, $\tilde s_1=s_1, \tilde s_2=s_2$, while $\tilde s_3$ is so that
\[
\frac{1}{\tilde s_3} <\frac{1}{s_3}-\frac{1}{\tau}+\frac{1}{q}.
\]
\begin{proof}
First note that  
\[
\big\| \| T_{\rr P}(\vec f, \vec g)  \|_{L^{R}(\ii W, \mu)}  \cdot v  \big\|_q^q=\big\| \big| \| T_{\rr P}(\vec f, \vec g)  \|_{L^{R}(\ii W, \mu)}  \cdot v \big| ^\tau  \big\|_{\frac{q}{\tau}}^{\frac{q}{\tau}},
\]
and since $\ds \frac{q}{\tau}>1$, we can find $u \in L^{\left( \frac{q}{\tau} \right)'}$ with $\ds \|u \|_{\left( \frac{q}{\tau} \right)'}=1$ such that
\[
\big\| \big| \| T_{\rr P}(\vec f, \vec g)  \|_{L^{R}(\ii W, \mu)}  \cdot v \big| ^\tau  \big\|_{\frac{q}{\tau}} = \int_{\rr R} \big| \| T_{\rr P}(\vec f, \vec g)  \|_{L^{R}(\ii W, \mu)}  \cdot v \big| ^\tau u dx=\big\| \| T_{\rr P}(\vec f, \vec g)  \|_{L^{R}(\ii W, \mu)}  \cdot v \cdot u^\frac{1}{\tau}  \big\|_\tau^\tau.
\]

Now we use the sparse domination for $\| \cdot \|_\tau^\tau$:
\begin{align*}
&\big\| \| T_{\rr P}(\vec f, \vec g)  \|_{L^{R}(\ii W, \mu)}  \cdot v \cdot u^\frac{1}{\tau}  \big\|_\tau^\tau \\
 &\lesssim \sum_{Q \in \ic S} \big(\aver{Q} \| \vec f(x, \cdot) \|_{L^{R^n_1}}^{s_1}   \big)^\frac{\tau}{s_1} \, \big(\aver{Q} \| \vec g(x, \cdot) \|_{L^{R^n_2}}^{s_2}   \big)^\frac{\tau}{s_2} \, \big(\aver{Q} |v \cdot u^\frac{1}{\tau}|^{s_3}   \big)^\frac{\tau}{s_3} \, |Q| \\
 &\lesssim \sum_{Q \in \ic S} \big(\aver{Q} \| \vec f(x, \cdot) \|_{L^{R^n_1}}^{s_1}   \big)^\frac{\tau}{s_1} \, \big(\aver{Q} \| \vec g(x, \cdot) \|_{L^{R^n_2}}^{s_2}   \big)^\frac{\tau}{s_2} \, \big(\aver{Q} |v |^{\tilde s_3}   \big)^\frac{\tau}{\tilde s_3} \, |Q| ^{\frac{1}{(q/\tau)}} \, \big(\aver{Q} |u |^\frac{\alpha}{\tau}   \big)^\frac{\tau}{\alpha} |Q| ^{\frac{1}{(q/\tau)'}},
\end{align*}
where we used H\"older's inequality with $\ds \frac{1}{s_3}=\frac{1}{\tilde s_3}+\frac{1}{\alpha}$. Now again we use H\"older's inequality in $\ell^{\frac{q}{\tau}}$ and $\ell^{\left(\frac{q}{\tau} \right)'}$ spaces indexed after the collection $\ic S$; the first term is precisely the right hand side of \eqref{eq:sparse-dom-non-subadd} with $\tilde s_1=s_1$ and $\tilde s_2=s_2$, while the second one can be estimated, using the sparseness properties of $\ic S$, by 
\[
\Big(  \sum_{Q \in \ic S}  \big(\aver{Q} |u |^\frac{\alpha}{\tau}   \big)^{ \frac{\tau}{\alpha} \cdot \left( \frac{q}{\tau}  \right)' }|Q|   \Big)^{\frac{1}{(q/\tau)'}} \lesssim \big\| \ic M_{\frac{\alpha}{\tau}} ( u)  \big\|_{\left(\frac{q}{\tau}\right)'}.
\]

Hence we can deduce the desired sparse domination from \eqref{eq:sparse-dom-non-subadd} provided we have
\[
\big\|  \ic M_{\frac{\alpha}{\tau}} (u) \big\|_{\left( \frac{q}{\tau} \right)'} \lesssim \big\|  u \big\|_{\left( \frac{q}{\tau} \right)'}=1.
\]

The last estimate is contingent upon
\[
\frac{\alpha}{\tau} < \left( \frac{q}{\tau} \right)' \Leftrightarrow \frac{1}{\tau} -\frac{1}{q} < \frac{1}{s_3} -\frac{1}{\tilde s_3},
\]
which is the condition we had on $\tilde s_3$.
\end{proof}
\end{proposition}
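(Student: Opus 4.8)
The plan is to obtain the $L^q$ sparse bound from the $L^\tau$ one — which is available by hypothesis, $\|\cdot\|^\tau_{L^{R}(\ii W,\mu)}$ being subadditive — through a duality argument, exactly as the constraints on $\tilde s_1,\tilde s_2,\tilde s_3$ in the statement suggest. Write $\Psi := \| T_{\rr P}(\vec f, \vec g)  \|_{L^{R}(\ii W, \mu)} \cdot v$. Since $q>\tau$, the exponent $q/\tau$ exceeds $1$, so by duality in $L^{q/\tau}$ there is a nonnegative $u$ with $\| u \|_{(q/\tau)'} = 1$ realizing
\[
\| \Psi \|_q^\tau = \big\| |\Psi|^\tau \big\|_{q/\tau} = \int_{\rr R} |\Psi|^\tau \, u \, dx = \big\| \Psi \cdot u^{1/\tau} \big\|_\tau^\tau .
\]
The first step is to apply the hypothesized $L^\tau$-sparse domination, but with the weight $v \cdot u^{1/\tau}$ in the role of $v$ (this function is $\tau$-integrable, since $\int |v|^\tau u \le \| v \|_q^\tau < \infty$); this produces a sparse family $\ic S$ — which, $u$ being determined by $\vec f, \vec g, v$ and the fixed exponents, depends only on these data — together with a bound on $\| \Psi \cdot u^{1/\tau} \|_\tau^\tau$ by a sum over $Q \in \ic S$ of the $s_1$- and $s_2$-averages of $\| \vec f(x,\cdot) \|_{L^{R^n_1}}$, $\| \vec g(x,\cdot) \|_{L^{R^n_2}}$ and the $s_3$-average of $v \cdot u^{1/\tau}$, each raised to the power $\tau/s_j$, times $|Q|$.

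The second step separates $u$ from $v$ inside the $s_3$-average. Fix $\alpha$ with $\frac{1}{s_3} = \frac{1}{\tilde s_3} + \frac{1}{\alpha}$, so H\"older's inequality on each $Q$ gives
\[
\big( \aver{Q} |v \cdot u^{1/\tau}|^{s_3} \big)^{\tau/s_3} \le \big( \aver{Q} |v|^{\tilde s_3} \big)^{\tau/\tilde s_3} \big( \aver{Q} |u|^{\alpha/\tau} \big)^{\tau/\alpha} .
\]
Writing $|Q| = |Q|^{\tau/q} \cdot |Q|^{1-\tau/q}$ and applying H\"older's inequality in $\ell^{q/\tau}$ and $\ell^{(q/\tau)'}$ over $\ic S$, the $\ell^{q/\tau}$-factor is precisely the right-hand side of \eqref{eq:sparse-dom-non-subadd} (with $\tilde s_1 = s_1$, $\tilde s_2 = s_2$) raised to the power $\tau/q$, while the $\ell^{(q/\tau)'}$-factor is
\[
\Big( \sum_{Q \in \ic S} \big( \aver{Q} |u|^{\alpha/\tau} \big)^{(\tau/\alpha)(q/\tau)'} |Q| \Big)^{1/(q/\tau)'} \lesssim \big\| \ic M_{\alpha/\tau}(u) \big\|_{(q/\tau)'} ,
\]
the last inequality using the pairwise disjoint major subsets $E_Q \subseteq Q$ furnished by the sparseness of $\ic S$ together with $\big( \aver{Q} |u|^{\alpha/\tau} \big)^{\tau/\alpha} \le \ic M_{\alpha/\tau}(u)(x)$ for $x \in E_Q$.

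It then remains to invoke the boundedness of the fractional maximal operator $\ic M_{\alpha/\tau}$ on $L^{(q/\tau)'}$, which holds exactly when $\frac{\alpha}{\tau} < (q/\tau)'$; since $(q/\tau)' = q/(q-\tau)$, this is equivalent to $\frac{1}{\tau} - \frac{1}{q} < \frac{1}{s_3} - \frac{1}{\tilde s_3}$, i.e. to the constraint imposed on $\tilde s_3$. Hence $\big\| \ic M_{\alpha/\tau}(u) \big\|_{(q/\tau)'} \lesssim \| u \|_{(q/\tau)'} = 1$, so $\| \Psi \cdot u^{1/\tau} \|_\tau^\tau$ is bounded by the right-hand side of \eqref{eq:sparse-dom-non-subadd} to the power $\tau/q$; raising to the power $q/\tau$ and recalling $\| \Psi \|_q^q = \big( \| \Psi \cdot u^{1/\tau} \|_\tau^\tau \big)^{q/\tau}$ gives the claim. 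I expect the only genuine difficulty to be bookkeeping: checking that the two successive H\"older steps leave the $\ell^{q/\tau}$-factor exactly equal to the advertised right-hand side (and not merely comparable to it), and tracking the powers $\tau, q, s_3, \tilde s_3, \alpha$ so that the condition making $\ic M_{\alpha/\tau}$ act on $L^{(q/\tau)'}$ lands precisely on the stated inequality for $\tilde s_3$. It is also worth stressing that the loss $\tfrac{1}{\tau}-\tfrac{1}{q}$ is intrinsic — it is the cost of the maximal-function estimate, so $\tilde s_3 = s_3$ is not attainable — and that the subadditivity of $\|\cdot\|^\tau_{L^{R}(\ii W,\mu)}$ enters only through the availability of the $L^\tau$-sparse bound taken as a hypothesis; nothing in the argument uses the iterated-space structure beyond that.
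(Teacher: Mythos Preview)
Your proof is correct and follows essentially the same route as the paper: dualize $L^{q/\tau}$ to introduce $u$, apply the $L^\tau$ sparse bound with weight $v\cdot u^{1/\tau}$, split the $s_3$-average via H\"older with $\frac{1}{s_3}=\frac{1}{\tilde s_3}+\frac{1}{\alpha}$, apply H\"older in $\ell^{q/\tau}\times\ell^{(q/\tau)'}$ over $\ic S$, and control the $u$-factor by $\|\ic M_{\alpha/\tau}(u)\|_{(q/\tau)'}$ using sparseness. One small terminological slip: $\ic M_{\alpha/\tau}$ here is the power maximal function $(\ic M(|\cdot|^{\alpha/\tau}))^{\tau/\alpha}$, not the fractional maximal operator; otherwise your argument matches the paper's.
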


\begin{rremark}
\label{remark:rangeFeffStein}
\begin{enumerate}[label=(\alph*), ref=\alph*]
\item We note that the sparse collection $\ic S$ depends on the functions involved and $\tau, s_1, s_2$ and $s_3$, rather than on $q, \tilde s_1, \tilde s_2$ and $\tilde s_3$.
\item \label{remark:rangeFS-item} For the bilinear Hilbert transform operator, we require the tuples $R_1, R_2$ and $R$ to satisfy the assumptions \eqref{eq:cond-r_j-theta} of Theorem \ref{thm:sparse-BHT} (i.e. they depend on certain $\theta_1, \theta_2$ and $\theta_3$ which are contained in $[0, 1)$ and satisfy $\theta_1+\theta_2+\theta_3=1$).

If $q \leq 1$ and $\ds \| \cdot \|_{{L^{R}(\ii W, \mu)}}^q$ is subadditive, then $s_1, s_2$ and $s_3$ need to satisfy
\[
\frac{1}{s_1}<\frac{1+\theta_1}{2}, \qquad \frac{1}{s_2}<\frac{1+\theta_2}{2}, \qquad \frac{1}{s_3}<\frac{1}{q'}-\frac{1+\theta_3}{2}=\frac{1}{q}-\frac{\theta_1+\theta_2}{2}.
\]
On the other hand, if $\ds \| \cdot \|_{{L^{R}(\ii W, \mu)}}^q$ is not subadditive, then we deduce the sparse domination result by using Proposition \ref{prop:sparse->sparse->no-subadditivity} and a $\tau$ for which $\ds \| \cdot \|_{{L^{R}(\ii W, \mu)}}^\tau$ is subadditive. For such a $\tau$, we have a sparse domination result with averages $s_1(\tau), s_2(\tau)$ and $s_3(\tau)$, where
\[
\frac{1}{s_1(\tau)}<\frac{1+\theta_1}{2}, \qquad \frac{1}{s_2(\tau)}<\frac{1+\theta_2}{2}, \qquad \frac{1}{s_3(\tau)}<\frac{1}{\tau'}-\frac{1+\theta_3}{2}=\frac{1}{\tau}-\frac{\theta_1+\theta_2}{2}.
\]

And this implies a sparse domination result for $q$, with $s_1, s_2$ and $s_3$ averages, where $s_1=s_1(\tau), s_2=s_2(\tau)$ and $\ds \frac{1}{s_3}<\frac{1}{s_3(\tau)}-\frac{1}{\tau}+\frac{1}{q}$. We want to minimize $s_3$ (and hence $s_3(\tau)$ as well), so we can pick $s_3(\tau)$ so that
\[
\frac{1}{s_3(\tau)}=\frac{1}{\tau} -\frac{\theta_1+\theta_2}{2} -\epsilon,
\]
for a certain $\epsilon$ small enough. Then $s_3$ can be chosen similarly:
\[
\frac{1}{s_3}=\frac{1}{s_3(\tau)}-\frac{1}{\tau}+\frac{1}{q}-\epsilon=\frac{1}{q} -\frac{\theta_1+\theta_2}{2} -2\epsilon.
\]

So even if we fail to have subadditivity, the conditions on $s_1, s_2$ and $s_3$ are given by \eqref{eq:condition-on-s} also in this case:
\[
\frac{1}{s_1}<\frac{1+\theta_1}{2}, \qquad \frac{1}{s_2}<\frac{1+\theta_2}{2}, \qquad \frac{1}{s_3}<\frac{1}{q}-\frac{\theta_1+\theta_2}{2}.
\]

\item In Corollary \ref{corollary-Fefferman-Stein}, we disregard the value of $s_3$ by setting $v \equiv 1$; we need nevertheless to have $\ds 0< \frac{1}{s_3}<\frac{1}{q}-\frac{\theta_1+\theta_2}{2}$, which implies that
\[
\frac{1}{s_1}+\frac{1}{s_2}<1+\frac{\theta_1+\theta_2}{2} < \min \left( \frac{3}{2}, 1+\frac{1}{q} \right).
\]

\end{enumerate}
\end{rremark}

\bibliographystyle{alpha}
\bibliography{TheHelicoidalMethod.bbl}
\end{document}